\newtheorem{theorem}{Theorem}[section]%
\newtheorem{lemma}[theorem]{Lemma}%
\newtheorem{proposition}[theorem]{Proposition}%
\newtheorem{corollary}[theorem]{Corollary}%
\newtheorem{remark}[theorem]{Remark}%
\newenvironment{ma}{\begin{array}{>{\displaystyle}r >{\displaystyle}c >{\displaystyle}l}}{\end{array}}%
\newcommand{\aleq}{\lesssim}
\newcommand{\ageq}{\gtrsim}
\newcommand{\aeq}{\approx}
\newcommand{\subsubset}{\subset\subset}
\newcommand{\N}{{\mathbb N}}
\newcommand{\R}{{\mathbb R}}
\newcommand{\Z}{{\mathbb Z}}
\renewcommand{\S}{{\mathbb S}}
\newcommand{\Rz}{{\mathcal{R}}} 
\newcommand{\Sw}{{\mathcal{S}}}
\newcommand{\fracm}[1]{{\frac{1}{#1}}}
\newcommand{\BMO}{\operatorname{BMO}}%
\newcommand{\supp}{\operatorname{supp}}%
\newcommand{\dist}{\operatorname{dist}}%
\newcommand{\Rszpot}{I}
\newcommand{\lap}{\Delta}
\newcommand{\D}{\abs{\nabla}}
\newcommand{\laps}[1]{\D^{#1}}
\newcommand{\lapms}[1]{\Rszpot_{#1}}
\newcommand{\lapa}{\D^{\alpha}}
\newcommand{\abs}[1]{{\left \vert #1 \right \vert}}%
\newcommand{\sabs}[1]{{\vert #1 \vert}}%
\newcommand{\brac}[1]{{\left ( #1 \right )}}%
\newcommand{\Vrac}[1]{{\left \Vert #1 \right \Vert }}%
\newcommand{\vrac}[1]{{\Vert #1 \Vert }}%
\newcommand{\ontop}[2]{{\genfrac{}{}{0pt}{}{#1}{#2}}}
\newcommand{\intl}{\int \limits}
\def\XXint#1#2#3{{\setbox0=\hbox{$#1{#2#3}{\int}$}%
     \vcenter{\hbox{$#2#3$}}\kern-.5\wd0}}%
\newcommand{\sref}[2]{#1.\ref{#2}}
\numberwithin{equation}{section}%
\title{$\varepsilon$-regularity for systems involving non-local, antisymmetric operators}
\author{Armin Schikorra\thanks{Max-Planck Institute for Mathematics in the Sciences, Inselstr. 22, 04103 Leipzig, Germany, armin.schikorra@mis.mpg.de}}
\date{\today}
\begin{document}
\maketitle

\thispagestyle{empty}
\begin{abstract}
\noindent
We prove an epsilon-regularity theorem for critical and super-critical systems with a non-local antisymmetric operator on the right-hand side.\\
These systems contain as special cases, Euler-Lagrange equations of conformally invariant variational functionals as Rivi\`{e}re treated them, and also Euler-Lagrange equations of fractional harmonic maps introduced by Da Lio-Rivi\`{e}re.\\
In particular, the arguments presented here give new and uniform proofs of the regularity results by Rivi\`{e}re, Rivi\`{e}re-Struwe, Da-Lio-Rivi\`{e}re, and also the integrability results by Sharp-Topping and Sharp, not discriminating between the classical local, and the non-local situations.
\end{abstract}

\tableofcontents
\section{Introduction}
In recent years there has been quite some research on the effect of antisymmetric potentials in the regularity theory of critical and super-critical 	elliptic partial differential equations. This was initiated by Rivi\`{e}re who in his celebrated \cite{Riv06} proved that solutions $u \in W^{1,2}(D,\R^N)$ to the equation
\begin{equation}\label{eq:rivpde}
 \lap u = \Omega \cdot \nabla u \quad \mbox{in $D \subset \R^2$},
\end{equation}
which is a contracted notation of
\[
 \lap u^i = \sum_{k=1}^N \Omega_{ik} \cdot \nabla u^k \quad \mbox{$1 \leq i \leq N$, in $D \subset \R^2$},
\]
are H\"older continuous, under the condition that $\Omega_{ij} \in L^2(D,\R^2)$ and the at first sight seemingly non-descript condition
\begin{equation}\label{eq:rivantisym}
\Omega_{ik} = - \Omega_{ki},\quad  1 \leq i,k \leq N.
\end{equation}
As Rivi\`{e}re showed, \eqref{eq:rivpde} with \eqref{eq:rivantisym} is essentially the general form of Euler-Lagrange equations of conformally invariant variational functionals which allow the characterization of Gr\"uter \cite{Gruter84}, take for example a manifold $\mathcal{N} \subset \R^N$ and the Dirichlet energy
\[
\int_{\R^2} \abs{\nabla u}^2, \quad u : D \subset \R^2 \to \mathcal{N} \subset \R^N.
\]
We refer the interested reader to the introduction of \cite{Riv06} for more details. In \cite{RivStru08} this was generalized to an epsilon-regularity theorem for $D \subset \R^m$, $m \geq 3$.\\
If the antisymmetry-condition \eqref{eq:rivantisym} is violated, solutions to \eqref{eq:rivpde} might exhibit singularities such as Frehse's \cite{Frehse73} counter-example $\log \log \frac{1}{\abs{x}}$. In fact, the antisymmetry is shown to be closely related to the appearance of Hardy spaces, and also to H\'elein's \cite{Hel91} moving frame technique, cf. \cite{IchEnergie}.\\
Motivated by this, Da Lio and Rivi\`{e}re \cite{DR1dMan} (for $m=1$) showed that this regularizing effect of antisymmetry exists and appears also in the setting of $m/2$-harmonic maps, critical points of the energy
\[
 \int_{\R^m} \abs{\laps{\frac{m}{2}} u}^2, \quad u : \R^m \to \mathcal{N} \subset \R^N.
\]
which satisfy (roughly) an Euler-Lagrange equation of the form
\begin{equation}\label{eq:DLantisym}
 \lap^{\frac{m}{2}} u^i = \sum_{k=1}^N \Omega_{ik}\ \laps{\frac{m}{2}} u^k \quad \mbox{$1 \leq i \leq N$, in $D \subset \R^m$}.
\end{equation}
Here, $\Omega_{ij} \in L^2(\R^m)$ satisfies again \eqref{eq:rivantisym}, and $\laps{\alpha} = (-\lap)^{\frac{\alpha}{2}}$ is the elliptic differential operator of differential order $\alpha$ with the symbol $\abs{\xi}^\alpha$, for the precise definition we refer to Section~\ref{s:fracfacts} . 

As well in the classical situation \cite{Riv06}, as also in the case of fractional harmonic maps, the argument relies on transforming the equation with an orthogonal matrix $P$ (in a similar way as H\`{e}lein's moving frame technique, cf.~\cite{IchEnergie}). That is, one computes the respective equation $P \nabla u$ instead of $\nabla u$, or $P  \lap^{\frac{n}{4}} u$ instead of $ \lap^{\frac{n}{4}} u$ and obtains a transformed $\Omega_P$, which for the right choice of $P$ exhibits better properties than the original $\Omega$: In the classical case, $\operatorname{div}(\Omega_P) = 0$, while in the fractional case, $\Omega_P \in L^{2,1}$ (where $L^{2,1} \subsetneq L^2$ is the Lorentz space dual to the weak $L^2$, denoted  by $L^{2,\infty}$). Note that while a condition like $\operatorname{div} (f) = 0$ is destroyed under a distortion like $\tilde{f} := fg$, even for $g \in L^\infty$, the condition $f \in L^{2,1}$ is also valid for $\tilde{f} = fg$, if $g \in L^\infty$. 

Thus, the techniques developed in the fractional setting \cite{DR1dMan, DR1dSphere, Sfracenergy, DndMan, SNHarmS10}, seem somewhat more dynamic and stable under certain distortions. For example, in \cite{DLSsphere, DLSman} Da Lio and the author were able to extend some of the results to the degenerate situation of the energy
\[
 \int_{\R^m} \abs{\laps{\alpha} u}^{\frac{m}{\alpha}} , \quad u : \R^m \to \mathcal{N} \subset \R^N,
\]
the Euler-Lagrange equation of which have the form
\[
 \laps{\alpha} (\abs{\laps{\alpha} u}^{\frac{m}{\alpha}-2}\laps{\alpha} u) = \abs{\laps{\alpha} u}^{\frac{m}{\alpha}-2} \sum_{k=1}^N \Omega_{ik}\ \laps{\alpha} u^k \quad \mbox{$1 \leq i \leq N$, in $D \subset \R^m$}.
\]
The aim of the present work is to shed more light on the connection between the two systems \eqref{eq:DLantisym} and \eqref{eq:rivpde} in the critical and supercritical case, and we are going to extend the techniques developed in \cite{DR1dMan, DR1dSphere, Sfracenergy, SNHarmS10} to give a uniform argument for $\varepsilon$-regularity for quite general systems which in particular include as special cases both \eqref{eq:DLantisym} and \eqref{eq:rivpde}.\\[0.5em]
Setting $w := (-\lap)^{\frac{1}{2}} u \equiv \laps{1} u \in L^2(\R^n)$, \eqref{eq:rivpde} reads as
\begin{equation}\label{eq:riv2dalio}
 \lap^{\frac{1}{2}} w^i = \sum_{\gamma = 1}^2 \sum_{k = 1}^N \Omega^\gamma_{ik} \Rz_{\gamma} [w^k],
\end{equation}
where $\Rz_{\gamma} \equiv \partial_\gamma \lap^{-\frac{1}{2}}$ denotes the Riesz transform. Thus, \eqref{eq:rivpde} is of the form \eqref{eq:DLantisym}, but $\Omega$ is not a pointwise matrix anymore, but a non-local, linear operator mapping $L^2(\R^m)$ into $L^1(\R^m)$. This was our main motivation, to study the regularity, and, in the super-critical regime, $\varepsilon$-regularity of solutions $w \in L^2(\R^m)$ of 
\begin{equation}\label{eq:wpde}
 \int w_i\ \laps{\mu} \varphi = -\int \Omega_{ik} [ w_k] \varphi \quad \mbox{for all $\varphi \in C_0^\infty(D)$},
\end{equation}
where $\Omega_{ik}$ is a linear mapping which maps $L^2(\R^m)$ into $L^1(\R^m)$. We will restrict ourselves to $\Omega$ of the form
\begin{equation}\label{eq:OmegaisRiesztrafo}
 \Omega_{ij}[] = \sum_{l=0}^m A^l_{ij}\Rz_l[],
\end{equation}
where $A^l_{ij} = - A^l_{ij}\in L^2(\R^m)$, $i,j \in 1,\ldots,m$, $\Rz_l[]$ is the $l$-th Riesz transform for $l = 1,\ldots,m$ and $\Rz_0[]$ is the identity on $\R^m$. The arguments presented here hold also for more general potentials $\Omega: L^2 \to L^1$, under suitable conditions on quasi-locality and its commutators. But as \eqref{eq:OmegaisRiesztrafo} contains already the most interesting examples (see below), we shall restrict our attention to this setting for the sake of overview.\\
Our main result is then the following $\varepsilon$-regularity:
\begin{theorem}\label{th:main}
Let $\mu \leq \min \{1,\frac{m}{2}\}$ or $\mu = \frac{m}{2}$. Let $D \subsubset \R^m$, $p \in (1,\infty)$, then there exists $\theta > 0$ such that the following holds: Let $w \in L^2(\R^m) \cap L^{(2)_{2\mu}}(D)$, that is,
\begin{equation} \label{eq:wassumption}
 \vrac{w}_{2,\R^m} + \sup_{B_\rho \subset D} \rho^{\frac{2\mu-m}{2}} \vrac{w}_{2,B_\rho} < \infty,
\end{equation}
be a solution to \eqref{eq:wpde}, where $\Omega$ is of the form \eqref{eq:OmegaisRiesztrafo}. If $\Omega$ satisfies moreover 
\begin{equation} \label{eq:regp}
 \sup_{B_\rho(x), x \in D} \rho^{\frac{2\mu-m}{2}} \vrac{A^l}_{L^2} \leq \theta,
\end{equation}
then $w \in L^{p}_{loc}(D)$. 
\end{theorem}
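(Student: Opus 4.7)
The strategy is to adapt the Rivi\`ere gauge technique and its fractional refinements by Da Lio--Rivi\`ere to the unified nonlocal setting of \eqref{eq:wpde}--\eqref{eq:OmegaisRiesztrafo}. The antisymmetry $A^l_{ij} = -A^l_{ji}$ is what makes an $SO(N)$-valued gauge available. First I would construct, using the smallness \eqref{eq:regp}, a rotation $P \colon \R^m \to SO(N)$ close to the identity (via a H\'elein--Uhlenbeck fixed-point/variational scheme) whose defining property is that the transformed potential, formally
\[
\Omega_{P}[\cdot] \;:=\; P\,\Omega\bigl[P^{-1}\cdot\bigr] \;-\; \bigl[\laps{\mu},P\bigr]\,P^{-1}\,\cdot,
\]
takes values in a compensated space strictly better than $L^{1}$ locally (a Hardy--Morrey or Lorentz--Morrey space). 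The existence of such $P$ is a Picard/Neumann-series argument that uses precisely the antisymmetry combined with the Morrey smallness \eqref{eq:regp}.

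Next I would rewrite \eqref{eq:wpde} for $v := Pw$. Using a fractional Leibniz rule and three-term commutator estimates in the spirit of Coifman--Rochberg--Weiss and of \cite{Sfracenergy, SNHarmS10}, one reduces
\[
\laps{\mu}(Pw) \;-\; P\,\laps{\mu} w \;-\; \sum_{l=0}^{m} A^{l}\,\Rz_{l}[Pw]
\]
to remainders that map the $L^{(2)_{2\mu}}$-data of $w$ into a Hardy/Lorentz--Morrey space. The key cancellation is that only the antisymmetric part of the top-order multiplier survives in $P\,\Omega\,P^{-1}$ and is eliminated by the gauge condition; what remains pairs an $L^{2}$-factor with an $L^{2,1}$- or $\mathcal H^{1}$-factor via the commutator structure of $\Rz_{l}$ with the multiplier $P$.

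The last step is a Morrey iteration. Inverting $\laps{\mu}$ by the Riesz potential $\lapms{\mu}$ on $B_{\rho}$ and combining with the above compensation, I would obtain a self-improving estimate of the form
\[
\rho^{\frac{2\mu - m}{2}}\,\vrac{w}_{L^{2}(B_{\rho/2})} \;\leq\; (C\,\theta + o(1))\,\rho^{\frac{2\mu - m}{2}}\,\vrac{w}_{L^{2}(B_{\rho})} \;+\; \text{controlled tail},
\]
which, once $\theta$ is small, iterates dyadically to yield a strict Morrey gain $\rho^{\alpha+(2\mu-m)/2}$ for some $\alpha > 0$. By the Adams--Morrey embedding this places $w$ in $L^{p_{0}}_{loc}(D)$ for some $p_{0} > 2$, and reinserting the gain into the commutator estimates (bootstrap) reaches any $p \in (1,\infty)$. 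The main obstacle I expect is the commutator step in the Morrey-valued setting: in the supercritical range $\mu < m/2$ each three-commutator must be shown to map between the correct Morrey spaces and to absorb one factor of $\theta$, which is standard in $L^{p}$ but delicate in Morrey spaces; a secondary difficulty is unifying the critical endpoint $\mu = m/2$ with the case $\mu \leq \min\{1,m/2\}$ under a single framework, since the former relies on Hardy--BMO duality while the latter uses Adams-type Morrey--Riesz potential mapping.
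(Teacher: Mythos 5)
Your outline reproduces the overall architecture of the paper (gauge transformation by an $SO(N)$-valued $P$, commutator/compensation estimates, Morrey iteration, Adams embedding, then a subcritical bootstrap to arbitrary $p$), but the central step — the self-improving decay estimate — is set up at the wrong integrability exponent, and in the supercritical regime $2\mu<m$ it cannot be closed as you state it. Your proposed inequality
\[
\rho^{\frac{2\mu-m}{2}}\vrac{w}_{2,B_{\rho/2}} \leq (C\theta+o(1))\,\rho^{\frac{2\mu-m}{2}}\vrac{w}_{2,B_{\rho}} + \text{tail}
\]
requires inverting $\laps{\mu}$ on the gauged right-hand side and landing back in $L^2$. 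But even after the gauge, $\Omega^P[w]$ is only a product of two $L^2$-type quantities, hence lies in (Hardy-)$L^1$ on the relevant Morrey scale, and $\lapms{\mu}$ maps $L^1$ into $L^{\frac{m}{m-\mu},\infty}$, which is strictly below $2$ whenever $\mu<\frac{m}{2}$ (and Adams' theorem, which you invoke, is unavailable at $p=1$). So the loop does not close at exponent $2$. The paper's resolution — and the genuinely new idea you are missing — is to run the iteration \emph{below} the natural exponent: one controls the Lorentz--Morrey quantities $\sup_{B_r} r^{(\lambda_\kappa-m)/p_\kappa}\vrac{w}_{(p_\kappa,\infty),B_r}$ with $p_\kappa=\frac{m}{m-\kappa}$, $\lambda_\kappa=\frac{m(2\mu-\kappa)}{m-\kappa}$, starting at $\kappa=\mu$ (where the hypothesis \eqref{eq:wassumption} gives the bound) and climbing $\kappa\to 2\mu$ in finitely many steps, each step combining the iteration lemma (power decay $r^{\sigma_\kappa}$) with Adams' embedding to gain a quantified increment $\kappa_{1}\geq\kappa+c(2\mu-\kappa)^{\alpha}$; only when $\kappa$ is close enough to $2\mu$ does one cross the threshold $p>2$. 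In the critical case $2\mu=m$ your $L^2$-level scheme is essentially Rivi\`ere--Struwe and would work via Hardy--BMO duality, but the theorem's main content is the unified supercritical statement.

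A secondary, less serious discrepancy: you construct $P$ by a ``Picard/Neumann-series argument.'' For a genuinely nonlocal, operator-valued $\Omega[]$ the paper instead minimizes the energy $E(Q)=\sup_{\psi}\int\Omega^Q[\psi]$ by the direct method; the antisymmetry is used not for the existence of $P$ but in the Euler--Lagrange equation, where the $so(N)$-projection kills the symmetric part and leaves only commutators $\mathcal{C}(\varphi,\Rz)$ and three-term commutators $H_\mu$ that are controlled in Hardy--BMO duality (including the previously open endpoint $\mu=1$). A continuity/Neumann-series construction would require a priori estimates for the nonlocal $\Omega^P$ that are not obviously available, so you should be explicit that the variational route is the one that survives the generalization.
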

Let us remark the following corollaries from  Theorem \ref{th:main}. 

As mentioned above, by the representation \eqref{eq:riv2dalio} this gives a new proof of Rivi\`ere's theorem \cite{Riv06}, and also the $\varepsilon$-regularity theorem of \cite{RivStru08}. 

Moreover, from Theorem \ref{th:main} a new proof of Sharp and Topping's integrability theorem \cite{SharpTopping11} for \eqref{eq:rivpde} follows, and also an extension to the super-critical setting. The latter has been done independently, and by different methods by Sharp \cite{Sharp12}.\\

Also, we extend these integrability results to the non-local case for $\mu \leq 1$. For $\mu > 1$ it seems already in the classical setting of the biharmonic maps, cf. \cite{StruweBiharm}, that for $\varepsilon$-regularity we need more information on the growth of $\Omega$ in terms of the solution, a fact which appeared also in our setting and forced us to restrict $\mu = \frac{m}{2}$ if $\mu > 1$.\\

Another corollary worth mentioning is that the arguments presented here also enable us to treat $\epsilon$-regularity critical points of more general non-local energies, e.g.,
\begin{equation}\label{eq:energynablaalphau}
 E(u) = \int \abs{\nabla^\alpha u}^2 \quad \mbox{$u : \R^m \to \mathcal{N} \subset \R^N$},
\end{equation}
where for $\Rz = [\Rz_1,\ldots,\Rz_m]^T$, and $\Rz_i$ being the $i$-th Riesz transform,
\[
 \nabla^\alpha u := \Rz[\laps{\alpha} u].
\]

Another remark regards the smallness condition of \eqref{eq:regp}. In the critical setting $2\mu = m$, it is easy to verify, that this condition holds, if $D$ is chosen appropriately small. In the super-critical regime $2\mu < m$, this condition would follow from some kind of monotonicity formula for stationary points of energies of the form \eqref{eq:energynablaalphau}, which for the non-classical settings are unknown so far.\\

Let us now sketch the arguments we are going to need. Firstly, somewhat motivated by the arguments in \cite{RivStru08}, we are going estimate the growth of the norm possibly far below the natural exponent $2$. More precisely we estimate the growth in $R$ of
\begin{equation}\label{eq:intro:morreykappanorm}
 \sup_{B_r \subset B_R} r^{\frac{\lambda_\kappa-m}{p_\kappa}}\ \vrac{w}_{p_\kappa,B_r},
\end{equation}
starting with $\kappa = \mu$, where
\[
 \lambda_\kappa := \frac{m(2\mu - \kappa)}{m-\kappa},
\]
\[
 p_\kappa := \frac{m}{m-\kappa}.
\]
The main work is to show that for any $\kappa \in [\mu,2\mu)$ there is a good growth of these quantities, then starting for $\kappa_0 = \mu$, we can find a sequence of $\kappa_i$ which converges to $2\mu$, such that each growth of the $\kappa_i$-norm (that is \eqref{eq:intro:morreykappanorm} with $\kappa_i$) is controlled by the $\kappa_{i-1}$-norm. Finally, for $\kappa$ sufficiently close to $2\mu$, we show that we can actually have an estimate for $p > 2$. From this we have
\begin{theorem}\label{th:upto2}
There is $\theta_2 > 0$ such that if $\theta < \theta_2$, there exists $p > 2$, $\lambda < 2\mu$, such that
\[
 w \in L_{loc}^{(p)_{\lambda}}(D).
\]
\end{theorem}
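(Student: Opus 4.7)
The plan is to introduce, for each $\kappa \in [\mu, 2\mu)$, the scale-invariant Morrey-type quantity
\[
N_\kappa(R) := \sup_{B_r \subset B_R} r^{\frac{\lambda_\kappa - m}{p_\kappa}}\, \vrac{w}_{p_\kappa, B_r},
\]
and to propagate it iteratively from $\kappa = \mu$ up to some $\kappa^{*} < 2\mu$ for which $p_{\kappa^{*}} > 2$. The initial bound $N_\mu(R) < \infty$ follows from assumption \eqref{eq:wassumption} by H\"older on each ball, using that $r^{(\lambda_\mu - m)/p_\mu}\, \vrac{w}_{p_\mu, B_r} \leq C\, r^{(2\mu-m)/2}\, \vrac{w}_{2,B_r}$ after a short computation from the definitions of $\lambda_\kappa$ and $p_\kappa$.

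The heart of the argument is a single-step improvement: for every $\kappa \in [\mu, 2\mu)$ there is a gain $\delta = \delta(\kappa) > 0$, uniformly bounded below on compact subsets of $[\mu, 2\mu)$, and a decay factor $0 < \tau < 1$ such that on sufficiently small balls $B_R \subset D$,
\[
N_{\kappa + \delta}(\tau R) \leq C\, \theta\, N_\kappa(R) + \text{(dyadic tail in } R\text{)}.
\]
To establish this, I would localize \eqref{eq:wpde} on a ball with a smooth cutoff $\eta$, invert $\laps{\mu}$ by the Riesz potential $\lapms{\mu}$, and decompose $w$ as $\lapms{\mu}[\eta\, \Omega[w]] + \lapms{\mu}[(1-\eta)\, \Omega[w]]$ modulo a smoothing commutator $[\laps{\mu},\eta]$. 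The main term is controlled by combining the smallness \eqref{eq:regp} of $A^l$ with an Adams-type embedding $\lapms{\mu} : L^{(q)_\lambda} \to L^{(p_{\kappa+\delta})_{\lambda_{\kappa+\delta}}}$, with $q$ chosen so that a H\"older product estimate on Morrey spaces applied to $A^l_{ij}\,\Rz_l[w_j]$ reproduces exactly the factor $\theta\, N_\kappa(R)$. The nonlocal tail $(1-\eta)\Omega[w]$ is handled by dyadic annular decomposition and summed using the polynomial decay of the Riesz kernel together with the Morrey bound at the previous stage.

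Once the single-step gain is available with $\delta$ bounded away from zero, I would iterate by setting $\kappa_0 = \mu$ and $\kappa_{i+1} = \kappa_i + \delta(\kappa_i)$; any prescribed target $\kappa^{*} < 2\mu$ is then reached in finitely many steps, with each step absorbing the main term into the left-hand side thanks to $\theta \ll 1$, so that the iteration constants remain geometric. Since $p_\kappa = m/(m-\kappa)$ is strictly increasing with $p_\kappa \to m/(m-2\mu) > 2$ in the supercritical regime and $p_\kappa \to +\infty$ in the critical case $\mu = m/2$, one can pick $\kappa^{*}$ with $p_{\kappa^{*}} > 2$; in the supercritical regime $\lambda^{*} = \lambda_{\kappa^{*}} < 2\mu$ holds from the formula, while in the critical case $\lambda_\kappa \equiv m = 2\mu$ and one appeals to the trivial embedding $L^{p} \subset L^{(p)_\lambda}$ for any $\lambda < m$ to conclude the membership claimed.

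The principal obstacle is the single-step gain itself. Two points are delicate. First, the antisymmetry of $A^l$ must be exploited quantitatively; a crude H\"older bound on $\Omega[w]$ loses too much integrability to close the iteration as $\kappa$ approaches $2\mu$, and one must invoke the non-local analogue of the div-curl/Hardy-space compensation that produces the $L^{2,1}$ gain in the local and fractional settings of \cite{Riv06,DR1dMan}. Second, this compensation must cohabit with the cutoff $\eta$ and the nonlocal truncation: the relevant commutator estimates have to be formulated directly in Morrey norms, and the annular tail must decay at a rate that survives dyadic summation. Obtaining the gain $\delta$ uniformly for $\kappa \in [\mu, 2\mu)$ is what prevents the scheme from degenerating at the endpoint and is the step where the full strength of \eqref{eq:OmegaisRiesztrafo} together with \eqref{eq:rivantisym} enters.
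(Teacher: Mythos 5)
Your overall architecture — the Morrey quantities $N_\kappa$, the one-step gain $\kappa \mapsto \kappa+\delta(\kappa)$ with $\delta$ degenerating only at the endpoint, the gauge/compensation input, and the dyadic treatment of the nonlocal tail — is the same as the paper's. But the final step contains a genuine error that makes the scheme unable to conclude in the supercritical regime. You claim that $p_\kappa = \frac{m}{m-\kappa}$ increases to $\frac{m}{m-2\mu} > 2$. The limit value exceeds $2$ if and only if $\mu > \frac{m}{4}$, which is \emph{not} guaranteed under the hypotheses of Theorem \ref{th:main} (e.g.\ $m=5$, $\mu=1$ gives $p_\kappa \nearrow \frac{5}{3}$). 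Indeed the paper warns explicitly that ``possibly $p_\kappa < 2$ for all $\kappa \in [\mu,2\mu)$.'' So in general there is no $\kappa^{*}<2\mu$ with $p_{\kappa^{*}}>2$, and an iteration that only ever records $w\in L^{(p_\kappa)_{\lambda_\kappa}}$ can never reach an exponent above $2$.

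The missing idea is that one must carry along a small reserve of differentiability and cash it in at the very end through Adams' embedding on a Morrey space with \emph{small} parameter $\lambda$. Concretely, the paper's right-/left-hand side estimates control not only $v$ but $\laps{\mu-\tau}v$ in $L^{(\frac{m}{m+\mu-\tau-\kappa},\infty)_{\lambda_{s,\varepsilon}}}$, where $\lambda_{s,\varepsilon}\to 0$ as $\tau\to\mu$ and $\kappa\to 2\mu$. Applying $\lapms{\mu-\tau}$ via Lemma \ref{la:adams} then gives integrability $\fracm{\tilde p}=\frac{m+\mu-\tau-\kappa}{m}-\frac{\mu-\tau}{\lambda_{s,\varepsilon}}$: because the gain is $\frac{\mu-\tau}{\lambda_{s,\varepsilon}}$ rather than $\frac{\mu-\tau}{m}$, the vanishing of $\lambda_{s,\varepsilon}$ converts an arbitrarily small leftover derivative $\mu-\tau$ into an arbitrarily large jump in integrability, which is how $\tilde p>2$ with $\lambda_{s,\varepsilon}<2\mu$ is obtained for all admissible $\mu$. (In the critical case $2\mu=m$ your argument does work, since there $p_\kappa\to\infty$.) A secondary, fixable imprecision: the improvement of the Morrey exponent from $\lambda_\kappa$ to $\lambda<\lambda_\kappa$ in each step is not a direct absorption but comes from the decay $\Phi(\Lambda^{-1}R)\leq \varepsilon\Phi(R)+\ldots$ and the iteration lemma, yielding a rate $r^{\sigma_\kappa}$ which is then fed into Adams; your sketch elides this, but it is the mechanism by which $\delta(\kappa)>0$ is actually produced.
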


For Theorem \ref{th:upto2}, the antisymmetry of $\Omega$ will be crucial. Once Theorem \ref{th:upto2} is established, the system \eqref{eq:wpde} becomes subcritical, and we can drop the antisymmetry condition and just by the growth of the PDE, we have
\begin{theorem}\label{th:uptoinfty}
Assume $w$ as in Theorem \ref{th:main}, where we do not require the antisymmetry of $\Omega$. Assume moreover, that $w \in L^{p_1}_{loc}(D)$ for $p_1 > 2$. Then for any $p > 2$, there is $\theta_p > 0$ such that if $\theta < \theta_p$ in \eqref{eq:regp}, also
\[
 w \in L^p_{loc}(D).
\]
\end{theorem}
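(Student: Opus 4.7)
The plan is to bootstrap the integrability of $w$ iteratively via a Morrey-based argument, exploiting that the system is genuinely subcritical once Theorem \ref{th:upto2} is in hand. In particular the antisymmetry of $\Omega$ is no longer needed: the gain at each step comes purely from H\"older's inequality in Morrey spaces combined with an Adams-type Morrey--Sobolev embedding.

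First I would localize. With $\eta \in C_0^\infty(D)$ a cutoff equal to $1$ on a slightly smaller domain, the equation \eqref{eq:wpde} gives, schematically,
$$
\eta w = I_\mu\bigl[\eta\, \Omega[w]\bigr] + I_\mu\bigl[[\laps{\mu},\eta]\, w\bigr],
$$
where $I_\mu$ inverts $\laps{\mu}$. The commutator $[\laps{\mu},\eta]$ is of order strictly less than $\mu$, and the far-field tail arising when $\Rz_l[w]$ is replaced by $\Rz_l[\eta' w]$ for a slightly larger cutoff $\eta'$ is smooth on the support of $\eta$; together these form a lower-order perturbation controlled by the global bound \eqref{eq:wassumption}. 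Interpolating the hypothesis $w \in L^{(2)_{2\mu}}(D)$ with the given $w \in L^{p_1}_{loc}(D)$ yields a starting point $w \in L^{(q_0)_{\lambda_0}}_{loc}(D)$ with $q_0 \in (2, p_1]$ and $\lambda_0 < 2\mu$.

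The core of the argument is the following iteration step. If $w \in L^{(q_k)_{\lambda_k}}_{loc}(D)$ with $\lambda_k < 2\mu$, then the boundedness of $\Rz_l$ on Morrey spaces, combined with H\"older's inequality applied to $A^l \Rz_l[w]$ using $A^l \in L^{(2)_{2\mu}}$, would give $\Omega[w] \in L^{(s_k)_{\lambda'_k}}_{loc}(D)$, where
$$
\frac{1}{s_k} = \frac{1}{2} + \frac{1}{q_k}, \qquad \lambda'_k = s_k\!\left(\mu + \frac{\lambda_k}{q_k}\right).
$$
A short computation yields $2\mu - \lambda'_k = \frac{2(2\mu-\lambda_k)}{q_k+2} > 0$, so $\lambda'_k < 2\mu$, and Adams' Morrey--Sobolev embedding then places $I_\mu\, \Omega[w]$ in $L^{(q_{k+1})_{\lambda'_k}}_{loc}(D)$ with $1/q_{k+1} = 1/s_k - \mu/\lambda'_k$. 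The same inequality $\lambda'_k < 2\mu$ is equivalent to $q_{k+1} > q_k$, so integrability strictly improves.

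Iterating finitely many times and using the embedding $L^{(q)_\lambda}_{loc} \subset L^q_{loc}$, I would reach $w \in L^p_{loc}(D)$ for any prescribed $p > 2$. The main obstacle is that $\lambda_k \to 2\mu$ along the iteration, so Adams' constants deteriorate as $k$ grows; however, only finitely many steps are required for any fixed target $p$, and the smallness $\theta < \theta_p$ is chosen precisely to absorb the cumulative constants from the H\"older and embedding estimates over those steps, as well as to dominate the commutator and tail errors at each stage. Carefully tracking these nonlocal error terms throughout is the main technical content.
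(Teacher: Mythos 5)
Your localization and the single H\"older--Adams step are both fine and match the paper's Lemma \ref{la:higherit} and Proposition \ref{pr:highit:fmorrey}. The genuine gap is in the iteration itself: as you have set it up, it stalls at a finite exponent and cannot reach an arbitrary $p>2$. Track your own recursion: writing $e_k:=2\mu-\lambda_k$ and using $\lambda_{k+1}=\lambda_k'$, your identity gives $e_{k+1}=\tfrac{2e_k}{q_k+2}<\tfrac12 e_k$, so $\lambda_k\uparrow 2\mu$ geometrically, while the per-step gain is
\[
\frac{1}{q_k}-\frac{1}{q_{k+1}}=\frac{\mu}{\lambda_{k+1}}-\frac12=\frac{2\mu-\lambda_{k+1}}{2\lambda_{k+1}}\leq \frac{e_0\,2^{-(k+1)}}{2\lambda_0}.
\]
Summing, $\tfrac{1}{q_\infty}\geq \tfrac{1}{q_0}-\tfrac{2\mu-\lambda_0}{2\lambda_0}$, which is strictly positive (indeed close to $\tfrac12$) precisely in the regime that Theorem \ref{th:upto2} delivers, namely $q_0$ slightly above $2$ and $\lambda_0$ slightly below $2\mu$. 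So the exponents $q_k$ increase to a finite limit barely above $2$, and "finitely many steps'' never reach a prescribed large $p$. (A second, smaller issue: interpolating $L^{(2)_{2\mu}}$ against $L^{p_1}_{loc}=L^{(p_1)_m}$ does not produce $\lambda_0<2\mu$ in the supercritical case $2\mu<m$ --- it gives $\lambda_0>2\mu$; the usable starting point with $\lambda_0<2\mu$ has to come from Theorem \ref{th:upto2}.)

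The missing idea is that the smallness $\theta<\theta_p$ must be used structurally, not merely to "absorb cumulative constants'' --- in your scheme there is nothing for $\theta$ to absorb, since H\"older plus Adams produces no term that is reabsorbed into the left-hand side. The paper instead keeps the Lebesgue exponent $p$ \emph{fixed} and drives the Morrey exponent $\lambda$ \emph{down} towards $0$: the equation yields a decay inequality of the form
\[
\vrac{w}_{(p)_\lambda,B_{\Lambda^{-1}r}}\ \aleq\ \theta\,\vrac{w}_{(p)_\lambda,B_{2r}}+\brac{\theta+\Lambda^{-\frac{\lambda}{p}}}\sum_{k=1}^\infty 2^{-k\frac{\lambda}{p}}[w]_{(p)_\lambda,B_{2^{k+1}r}}+\vrac{f}_\infty\,r^{\lambda_N\frac{p+2}{2p}},
\]
and the abstract iteration Lemma \ref{la:iteration} (with $\theta$ small and $\Lambda$ large, both depending on $\lambda$) converts this decay into a power gain, i.e. $w\in L^{(p)_{\lambda-c\lambda^4}}_{loc}$. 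Iterating sends $\lambda\to0$, and only then is Adams applied \emph{once}: by \eqref{eq:higherit:p1fromplambda}, $1/p_1=1/p+1/2-\mu/\lambda_N$ with $\lambda_N\to 2\mu p/(p+2)$ as $\lambda\to0$, hence $p_1\to\infty$. Without this $\lambda$-decreasing mechanism (or some substitute that breaks the stall), the pure exponent bootstrap fails.
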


The main difficulty is thus Theorem \ref{th:upto2} and the estimates of the Morrey norm. For the proof of this theorem we need the following two main technical ingredients: Firstly, we need to extend the known commutator results \cite{DR1dSphere, DR1dMan}, and also the pointwise estimates \cite{Sfracenergy, SNHarmS10}. 
We introduce the following commutators: 
Let $X$ be a linear space, For $\varphi \in C_0^\infty(\R^m)$, $T: L^p(\R^m) \to L^q(\R^m)$, $1 \leq p,q\leq \infty$. We then set for $f \in L^p(\R^m)$ the commutator $\mathcal{C}(\varphi,T)[f]$
\begin{equation}\label{eq:def:commutator}
 \mathcal{C}(\varphi,T)[f] := \varphi T[f] - T[\varphi f].
\end{equation}
This commutator was estimated in terms of Hardy spaces for $T = \Rz$ the Riesz transform or $T = \lapms{s}$ the Riesz potential in \cite{CRW76, Chanillo82}, nevertheless we need more precise estimates and generalizations.
The next bilinear commutator was introduced in \cite{DR1dSphere}, in \cite{Sfracenergy} pointwise estimates were given.
\begin{equation}\label{eq:def:Hsab}
 H_s(a,b) := \laps{s}(ab) - a\laps{s} b - b \laps{s}a.
\end{equation}

For these commutators we show the following

\begin{theorem}\label{th:commutators}
For any $\mu \in (0,1]$, we have the following Hardy-space $\mathcal{H}$ estimate (for $\Rz[]$ any zero-multiplier operator, we need it for the Riesz-transform, only)
\[
 \Vrac{\laps{\mu} \brac{\Rz[h]\ \lapms{\mu} b - \Rz[h\ \lapms{\mu} b]}}_{\mathcal{H}} \leq \Vert h \Vert_{2}\ \Vert b \Vert_{2}
\]
Moreover, we have
\[
 \vrac{ \mathcal{C}(f,\Rz)[\laps{\mu} \varphi] }_2 \aleq \vrac{\laps{\mu} f}_2\ [\varphi]_{BMO},
\]
and its pointwise counter-part: For any $\delta_i \in (0,1)$ and any $\gamma_i \in (0,\delta_i)$, $i = 1,2$,
 \[
\begin{ma}
\abs{\mathcal{C}(a,\Rz)[b]} &\leq& C_{\Rz,\delta_1,\gamma_1}\  \lapms{\delta_1-\gamma_1} \abs{\laps{\delta_1} a}\ \lapms{\gamma_1} \abs{b}
+ C_{\Rz,\delta_2,\gamma_2}\ \lapms{\gamma_2}\brac{\lapms{\delta_2-\gamma_2}\abs{b}\  \abs{\laps{\delta_2} a}}.
\end{ma}
\]
Finally we have
\[
 \vrac{H_\mu(\varphi,g)}_2 \aleq \vrac{\laps{\mu} g}_2\ [\varphi]_{BMO}.
\]
and 
\begin{equation}\label{eq:def:laphH1inhardy}
\vrac{\laps{\mu} H_{\mu}(a,b)}_{\mathcal{H}} \aleq \vrac{\laps{\mu} a}_2\ \vrac{\laps{\mu} b}_2 \quad \mbox{for $\mu \in (0,1]$},
\end{equation}
as well as its pointwise counterpart: for any $\mu \in [0,m]$ there is $L \in \N$ such that for any $\beta \in [0,\min (\mu,1))$, $\mu \in [0,m)$, $\tau \in (\max\{\beta,\mu+\beta-1\},\mu]$ there are, $s_k \in [0,\mu)$, $t_k \in [0,\tau)$, where $\tau-\beta -s_k-t_k \geq 0$, such that the following holds
\[
 \abs{\laps{\beta} H_\mu(a,b)} \aleq \sum_{k=1}^L \lapms{\tau-\beta  -s_k-t_k} \brac{\lapms{s_k} \abs{\laps{\mu} a}\ \lapms{t_k}\abs{\laps{\tau} b}}.
\]
\end{theorem}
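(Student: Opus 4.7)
The theorem bundles six commutator estimates which I would address in two linked groups: first establish the pointwise Riesz-potential bounds (the third and sixth statements), then derive the $L^2$ and Hardy-space bounds by duality and standard interpolation with $BMO$ and $\mathcal{H}$.

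The unifying tool is a Littlewood--Paley decomposition combined with Bony's paraproduct $ab = T_a b + T_b a + R(a,b)$. For $\mathcal{C}(a,\Rz)[b] = a\Rz[b] - \Rz[ab]$, the Riesz transform is a Fourier multiplier of order zero, so in the low-high paraproduct it commutes with the smooth cut-off on the $a$-side up to a controlled tail of the form $\lapms{\delta_1-\gamma_1}\abs{\laps{\delta_1} a}\,\lapms{\gamma_1}\abs{b}$; the high-low piece is handled by exchanging the roles, producing the second summand and giving the stated pointwise bound. For $H_\mu(a,b) = \laps{\mu}(ab) - a\laps{\mu} b - b\laps{\mu} a$, the two one-sided paraproducts cancel exactly against the two subtracted terms, so only the balanced diagonal remainder survives. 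On that remainder one can redistribute fractional derivatives freely; assigning $\laps{\mu}$ to $a$, $\laps{\tau}$ to $b$, and an auxiliary Riesz potential $\lapms{\tau-\beta-s_k-t_k}$ on the outside yields the pointwise bound for $\laps{\beta} H_\mu(a,b)$, while the restriction $\tau \in (\max\{\beta,\mu+\beta-1\},\mu]$ is precisely what keeps $s_k, t_k$ in the admissible ranges $[0,\mu)$, $[0,\tau)$ with $\tau - \beta - s_k - t_k \geq 0$.

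With the pointwise bounds in hand, the $L^2$ estimates $\vrac{\mathcal{C}(f,\Rz)[\laps{\mu}\varphi]}_2 \aleq \vrac{\laps{\mu} f}_2\,[\varphi]_{BMO}$ and $\vrac{H_\mu(\varphi,g)}_2 \aleq \vrac{\laps{\mu} g}_2\,[\varphi]_{BMO}$ follow from Hedberg/Adams-type pointwise estimates for Riesz potentials of $BMO$ functions combined with the $L^2$-boundedness of the fractional maximal operator: the $BMO$ factor is absorbed into a Carleson-type control of $\lapms{\gamma}\abs{\laps{\mu}\varphi}$, and the remaining factor is in $L^2$. For the two Hardy-space bounds I would dualize against $\varphi \in BMO$, move $\laps{\mu}$ off the commutator and onto $\varphi$, reducing the pairing to one of the $L^2$-$BMO$ estimates just established; Fefferman's $\mathcal{H}^1$-$BMO$ duality then yields the desired bound. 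For the first Hardy-space estimate one can alternatively invoke a direct Coifman--Lions--Meyer--Semmes div-curl argument, since $\Rz[h]\,\lapms{\mu} b - \Rz[h\,\lapms{\mu} b]$ has precisely the null-form structure that lifts bilinear quantities from $L^1$ into $\mathcal{H}^1$.

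The main obstacle I expect is the pointwise bound on $\laps{\beta} H_\mu(a,b)$: the parameters $\beta,\tau,s_k,t_k$ must be allocated delicately so that no negative-order Riesz potential ever appears and so that the boundary values of $\tau$ are respected; when $\mu$ is close to $1$ the argument sits at the edge of validity of the standard fractional Leibniz rule, and the auxiliary potential $\lapms{\tau-\beta-s_k-t_k}$ is what rescues the estimate in that regime. Once this piece is secured, the remaining five estimates reduce to the duality and interpolation steps just described.
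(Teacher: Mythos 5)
There is a genuine gap in the logical architecture of your proposal. You plan to prove the pointwise potential bounds first and then obtain the $L^2$--$BMO$ estimates from them via Hedberg/Adams-type estimates. This cannot work: the pointwise bound for $\mathcal{C}(f,\Rz)[\laps{\mu}\varphi]$ produces factors of the form $\lapms{\gamma}\abs{\laps{\mu}\varphi}$, and $\varphi \in BMO$ gives no pointwise (nor $L^2$) control of a Riesz potential of the \emph{absolute value} of $\laps{\mu}\varphi$ --- the $BMO$ bound encodes cancellation that is destroyed the moment absolute values are taken inside the potential. The paper runs the implications in the opposite direction: the $L^2$--$BMO$ bounds for $\mathcal{C}(f,\Rz)[\laps{\mu}\varphi]$ and $H_\mu(\varphi,g)$ are deduced \emph{from} the Hardy-space estimate on $\laps{\mu}(\Rz[h]\,\lapms{\mu}b - \Rz[h\,\lapms{\mu}b])$ by $\mathcal{H}$--$BMO$ duality, together with the Coifman--Rochberg--Weiss commutator theorem; the pointwise bounds are proved by an entirely separate, Fourier-free kernel argument (decomposing $\R^{3m}$, resp.\ $\R^{4m}$, according to the relative sizes of $\abs{x-y}$, $\abs{x-z}$, $\abs{z-y}$ and Taylor-expanding the kernels), not by paraproducts. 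A Littlewood--Paley decomposition does not naturally dominate anything \emph{pointwise} by Riesz potentials of $\abs{\laps{\delta}a}$ and $\abs{b}$, so your route to the third and sixth estimates is also unsubstantiated.

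A second, equally serious gap: the claim that for $H_\mu$ ``the two one-sided paraproducts cancel exactly against the two subtracted terms, so only the balanced diagonal remainder survives'' is false, and it hides the entire difficulty of the theorem. What survives in the low-high and high-low parts is a commutator of the type $\sum_j\bigl(\laps{\mu}(a_j\, b^{j-4}) - (\laps{\mu}a_j)\, b^{j-4}\bigr)$, whose symbol $\abs{\xi}^{\mu}-\abs{\eta}^{\mu}$ must be expanded in a Taylor series; the $l$-th term carries a factor $\abs{\xi-\eta}^{l-\mu}$, and the square-function estimate needed to close the argument requires $l-\mu>0$. This is exactly where the hypothesis $\mu\le 1$ enters, and at the endpoint $\mu=1$ the $l=1$ term fails and must be rewritten by hand as a classical Riesz-transform commutator plus a controllable remainder --- which is the paper's main new contribution here and which your proposal never addresses. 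Your fallback ``div-curl'' suggestion does not rescue this: $\Rz[h]\,\lapms{\mu}b - \Rz[h\,\lapms{\mu}b]$ is not a div-curl quantity, and had such a soft argument been available, the case $\mu=1$ would not have been open.
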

\begin{remark}
For $\mu < 1$ the Hardy-space estimates above follow essentially from an obvious adaption of Da Lio and Rivi\`{e}re's argument \cite{DR1dSphere}, and \eqref{eq:def:laphH1inhardy} has been proven by them. For $\mu > 1$, already from the pointwise arguments in \cite{Sfracenergy} there is no hope for similar results. The interesting and new case $\mu = 1$, for which even \eqref{eq:def:laphH1inhardy} was unclear up to now, needs a more careful adaption of the arguments in \cite{DR1dSphere}.
\end{remark}

Equipped with a good understanding of these commutator we will show

\begin{theorem}\label{th:intro:energy}
There is a uniform $\Lambda > 0$ such that the following holds: Let $\Omega$ be as in \eqref{eq:OmegaisRiesztrafo}, and assume that $\Omega_{ij}[] = -\Omega_{ji}[]$. For any $B_r \subset \R^m$, we can then choose $P: \R^m \to SO(N)$, $\supp (P-I) \subset B_{r}$. Then for any $\varphi \in C_0^\infty(B_{r})$, 
\[
- \int \Omega^P[\laps{\mu} \varphi] \leq C\ r^{\frac{m}{2}-\mu}\ \vrac{A}_{2}\ [\varphi]_{BMO} + \vrac{A}_{2}^{2}\ \begin{cases}
                                                           [\varphi]_{BMO} \quad &\mbox{if $\mu \in (0,1]$},\\ 
                                                           \vrac{\laps{\mu} \varphi}_{(2,\infty)} \quad &\mbox{if $\mu > 1$},
                                                           \end{cases}
\]
where
\[
 \Omega^P_{ij} [f]:= \brac{\laps{\mu} P_{ik}}\ P^T_{kj}\ f + P_{ik} \Omega_{kl} [P^T_{lj} f],
\]
\end{theorem}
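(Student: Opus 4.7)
The strategy is a gauge-theoretic construction in the spirit of Hélein's moving frames and their fractional incarnations in \cite{DR1dSphere, Sfracenergy, SNHarmS10}. The plan is to construct $P$ as the minimizer of a suitable $\Omega$-dependent energy and then use the resulting Euler-Lagrange equation together with the commutator identities of Theorem \ref{th:commutators} to decompose $\Omega^P[\laps{\mu}\varphi]$ into pieces that either cancel by the gauge equation, land in $\mathcal{H}^1$, or can be paired against $\varphi$ in $BMO$ (respectively against $\laps{\mu}\varphi$ in weak $L^2$ when $\mu > 1$).

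For the construction of the gauge, I would minimize an energy of the shape
\[
F(Q) \;:=\; \int |\laps{\mu/2} Q|^2 \;-\; 2\sum_l \int (Q^T\,\Rz_l[Q]) \cdot A^l
\]
among $Q\colon \R^m \to SO(N)$ with $\supp(Q - I) \subset B_r$, existence following from the direct method together with a standard penalization enforcing the $SO(N)$-constraint pointwise. Comparing with the competitor $Q\equiv I$ yields the energy bound $\vrac{\laps{\mu} P}_2 \aleq \vrac{A}_2$. Varying $P_t = e^{t\eta} P$ with $\eta \in C_0^\infty(B_r, so(N))$ gives the Euler-Lagrange identity, which morally states that the antisymmetric-in-$(i,j)$ part of the total operator $\Omega^P$ is absorbed by $H_\mu$- and $\mathcal{C}$-type commutators rather than merely lying in $L^2$.

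The next step is to decompose $\Omega^P[\laps{\mu}\varphi]$ summand-by-summand. For the first term, combining $PP^T = I$ with Theorem \ref{th:commutators} gives
\[
(\laps{\mu}P_{ik})P^T_{kj} \;=\; \tfrac12 \bigl((\laps{\mu}P_{ik})P^T_{kj} - P_{jk}\laps{\mu}P^T_{ki}\bigr) \;-\; \tfrac12\, H_\mu(P_{ik}, P^T_{kj}),
\]
so that the antisymmetric part (the first bracket) is annihilated by the gauge equation while the remainder is an $H_\mu$-commutator. For the second term, expanding $P_{ik}\Rz_l[P^T_{lj} f] = \Rz_l[P_{ik} P^T_{lj} f] + \mathcal{C}(P_{ik},\Rz_l)[P^T_{lj} f]$ and antisymmetrising in $(k,l)$ via $A^l_{kn} = -A^l_{nk}$ isolates an antisymmetric-in-$(i,j)$ main term (again killed by the gauge) plus $\mathcal{C}(P,\Rz)$-remainders. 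Each remainder is then controlled via Theorem \ref{th:commutators}: for $\mu \in (0,1]$ the estimate \eqref{eq:def:laphH1inhardy} gives $\vrac{\laps{\mu} H_\mu(P,P^T)}_{\mathcal{H}} \aleq \vrac{\laps{\mu}P}_2^2 \aleq \vrac{A}_2^2$, paired with $[\varphi]_{BMO}$ by Fefferman-Stein duality, while the $\mathcal{C}(P,\Rz)$-terms are bounded by the $L^2$-commutator estimate of the same theorem. The purely linear-in-$A$ contribution, involving no $P$-factors, is handled by Riesz-anti-self-adjointness combined with the support condition $\supp\varphi \subset B_r$, which localises $A^l$ to a neighbourhood of $B_r$ and yields the dimensional prefactor $r^{m/2-\mu}$ via Morrey-type scaling against $[\varphi]_{BMO}$.

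The principal obstacle is two-fold. First, one must engineer the coupling term in $F$ so that its Euler-Lagrange equation \emph{simultaneously} cancels the antisymmetric parts of both summands of $\Omega^P$ up to explicit commutators---a fractional analogue of the Coulomb gauge from Hélein's original construction, where the coupling is dictated by writing $A^l$ as the formal adjoint of $\Rz_l$ applied to a quantity symmetric in the Lie-algebra indices. Second, in the regime $\mu > 1$ (i.e. $\mu = m/2$) only the pointwise Riesz-potential bound of Theorem \ref{th:commutators} is available for $H_\mu$, not its Hardy-space variant, so the duality must be carried out against $\vrac{\laps{\mu}\varphi}_{(2,\infty)}$ rather than $[\varphi]_{BMO}$; this is precisely the source of the bifurcation in the stated conclusion.
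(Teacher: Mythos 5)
Your overall architecture is the right one and matches the paper's: construct $P$ by minimizing an $\Omega$-dependent energy, compute the Euler--Lagrange equation to kill the antisymmetric part of $\Omega^P$, and estimate the leftover $H_\mu$- and $\mathcal{C}$-commutators via Theorem \ref{th:commutators}, with the $\mu>1$ bifurcation coming from the failure of the Hardy-space estimate \eqref{eq:def:laphH1inhardy} above order one. The term-by-term decomposition you describe ($\laps{\mu}P\,P^T = so(\cdot) - \tfrac12 H_\mu(P,P^T)$, and $P\Omega[P^T\cdot]$ split into a gauge-cancelled antisymmetric main term plus $\mathcal{C}(P,\Rz)$-remainders) is essentially identical to Lemmas \ref{la:energy:Pmineuler} and \ref{la:betterOmegaP}.

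The genuine gap is exactly the step you yourself label ``the principal obstacle'': you never verify that the Euler--Lagrange equation of your functional $F(Q)=\int\sabs{\laps{\mu/2}Q}^2-2\sum_l\int(Q^T\Rz_l[Q])\cdot A^l$ produces the required cancellation, and this is the entire content of the theorem. The difficulty is structural, not technical: here $\Omega^Q$ is a \emph{non-local operator} $L^2\to L^1$, not a matrix-valued $L^2$ function, so the classical quadratic gauge energy $\vrac{P\nabla P^T+P\Omega P^T}_2^2$ has no direct analogue, and an ad hoc quadratic surrogate like your $F$ has no reason to yield an EL identity in which the antisymmetric part of the \emph{operator} $P\Omega[P^T\cdot]$ (tested against $\laps{\mu}\varphi$) appears. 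The paper resolves this by minimizing $E(Q)=\sup_{\vrac{\psi}_2\le1}\int\Omega^Q[\psi]$, which by Riesz representation equals $\vrac{\overline{\Omega^Q}}_2$ for a dual density $\overline{\Omega^Q}\in L^2$; varying $Q_\varepsilon=e^{\varepsilon\varphi\antisymm}Q$ and testing against the extremal $\overline{\psi}=c\,\chi_{D_\Lambda}\overline{\Omega^P}$ then yields precisely the identity of Lemma \ref{la:energy:Pmineuler}, with all error terms being commutators $\mathcal{C}(\varphi,\Omega)$, $\mathcal{C}(P,\Omega)$, $H_\mu(\varphi,P-I)$, $H_\mu(P-I,P^T-I)$ covered by Theorem \ref{th:commutators}. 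A secondary problem with your $F$: comparison with $Q\equiv I$ gives at best $\vrac{\laps{\mu/2}P}_2^2\aleq\vrac{P-I}_2\,\vrac{A}_2$, which controls only half the differentiability needed for the a priori bound $\vrac{\laps{\mu}P}_2\aleq\vrac{A}_2$ that your subsequent commutator estimates require; the paper's $E$ gives this bound directly (Lemma \ref{la:energy:minexists}). As written, the proposal therefore outlines the reduction correctly but leaves the central construction unproved.
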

In \cite{IchEnergie} the construction of $P$ is done via minimization of $E(P) = \vrac{P \nabla P^T + P \Omega P}_{L^2}^2$ under the condition that $P$ maps into $SO(N)$, a.e.. This is the argument that H\'elein \cite{Hel91} essentially used for his moving-frame technique, and it provides an alternative to Rivi\'ere's adaption of Uhlenbecks \cite{Uhlenbeck82} gauge-theoretic construction of $P$ in \cite{Riv06}. Both techniques can be extended to the fractional case, where $\Omega$ is still a pointwise multiplication \cite{DR1dMan, Sfracenergy}. We adapt the arguments \cite{Sfracenergy, IchEnergie} to this case of a non-local operator $\Omega[]$, by minimizing in Section~\ref{th:intro:energy} the energy
\[
E(P) := \sup_{\psi \in L^2} \intl_{\R^m} \Omega^P [\psi],
\]
and showing that several terms of the Euler-Lagrange equations fall under the realm of Theorem \ref{th:commutators}.

\paragraph*{Notation}
Let $L^{p,q}$ be the Lorentz spaces, cf., e.g. \cite{Hunt66, Tartar07, GrafC08}, whose norm we denote with $\vrac{\cdot}_{(p,q)}$. We set
\begin{equation}\label{eq:def:Morreynorm}
 \Vert f \Vert_{(p,q)_\lambda} \equiv \Vert f \Vert_{\mathcal{M}((p,q),\lambda)} := \sup_{B_r \subset \R^m} r^{\frac{\lambda - m}{p}} \Vert f \Vert_{(p,q),B_r},
\end{equation}
and for $A \subset \R^m$,
\begin{equation}\label{eq:def:fpqlambdacentered}
 [f]_{(p,q)_\lambda, A} := \abs{A}^{\frac{\lambda - m}{mp}}\ \vrac{f}_{(p,q),A},
\end{equation}
\begin{equation}\label{eq:def:fpqlambdanotcentered}
 \vrac{f}_{(p,q)_\lambda, A} := \sup_{B_\rho \subset A} [f]_{(p,q),B_\rho}.
\end{equation}
We say that $f$ belongs to the Morrey space $L^{(p,q)_\lambda}(A)$, if the respective norm $\vrac{f}_{(p,q)_\lambda, A}$ is finite.

We will also use frequently the following annuli
\begin{equation}\label{eq:def:Alambdark}
A_{\Lambda, r}^k := B_{2^k \Lambda r} \backslash B_{2^{k-1} \Lambda r}, \quad A_{r}^k \equiv A_{1,r}^k.
\end{equation}

In Section~\ref{s:fracfacts} we recall several facts on the fractional laplacian, which we are going to use throughout this work.

\paragraph*{Acknowledgements.}
The author has received funding from the European Research Council under the European Union's Seventh Framework Programme (FP7/2007-2013) / ERC grant agreement no 267087, DAAD PostDoc Program (D/10/50763) and the Forschungsinstitut f\"ur Mathematik, ETH Z\"urich. He would like to thank Tristan Rivi\`ere and the ETH for their hospitality.
\newpage
\section{\texorpdfstring{$L^{2+\varepsilon}$}{L2+eps}-integrability: Proof of Theorem \ref{th:upto2}}\label{s:upto2}
It is helpful, to check once and for all, 
\begin{equation} \label{eq:mm2mulambda}
 m-2\mu = \frac{m-{\lambda_\kappa}}{p_\kappa}, \quad \kappa \in [\mu,2\mu),
\end{equation}
where
\begin{equation}\label{eq:def:lambdakappa}
 \lambda_\kappa := \frac{m(2\mu - \kappa)}{m-\kappa},
\end{equation}
\begin{equation}\label{eq:def:pkappa}
 p_\kappa := \frac{m}{m-\kappa}.
\end{equation}

Assume $w: \R^m \to \R^N$, $\mu\leq \frac{m}{2}$, $w \in L^2(\R^m)$, $\laps{\mu} w \in L^2(\R^m)$ is for $D \subsubset \R^m$ a solution to \eqref{eq:wpde}. We are going to establish that for any $\kappa \in [\mu,2\mu)$, if $\theta \equiv \theta_\kappa$ in \eqref{eq:regp} is suitably small, for any $\tilde{D} \subsubset D$, we have
\begin{equation}\label{eq:claiml2}
 \sup_{r > 0, x_0 \in \tilde{D}} r^{\frac{\lambda_\kappa-m}{p_\kappa}}\ \vrac{w}_{p_\kappa,B_r(x_0)} \leq C_{\tilde{D},w,\kappa}.
\end{equation}
Note that possibly $p_\kappa < 2$ for all $\kappa \in [\mu,2\mu)$. In order to show \eqref{eq:claiml2}, we first note that its satisfied by assumption \eqref{eq:wassumption} for $\kappa = \mu$. In fact, if $x_0 \in \tilde{D} \subsubset D$, then for any $r >0$, or $B_r(x_0) \subset D$ or $r > c \dist(\tilde{D},\partial D)$. Now, we show that for arbitrary $\kappa \in [\mu,2\mu)$, there is $\kappa_1 > \kappa$, so that \eqref{eq:claiml2} holds. Moreover, we will show a lower bound on $\kappa_1 - \kappa$, in order to ensure that we come arbitrarily close to $2\mu$ if we repeat this construction  finitely many times. 

Then we can show that if we choose $\kappa \in [\mu,2\mu)$ close enough to $2\mu$, \eqref{eq:claiml2} suffices to conclude the better integrability of Theorem \ref{th:upto2}.

\subsection*{Establishing \eqref{eq:claiml2}}
For mappings $P: \R^m \to SO(N)$, $P \equiv I$ on $\R^m \backslash D$ (denoting with $I = (\delta_{ij})_{ij} \in R^{N\times N}$ the identity matrix) from \eqref{eq:wpde} we have
\[
\begin{ma}
 \int P_{ik}w_{k}\ \laps{\mu} \varphi &=& \int w_{k}\ \laps{\mu} (P_{ik} \varphi) -\int w_{k}\ \brac{\laps{\mu} P_{ik}}\ \varphi - \int w_{k}\ H_{\mu}(P_{ik},\varphi)\\
&=& -\int \Omega_{kl} [w_{l}]\ P_{ik} \varphi -\int w_{k}\ \brac{\laps{\mu} P_{ik}}\ \varphi - \int w_{k}\ H_{\mu}((P-I)_{ik},\varphi)\\
\end{ma}
\]
Setting $v_i := P_{ik} w_k$, this is
\begin{equation}\label{eq:pdev}
 \int v_i\ \laps{\mu} \varphi = -\int \brac{P_{ik} \Omega_{kl} [P_{jl} v_j] + \brac{\laps{\mu} P_{ik}} P_{jk} v_{j} } \ \varphi - \int w_{k}\ H_{\mu}((P-I)_{ik},\varphi)\\
\end{equation}
\paragraph*{The Growth Estimates.}
From \eqref{eq:pdev}, Lemma~\ref{la:rhs:Htermest}, and Lemma~\ref{la:rhs:OmegaPest} we infer
\begin{theorem}[Right-hand side estimates]\label{th:rhsest}
If $\mu \in (0,\min \{1,\frac{m}{2}\}]$ or $2\mu = m$, there is a uniform $\Lambda \equiv \Lambda_\mu > 0$, depending only on $\mu$, such that the following holds: Let $B_r \subset \R^m$, and assume \eqref{eq:pdev} holds for all $\varphi \in C_0^\infty(B_{r})$. Then exists a choice of $P$ such that \eqref{eq:pdev} implies for any $\varphi \in C_0^\infty(B_{\Lambda^{-1}r})$, and for any $\tau \in (0,\mu]$ sufficiently close to, or greater than $2\mu-\kappa$, 
\[
\begin{ma}
 (\Lambda^{-1}r)^{2\mu-m} \int v\ \laps{\mu} \varphi 
&\leq& C_{\kappa}\ \theta\ \vrac{ \laps{\tau} \varphi }_{\brac{\frac{m}{\tau+\kappa -\mu},1}}\ \vrac{w}_{(p_\kappa,\infty)_{{\lambda_\kappa}},B_{r}}\\
&& + C_{\kappa}\ \theta\ \vrac{ \laps{\tau} \varphi }_{\brac{\frac{m}{\tau+\kappa -\mu},2}}\ \Lambda^{\kappa-3\mu}\ \sum_{k=1}^\infty 2^{k(\kappa-3\mu)}\ [w]_{(p_\kappa,\infty)_{{\lambda_\kappa}},A^k_{r}}.
\end{ma}
\]
where we recall that the right-hand side norms were defined in \eqref{eq:def:fpqlambdacentered}, \eqref{eq:def:fpqlambdanotcentered}, $A_{r}^k$ is as in \eqref{eq:def:Alambdark}, and $\lambda_\kappa$ as in \eqref{eq:def:lambdakappa}, $p_\kappa$ as in \eqref{eq:def:pkappa}. 
\end{theorem}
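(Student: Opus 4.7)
The plan is to start from the transformed system \eqref{eq:pdev} for $v = Pw$, where $P$ is the gauge produced by the energy minimization of Theorem~\ref{th:intro:energy}, so that $\supp(P-I)\subset B_r$ and the $SO(N)$-valued map $P$ satisfies $\vrac{\laps{\mu}P}_{2} \aleq \vrac{A}_{2} \aleq \theta\, r^{(m-2\mu)/2}$ on the scale $r$. The right-hand side of \eqref{eq:pdev} splits into the gauged-potential term $-\int\Omega^P_{ij}[v_j]\,\varphi$ and the commutator remainder $-\int w_k\, H_\mu((P-I)_{ik},\varphi)$, which I estimate separately: the first is amenable to Theorem~\ref{th:intro:energy} (after a duality step to re-express $\int\Omega^P[\cdot]\varphi$ in the form $\int\Omega^P[\laps{\mu}\tilde\varphi]$), and combined with pointwise boundedness of $P$ and H\"older in Lorentz-Morrey spaces produces the first $C_\kappa\theta$-term with the Morrey norm of $w$ on $B_r$.

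For the commutator remainder I would invoke the pointwise estimate of Theorem~\ref{th:commutators} with $\beta = 0$,
\[
 \abs{H_\mu((P-I),\varphi)} \aleq \sum_{k=1}^L \lapms{\tau-s_k-t_k}\brac{\lapms{s_k}\abs{\laps{\mu}(P-I)}\cdot \lapms{t_k}\abs{\laps{\tau}\varphi}},
\]
pair against $w$, and use self-adjointness to move the outer Riesz potential onto $w$. This turns the integral into a three-factor product, to which I apply H\"older in Lorentz-Morrey spaces: the $\laps{\mu}(P-I)$-factor gives $\vrac{A}_2 \aleq \theta$ after the proper rescaling, the $\laps{\tau}\varphi$-factor enters with the Lorentz norm $\vrac{\laps{\tau}\varphi}_{(m/(\tau+\kappa-\mu),q)}$ as claimed, and the $w$-factor is split dyadically as $\R^m = B_{r}\cup \bigcup_{k\geq 1}A^k_{r}$. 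For $\varphi$ supported in $B_{\Lambda^{-1}r}$ and $y\in A^k_{r}$ at distance $\simeq 2^k r$ from $\supp\varphi$, the three Riesz kernels together with the Morrey rescaling $(2^k r)^{2\mu-m}$ and the prefactor $(\Lambda^{-1}r)^{2\mu-m}$ assemble into the factor $\Lambda^{\kappa-3\mu}\,2^{k(\kappa-3\mu)}$ per annulus; summability is ensured by $\kappa<3\mu$, which always holds since $\kappa<2\mu$.

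The principal difficulty is the combinatorial bookkeeping of exponents: $\tau$ has to be chosen simultaneously close to or at least $2\mu-\kappa$, so that the three factors' Lorentz-Morrey exponents add up to $1$ in H\"older after accounting for the Riesz gains $s_k$, $t_k$, and $\tau-s_k-t_k$, while remaining within the admissibility range $\tau\in(\max\{\beta,\mu+\beta-1\},\mu]$ of Theorem~\ref{th:commutators}. The hypotheses $\mu\in(0,\min\{1,m/2\}]$ or $2\mu=m$ are precisely what keeps this range nonempty with room to spare; for intermediate $\mu$ the admissible window collapses and no useful pointwise bound for $H_\mu$ remains. Finally, the distinction between Lorentz exponent $1$ in the first term and $2$ in the tail term reflects the use of the Hardy-space refinement \eqref{eq:def:laphH1inhardy} (paired with the BMO/$L^{p,1}$-side) for the local piece, while the annular tail is handled by the direct Lorentz-H\"older bound, which only gives Lorentz exponent $2$.
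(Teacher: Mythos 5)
Your overall architecture matches the paper's: Theorem \ref{th:rhsest} is obtained by combining an estimate for the commutator remainder $\int w\,H_\mu(P-I,\varphi)$ (Lemma \ref{la:rhs:Htermest}) with an estimate for the gauged potential term (Lemma \ref{la:rhs:OmegaPest}), using the energy-minimizing $P$, pointwise commutator bounds, and a dyadic annular decomposition. However, your treatment of the $H$-term has a gap that is fatal precisely in the supercritical range $2\mu<m$. Taking $\beta=0$ and moving only the outer potential $\lapms{\tau-s_k-t_k}$ (of order $<\epsilon$) onto $w$ leaves $w$ essentially in $L^{(p_\kappa,\infty)}$ with $p_\kappa=\frac{m}{m-\kappa}$, while the remaining product $\lapms{s_k}\sabs{\laps{\mu}P}\cdot\lapms{t_k}\sabs{\laps{\tau}\varphi}$ lies in $L^{p_4}$ with $\fracm{p_4}=\fracm{2}+\frac{\kappa-\mu}{m}+O(\epsilon)$; the H\"older exponents then sum to $\frac{3}{2}-\frac{\mu}{m}+O(\epsilon)$, which exceeds $1$ whenever $\mu<\frac{m}{2}$, so no H\"older inequality (even the local one with a volume factor) applies. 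The paper avoids this by writing $\int w\,H_\mu=\int\lapms{\beta}w\ \laps{\beta}H_\mu$ with $\beta$ chosen close to $2\mu-\kappa$ (reserving $\beta=0$ for the critical case $2\mu=m$): transferring $\beta$ derivatives onto $w$ lets Adams' embedding in the Morrey scale $\lambda_\kappa$ upgrade $\lapms{\beta}\tilde w$ to $L^{(p_1,\infty)}$ with $\fracm{p_1}=\frac{m-\kappa}{m}\cdot\frac{2\mu-\kappa-\beta}{2\mu-\kappa}$ nearly $0$, bringing the exponent sum down to $f(\beta)\approx\fracm{2}+\frac{\mu}{m}<1$. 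This is exactly why the pointwise bound on $\laps{\beta}H_\mu$ for \emph{positive} $\beta$ (Lemma \ref{la:lapsbetaHalpha}) is needed, not just the $\beta=0$ case you invoke.

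A second omission concerns the potential term: your ``duality step'' captures only the piece $I_1=\int\Omega^P[\laps{\mu}g_1]$ of the paper's decomposition. Because $\Omega$ is non-local, replacing $\Omega[P^Tw]\,\varphi$ by $\Omega[P^Tw\,\varphi]$ produces the commutator $\int P\,\mathcal{C}(\varphi,\Omega)[P^Tw_0]$, which is not negligible and requires the pointwise Riesz-transform commutator estimate (Lemma \ref{la:commRzTransf}) together with its own delicate choice of $\gamma_1,\gamma_2,\delta$ close to $2\mu-\kappa$; this term is absent from your outline. The annular tail and the $\Lambda^{\kappa-3\mu}2^{k(\kappa-3\mu)}$ bookkeeping you describe are consistent with the paper's quasi-locality arguments, and your closing remark on the Lorentz exponents $1$ versus $2$ is in the right spirit, but the two points above must be repaired before the proof is complete.
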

From Theorem \ref{th:rhsest} and Lemma \ref{la:lhsest} (applied to $\Lambda^{-1} r$ instead of $r$) we infer for any $\tau \in (0,\mu]$ sufficiently close to $\mu$ and any $\Lambda \gg \Lambda_\mu$ sufficiently large (for the right-hand side norms recall \eqref{eq:def:fpqlambdacentered} and \eqref{eq:def:fpqlambdanotcentered}), also in view of Proposition \ref{pr:summationdoesntmatter},
\[
\begin{ma}
 &&(\Lambda^{-2} r)^{2\mu-m}\ \vrac{\laps{\mu-\tau}v}_{(\frac{m}{m+\mu-\tau-\kappa},\infty),B_{\Lambda^{-2} r}} \\
&\leq& \Lambda^{-1}\ C_{\kappa}\ \theta \ \vrac{w}_{(p_\kappa,\infty)_{{\lambda_\kappa}},B_{r}}\\
&& + \Lambda^{-1}\ C_{\kappa}\ \theta\ \sum_{k=1}^\infty 2^{k(\kappa-3\mu)}\ [w]_{(p_\kappa,\infty)_{{\lambda_\kappa}},A^k_{r}}.\\
 &&+ C\ (\Lambda^{-2} r)^{2\mu-m}\ \Lambda^{\kappa-m+\tau-\mu}\vrac{w}_{(p_\kappa,\infty),B_{\Lambda^{-1} r}} \\
&&+ C\ (\Lambda^{-2} r)^{2\mu-m}\ \Lambda^{\kappa-m+\tau-\mu} \sum_{k=0}^\infty 2^{k(\kappa-m+\tau-\mu)}\ \vrac{w}_{(p_\kappa,\infty),A^k_{{\Lambda^{-1} r}}}\\
&\overset{\eqref{eq:mm2mulambda}}{\leq}& C_{\kappa}\ \theta\ \Lambda^{m-2\mu} \ \vrac{w}_{(p_\kappa,\infty)_{{\lambda_\kappa}},B_{r}}\\ 
&& + C_{\kappa}\ \theta\ \Lambda^{m-2\mu}\ \sum_{k=1}^\infty 2^{k(\kappa-3\mu)}\ [w]_{(p_\kappa,\infty)_{{\lambda_\kappa}},A^k_{r}} \\ 
&&+ C\ \Lambda^{\kappa+\tau-3\mu}\  \vrac{w}_{(p_\kappa,\infty)_{\lambda_\kappa},B_{\Lambda^{-1} r}} \\
&&+ C\ \Lambda^{\kappa+\tau-3\mu} \	 \sum_{k=0}^\infty 2^{k(\kappa+\tau-3\mu)}\  [w]_{(p_\kappa,\infty)_{\lambda_\kappa},A^k_{{\Lambda^{-1} r}}} \\ 
&\overset{\sref{P}{pr:summationdoesntmatter}}{\aleq}& (C_{\kappa}\ \theta\ \Lambda^{m-2\mu} + C\Lambda^{\kappa+\tau-3\mu})\ \vrac{w}_{(p_\kappa,\infty)_{{\lambda_\kappa}},B_{r}}\\ 
&& + (C_{\kappa}\ \theta\ \Lambda^{m-2\mu} + C\ \Lambda^{\kappa+\tau-3\mu})\ \sum_{k=1}^\infty 2^{k(\kappa+\tau-3\mu)}\ [w]_{(p_\kappa,\infty)_{{\lambda_\kappa}},A^k_{r}}.
\end{ma}
\]
For later reference, we write this as
\begin{equation}\label{eq:setup:2nditerationguy}
\begin{ma}
(\Lambda^{-2} r)^{2\mu-m}\ \vrac{\laps{\mu-\tau}v}_{(\frac{m}{m+\mu-\tau-\kappa},\infty),B_{\Lambda^{-2} r}} 
&\leq&  (C_{\kappa,\mu}\ \theta\ \Lambda^{m-2\mu} + C_\mu\ \Lambda^{\kappa+\tau-3\mu})\ \vrac{w}_{(p_\kappa,\infty)_{{\lambda_\kappa}},B_{r}}\\ 
&& +  (C_{\kappa,\mu}\ \theta\ \Lambda^{m-2\mu} + C_\mu\ \Lambda^{\kappa+\tau-3\mu})\ \sum_{k=1}^\infty 2^{k(\kappa+\tau-3\mu)}\ [w]_{(p_\kappa,\infty)_{{\lambda_\kappa}},A^k_{r}}.
\end{ma}
\end{equation}
For $\tau = \mu$, 
\begin{equation}\label{eq:setup:iterationguy}
\begin{ma}
(\Lambda^{-2} r)^{2\mu-m}\ \vrac{v}_{(p_\kappa,\infty),B_{\Lambda^{-2} r}} 
&\leq& (C_{\kappa,\mu}\ \theta\ \Lambda^{m-2\mu} + C_{\mu}\ \Lambda^{\kappa-2\mu})\ \vrac{w}_{(p_\kappa,\infty)_{{\lambda_\kappa}},B_{r}}\\ 
&& + (C_{\kappa,\mu}\ \theta\ \Lambda^{m-2\mu} + C_{\mu}\ \Lambda^{\kappa-2\mu})\ \sum_{k=1}^\infty 2^{k(\kappa-2\mu)}\ [w]_{(p_\kappa,\infty)_{{\lambda_\kappa}},A^k_{r}}.
\end{ma}
\end{equation}
\paragraph*{The Iteration Procedure.}
Note that $\abs{w} = \abs{v}$, so we can use them equivalently. Equation \eqref{eq:setup:iterationguy} holds for any $B_r(x_0)$, where $x_0 \in D$ and $r < \tilde{d}(x_0) := C\dist(x_0,\partial D)$ (the constant essentially only depending on the construction of $P$ and the set where $\Omega$ is small). For $x_0 \in D$ and $R > 0$ set
\[
 \Phi_{x_0} (R) := \sup_{B_\rho \subset B_R(x_0)} \rho^{2\mu-m} \vrac{w}_{(p_\kappa,\infty),B_{\rho}},
\]
and its centered counter-part
\[
 \Psi_{x_0} (R) := \sup_{\rho \in (0,R)} \rho^{2\mu-m} \vrac{w}_{(p_\kappa,\infty),B_{\rho}(x_0)} \leq \Phi_{x_0}(R)
\]
then from \eqref{eq:setup:iterationguy} for any $R$, $x_0 \in D$ with $R < d(x_0)$, we have 
\[
 \Phi_{x_0}(\Lambda^{-1} R) \leq \brac{C_{\kappa}\ \theta \Lambda^{m-2\mu}+ C\ \Lambda^{\kappa-2\mu}} \ \Phi_{x_0}(R) +  \brac{C_{\kappa}\ \theta \Lambda^{m-2\mu}+ C\ \Lambda^{\kappa-2\mu}}\ \sum_{k=1}^\infty 2^{k(\kappa-2\mu)}\ \Psi_{x_0}(2^k R).\\
 \]
Note that from \eqref{eq:claiml2}, we know that $\Phi_{x_0}(R) < C_{D,x_0,w}$ for any $x_0 \in D$. Now we can iterate, Lemma \ref{la:iteration}, satisfying the assumption \eqref{eq:it:smallness} by choosing $\Lambda \equiv \Lambda_\kappa := 2^{\frac{C_\mu}{\brac{2\mu-\kappa}^4}}$ with sufficiently large $C_\mu$ and assuming that $\theta<(\Lambda_\kappa)^{\kappa-m}$. Then, for any $r < R$,
\[
 \sup_{B_\rho \subset B_r(x_0)} \rho^{2\mu-m}\ \vrac{w}_{(p_\kappa,\infty),B_{\rho}} = \sup_{B_\rho \subset B_r(x_0)} \rho^{2\mu-m}\ \vrac{v}_{(p_\kappa,\infty),B_{\rho}}  \aleq C_{\kappa, w,\Lambda,R}\  r^{\sigma_\kappa}, \quad \mbox{where $\sigma_\kappa = \frac{(2\mu-\kappa)^4}{C_\mu}$}.
\]
We can assume, that $\sigma_\kappa < 2\mu-\kappa$. Since
\[
 \sup_{\rho > R} r^{2\mu-\sigma_\kappa-m}\ \vrac{w}_{(p_\kappa,\infty),B_{r}(x_0)} \aleq R^{-\sigma_\kappa}\ \vrac{w}_{(p_\kappa,\infty)_{{\lambda_\kappa}},B_{4r}},
\]
we arrive at
\[
 r^{2\mu-\sigma_\kappa-m}\ \vrac{w}_{(p_\kappa,\infty),B_{r}(x_0)} \leq C_{\kappa,w,x_0}
\]
so we get for any $B_r(x_0) \subset B_R(x_0)$
\[
 r^{-\sigma_\kappa} \vrac{\chi_{B_r} w}_{(p_\kappa,\infty)_{{\lambda_\kappa}}} + \sup_{\rho > 0} \rho^{2\mu-\sigma_\kappa - m} \vrac{w}_{(p_\kappa,\infty), B_\rho(x_0)} \leq C_{\kappa,w}.
\]
Plugging this into \eqref{eq:setup:2nditerationguy}, we have for all $\tau \in (0,\mu]$ sufficiently close to, or greater than $2\mu-\kappa$,
\begin{equation}\label{eq:Dmmtauvest}
r^{2\mu-m}\ \vrac{\laps{\mu-\tau}v}_{(\frac{m}{m+\mu-\tau-\kappa},\infty),B_{\Lambda^{-1}r}(x_0)} \\
\aleq C_{\kappa,w} r^{\sigma_\kappa} + C_{\kappa,w}\ r^{\sigma_\kappa}\ \sum_{k=1}^\infty 2^{k(\kappa-2\mu+\sigma_\kappa)},
\end{equation}
so that we have for all small $r$,
\[
r^{2\mu-m-\sigma_\kappa}\ \vrac{\laps{\mu-\tau}v}_{(\frac{m}{m+\mu-\tau-\kappa},\infty),B_{r}(x_0)} \\
\aleq C_{w,\kappa,R}.
\]
Moving the $B_R(x_0)$, for any $D_1 \subsubset D$, we have that
\begin{equation}\label{eq:setup:nablamumtaulambdabounded}
 \vrac{\laps{\mu-\tau}v}_{(\frac{m}{m+\mu-\tau-\kappa},\infty)_{\lambda},D_1} \aleq C_{w,\kappa,\D_1,D}
\end{equation}
for $\lambda$ such that (choosing $\tau$ possibly even closer to $\mu$, ensuring that $\abs{\mu-\tau} \leq \frac{\sigma_{\kappa}}{2}$)
\[
 \frac{\lambda}{m}  = \frac{3\mu-\tau-\kappa-\sigma_\kappa}{m+\mu-\tau-\kappa} \leq  \frac{3\mu-\tau-\kappa-\sigma_\kappa}{m-\kappa}  \overset{\eqref{eq:def:lambdakappa}}{=} \frac{\lambda_\kappa}{m}+\frac{\mu-\tau-\sigma_\kappa}{m-\kappa}
\]
\paragraph*{Choosing the next $\kappa$.}
Assume for a moment that $2\mu < m$. we can guarantee
\begin{equation}\label{eq:lambdaNleqlambda0}
 0 < \lambda <  \lambda_\kappa - c_{m}\sigma_\kappa,
\end{equation}
and we choose $\kappa_{1,1} \in(\kappa,2\mu)$ via
\[
 \lambda =: m\frac{2\mu - \kappa_{1,1}}{m-\kappa_{1,1}}.
\]
By \eqref{eq:lambdaNleqlambda0}, 
\[
 m\frac{2\mu - \kappa_{1,1}}{m-\kappa_{1,1}} < m\frac{2\mu - \kappa}{m-\kappa} - c_{m-2\mu}\ \sigma_\kappa
\]
and thus we have
\[
 \kappa_{1,1} > \kappa + \sigma c_{m-2\mu}\ \frac{(m-\kappa_{1,1})(m-\kappa)}{m}.
\]
On the other hand, by a localized version of Adams' \cite{Adams75}-argument on Riesz potentials, we infer from \eqref{eq:setup:nablamumtaulambdabounded} that for any $D_2 \subsubset D_1$,
\[
 \vrac{v}_{(p,\infty)_{\lambda},D_2} = \vrac{w}_{(p,\infty)_{\lambda},D_2} < \infty,
\]
where
\[
 \fracm{p} = \frac{m+\mu-\tau-\kappa}{m} - \frac{\mu -\tau}{\lambda} \in (0,1).
\]
Letting
\[
 \frac{m}{m-\kappa_{1,2}} := p,
\]
we can estimate 
\[
 \frac{\kappa_{1,2}-\kappa}{m}=\brac{\mu-\tau} \brac{ \fracm{\lambda} - \fracm{m} } \geq \sigma_{\kappa} c_\mu.
\]
Thus for a certain $\alpha > 0$,
\[
 \kappa_1 := \min {\kappa_{1,1},\kappa_{1,2}} \geq \kappa_0 + c_0 (2\mu - \kappa)^\alpha,
\]
and since
\[
 p \geq \frac{m}{m-\kappa_1},\quad \lambda < m\frac{2\mu - \kappa_1}{m-\kappa_1} ,
\]
for any $D_3 \subsubset D$, we arrive at
\[
\vrac{w}_{(p_{\kappa_1},\infty)_{m\frac{2\mu-\kappa_1}{m-\kappa_1}},D_3} < \infty.
\]
Varying this in $D_3 \subsubset D$, we have \eqref{eq:claiml2} for $\kappa_1$.
If $2\mu = m$, we use this same argument, to conclude that $w \in L^p(D_3)$ for some $p > 2$, which is already the claim of Theorem \ref{th:upto2}.
\paragraph*{Estimating the growth of $\kappa$.}
Iterating this procedure (for smaller and smaller $\theta$ in \eqref{eq:regp}), we obtain $\kappa_k \in [\mu,2\mu)$, and
\[
 \kappa_{k+1} \geq \kappa_{k}+c_0 (2\mu-\kappa)^\alpha.
\]
Since the sequence $(\kappa_{k})_k$ is monotone and bounded, and the only fixed point is $\kappa_\infty = 2\mu$, for any $\varepsilon > 0$ there is a step-count $L$ such that $\abs{\kappa_L - 2\mu} < \varepsilon$. This shows \eqref{eq:claiml2}.\\
\subsection*{Integrability slightly above 2}
By the arguments above, fixing $\tilde{D} \subsubset D$, going back to \eqref{eq:Dmmtauvest}, if $2\mu - \kappa < \varepsilon$ small enough, for $\tau \in (\varepsilon,\mu]$, ignoring $\sigma_\kappa > 0$,
\[
\sup_{B_r \subset \tilde{D}} r^{2\mu-m}\ \vrac{\laps{\mu-\tau}v}_{(\frac{m}{m+\mu-\tau-\kappa},\infty),B_{\Lambda^{-1}r}(x_0)} 
\aleq C_{\kappa,w,\tilde{D}}.
\]
If $2\mu = m$, choosing $\tau = \mu$, we have
\[
 \frac{m}{m+\mu-\mu-\kappa} \xrightarrow{\kappa \to 2\mu=m} \infty,
\]
which proves Theorem \ref{th:upto2}, and in fact even Theorem \ref{th:main}. So let from now on $2\mu < m$, $\mu \leq 1$. Then for $\lambda_{s,\varepsilon} \in (0,m)$, $s := \mu-\tau$,
\[
\begin{ma}
&&\frac{\lambda_{s,\varepsilon}-m}{\frac{m}{m+\mu-\tau-\kappa}} = 2\mu-m\\
&\Leftrightarrow& \lambda_{s,\varepsilon} = \frac{m}{m+\mu-\tau-\kappa} (3\mu-\tau-\kappa) \xrightarrow{\tau \to \mu,\kappa \to 2\mu} 0\\
\end{ma}
\]
and 
\[
\begin{ma}
 \fracm{\tilde{p}} &:=&  \frac{m+\mu-\tau-\kappa}{m} - \frac{\mu-\tau}{\frac{m}{m+\mu-\tau-\kappa} (3\mu-\tau-\kappa)} 
= \frac{m+\mu-\tau-\kappa}{m} - \frac{(\mu-\tau)\brac{m+\mu-\tau-\kappa}}{{m}(3\mu-\tau-\kappa)}\\
&=& 1 + \frac{\mu-\tau-\kappa}{m(3\mu-\tau-\kappa)} \brac{2\mu+\tau-\kappa}  - \frac{(\mu-\tau)}{3\mu-\tau-\kappa}
\end{ma}
\]
we have by Adams' \cite{Adams75},
\[
 v \in L_{loc}^{(\tilde{p},\infty)_{\lambda_{s,\varepsilon}}}(D).
\]
One checks that there is $\varepsilon > 0$ such that if $\abs{\tau-\mu} < \varepsilon$, $2\kappa-\mu < \varepsilon$, then $\tilde{p} > 2$, $\lambda_{s,\varepsilon} < 2\mu$.

\newpage
\subsection{Estimates of the \texorpdfstring{$H$}{H}-term: Proof of Theorem \ref{th:rhsest} (I)}
This is to estimate for $\varphi \in C_0^\infty(B_r)$ the following term 
\begin{equation}\label{eq:rhs:secondguy:1}
\int w\ H_{\mu}(P-I,\varphi) = \int \lapms{\beta} w\ \laps{\beta} H_{\mu}(P,\varphi)
\end{equation}

\begin{lemma}\label{la:rhs:Htermest}
Let $\mu \in (0,\frac{m}{2}]$, $\mu \leq 1$ or $\mu = \frac{m}{2}$. For any $\kappa \in [\mu,2\mu)$, there are $C_{\kappa,\mu} > 0$, $\tau \in (0,\mu)$ such for any $\varphi \in C_0^\infty(B_{\Lambda^{-1} r})$ the following holds: If $\supp (P-I) \subset B_{\Lambda^{-1} r}$, 
\[
\begin{ma}
(\Lambda^{-1} r)^{2\mu-m} \int w\ H_{\mu}(P-I,\varphi) &\leq& C_{\kappa,\mu}\ \vrac{\laps{\tau} \varphi}_{(\frac{m}{\kappa+\tau-\mu},2)}\ (\Lambda^{-1} r)^{\mu - \frac{m}{2}} \vrac{\laps{\mu}P}_{2}\ \vrac{w}_{\brac{{p_\kappa},\infty}_{\lambda_\kappa},B_{r}}  \\
 &&+ C_{\kappa,\mu}\ \vrac{\laps{\tau} \varphi}_{(\frac{m}{\kappa+\tau-\mu},2)}\ (\Lambda^{-1} r)^{\mu-\frac{m}{2}}\ \vrac{\laps{\mu}P}_{2}\ \sum_{k=1}^\infty (2^k \Lambda)^{\kappa-3\mu} [w]_{\brac{{p_\kappa},\infty}_{{\lambda_\kappa} },A_{r}^k}
\end{ma}
\]
where we recall the definition $A_r^k$ from \eqref{eq:def:Alambdark}, $\lambda_\kappa$ from \eqref{eq:def:lambdakappa}, and $p_\kappa$ from \eqref{eq:def:pkappa}.
As for the asymptotic behavior as $\kappa \to 2\mu$, one can choose $\tau$ approaching $\max\{\mu-1,0\}$, and $C_{\kappa,\mu}$ blows up.
\end{lemma}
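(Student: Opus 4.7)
The plan is to reduce everything to the pointwise commutator bound for $\laps{\beta} H_\mu$ from Theorem \ref{th:commutators}. Since $H_\mu$ is bilinear and $H_\mu(I,\varphi)=0$ (constants lie in the kernel of $\laps{\mu}$), we have $H_\mu(P-I,\varphi) = H_\mu(P,\varphi)$, which is already exploited in \eqref{eq:rhs:secondguy:1}. Shifting $\beta$ derivatives onto $w$ by Plancherel gives $\int w\cdot H_\mu(P,\varphi) = \int \lapms{\beta} w\cdot \laps{\beta} H_\mu(P,\varphi)$, and I would then apply the pointwise estimate
\[
\babs{\laps{\beta} H_\mu(P,\varphi)} \aleq \sum_{k=1}^{L} \lapms{\tau-\beta-s_k-t_k}\brac{\lapms{s_k}\abs{\laps{\mu} P}\ \lapms{t_k}\abs{\laps{\tau}\varphi}},
\]
for admissible $(\beta,\tau,s_k,t_k)$. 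Moving $\lapms{\tau-\beta-s_k-t_k}$ across by self-adjointness collapses each summand into a product of three Riesz potentials paired against $w$, $\laps{\mu}P$, and $\laps{\tau}\varphi$.

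The second step is H\"older in Lorentz spaces combined with an Adams-type Riesz-potential embedding for Morrey functions. The $s_k, t_k$ should be chosen so that $\lapms{s_k}\abs{\laps{\mu}P}$ is absorbed into $\vrac{\laps{\mu}P}_{2}$, $\lapms{t_k}\abs{\laps{\tau}\varphi}$ into the Lorentz norm $\vrac{\laps{\tau}\varphi}_{(m/(\kappa+\tau-\mu),2)}$, and the remaining exponent acting on $w$ matches $p_\kappa = m/(m-\kappa)$ with the Morrey refinement $\lambda_\kappa$. The dimensionless prefactor $(\Lambda^{-1}r)^{\mu-m/2}$ appearing in the conclusion is exactly the scale needed to normalize the $L^2$-norm of $\laps{\mu}P$ against the $B_{\Lambda^{-1}r}$-scaled integral on the left.

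To upgrade the $L^p$ bound to the Morrey-Lorentz one, I would decompose $w = w\chi_{B_{2\Lambda^{-1}r}} + \sum_{k\geq 1} w\chi_{A^k_{r}}$. The near piece produces the centered term $\vrac{w}_{(p_\kappa,\infty)_{\lambda_\kappa},B_r}$ via a localized Adams lemma. For each annulus $A^k_{r}$, since $\varphi$ and $\supp(P-I)$ sit in $B_{\Lambda^{-1}r}$, the coupling with $w\chi_{A^k_{r}}$ happens through Riesz kernels at distance $\sim 2^k\Lambda r$; pairing with the Morrey scaling of $[w]_{(p_\kappa,\infty)_{\lambda_\kappa},A_r^k}$ yields precisely the tail weight $(2^k\Lambda)^{\kappa-3\mu}$, and since $\kappa<2\mu$ the series converges. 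The principal difficulty is the simultaneous bookkeeping of $(\beta,\tau,s_k,t_k)$: they must satisfy the constraints of Theorem \ref{th:commutators} and jointly realise the three H\"older exponents above. As $\kappa\to 2\mu$ these constraints pinch the admissible $\tau$ toward $\max\{\mu-1,0\}$, which is both the stated asymptotic behaviour and the source of the blow-up of $C_{\kappa,\mu}$. The case $\mu = m/2 > 1$ requires the alternative branch $\tau > \mu+\beta-1$ of Theorem \ref{th:commutators}, but the overall scheme is identical.
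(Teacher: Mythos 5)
Your proposal follows essentially the same route as the paper's proof: decompose $w$ into the near part on $B_{\Lambda r}$ and the annular tails, handle the tails by quasi-locality of $\laps{\mu}$ acting on the compactly supported $(P-I)\varphi$ (yielding the $(2^k\Lambda)^{\kappa-3\mu}$ weights), and for the near part shift $\beta$ derivatives onto $w$, invoke the pointwise estimate of Lemma \ref{la:lapsbetaHalpha}, and close with Lorentz--H\"older plus Adams' embedding. The only substantive step you defer is the verification that the resulting H\"older exponents actually sum to at most one — in the paper this is the computation $f(\beta)=\frac{3}{2}-\frac{\mu}{m}-\beta\frac{m-2\mu}{m(2\mu-\kappa)}\leq 1$ for $\beta$ near $2\mu-\kappa$, which is precisely where the hypothesis $\mu\leq 1$ or $\mu=\frac{m}{2}$ is consumed — but you correctly identify this bookkeeping as the crux, so the outline is sound.
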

\begin{proof}[Proof of Lemma \ref{la:rhs:Htermest}]
For a somewhat clearer presentation, we are going to show the following claim for $\varphi \in C_0^\infty(B_{r})$ and $\supp (P-I) \subset B_{r}$
\[
\begin{ma}
r^{2\mu-m} \int w\ H_{\mu}(P-I,\varphi) &\leq& C_{\kappa,\mu}\ \vrac{\laps{\tau} \varphi}_{(\frac{m}{\kappa+\tau-\mu},2)}\ r^{\mu - \frac{m}{2}} \vrac{\laps{\mu}P}_{2}\ \vrac{w}_{\brac{{p_\kappa},\infty}_{\lambda_\kappa},B_{\Lambda r}}  \\
 &&+ C_{\kappa,\mu}\ \vrac{\laps{\tau} \varphi}_{(\frac{m}{\kappa+\tau-\mu},2)}\ r^{\mu-\frac{m}{2}}\ \vrac{\laps{\mu}P}_{2}\ \sum_{k=1}^\infty (2^k \Lambda)^{\kappa-3\mu} [w]_{\brac{{p_\kappa},\infty}_{{\lambda_\kappa} },A_{\Lambda,r}^k}.
\end{ma}
\]
Applied to $\tilde{r} := \Lambda^{-1} r$ gives the original claim.\\[1em]
As usual, we decompose
\[
\int w\ H_{\mu}(P-I,\varphi) =  I + \sum_{k = 1}^\infty II_k,
\]
where
\[
 I := \int \chi_{B_{\Lambda r}}w\ H_{\mu}(P-I,\varphi),
\]
and, denoting $A_k := A_{\Lambda, r}^k$,
\[
 II_k := \int w\ H_{\mu}(P-I,\varphi)\chi_{A_k}.
\]
\underline{As for $II_k$}, since $\supp \varphi \cup \supp (P-I) \subset \overline{B_r}$
\[
 H_{\mu}(P-I,\varphi) \chi_{A_k} = \chi_{A_k}\laps{\mu} ((P-I)\varphi). 
\]
By Lemma~\ref{la:QuasiLocality} we then have for any $\tau \in (0,\mu]$, using also Lemma \ref{la:sobpoinc},
\[
\begin{ma}
 \vrac{H_{\mu}(P-I,\varphi)}_{(\frac{m}{\kappa},1).A_k} &\aleq& \ \brac{2^k \Lambda r}^{-m-\mu}\ \brac{2^k \Lambda r}^{\kappa}\
 r^{\frac{m}{2}-\kappa+\mu}\ \vrac{\varphi}_{(\frac{m}{\kappa-\mu},\infty)}\ \vrac{P-I}_{2}\\ 
 &\aleq& \brac{2^k \Lambda r}^{-m-\mu}\ \brac{2^k \Lambda r}^{\kappa}\
 r^{\frac{m}{2}-\kappa+2\mu} \vrac{\laps{\tau} \varphi}_{(\frac{m}{\kappa+\tau-\mu},\infty)}\ \vrac{\laps{\mu}P}_{2}\\
 &=& \brac{2^k \Lambda}^{-m+\kappa-\mu}\ \vrac{\laps{\tau} \varphi}_{(\frac{m}{\kappa+\tau-\mu},\infty)}\ r^{\mu-\frac{m}{2}}\ \vrac{\laps{\mu}P}_{2}.
 \end{ma} 
\]
Consequently,
\[
\begin{ma}
  \abs{II_k} &\aleq& \vrac{w\chi_{A_k}}_{({p_\kappa},\infty)}\ \brac{2^k \Lambda}^{-m+\kappa -\mu}\ \vrac{\laps{\tau} \varphi}_{(\frac{m}{\kappa+\tau-\mu},\infty)}\ r^{\mu-\frac{m}{2}}\ \vrac{\laps{\mu}P}_{2}\\
  &\overset{\eqref{eq:mm2mulambda}}{\aleq}& (2^k \Lambda r)^{m-2\mu}\ [w\chi_{A_k}]_{({p_\kappa},\infty)_{\lambda_\kappa}}\ \brac{2^k \Lambda}^{-m+\kappa -\mu}\ \vrac{\laps{\tau} \varphi}_{(\frac{m}{\kappa+\tau-\mu},\infty)}\ r^{\mu-\frac{m}{2}}\ \vrac{\laps{\mu}P}_{2}\\ 
  &\aleq& r^{m-2\mu}\ (2^k \Lambda)^{\kappa - 3\mu}\ \vrac{\laps{\tau} \varphi}_{(\frac{m}{\kappa+\tau-\mu},\infty)}\ r^{\mu-\frac{m}{2}}\ \vrac{\laps{\mu}P}_{2}\ [w\chi_{A_k}]_{({p_\kappa},\infty)_{\lambda_\kappa}}.
 \end{ma}
\]
\underline{As for $I$}, set $\tilde{w} := \chi_{B_\Lambda r} w$ and write
\[
 \int \tilde{w}\ H_{\mu}(P ,\varphi) = \int \lapms{\beta} \tilde{w}\ \laps{\beta}H_{\mu}(P ,\varphi) 
\]
Actually, the claim follows quite straight forward from \eqref{eq:c2:laphH1inhardy} for $\mu \leq 1$, $\beta := \mu$, but the pointwise estimates on $H$, Lemma \ref{la:lapsbetaHalpha}, are strong enough to deal with our situation, and they do not make use of para-products which were necessary for the proof of \eqref{eq:c2:laphH1inhardy}: By Lemma \ref{la:adams}
\[
 \vrac{\lapms{\beta} \tilde{w}}_{(p_1,\infty)_{{\lambda_\kappa}}} \aleq \vrac{\tilde{w}}_{({p_\kappa},\infty)_{{\lambda_\kappa}}}
\]
where for $\beta < \min (2\mu - \kappa,1)$,
\[
 \fracm{p_1} = \frac{m-\kappa}{m}\ \frac{2\mu - \kappa- \beta}{2\mu - \kappa} \in (0,1).
\]
If $\mu = \frac{m}{2}$, we set $\beta = 0$, if $\mu < \frac{m}{2}$, let $\epsilon > 0$ such that $\mu +\epsilon < \frac{m}{2}$. Now we estimate $\sabs{\laps{\beta} H_\mu(P,\varphi)}$, applying Lemma \ref{la:lapsbetaHalpha} for any $\tau \in (\max\{\beta,\mu+\beta-1\},\mu]$,
we have to control terms of the form (for $s \in (0,\mu)$, $t \in (0,\tau)$, $\tau-\beta -s-t \in [0,\epsilon)$)
\[
  \lapms{\tau-\beta -s-t} \brac{\lapms{s} \abs{\laps{\mu} P}\ \lapms{t}\abs{\laps{\tau} \varphi}}.
\]
We have
\[
 \vrac{\lapms{s} \abs{\laps{\mu} P}}_{(p_2,2)}  \aleq \vrac{\laps{\mu} P}_{2}, \quad \fracm{p_2} = \fracm{2} - \frac{s}{m} \in (0,1),
\]
\[
 \vrac{\lapms{t}\abs{\laps{\tau} \varphi}}_{(p_3,2)} \aleq \vrac{\laps{\tau} \varphi}_{(\frac{m}{\kappa+\tau-\mu},2)}, \quad \fracm{p_3} = \frac{\kappa +\tau-\mu}{m} - \frac{t}{m} \in (0,1).
\]
Note that
\[
 0 < \fracm{p_2} + \fracm{p_3} = \fracm{2} + \frac{\kappa +\tau-\mu-s-t}{m} < \fracm{2} + \frac{\kappa +\tau-\mu+\epsilon-\tau+\beta}{m} 
 < \fracm{2} + \frac{\epsilon+\mu}{m}<1,
\]
consequently,
\[
\vrac{\lapms{\tau-\beta -s-t} \brac{\lapms{s} \abs{\laps{\mu} P}\ \lapms{t}\abs{\laps{\tau} \varphi}}}_{(p_4,1)} \aleq 
\vrac{\laps{\mu} P}_{2}\ \vrac{\laps{\tau} \varphi}_{(\frac{m}{\kappa+\tau-\mu},2)},
\]
where
\[
\fracm{p_4} = \fracm{p_2}+\fracm{p_3}  - \frac{\tau-\beta -s-t}{m} = 
\fracm{2}+ \frac{\kappa +\beta-\mu }{m} \in (0,1).
\]
%
Now we have to ensure that the $f(\beta) \leq 1$ for admissible $\beta$ (and admissible $\tau$):
\[
f(\beta) := \fracm{p_1} + \fracm{p_4} 
=  \frac{3}{2} - \frac{\mu}{m} - \beta \frac{m-2\mu}{m(2\mu - \kappa)} > 0.
\]
Obviously, $f(0) = 1$ holds, if $\mu = \frac{m}{2}$ (so $\beta = 0$, and $\tau$ arbitrarily between $(\mu-1,\mu]$). As for the case $\mu < \frac{m}{2}$, $\mu \leq 1$, We have $2\mu - \kappa \leq 1$ for $\kappa \in [\mu,2\mu)$, then 
\[
f(2\mu-\kappa) = \frac{1}{2}  + \frac{\mu}{m} < 1.
 \]
so we can take $\beta < 1$ sufficiently close to $2\mu-\kappa$, so that $f(\beta) < 1$, and take $\tau \in (\beta,\mu)$ sufficiently close to or greater than $2\mu - \kappa$. Consequently,
\[
\begin{ma}
 \abs{I} &\aleq& \int_{B_{4r}} \lapms{\beta} \tilde{w}\ \laps{\beta}H_{\mu}(P,\varphi) + \sum_{k=1}^\infty \int_{A_{4r}^k} \lapms{\beta} \tilde{w}\ \laps{\beta}H_{\mu}(P,\varphi)\\
 &\aleq& \vrac{\lapms{\beta} \tilde{w}}_{(p_1,\infty),B_{4r}}\ \vrac{\laps{\beta}H_{\mu}(P,\varphi)}_{(p_4,1)}\ r^{m-\frac{m}{p_1}-\frac{m}{p_4}}\\
 && + \sum_{k=1}^\infty \vrac{\lapms{\beta} \tilde{w}}_{(p_1,\infty),A_{4r}^k}\ \vrac{\laps{\beta}H_{\mu}(P,\varphi)}_{(p_4,1),A_{4r}^k}\ (2^k r)^{m-\frac{m}{p_1}-\frac{m}{p_4}}\\
 &\aleq& r^{\frac{m-{\lambda_\kappa}}{p_1}} \vrac{\lapms{\beta} \tilde{w}}_{(p_1,\infty)_{\lambda_\kappa}}\ \vrac{\laps{\beta}H_{\mu}(P,\varphi)}_{(p_4,1)}\ r^{m-\frac{m}{p_1}-\frac{m}{p_4}}\\
 && + \sum_{k=1}^\infty (2^k r)^{\frac{m-{\lambda_\kappa}}{p_1}} \vrac{\lapms{\beta} \tilde{w}}_{(p_1,\infty)_{\lambda_\kappa}}\ \vrac{\laps{\beta}H_{\mu}(P,\varphi)}_{(p_4,1),A_{4r}^k}\ (2^k r)^{m-\frac{m}{p_1}-\frac{m}{p_4}}\\
 &\aleq& r^{\frac{m-{\lambda_\kappa}}{p_1}} \vrac{\tilde{w}}_{({p_\kappa},\infty)_{\lambda_\kappa}}\ \vrac{\laps{\beta}H_{\mu}(P,\varphi)}_{(p_4,1)}\ r^{m-\frac{m}{p_1}-\frac{m}{p_4}}\\
 && + \sum_{k=1}^\infty (2^k r)^{\frac{m-{\lambda_\kappa}}{p_1}} \vrac{\tilde{w}}_{({p_\kappa},\infty)_{\lambda_\kappa},A_{4r}^k}\ \vrac{\laps{\beta}H_{\mu}(P,\varphi)}_{(p_4,1),A_{4r}^k}\ (2^k r)^{m-\frac{m}{p_1}-\frac{m}{p_4}}.\\
\end{ma}
\]
By Proposition~\ref{pr:HestRm}, for the same $\tau$ as above,
\[
 \vrac{\laps{\beta}H_{\mu}(P,\varphi)}_{(p_4,1)} \aleq \vrac{\laps{\mu} P}_{2}\ \vrac{\laps{\tau} \varphi}_{\frac{m}{\kappa-\tau-\mu},2}.
\]

Now we apply Proposition~\ref{pr:HestArk} (using that $\varphi$ and $P-I$ have support in $B_r$), and using
\[ 
\begin{ma}
&&\frac{m-\frac{m(2\mu-\kappa)}{m-\kappa}}{p_1} + m-\frac{m}{p_1}-\frac{m}{p_4} -m-\beta + \frac{m}{p_4}\\
&=& -2\mu + \kappa,
\end{ma}
\]
and
\[ 
 \frac{m-{\lambda_\kappa}}{p_1} + m-\frac{m}{p_1}-\frac{m}{p_4}+\frac{m}{2}-\mu = m -2\mu
\]
we conclude 
\[
\begin{ma}
 \abs{I} &\aleq& r^{m-2\mu}\ \vrac{\tilde{w}}_{({p_\kappa},\infty)_{\lambda_\kappa}}\ 
 r^{\mu-\frac{m}{2}}\vrac{\laps{\mu} P}_{2}\ \vrac{\laps{\tau} \varphi}_{\frac{m}{\kappa-\tau-\mu},2}
 \\
 && + r^{m-2\mu}\ \sum_{k=1}^\infty 2^{k (-2\mu + \kappa)}
 \vrac{\tilde{w}}_{({p_\kappa},\infty)_{\lambda_\kappa},A_{4r}^k}\  \vrac{\laps{\tau} \varphi}_{(\frac{m}{\kappa+\tau-\mu},\infty)}\ r^{\mu-\frac{m}{2}}\vrac{\laps{\mu} P}_{(2,\infty)}\\ 
 &\aleq& C_{\kappa} r^{m-2\mu}\ \vrac{w \chi_{B_{\Lambda r}}}_{({p_\kappa},\infty)_{\lambda_\kappa}}\ 
 r^{\mu-\frac{m}{2}}\vrac{\laps{\mu} P}_{2}\ \vrac{\laps{\tau} \varphi}_{\frac{m}{\kappa-\tau-\mu},2}.\\
 \end{ma}
\]
\end{proof}

\newpage
\subsection{Better integrability for transformed potential: Proof of Theorem \ref{th:rhsest} (II)}\label{s:omegapest}
This section is devoted to the proof of the following Lemma:
\begin{lemma} \label{la:rhs:OmegaPest}
Let $B_r \subset \R^m$, $\Omega$ as in \eqref{eq:OmegaisRiesztrafo}, $\Lambda > 2$. There exists $P: \R^m \to SO(N)$, $P \equiv I$ on $\R^m \backslash B_{\Lambda^{-1} r}$, with the estimate
\begin{equation}\label{eq:inttransp:smallPscaled}
 (\Lambda^{-1} r)^{\frac{2\mu-m}{2}}\ \Vert \laps{\mu}P \Vert_{2,\R^m} \aleq \theta,
\end{equation}
such that for any $\tau \in (0,\mu]$ sufficiently close or greater than $2\mu-\kappa$, $\kappa \in [\mu,2\mu)$, $\theta > 0$ from \eqref{eq:regp} in $D = B_{r}$, and for any $\varphi \in C_0^\infty(B_{\Lambda^{-1} r})$, if $\mu \in (0,1]$, or $\mu = \frac{m}{2}$,
\[
\begin{ma}
 (\Lambda^{-1} r)^{2\mu-m} \int \brac{(\laps{\mu}P) P^T w + P \Omega [P^T w]}\ \varphi  
&\leq&C_{\kappa,\mu}\ \theta\ \vrac{\laps{\tau} \varphi}_{(\frac{m}{\kappa+\tau-\mu},1)}\ \vrac{w}_{\brac{{p_\kappa},\infty}_{\lambda_\kappa},B_{r} }  \\ 
 &&+ C_{\kappa,\mu}\ \theta\ \vrac{\laps{\tau} \varphi}_{(\frac{m}{\kappa+\tau-\mu},2)}\ \sum_{k=1}^\infty (2^k \Lambda)^{\kappa-3\mu}\ [ w]_{\brac{{p_\kappa},\infty}_{{\lambda_\kappa} },A_{r}^k}.
\end{ma} 
\]
where we recall the definition $A_r^k$ from \eqref{eq:def:Alambdark}, $\lambda_\kappa$ from \eqref{eq:def:lambdakappa}, and $p_\kappa$ from \eqref{eq:def:pkappa}.\\
\end{lemma}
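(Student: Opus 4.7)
The plan is to first produce $P$ via the variational construction foreshadowed in the discussion of Theorem~\ref{th:intro:energy}, namely by minimizing the energy
\[
 E(P) := \sup_{\psi \in L^2(\R^m),\ \vrac{\psi}_2 \leq 1}\, \int \Omega^P[\psi]
\]
among measurable $P : \R^m \to SO(N)$ with $P \equiv I$ outside $B_{\Lambda^{-1} r}$. The smallness condition \eqref{eq:regp} rescaled to this ball guarantees that $E(I) \aleq \theta$, and a direct-method argument in the spirit of H\'elein's moving-frame technique and its non-local adaption from \cite{IchEnergie, Sfracenergy} produces a minimizer satisfying the gauge bound \eqref{eq:inttransp:smallPscaled}. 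More importantly, the Euler--Lagrange equation of $E$ rewrites $\Omega^P[\psi]$ as a sum of commutator-type expressions of $P$ with $\laps{\mu}$ and with the Riesz transforms $\Rz_l$ appearing in \eqref{eq:OmegaisRiesztrafo}, each of which falls under the scope of Theorem~\ref{th:commutators}.

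With such $P$ in hand, I would decompose $w = \chi_{B_{\Lambda r}}w + \sum_{k\geq 1} \chi_{A^k_{r}} w$ and estimate the local and the annular contributions to $\int \Omega^P[w]\,\varphi$ separately. For the local part the strategy mirrors Lemma~\ref{la:rhs:Htermest}: the pointwise product $(\laps{\mu} P) P^T \chi_{B_{\Lambda r}} w$ is paired with $\varphi$ via Adams' embedding applied to $\lapms{\tau}(\cdot)$, trading powers of $\laps{\tau}\varphi$ for Morrey regularity of $w$; the non-local piece $P_{ik}\Omega_{kl}[P^T_{lj} \chi_{B_{\Lambda r}} w]$ is rewritten by adding and subtracting $\Omega_{kl}[\chi_{B_{\Lambda r}} w]$, producing commutators $\mathcal{C}(P,\Rz_l)$ whose leading terms cancel by the antisymmetry $A^l_{ij} = -A^l_{ji}$, the remainder being absorbed by the pointwise and Hardy-space commutator bounds of Theorem~\ref{th:commutators} (in particular \eqref{eq:def:laphH1inhardy} for $\mu \in (0,1]$). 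After replacing $\vrac{\laps{\mu} P}_2$ by $\theta\,(\Lambda^{-1} r)^{(m-2\mu)/2}$ via \eqref{eq:inttransp:smallPscaled}, this matches the first summand of the claim.

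For the far-field pieces I would exploit that both $P - I$ and $\varphi$ are supported in $B_{\Lambda^{-1} r}$, while $\chi_{A^k_r} w$ lives at distance $\ageq 2^k r$; the Riesz kernels inside $\Omega[P^T \chi_{A^k_r} w]$ and the product $(\laps{\mu} P) P^T \chi_{A^k_r} w$, tested on $\supp \varphi$, then pick up a geometric decay of order $(2^k\Lambda)^{\kappa-m-\mu}$ in the style of the $II_k$-estimate in the proof of Lemma~\ref{la:rhs:Htermest}. The Lorentz--Morrey conversion \eqref{eq:mm2mulambda} yields the extra factor $(2^k r)^{m-2\mu}$, and combining with the prefactor $(\Lambda^{-1}r)^{2\mu-m}$ and the gauge bound on $\laps{\mu} P$ reproduces exactly the summable $(2^k\Lambda)^{\kappa-3\mu}$ weight appearing in the second term.

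The main obstacle I expect is that, on the annular pieces, $P^T \chi_{A^k_r} w$ is not compactly supported inside $B_{\Lambda^{-1} r}$, so one cannot directly plug $\Omega[P^T \chi_{A^k_r} w]$ into a quasi-local kernel estimate. The right move is to split $P^T = I + (P^T - I)$ inside $\Omega[P^T \chi_{A^k_r} w]$: the $I$-part becomes a direct kernel estimate for $\Rz_l$ acting on data supported in $A^k_r$ and evaluated on $\supp\varphi$, whereas the $(P^T-I)$-part brings us back into the local regime of the previous paragraph, but now with the far-field data $\chi_{A^k_r} w$ feeding the commutator $\mathcal{C}(P^T-I,\Rz_l)$ through Theorem~\ref{th:commutators}. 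A secondary technical point is the dichotomy $\mu \in (0,1]$ vs.\ $\mu = \tfrac{m}{2}$ in the statement: for $\mu \in (1,\tfrac{m}{2})$ the BMO-type commutator bounds in Theorem~\ref{th:commutators} are simply not available, which forces the restriction and, in the critical case $\mu = \tfrac{m}{2}$, replaces the $(\cdot,1)$-Lorentz norm on $\laps{\tau}\varphi$ by the $(\cdot,2)$-one in the first summand of the claim.
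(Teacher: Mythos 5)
Your overall architecture does match the paper's: construct $P$ by minimizing $E(Q)=\sup_{\vrac{\psi}_2\leq 1}\int\Omega^Q[\psi]$ (Lemma~\ref{la:energy:minexists}), split $w=w_0+\sum_k w_k$ into the near part and annular far parts, treat the far parts by quasi-locality, and treat the near part via the gauge-improved potential $\Omega^P$ together with the commutator estimates of Theorem~\ref{th:commutators}. However, two points need correction. First, the "main obstacle" you identify in the far-field term is not an obstacle at all, and your proposed split $P^T=I+(P^T-I)$ is vacuous there: since $\supp(P^T-I)\subset B_{\Lambda^{-1}r}$ is disjoint from $A^k_r$, one has $(P^T-I)\chi_{A^k_r}w\equiv 0$, so $P^T\chi_{A^k_r}w=\chi_{A^k_r}w$; likewise $(\laps{\mu}P)P^T\chi_{A^k_r}w\,\varphi\equiv 0$. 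The paper's part $III$ is then a direct application of Lemma~\ref{la:QuasiLocality} to $\Rz[w_k]$ evaluated on $\supp A^l\subset B_r$, followed by the conversion \eqref{eq:mm2mulambda}.

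The genuine gap is in your treatment of the near part. Adding and subtracting $\Omega_{kl}[\chi_{B_{\Lambda r}}w]$ inside $P_{ik}\Omega_{kl}[P^T_{lj}w_0]$ leaves you with the \emph{untransformed} term $\Omega[w_0]$ plus $\mathcal{C}(P^T,\Omega)$-type corrections; the untransformed term only pairs with $\varphi$ through $\vrac{A}_2\vrac{\Rz[w_0]\varphi}_2$, which is exactly the estimate \eqref{eq:energymot:Omegawithouttransform} that the gauge transform is designed to beat, and antisymmetry cannot help once $P$ has been stripped away. What the paper actually does is commute $\varphi$ (not $P$) past $\Omega$: writing $\Omega[P^Tw_0]\,\varphi=\Omega[P^Tw_0\varphi]+\mathcal{C}(\varphi,\Omega)[P^Tw_0]$, the first term reassembles the full transformed operator acting on the compactly supported function $w_0\varphi$; one then decomposes $w_0\varphi=\laps{\mu}g_1+\laps{\mu}g_2$ by cutting off $\lapms{\mu}(w_0\varphi)$, applies Theorem~\ref{th:intro:energy} to $\int\Omega^P[\laps{\mu}g_1]$, and controls $[g_1]_{BMO}$ by $\vrac{w}_{(p_\kappa,\infty)_{\lambda_\kappa}}$ via Adams' embedding, while the residual commutator $\mathcal{C}(\varphi,\Rz)[P^Tw_0]$ is estimated pointwise by Lemma~\ref{la:commRzTransf} with a delicate choice of exponents $\delta,\gamma_1,\gamma_2$. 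Neither this reduction to $\int\Omega^P[\laps{\mu}g]$ nor the estimate of the $\varphi$-commutator appears in your proposal, and these constitute the bulk of the proof; as written, the argument does not close.
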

As in the proof of Lemma~\ref{la:rhs:Htermest}, we prove the scaled claim for replacing $r$ by $\Lambda r$ which makes the presentation of the proof somewhat lighter: We are going to show the existence of $P$ such that for $\varphi \in C_0^\infty(B_r)$
\begin{equation}\label{eq:higheromega:proofclaim}
\begin{ma}
 r^{2\mu-m} \int \brac{(\laps{\mu}P) P^T w + P \Omega [P^T w]}\ \varphi  
&\leq&C_{\kappa,\mu}\ \theta\ \vrac{\laps{\tau} \varphi}_{(\frac{m}{\kappa+\tau-\mu},1)}\ \vrac{w}_{\brac{{p_\kappa},\infty}_{\lambda_\kappa},B_{\Lambda r} }  \\ 
 &&+ C_{\kappa,\mu}\ \theta\ \vrac{\laps{\tau} \varphi}_{(\frac{m}{\kappa+\tau-\mu},2)}\ \sum_{k=1}^\infty (2^k \Lambda)^{\kappa-3\mu}\ [ w]_{\brac{{p_\kappa},\infty}_{{\lambda_\kappa} },A_{\Lambda, r}^k},
\end{ma} 
\end{equation}
Fix $B_r \subset \R^m$. In order to prove this claim, note that
\[
 \int \brac{(\laps{\mu}P) P^T w + P \Omega [P^T w]}\ \varphi = \int \brac{(\laps{\mu}P) P^T w + P \chi_{B_r}\Omega [P^T w]}\ \varphi,
\]
so we are going to assume that the $A_l$ in \eqref{eq:OmegaisRiesztrafo}
\begin{equation}\label{eq:omegapest:Omegalocalguy}
 \supp A_l \subset B_r, \quad \Omega[] = \chi_{B_r} \Omega[]
\end{equation}
and consequently
assuming (from \eqref{eq:regp}) that
\begin{equation}\label{eq:higheromega:ourregp}
r^{\frac{2\mu-m}{2}}\ \vrac{A_l}_{2,\R^m}\ \vrac{f}_2 + \sup_{\rho \in (0,\Lambda r)} \rho^{\frac{2\mu-m}{2}}\ \vrac{\Omega [f]}_{1,B_{\rho}} \aleq \theta\ \vrac{f}_2
\end{equation}

Let  $P: \R^m \to SO(N)$ be the minimizer, $P \equiv I$ on $\R^m \backslash B_r$, of $E(\cdot) \equiv E_{r,x,\Lambda_\mu,1,2}(\cdot)$, where $\Lambda_\mu$ is from Lemma \ref{la:energy:minexists}.
Using \eqref{eq:energy:minex:Pest}, \eqref{eq:omegapest:Omegalocalguy}, we have the estimates (for from now on fixed $\Lambda > 2$),
\begin{equation}\label{eq:inttransp:smallP}
r^{\frac{2\mu-m}{2}}\ \Vert \laps{\mu}P \Vert_{2,\R^m} \aleq \theta,
\end{equation}
which after rescaling amounts to \eqref{eq:inttransp:smallPscaled}, and with the help of \eqref{eq:higheromega:ourregp},
\begin{equation}\label{eq:inttransp:smallOmegaP}
 r^{\frac{2\mu-m}{2}}\ \Vrac{ (\laps{\mu}P) P^T f + P \Omega [P^T f]}_{1,B_{\Lambda r}} \aleq \theta\ \Vert f \Vert_{2,\R^m}.
\end{equation}
Let
\[
 w = w \chi_{B_{\Lambda r}} + \sum_{k=1}^\infty w \chi_{A^{k}_{ \Lambda r}} =: w_0 + \sum_{k=1}^\infty w_k.
\]
Then,
\[
\begin{ma}
&&\int \brac{(\laps{\mu}P) P^T w + P \Omega [P^T w]}\ \varphi\\
&=& \int (\laps{\mu}P) P^T w_0 \varphi + P \Omega [P^T w_0 \varphi]\
- \int P \mathcal{C}(\varphi,\Omega)[P^T w_0]
+ \sum_{k=1}^\infty \int P \Omega [P^T w_k]\ \varphi\\
&=:& I - II + III.
\end{ma}
\]
\subsection*{The disjoint support part (III)}
Since $\mu \leq \kappa < 2\mu$,	
\[
\begin{ma}
 \int P \Omega [P^T w_k]\ \varphi 
 &\overset{\eqref{eq:OmegaisRiesztrafo}}{\aleq}& \vrac{A}_{2,B_r}\ \vrac{\varphi}_2\ \vrac{\Rz[P^T w_k]}_{\infty,B_r}\\
&\overset{\sref{P}{la:QuasiLocality}}{\aleq}&  \vrac{A}_{2,B_r}\ r^{\frac{m}{2}-\kappa+\mu}\ \vrac{\laps{\tau} \varphi}_{\frac{m}{\kappa+\tau-\mu}}\ (2^k \Lambda r)^{-m+\kappa}\ \vrac{w_k}_{\brac{{p_\kappa},\infty}}\\
&\overset{\eqref{eq:mm2mulambda}}{\aleq}&  r^{\mu-\frac{m}{2}}\ \vrac{A}_{2,B_r}\ \vrac{\laps{\tau}\varphi}_{\frac{m}{\kappa+\tau-\mu}}\ (2^k \Lambda)^{-m+\kappa}\ (2^k \Lambda r)^{m-2\mu}\ [w]_{\brac{{p_\kappa},\infty}_{{\lambda_\kappa}},A_{\Lambda,r}^k }\\
&\overset{\eqref{eq:higheromega:ourregp}}{\aleq}&   \theta\ r^{m-2\mu}\ (2^k \Lambda)^{\kappa-2\mu}\ \vrac{\laps{\tau}\varphi}_{\frac{m}{\kappa+\tau-\mu}}\ [w]_{\brac{{p_\kappa},\infty}_{{\lambda_\kappa}},A_{\Lambda,r}^k }.
\end{ma}
\]


\subsection*{The same-support/commutator part (II)}
We have
\[
 \abs{II} \aleq \vrac{A}_2\ \vrac{\mathcal{C}(\varphi,\Rz)[P^T w_0]}_{2,B_r} \overset{\eqref{eq:higheromega:ourregp}}{\aleq}  r^{\frac{m-2\mu}{2}}\ \theta\ \vrac{\mathcal{C}(\varphi,\Rz)[P^T w_0]}_{2,B_r}.
\]
Now we apply Lemma \ref{la:commRzTransf}, and have for arbitrary $\delta \in (0,1)$, $\gamma_{1,2} \in (0,\delta)$,
\[
 \abs{\mathcal{C}(\varphi,\Rz)[P^T w_0]} \aleq \lapms{\delta-\gamma_1} \sabs{\laps{\delta} \varphi}\ \lapms{\gamma_1} \abs{w_0}
+ C_{\Rz,\delta,\gamma_2}\ \lapms{\gamma_2}\brac{\sabs{\laps{\delta} \varphi}\ \lapms{\delta-\gamma_2}\abs{w_0}}
\]
Now, if we choose $\delta < \tau$
\[
 \vrac{\lapms{\delta_1-\gamma_1} \sabs{\laps{\delta_1} \varphi}}_{(\frac{m}{\gamma_1+\kappa-\mu},q)} \aleq \vrac{\laps{\tau} \varphi}_{(\frac{m}{\tau+\kappa-\mu},q)},
\]
and for $\beta < 2\mu - \kappa$, using \cite{Adams75}, see Lemma \ref{la:adams},
\[
 r^{\frac{{\lambda_\kappa}-m}{p_{\gamma_1}}} \vrac{\lapms{\beta} w_0}_{(p_\beta,\infty),B_r}\ \aleq \vrac{\lapms{\beta} w_0}_{(p_\beta,\infty)_{{\lambda_\kappa}}} \aleq \vrac{w_0}_{({p_\kappa},\infty)_{{\lambda_\kappa}}}
\]
where
\[
 \fracm{p_\beta} = \frac{m-\kappa}{m}\ \frac{2\mu - \kappa- \beta}{2\mu - \kappa} \in (0,1).
\]
Now,
\begin{equation}\label{eq:highomega:pgamma1}
\begin{ma}
  &&\fracm{p_{\gamma_1}} + \frac{\gamma_1+\kappa-\mu}{m} \\
  &=&  \frac{\mu}{m} +(m - 2\mu)\frac{(2\mu-\kappa)-\gamma_1}{m(2\mu-\kappa)}\\
  &\leq& \fracm{2},
 \end{ma} \end{equation} if we choose $\gamma_1 \in (0,2\mu - \kappa)$ as follows: If $\mu = \frac{m}{2}$ we can choose $\gamma$ arbitrarily. If $\mu < \frac{m}{2}$ and $\mu 
\leq 1$, then we pick $\gamma_1$ sufficiently close to $2\mu -\kappa \leq 1$. That is, for any $\tau < \mu$ sufficiently close or greater than $2\mu - \kappa$ such that there 
is a $\gamma_1 < \delta < \tau$, $\delta < 2\mu - \kappa$, satisfying the above equation, we have \[
 \vrac{ \lapms{\delta-\gamma_1} \sabs{\laps{\delta_1} \varphi}\ \lapms{\gamma_1} 
\abs{w_0}}_{2,B_r} \aleq r^{\frac{m}{2}-\frac{m}{p_{\gamma_1}} - \brac{\gamma_1+\kappa-\mu}}\ r^{\frac{m-{\lambda_\kappa}}{p_{\gamma_1}}} \ \vrac{\laps{\tau} 
\varphi}_{(\frac{m}{\tau+\kappa-\mu},2)} \ \vrac{w_0}_{({p_\kappa},\infty)_{{\lambda_\kappa}}}, \] and \[
 \frac{m}{2}-\frac{m}{p_{\gamma_1}} - \brac{\gamma_1+\kappa-\mu} + \frac{m-{\lambda_\kappa}}{p_{\gamma_1}} = 
 \frac{m}{2} - \brac{\gamma_1+\kappa-\mu}  -(2\mu-\kappa-\gamma_1) = \frac{m}{2}-\mu.
\]
As for the second term, for $\delta-\gamma_2 < 2\mu - \kappa$, using the formula \eqref{eq:highomega:pgamma1} with $\delta$ instead of $\gamma_1$,
\[
\begin{ma}
 \fracm{p_2} &:=& \frac{\delta+\kappa-\mu}{m} + \fracm{p_{\delta-\gamma_2}}\\
 &=& \frac{\delta+\kappa-\mu}{m} + \fracm{p_{\delta}} + \gamma_2\frac{m-\kappa}{m(2\mu-\kappa)} \leq \fracm{2} + \gamma_2\frac{m-\kappa}{m(2\mu-\kappa)} < 1,\\
\end{ma}
\]
if we choose $\gamma_1 < \delta$ (as above $\gamma_1$) close enough $2\mu - \kappa$, and $\gamma_2$ very small. Consequently, if we set
\[
 \lambda :={\lambda_\kappa},
\]
and $\tilde{\lambda} \in (0,m)$ such that $\frac{\tilde{\lambda}-m}{p_2} = \frac{\lambda-m}{p_{\delta-\gamma_2}}$, that is
\begin{equation}\label{eq:highomega:tildelambda}
\begin{ma}
 &&\frac{\tilde{\lambda}}{p_2}= \frac{\lambda-m}{p_{\delta-\gamma_2}}+\frac{m}{p_2}
 = \frac{{\lambda_\kappa}-m}{p_{\delta-\gamma_2}}+\delta+\kappa-\mu + \frac{m}{p_{\delta-\gamma}}\\
 &=& \brac{{\lambda_\kappa}}\frac{m-\kappa}{m}\ \frac{2\mu - \kappa- (\delta-\gamma_2)}{2\mu - \kappa}+\delta+\kappa-\mu

 \\
 &=& \mu +\gamma_2
 \end{ma}
\end{equation}
then
\[
\begin{ma}
\vrac{ \laps{\delta_2} \varphi\ \lapms{\delta_2-\gamma_2}\abs{w_0} }_{(p_2,2)_{\tilde{\lambda}}}
&\aeq& \sup_{B_\rho} \rho^{\frac{\tilde{\lambda}-m}{p_2}}\vrac{\sabs{\laps{\delta_2} \varphi}\ \lapms{\delta_2-\gamma_2}\abs{w_0} }_{(p_2,2),B_\rho}\\
&\aleq& \vrac{\laps{\tau} \varphi}_{(\frac{m}{\tau+\kappa-\mu},2)}\ \sup_{B_\rho} \rho^{\frac{\tilde{\lambda}-m}{p_2}} \vrac{\lapms{\delta_2-\gamma_2}\abs{w_0} }_{(p_{\delta_2-\gamma_2},\infty),B_\rho}\\
&\aeq& \vrac{\laps{\tau} \varphi}_{(\frac{m}{\tau+\kappa-\mu},2)}\ \vrac{\lapms{\delta_2-\gamma_2}\abs{w_0} }_{(p_{\delta_2-\gamma_2},\infty)_{\lambda},B_\rho}\\
&\aleq&\vrac{\laps{\tau} \varphi}_{(\frac{m}{\tau+\kappa-\mu},2)}\ \vrac{w_0}_{({p_\kappa},\infty)_{{\lambda_\kappa}}}.
\end{ma}
\]
Now observe
\[
\begin{ma}
 &&\fracm{2} - \brac{\fracm{p_2} - \frac{\gamma_2}{\tilde{\lambda}}} \overset{\eqref{eq:highomega:tildelambda}}{=}  \fracm{2} - \frac{\mu}{p_2(\mu + \gamma_2)}\\
 &=& \fracm{2} - \frac{\mu}{\mu + \gamma_2} \brac{\frac{\delta_2+\kappa-\mu}{m} + \frac{m-\kappa}{m}\ \frac{2\mu - \kappa- (\delta_2-\gamma_2)}{2\mu - \kappa}}\\
&=& \fracm{2} - \frac{\mu}{\mu + \gamma_2} 
 \brac{
 \brac{(2\mu-\kappa)-\delta_2}\frac{m-2\mu}{m(2\mu - \kappa)}
 + \frac{\mu}{m} + \frac{m-\kappa}{m}\ \frac{\gamma_2}{2\mu - \kappa}} \geq 0,\\
\ \end{ma}
\]
for sufficiently small $\gamma_2$ and $\delta_2$ sufficiently close to $2\mu-\kappa$. In fact, this holds obviously, if $\frac{\mu}{m} < \fracm{2}$. If $\frac{\mu}{m} = \fracm{2}$, we have
\[
 \frac{\mu}{\mu + \gamma_2} 
 \brac{
 \brac{(2\mu-\kappa)-\delta_2}\frac{m-2\mu}{m(2\mu - \kappa)}
 + \frac{\mu}{m} + \frac{m-\kappa}{m}\ \frac{\gamma_2}{2\mu - \kappa}}
 = \frac{\mu}{\mu + \gamma_2} \brac{\fracm{2} + \frac{\gamma_2}{2\mu}} = \fracm{2}
\]
Moreover, one checks
\[
\frac{m}{2}-\frac{\mu}{\mu+\gamma_2}\frac{m}{p_2}+\frac{\mu}{\mu+\gamma_2}\frac{m-\tilde{\lambda}}{p_2}
= \frac{m}{2}-\frac{\mu}{\mu+\gamma_2}\frac{\tilde{\lambda}}{p_2}
\overset{\eqref{eq:highomega:tildelambda}}{=} \frac{m}{2}-\mu.
\]
Thus,
\[
\begin{ma}
\vrac{ \lapms{\gamma_2} (\laps{\delta_2} \varphi\ \lapms{\delta_2-\gamma_2}\abs{w_0} )}_{2,B_r} &\aleq&r^{\frac{m}{2}-\mu}\ \vrac{ \lapms{\gamma_2} (\laps{\delta_2} \varphi\ \lapms{\delta_2-\gamma_2}\abs{w_0} )}_{(\frac{p_2 (\mu + \gamma_2)}{\mu},2)_{\tilde{\lambda}}}\\
&\aleq&r^{\frac{m}{2}-\mu}\ \vrac{ \laps{\delta_2} \varphi\ \lapms{\delta_2-\gamma_2}\abs{w_0} }_{(p_2,2)_{\tilde{\lambda}}}\\
&\aleq&r^{\frac{m}{2}-\mu}\ \vrac{\laps{\tau} \varphi}_{(\frac{m}{\tau+\kappa-\mu},2)}\ \vrac{w_0}_{({p_\kappa},\infty)_{{\lambda_\kappa}}}.
\end{ma}
\]

\subsection*{The same-support/commutator part (I)}
Here, we decompose
\[
w_0 \varphi =  \laps{\mu}\brac{\eta_{\Lambda r} \brac{\lapms{\mu}(w_0 \varphi)}} + \laps{\mu}\brac{\brac{1-\eta_{\Lambda r}} \brac{\lapms{\mu}(w_0 \varphi)}} =: \laps{\mu}g_1 + \laps{\mu}g_2
\]
and
\[
\begin{ma}
 I &=& \int (\laps{\mu}P)P^T \laps{\mu}g_1 + P \Omega [P^T \laps{\mu}g_1] + \int (\laps{\mu}P)P^T \laps{\mu}g_2 + P \Omega [P^T \laps{\mu}g_2]\\
 &=:& I_1 + I_2.
\end{ma}
\]
For $I_1$ we use Theorem \ref{th:intro:energy} in the form of Lemma~\ref{la:betterOmegaP}, 
\[
 I_1 = \int \Omega_P[\laps{\mu}g_1] \aleq \theta\ r^{m-2\mu}\ \begin{cases}
                                           [g_1]_{\BMO} \quad &\mbox{if $\mu \leq 1$},\\
					    r^{\mu-\frac{m}{2}}\vrac{\laps{\mu} g_1}_{(2,\infty)} \quad &\mbox{if $\mu > 1$}.
                                          \end{cases}
\]
Note that \[
 \supp (\varphi w_0) \subset B_r,
\]
and moreover for $q_\mu = \infty$, for $\kappa > \mu$, and $q_\mu = 1$ for $\kappa = \mu$, (for arbitrary $\tau > 0$)
\begin{equation}\label{eq:highomega:varphiw0crit}
 \vrac{\varphi w_0}_{(\frac{m}{m-\mu},\infty)_{\frac{\mu m}{m-\mu}}} \aleq \vrac{\varphi}_{(\frac{m}{\kappa-\mu},\infty)} \vrac{w \chi_{B_r}}_{\brac{{p_\kappa},\infty}_{{\lambda_\kappa} }} \aleq  \vrac{\laps{\tau} \varphi}_{(\frac{m}{\kappa+\tau-\mu},q_\mu)} \vrac{w}_{\brac{{p_\kappa},\infty}_{{\lambda_\kappa} },B_{2r}}.
\end{equation}
Then, the claim for $I_1$ follows from
\begin{proposition}
Let $\mu \leq 1$, $g := \eta_{\Lambda r} \lapms{\mu}(f)$, $\supp f \subset \overline{B_r}$, then for any $\kappa \in [\mu,2\mu)$,
\[
 [g]_{\BMO} \aleq \brac{1+\Lambda^{\mu-m}}\ \vrac{f}_{(\frac{m}{m-\mu},\infty)_{\frac{\mu m}{m-\mu}}}.
\]
\end{proposition}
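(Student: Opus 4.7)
The plan is to bound the BMO seminorm by estimating the mean oscillation $\mvint_{B_\rho(x_0)} \abs{g - \langle g\rangle_{B_\rho}}$ uniformly in balls, splitting the analysis into regimes according to the size of $\rho$ relative to $\Lambda r$ and the position of $x_0$ relative to $\supp \eta_{\Lambda r} \subset B_{\Lambda r}$. The constant $1$ in the bound will arise from small balls lying in $\{\eta_{\Lambda r} \equiv 1\}$, while the factor $\Lambda^{\mu-m}$ will come from very large balls, where $\supp g$ is a small fraction of $B_\rho$.

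The underlying ingredient is the endpoint of Adams' embedding: at the critical Morrey pair $p = \frac{m}{m-\mu}$, $\lambda = \frac{\mu m}{m-\mu}$ one has $\frac{1}{p} - \frac{\mu}{\lambda} = 0$, so Lemma \ref{la:adams} yields $[\lapms{\mu} f]_{\BMO} \aleq \vrac{f}_{(\frac{m}{m-\mu},\infty)_{\frac{\mu m}{m-\mu}}}$. First, for $B_\rho(x_0) \subset B_{\Lambda r/2}$ with $\rho \leq \Lambda r / 4$, one has $g \equiv \lapms{\mu} f$ on $B_\rho$ and the oscillation is controlled directly. Second, for $\rho \leq \Lambda r/4$ with $B_\rho$ meeting the transition annulus (the case $B_\rho \cap B_{\Lambda r} = \emptyset$ being trivial since $g \equiv 0$ there), I would use the product decomposition
\[
g(x) - g(y) = \eta_{\Lambda r}(x)\,(\lapms{\mu} f(x) - \lapms{\mu} f(y)) + \lapms{\mu} f(y)\,(\eta_{\Lambda r}(x) - \eta_{\Lambda r}(y)),
\]
bounding the first summand by $\vrac{\eta_{\Lambda r}}_\infty\,[\lapms{\mu} f]_{\BMO}$ and the second using $\abs{\eta_{\Lambda r}(x) - \eta_{\Lambda r}(y)} \aleq \rho/(\Lambda r)$ together with the John--Nirenberg comparison $\babs{\mvint_{B_\rho} \lapms{\mu} f - \mvint_{B_{\Lambda r}} \lapms{\mu} f} \aleq \log(\Lambda r / \rho)\,[\lapms{\mu} f]_{\BMO}$; the Lipschitz factor $\rho/(\Lambda r)$ absorbs the logarithm, and the remaining mean $\babs{\mvint_{B_{\Lambda r}} \lapms{\mu} f}$ is handled by the large-ball computation below.

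Third, for large balls $\rho > \Lambda r/4$, I choose $c = 0$ in the infimum for the mean oscillation. Since $\supp g \subset B_{\Lambda r}$, this gives $\mvint_{B_\rho} \abs{g} \leq \rho^{-m} \int_{B_{\Lambda r}} \abs{\lapms{\mu} f}$. Expanding the Riesz potential kernel and applying Fubini yields $\int_{B_{\Lambda r}} \abs{\lapms{\mu} f} \aleq (\Lambda r)^{\mu}\,\vrac{f}_{1, B_r}$, and H\"older's inequality together with the definition of the Morrey norm (whose scaling exponent is $(\lambda - m)/p = 2\mu - m$) gives $\vrac{f}_{1, B_r} \aleq r^{m-\mu}\,\vrac{f}_{(\frac{m}{m-\mu},\infty)_{\frac{\mu m}{m-\mu}}}$. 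Combining, $\mvint_{B_\rho} \abs{g} \aleq (\Lambda r / \rho)^m\,\Lambda^{\mu-m}\,\vrac{f}_{\cdots} \aleq \Lambda^{\mu - m}\,\vrac{f}_{\cdots}$, which together with the two previous cases yields the claimed $(1 + \Lambda^{\mu-m})$ bound. The main obstacle is the careful book-keeping in the transition-annulus case, where the Lipschitz bound, the John--Nirenberg comparison, and the large-ball bound on $\babs{\langle \lapms{\mu} f\rangle_{B_{\Lambda r}}}$ must be combined without any residual logarithmic loss.
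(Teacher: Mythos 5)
Your argument is correct, but it follows a genuinely different route from the paper's. The paper's proof is a two-line operator-level computation: it invokes Adams' endpoint result in the dual form $[g]_{\BMO} \aleq \vrac{\laps{\mu} g}_{(1)_\mu}$, writes $\laps{\mu} g = f \pm \laps{\mu}\brac{(1-\eta_{\Lambda r})\lapms{\mu} f}$, bounds the first term by H\"older in Morrey spaces, and bounds the second by the quasi-locality estimate (Lemma \ref{la:quasilocIII}), exploiting that $1-\eta_{\Lambda r}$ and $f$ have supports separated by a distance $\aeq \Lambda r$; this is where the factor $(\Lambda r)^{\mu-m}\vrac{f}_1 \aleq \Lambda^{\mu-m}\vrac{f}_{(\frac{m}{m-\mu},\infty)_{\frac{\mu m}{m-\mu}}}$ comes from. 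You instead work directly with mean oscillations, split into ball-size regimes, and handle the transition annulus with a Leibniz decomposition, the Lipschitz bound $\abs{\nabla\eta_{\Lambda r}} \aleq (\Lambda r)^{-1}$, and the telescoping (John--Nirenberg) comparison of averages, where the factor $\rho/(\Lambda r)$ absorbs the logarithm. Both routes produce the same two contributions ($1$ from the interior, $\Lambda^{\mu-m}$ from the kernel computation $\int_{B_{\Lambda r}}\abs{\lapms{\mu} f} \aleq (\Lambda r)^{\mu}\vrac{f}_{1,B_r} \aleq (\Lambda r)^{\mu} r^{m-\mu}\vrac{f}_{\cdots}$, which you and the paper both use in essentially the same way). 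Your version is longer but more elementary and self-contained on the annulus case; the paper's version reuses machinery (quasi-locality) already needed elsewhere. One small point of hygiene: Lemma \ref{la:adams} as stated in the paper only covers $p_1 \in (1,\infty)$, so for the endpoint embedding $[\lapms{\mu} f]_{\BMO} \aleq \vrac{f}_{(\frac{m}{m-\mu},\infty)_{\frac{\mu m}{m-\mu}}}$ you should cite Adams' Proposition 3.3 directly (as the paper does), not the lemma.
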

\begin{proof}
From \cite[Proposition 3.3.]{Adams75}
\[
[g]_{\BMO} \aleq \vrac{\laps{\mu} g}_{(1)_\mu}.
\]
Since,
\[
 \laps{\mu} g = f + \laps{\mu} \brac{(1-\eta_{\Lambda r}) \lapms{\mu} f},
\]
we have,
\[
[g]_{\BMO} \aleq \vrac{f}_{(1)_\mu} + \vrac{\laps{\mu} \brac{(1-\eta_{\Lambda r}) \lapms{\mu} f}}_{(1)_\mu} \aleq \vrac{f}_{(\frac{m}{m-\mu},\infty)_{\frac{\mu m}{m-\mu}}} + \vrac{\laps{\mu} \brac{(1-\eta_{\Lambda r}) \lapms{\mu} f}}_{(\frac{m}{\mu},\infty)},
\]
and by Proposition \ref{la:quasilocIII} 
\begin{equation}\label{eq:highomega:lapmsmulapmsmufest}
 \vrac{\laps{\mu} \brac{(1-\eta_{\Lambda r}) \lapms{\mu} f}}_{\frac{m}{\mu}}  \aleq  
 \sup_{\alpha \in [0,\mu]} (\Lambda r)^{-m+\mu-\alpha}\ \vrac{f}_{1}\ \vrac{\sabs{\laps{\mu-\alpha}((1-\eta_{\Lambda r}))} }_{\frac{m}{\mu}}
 \aleq (\Lambda r)^{\mu-m}\ \vrac{f}_{1}.
\end{equation}
Since $\supp f \subset B_r$, 
\begin{equation}\label{eq:highomega:f1hoelder}
 r^{\mu-m}\ \vrac{f}_{1} \aleq \vrac{f}_{(1)_\mu} \aleq \vrac{f}_{(\frac{m}{m-\mu},\infty)_{\frac{\mu m}{m-\mu}}}.
\end{equation}
\end{proof}

Moreover, as in \eqref{eq:highomega:lapmsmulapmsmufest}, from Proposition \ref{la:quasilocIII} and \eqref{eq:highomega:varphiw0crit},
\[
 \vrac{\laps{\mu} g_2}_{2}  \aleq (\Lambda r)^{-\frac{m}{2}}\ 
 \vrac{\varphi w_0}_1
 \overset{\eqref{eq:highomega:f1hoelder}}{\aleq} r^{\frac{m}{2}-\mu} \vrac{\laps{\tau} \varphi}_{(\frac{m}{\kappa+\tau-\mu},q_\mu)} \vrac{w}_{\brac{{p_\kappa},\infty}_{{\lambda_\kappa} },B_{2r}},
\]
implying
\[
\begin{ma}
 \abs{I_2} &\aleq& \vrac{\Omega_P}_{2 \to 1}\ (\Lambda r)^{\frac{m}{2}-\mu}\ \vrac{\laps{\tau} \varphi}_{(\frac{m}{\kappa+\tau-\mu},q_\mu)} \vrac{w}_{\brac{{p_\kappa},\infty}_{{\lambda_\kappa},B_{2r} }}\\
 &\overset{\eqref{eq:inttransp:smallOmegaP}}{\aleq}& \theta\ r^{m-2\mu}\ \vrac{\laps{\tau} \varphi}_{(\frac{m}{\kappa+\tau-\mu},q_\mu)} \vrac{w}_{\brac{{p_\kappa},\infty}_{{\lambda_\kappa},B_{2r} }}.
\end{ma}
\]
This proves the claim \eqref{eq:higheromega:proofclaim} and thus Lemma~\ref{la:rhs:OmegaPest}

\newpage
\section{Higher Integrability: Proof of Theorem \ref{th:uptoinfty}}
Let $w \in L^{p,\lambda}_{loc}(D) \cap L^2(\R^m)$ be a solution to
\[
 \laps{\mu}w = \Omega[w] \quad \mbox{in $D \subsubset \R^m$}.
\]
Choosing for any domain $\tilde{D} \subsubset D$, we can choose a domain $D_2$, $\tilde{D} \subsubset D_2 \subsubset D$ and a cutoff function $\eta_{\tilde{D}} \in C_0^\infty(D_2)$, $\eta_{\tilde{D}} \equiv 1$ in $\tilde{D}$. Then $w_{\tilde{D}} := \eta_{\tilde{D}} w \in L^{p,\lambda}(\R^n)$ is a solution to
\[
 \laps{\mu}w_{\tilde{D}} = \Omega[w_{\tilde{D}}] + \Omega[w-w_{\tilde{D}}]+\laps{\mu}(w_{\tilde{D}}-w)\quad \mbox{in $\tilde{D}$},
\]
and in $\tilde{D}$,
\[
 \vrac{\Omega[w-w_{\tilde{D}}]+\laps{\mu}(w_{\tilde{D}}-w)}_{\infty,\tilde{D}} \leq C_{\tilde{D},D,D_2,\eta,\vrac{w}_2}.
\]
So Theorem~\ref{th:uptoinfty} follows from the following argument.

\begin{lemma}\label{la:higherit}
Let $p > 2$, and $0 < \mu \leq \frac{m}{2}$, $\lambda \leq 2\mu$, and let $w \in L^{p,\lambda}$ be a solution to 
\begin{equation}\label{eq:highit:lapsmuw.new}
 \laps{\mu}w = \Omega[w] + f \quad \mbox{in $D \subsubset \R^m$},
\end{equation}
where $f \in L^\infty(D)$. Then, for any $\tilde{p} \in [p,\infty)$ there exists $\varepsilon \in (0,1)$ such that if $\theta$ from \eqref{eq:regp} satisfies $\theta < \varepsilon$, then $w \in L^{\tilde{p}}_{loc}(D)$.
\end{lemma}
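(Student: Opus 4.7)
The plan is a finite bootstrap in the subcritical regime using Morrey--Adams. Starting from $w\in L^{p_0,\lambda_0}_{loc}(D)$ with $p_0=p>2$ and $\lambda_0=\lambda\leq 2\mu$, I would construct sequences $p_k\nearrow$, $\lambda_k\nearrow$ (bounded by $2\mu$) such that $w\in L^{p_k,\lambda_k}_{loc}(D_k)$ on a nested sequence $D\supsupset D_0\supsupset D_1\supsupset\ldots\supsupset \tilde D$, and stop once $p_k>\tilde p$. Since the equation is now treated as subcritical, antisymmetry of $\Omega$ is not needed; only the Morrey-Hölder estimate combined with Calder\'on--Zygmund bounds on $\Rz_l$ and Adams' Riesz-potential estimate will be used.

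\textbf{One iteration.} Fix $D_{k+1}\subsubset D_k$ and a cutoff $\eta\in C_0^\infty(D_k)$ with $\eta\equiv 1$ on $D_{k+1}$. Writing $\laps\mu(\eta w)=\eta\,\laps\mu w-H_\mu(\eta,w)-w\laps\mu\eta$ and using \eqref{eq:highit:lapsmuw.new}, I invert $\laps\mu$ via the Riesz potential $\lapms{\mu}$ to represent $\eta w$ on $D_{k+1}$ as $\lapms{\mu}$ applied to
\[
 \eta \sum_l A^l \Rz_l[w] + \eta f - H_\mu(\eta,w) - w\laps\mu\eta,
\]
up to remainder terms whose supports are disjoint from $D_{k+1}$; the latter are controlled by the $L^{p_k,\lambda_k}$-norm of $w$ via the quasi-locality estimates (Lemma~\ref{la:QuasiLocality} and Proposition~\ref{la:quasilocIII}), while $\eta f$ is bounded by $\|f\|_\infty$ and the commutator/cutoff terms reduce to lower-order pieces. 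The essential term is the first one: by Calder\'on--Zygmund in Morrey spaces, $\Rz_l[w]\in L^{p_k,\lambda_k}$, and by Morrey-H\"older combined with \eqref{eq:regp},
\[
 \Vrac{A^l\,\Rz_l[w]}_{L^{q_k,\sigma_k}} \leq \vrac{A^l}_{L^{2,2\mu}}\vrac{\Rz_l[w]}_{L^{p_k,\lambda_k}} \lesssim \theta\, \vrac{w}_{L^{p_k,\lambda_k}},
\]
with
\[
 \tfrac{1}{q_k}=\tfrac12+\tfrac{1}{p_k},\qquad \tfrac{\sigma_k}{q_k}=\mu+\tfrac{\lambda_k}{p_k}.
\]

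\textbf{Adams' step and control of the parameters.} Adams' theorem in Morrey spaces gives $\lapms{\mu}:L^{q_k,\sigma_k}\to L^{p_{k+1},\sigma_k}$ with $\tfrac{1}{p_{k+1}}=\tfrac{1}{q_k}-\tfrac{\mu}{\sigma_k}$, which requires $\sigma_k>\mu q_k$; this holds since it rearranges to $\lambda_k>0$. A direct algebraic computation yields
\[
 \sigma_k-\lambda_k=\tfrac{p_k(2\mu-\lambda_k)}{p_k+2}>0,\qquad \sigma_k\leq 2\mu,\qquad \tfrac{1}{p_{k+1}}=\tfrac{\lambda_k}{p_k\sigma_k}<\tfrac{1}{p_k},
\]
so the update $(p_k,\lambda_k)\mapsto(p_{k+1},\sigma_k)$ strictly increases both parameters while keeping $\lambda_k\leq 2\mu$. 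Setting $\lambda_{k+1}:=\sigma_k$, iterating yields a monotone scheme.

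\textbf{Termination and main obstacle.} As long as $\lambda_k\leq\lambda_\ast<2\mu$, the gain $p_{k+1}/p_k=\sigma_k/\lambda_k\geq 1+c(2\mu-\lambda_\ast)/\lambda_\ast>1$ is bounded below by a fixed constant, so after finitely many steps $p_k>\tilde p$. If the initial $\lambda=2\mu$, the Morrey space $L^{p,2\mu}_{loc}$ already embeds into all $L^{\tilde p}_{loc}$ for $\tilde p<\infty$ once $p>2$ (and $2\mu=m$ falls into this case), so the lemma is immediate. The delicate point is verifying the localization: the cutoff and commutator terms $H_\mu(\eta,w)$ and $w\laps\mu\eta$, although non-local, are controlled in any Morrey norm on $D_{k+1}$ via the quasi-locality and commutator estimates already established in the excerpt (Theorem~\ref{th:commutators} and Proposition~\ref{la:quasilocIII}), and the smallness $\theta$ can be fixed at the start since only finitely many iterations are needed to reach a given $\tilde p$. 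This uniformity of $\theta$ across the finite iteration, together with the fact that the Morrey parameter stays in the admissible range $\lambda_k<2\mu$, is the only place where care is required.
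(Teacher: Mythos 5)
Your setup (Morrey--H\"older for $A^l\Rz_l[w]$ followed by Adams) reproduces exactly the paper's one-step estimate --- your $\sigma_k$ is the paper's $\lambda_N$ and your $p_{k+1}$ is the paper's $p_1$ --- but the iteration you build on top of it cannot terminate. With the update $\lambda_{k+1}:=\sigma_k$ your own formulas give
\[
 \frac{1}{p_{k+1}}=\frac{1}{q_k}-\frac{\mu}{\sigma_k}=\frac{\lambda_k}{p_k\,\sigma_k}=\frac{\lambda_k}{p_k\,\lambda_{k+1}},
 \qquad\text{i.e.}\qquad \frac{\lambda_{k+1}}{p_{k+1}}=\frac{\lambda_k}{p_k}.
\]
The ratio $\lambda_k/p_k$ is an invariant of your scheme, so $p_k=p\,\lambda_k/\lambda<2\mu p/\lambda$ for every $k$: the bootstrap stalls at the finite threshold $2\mu p/\lambda$ and cannot reach an arbitrary $\tilde p$. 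Your termination argument (``as long as $\lambda_k\leq\lambda_\ast<2\mu$ the gain is bounded below'') begs the question, because the scheme itself drives $\lambda_k\nearrow 2\mu$ geometrically, namely $2\mu-\lambda_{k+1}=\frac{2}{p_k+2}(2\mu-\lambda_k)$, so no fixed $\lambda_\ast<2\mu$ bounds the $\lambda_k$. The side remark that $\lambda=2\mu$ makes the lemma immediate is also false: $L^{(p)_{2\mu}}_{loc}$ is just $L^p_{loc}$ with a decay weight and embeds into no $L^{\tilde p}_{loc}$ with $\tilde p>p$; the paper instead reduces to $\lambda<2\mu$ by H\"older, trading a little integrability for Morrey decay.

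What is missing is a mechanism that breaks the invariance of $\lambda/p$, and this is where the smallness of $\theta$ must enter (your argument never genuinely uses it). The paper keeps $p$ fixed and, after H\"older-ing the Adams output back down to exponent $p$, obtains an estimate of the form
\[
 \vrac{w}_{(p)_\lambda,B_{\Lambda^{-1}r}}\aleq \theta\,\vrac{w}_{(p)_\lambda,B_{2r}}+\brac{\theta+\Lambda^{-\frac{\lambda}{p}}}\sum_{k}2^{-k\frac{\lambda}{p}}[w]_{(p)_\lambda,B_{2^{k+1}r}}+\vrac{f}_\infty\, r^{\lambda_N\frac{p+2}{2p}},
\]
to which the iteration Lemma~\ref{la:iteration} applies once $\theta$ and $\Lambda^{-\lambda/p}$ are small; this yields a genuine decay gain $\vrac{w}_{(p)_\lambda,B_\rho}\aleq\rho^{\sigma}$, i.e.\ $w\in L^{(p)_{\lambda-p\sigma}}_{loc}$ with the \emph{same} $p$ and a strictly \emph{smaller} Morrey exponent. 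Repeating drives $\lambda\to 0$, and only then does a single application of your Adams step, $\frac{1}{p_1}=\frac1p+\frac12-\frac{\mu}{\lambda_N}$ with $\lambda_N\to\frac{2\mu p}{p+2}$, produce $p_1\to\infty$. In short, the correct direction of the bootstrap is to decrease $\lambda$ at fixed $p$ using the smallness of $\theta$, not to increase $p$ at the cost of increasing $\lambda$.
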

\begin{proof}
In order to keep the presentation short, we are going to assume that $\Omega[] = \tilde{\Omega} \Rz[]$. Also note that if $w \in L^{p,\lambda}$ for some $p > 2$, than for some $\tilde{p} \in (2,p)$, $w \in L^{\tilde{p},\tilde{\lambda}}$, for some $\tilde{\lambda} < \lambda$, so we can assume w.l.o.g. that $\lambda < 2\mu$.

From \eqref{eq:highit:lapsmuw.new} we have for any $B_{r} \subset B_R \subset \tilde{D}$,
\[
\begin{ma}
\vrac{\laps{\mu} w}_{\frac{2p}{p+2},B_r} &\aleq& \vrac{\Omega}_{2,B_r} \vrac{\Rz[w]}_{p,B_r} + \vrac{f}_{\infty}\ r^{m\frac{p+2}{2p}}\\
&\overset{\eqref{eq:regp}}{\aleq}& r^{\frac{m-2\mu}{2}}\  \theta\ \vrac{w}_{p,B_{2r}} + r^{\frac{m-2\mu}{2}} \ \theta \sum_{k=2}^\infty 2^{-k\frac{m}{p}} \vrac{w}_{p,A^k_r} + \vrac{f}_{\infty}\ r^{m\frac{p+2}{2p}}\\
&\aleq& r^{\frac{m-2\mu}{2}}\ r^{\frac{m-\lambda}{p}}\ \theta\ \vrac{w}_{(p)_\lambda,B_{R}} + r^{\frac{m-2\mu}{2}}\ r^{\frac{m-\lambda}{p}} \theta \sum_{k=2}^\infty 2^{-k\frac{\lambda}{p}} [w]_{(p)_\lambda,B_{2^{k+1} R}} + \vrac{f}_{\infty}\ r^{m\frac{p+2}{2p}}\\
\end{ma}
\]
That is, for 
\begin{equation}\label{eq:higherit:lambdaNfromplambda}
 \lambda_N := \lambda \frac{2}{2+p} + 2\mu \frac{p}{2+p} \in (\lambda, 2\mu),
\end{equation}
\[
\begin{ma}
\vrac{\laps{\mu} w}_{(\frac{2p}{p+2})_{\lambda_N},B_{\frac{R}{2}}}
&\aleq& \theta\ \vrac{w}_{(p)_\lambda,B_{R}} + \theta \sum_{k=2}^\infty 2^{-k\frac{\lambda}{p}} [w]_{(p)_\lambda,B_{2^{k+1} R}} + \vrac{f}_{\infty}\ R^{\lambda_N\frac{p+2}{2p}}\\
\end{ma}
\]
Consequently, by Proposition \ref{pr:highit:fmorrey} (note that $\frac{2p}{p+2} > 1$), for $p_2 = 2p/(p+2)$ and $p_1 > p$ (since $\lambda_N < 2\mu$) defined by
\begin{equation}\label{eq:higherit:p1fromplambda}
\fracm{p_1} = \fracm{p} + \fracm{2} - \frac{\mu}{\lambda_N}
\end{equation}
\[
\begin{ma}
\vrac{w}_{p_1,B_{\Lambda^{-1}r}} 
&\aleq& (\Lambda^{-1} r)^{-\frac{\lambda_N}{p}-\frac{\lambda_N}{2}+\mu+\frac{m}{p_1}} \vrac{\laps{\mu} w}_{(\frac{2p}{p+2})_{\lambda_N},B_{r}} + \Lambda^{-\frac{m}{p_1}}\ \sum_{k=1}^\infty 2^{-km}\  r^{\frac{m}{p_1}-m}\ \vrac{w}_{1,A^k_r}\\

&\aleq& (\Lambda^{-1} r)^{-\frac{\lambda_N}{p}-\frac{\lambda_N}{2}+\mu+\frac{m}{p_1}} 
\theta\ \vrac{w}_{(p)_\lambda,B_{2r}} + (\Lambda^{-1} r)^{-\frac{\lambda_N}{p}-\frac{\lambda_N}{2}+\mu+\frac{m}{p_1}}\ \theta \sum_{k=2}^\infty 2^{-k\frac{\lambda}{p}} [w]_{(p)_\lambda,B_{2^{k+2} r}}\\
&&+ (\Lambda^{-1} r)^{-\frac{\lambda_N}{p}-\frac{\lambda_N}{2}+\mu+\frac{m}{p_1}} \vrac{f}_{\infty}\ r^{\lambda_N\frac{p+2}{2p}}\\
&&+ (\Lambda^{-1} r)^{\frac{m}{p_1}-\frac{\lambda}{p}}\  \Lambda^{-\frac{\lambda}{p}} \sum_{k=1}^\infty 2^{-k\frac{\lambda}{p}}\  [w]_{(p)_\lambda,B_{2^{k+1}r}}\\
&\aleq& (\Lambda^{-1} r)^{-\frac{\lambda_N}{p}-\frac{\lambda_N}{2}+\mu+\frac{m}{p_1}} 
\theta\ \vrac{w}_{(p)_\lambda,B_{2r}}\\
&&+ (\Lambda^{-1} r)^{-\frac{\lambda_N}{p}-\frac{\lambda_N}{2}+\mu+\frac{m}{p_1}}\ (\theta + \Lambda^{-\frac{\lambda}{p}} )\ \sum_{k=1}^\infty 2^{-k\frac{\lambda}{p}} [w]_{(p)_\lambda,B_{2^{k+1} r}}\\
&&+ (\Lambda^{-1} r)^{-\frac{\lambda_N}{p}-\frac{\lambda_N}{2}+\mu+\frac{m}{p_1}} \vrac{f}_{\infty}\ r^{\lambda_N\frac{p+2}{2p}}\\
\end{ma}
\]
Consequently,
\[
\begin{ma}
\vrac{w}_{p,B_{\Lambda^{-1}r}} 
&\aleq& (\Lambda^{-1} r)^{\frac{m}{p}-\frac{m}{p_1}} \vrac{w}_{p_1,B_{\Lambda^{-1}r}} \\
&\aleq& (\Lambda^{-1} r)^{\frac{m}{p}-\frac{\lambda_N}{p}-\frac{\lambda_N}{2}+\mu}
\theta\ \vrac{w}_{(p)_\lambda,B_{2r}}\\
&&+ (\Lambda^{-1} r)^{\frac{m}{p}-\frac{\lambda_N}{p}-\frac{\lambda_N}{2}+\mu}\ (\theta + \Lambda^{-\frac{\lambda}{p}} )\ \sum_{k=1}^\infty 2^{-k\frac{\lambda}{p}} [w]_{(p)_\lambda,B_{2^{k+1} r}}\\
&&+ (\Lambda^{-1} r)^{\frac{m}{p}-\frac{\lambda_N}{p}-\frac{\lambda_N}{2}+\mu} \vrac{f}_{\infty}\ r^{\lambda_N\frac{p+2}{2p}}.
%
\end{ma}
\]
Now
\[
\frac{m}{p}-\frac{\lambda_N}{p}-\frac{\lambda_N}{2}+\mu = \frac{m-\lambda}{p},
\]
which implies finally, for any $B_{2r} \subset D$, 
\[
\begin{ma}
\vrac{w}_{(p)_\lambda,B_{\Lambda^{-1}r}}
&\aleq& \theta\ \vrac{w}_{(p)_\lambda,B_{2r}}\\
&&+  (\theta + \Lambda^{-\frac{\lambda}{p}} )\ \sum_{k=1}^\infty 2^{-k\frac{\lambda}{p}} [w]_{(p)_\lambda,B_{2^{k+1} r}}\\
&&+ \vrac{f}_{\infty}\ r^{\lambda_N\frac{p+2}{2p}}.
\end{ma}
\]
Now we argue similar to the iteration in Section~\ref{s:upto2}: Choose $\Lambda_\lambda := 2^{C_{p,\mu} \lambda^{-4}}$, assume that $\theta < \Lambda_\lambda^{-\frac{\lambda}{p}}$, and choose $C_{p,\mu}$ so that \eqref{eq:it:smallness} is satisfied. Then we can choose a new $\lambda_1 = \lambda-c\lambda^{4}$ for which the above estimate holds and the right-hand side is finite. Repeating this argument (for smaller and smaller $\theta$), we obtain a monotone decreasing sequence of $\lambda_{i+1} = \lambda_i - c \lambda_i^4 \geq 0$, which has as only fixed point $0$. Thus, for any $\lambda > 0$ there exists $\theta > 0$ such that for any $\tilde{D} \subsubset D$,
\[
 \vrac{w}_{(p)_\lambda,\tilde{D}} \leq C_{\tilde{D},D,\lambda, w}.
\]
Note that for $\lambda \to 0$, $\lambda_N \to \mu \frac{2p}{p+2}$ and thus $p_1$ in \eqref{eq:higherit:p1fromplambda} tends to infinity. Thus, we have obtain for any $\tilde{p} > 1$ a $\lambda_{\tilde{p}} > 0$ such that $p_1 \equiv p_1(\lambda_{\tilde{p}}) > \tilde{p}$, and if $\theta$ is small enough, we have to iterate the above argument finitely many steps to obtain that $w \in L^{p_1}_{loc}(\tilde{D})$.
%
\end{proof}

\begin{proposition}\label{pr:highit:fmorrey}
For any $f$, $\mu \in (0,m)$ we have for $p_1 \in (1,\infty)$, $p_2 \in (1,\infty)$, $\lambda \in (0,m)$ such that
\[
 \fracm{p_1} = \fracm{p_2} - \frac{\mu}{\lambda},
\]
the following estimate for any $\Lambda > 2$
\[
 \vrac{f}_{p_1,B_{\Lambda^{-1}r}} \aleq (\Lambda^{-1} r)^{-\frac{\lambda}{p_2}+\mu+\frac{m}{p_1}} \vrac{\laps{\mu} f}_{(p_2)_\lambda, B_r} + \sum_{k=1}^\infty 2^{-km}\  \Lambda^{-\frac{m}{p_1}}\ r^{\frac{m}{p_1}-m}\ \vrac{f}_{1,A^k_r}
\]
\end{proposition}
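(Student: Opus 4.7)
The plan is to establish this as a localized version of Adams' Morrey-Sobolev embedding for the Riesz potential $\lapms{\mu}$. I would start from the Riesz-potential identity $f = c_{m,\mu}\,\lapms{\mu}\laps{\mu}f$ (valid up to a preliminary cutoff/approximation step if $f$ does not decay sufficiently at infinity), and split the convolution into near-field and far-field contributions relative to $B_r$:
\[
 f(x) \;=\; c\int_{B_r}\frac{\laps{\mu}f(y)}{|x-y|^{m-\mu}}\,dy \;+\; c\int_{B_r^c}\frac{\laps{\mu}f(y)}{|x-y|^{m-\mu}}\,dy \;=:\;A(x)+B(x).
\]

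For the near-field term $A$, I would invoke Adams' Morrey-Sobolev embedding (Lemma~\ref{la:adams}) in the form $\vrac{\lapms{\mu}g}_{(p_1)_\lambda}\aleq\vrac{g}_{(p_2)_\lambda}$ with $g:=\chi_{B_r}\laps{\mu}f$, so that $\vrac{A}_{(p_1)_\lambda}\aleq\vrac{\laps{\mu}f}_{(p_2)_\lambda,B_r}$. Passing from the global Morrey norm to the local $L^{p_1}$ norm on $B_{\Lambda^{-1}r}$ via $\vrac{A}_{p_1,B_{\Lambda^{-1}r}}\leq(\Lambda^{-1}r)^{(m-\lambda)/p_1}\vrac{A}_{(p_1)_\lambda}$, and using the elementary identity $(m-\lambda)/p_1 = m/p_1-\lambda/p_2+\mu$ that follows from the assumption $1/p_1=1/p_2-\mu/\lambda$, reproduces precisely the first term of the claim.

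For the far-field term $B$ the Riesz kernel is no longer singular on $B_{\Lambda^{-1}r}$: for $x\in B_{\Lambda^{-1}r}$ and $y\in A^k_r$ with $k\geq 1$ one has $|x-y|\aeq 2^k r$. The subtle point is that the claim requires the far-field data to appear as $\vrac{f}_{1,A^k_r}$, not as $\vrac{\laps{\mu}f}_{1,A^k_r}$. I would handle this by exploiting that $\laps{\mu}B = \chi_{B_r^c}\laps{\mu}f$ vanishes on $B_r$, so $B$ is $\laps{\mu}$-harmonic in $B_r$ and agrees with $f-A$ on $B_r^c$. Applying the Riesz-Poisson representation on a slightly smaller ball $B_{r/2}\subset B_r$ yields, for $x\in B_{\Lambda^{-1}r}$,
\[
 |B(x)|\aleq\sum_{k=1}^\infty (2^k r)^{-m}\vrac{f-A}_{1,A^k_r}.
\]
The contribution of $A$ on the right can be bounded via the pointwise estimate $|A(y)|\aleq (2^k r)^{\mu-m}\vrac{\laps{\mu}f}_{1,B_r}$ for $y\in A^k_r$, and H\"older combined with Morrey-scaling then absorbs it into the first term of the claim. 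Integrating the resulting pointwise bound over $B_{\Lambda^{-1}r}$ extracts the volume factor $|B_{\Lambda^{-1}r}|^{1/p_1}\aeq(\Lambda^{-1}r)^{m/p_1}$, producing the second term $\Lambda^{-m/p_1}r^{m/p_1-m}\sum_k 2^{-km}\vrac{f}_{1,A^k_r}$ of the claim.

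The main technical obstacle is making the Riesz-Poisson step rigorous, since the explicit kernel on a ball degenerates as $y$ approaches the boundary. A safer alternative I would fall back on is the following: choose a smooth cutoff $\eta\in C_0^\infty(B_r)$ with $\eta\equiv 1$ on $B_{\Lambda^{-1/2}r}$, apply the Riesz representation to the compactly supported function $\eta f$, and control the nonlocal error $\laps{\mu}(\eta f)-\eta\laps{\mu}f$ using the pointwise decay $|\laps{\mu}\eta(y)|\aleq(\Lambda^{-1}r)^m|y|^{-m-\mu}$ for $|y|\gg r$. This second route produces the $\vrac{f}_{1,A^k_r}$ terms directly on each annulus, at the cost of invoking commutator estimates of the type already collected in Theorem~\ref{th:commutators}.
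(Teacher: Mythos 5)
Your near-field term $A$ is handled correctly and exactly as in the paper: Adams' embedding (Lemma~\ref{la:adams}) applied to $\chi_{B_r}\laps{\mu}f$, followed by H\"older/Morrey rescaling, with the right exponent identity. The genuine gap is in the far field, and neither of your two routes closes it. The underlying obstruction is that the right-hand side of the Proposition gives no control of $f$ itself inside $B_r$ --- only of $\laps{\mu}f$ there and of $f$ on the exterior annuli $A^k_r$, $k\geq1$ --- so any representation of $B=\lapms{\mu}\brac{\chi_{B_r^c}\laps{\mu}f}$ that reintroduces $f$ (equivalently $B=f-A$) on $B_r$ or on an interior annulus, without a summable or absorbable coefficient, is circular. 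Your Poisson-kernel route does exactly that: the representation of the $\mu$-harmonic function $B$ on $B_{r/2}$ integrates $B=f-A$ over all of $\R^m\setminus B_{r/2}$, and the portion $r/2<\abs{y}<r$ is covered by no $\vrac{f}_{1,A^k_r}$; pushing the representation ball out to $B_r$ trades this for the boundary weight $(\abs{y}^2-r^2)^{-\mu/2}$, which is not integrable against mere $L^1$ data on $A^1_r$. (Moreover the Riesz--Poisson formula you invoke exists in this form only for $\mu\in(0,2)$, while the Proposition is stated for $\mu\in(0,m)$.) Your fallback has the same defect in different clothing: $\laps{\mu}(\eta f)-\eta\laps{\mu}f=f\,\laps{\mu}\eta+H_\mu(\eta,f)$, and $\abs{\laps{\mu}\eta}$ is of size $r^{-\mu}$ throughout $B_r$ (the decay $\abs{y}^{-m-\mu}$ you quote only holds for $\abs{y}\gg r$), so $\lapms{\mu}\ebrac{f\,\laps{\mu}\eta}$ evaluated on $B_{\Lambda^{-1}r}$ is comparable to an average of $f$ over $B_r$ with no small factor; the commutator $H_\mu(\eta,f)$ likewise involves fractional derivatives of $f$ of order below $\mu$ on $B_r$, which are not part of the data.

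The paper sidesteps all of this by dualizing. It picks $\varphi\in C_0^\infty(B_{\Lambda^{-1}r})$ with $\vrac{\varphi}_{p_1'}\leq1$ realizing $\vrac{f}_{p_1,B_{\Lambda^{-1}r}}$, writes $\int f\varphi=\int\laps{\mu}f\;\lapms{\mu}\varphi$, applies the dyadic partition of unity to $\laps{\mu}f$ (not to $f$), and for each exterior annulus moves both operators back onto the test function, so that the far field contributes $\int f\,\laps{\mu}\brac{\eta_{A^k_r}\lapms{\mu}\varphi}$. Since $\supp\varphi$ and $\supp\eta_{A^k_r}$ are at mutual distance $\aeq 2^kr$, quasi-locality (Lemma~\ref{la:quasilocIII}) gives $\vrac{\laps{\mu}(\eta_{A^k_r}\lapms{\mu}\varphi)}_{\infty}\aleq 2^{-km}\Lambda^{-m/p_1}r^{m/p_1-m}$, which pairs with $\vrac{f}_{1,A^k_r}$ to produce the second term of the claim; no Poisson kernel and no commutator with a cutoff of $f$ ever appears. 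If you want a direct, non-duality argument, you would need a kernel representation of $B$ on $B_{\Lambda^{-1}r}$ whose kernel is uniformly of size $(2^kr)^{-m}$ on each $A^k_r$ and carries no order-one weight on $B_r$ --- which is precisely what the duality manufactures.
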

\begin{proof}
Let $1 < p_4 \leq p_1'$,
\[
 \fracm{p_3} + \fracm{p_4} = 1.
\]
\[
 \fracm{p_3} = \fracm{p_2} - \frac{\mu}{\lambda} \in (0,1).
\]
There exists $\varphi \in C_0^\infty(B_{\Lambda^{-1}r})$, $\vrac{\varphi}_{p_1'} \leq 1$, such that
\[
 \begin{ma}
  \vrac{f}_{p_1,B_{\Lambda^{-1}r}}  &\aleq& \int f \varphi = \int \lapms{\mu} \brac{\eta_{B_r} \laps{\mu} f}\ \varphi + \sum_{k=1}^\infty \int f\ \laps{\mu} \brac{\eta_{A_r^k}\lapms{\mu} \varphi}\\
  &\aleq& \vrac{\lapms{\mu} (\eta_r \laps{\mu} f)}_{p_3,B_{\Lambda^{-1} r}}\ \vrac{\varphi}_{p_4} + \sum_{k=1}^\infty \vrac{f}_{p_1,A^k_r}\  \vrac{\laps{\mu}\brac{\eta_{A^k_r} \lapms{\mu} \varphi}}_{p_1'} 
 \end{ma}
\]
By Lemma~\ref{la:quasilocIII},
\[
\vrac{\laps{\mu}\brac{\eta_{A^k_r} \lapms{\mu} \varphi}}_{\infty} \aleq 2^{-km}\  \Lambda^{-\frac{m}{p_1}}\ r^{\frac{m}{p_1}-m}
\]
Since $p_4 \leq p_1'$,
\[
 \vrac{\varphi}_{p_4,B_r} \aleq (\Lambda^{-1} r)^{\frac{m}{p_4}-\frac{m}{p_1'}}.
\]
And using Lemma~\ref{la:adams}
\[
 \vrac{\lapms{\mu} (\eta_r \laps{\mu} f)}_{p_3,B_{\Lambda^{-1} r}} \aleq (\Lambda^{-1} r)^{\frac{m-\lambda}{p_3}} \vrac{\lapms{\mu} (\eta_r \laps{\mu} f)}_{(p_3)_\lambda} \aleq (\Lambda^{-1} r)^{\frac{m-\lambda}{p_3}} \vrac{\laps{\mu} f}_{(p_2)_\lambda,B_r}
\]
Consequently, we have shown the claim.

%
%
\end{proof}

\newpage
\section{Commutators and fractional product rules: Proof of Theorem~\ref{th:commutators}}
In this section we repeat and refine some commutator estimates and non-local expansion rules which were introduced in \cite{Sfracenergy}, motivated by the results in \cite{DR1dSphere,SNHarmS10}. The for us most important commutators are
\[
 H_\alpha (a,b) :=\laps{\alpha} (ab) - a \laps{\alpha} b - - b \laps{\alpha} a,
\]
and for a linear operator $T$
\[
\mathcal{C}(a,T)[b] := a T[b] - T[ab].
\]
The commutator $H_\alpha (a,b)$ was introduced by Da Lio and Rivi\`{e}re in \cite{DR1dSphere}, where Hardy-space $\mathcal{H}$ and $BMO$-estimates where shown, making use of the Hardy-Littlewood decomposition and paraproducts. This is also somewhat related to the techniques of the T1-Theorem cf. \cite{KP88}. 
If one is interested in $L^2$-estimates only (e.g., in the sphere case) then there is an extremely elementary argument \cite{SNHarmS10} somewhat inspired by Tartar's proof of Wente's inequality \cite{Tartar85}. 
For general Lorentz space estimates there is also an argument using potential arguments, which even gives pointwise estimates, and was introduced in \cite{Sfracenergy}. As it is a direct, pointwise argument not involving the Fourier transform, it is easier to apply in non-linear situations, cf. \cite{BPSknot12}.

The commutator $\mathcal{C}(a,T)[b]$ and its Hardy-BMO estimates were introduced in \cite{CRW76} for the Riesz transform $\Rz$, and later generalized to the Riesz potential $\lapms{\alpha}$ in \cite{Chanillo82}. Again for pointwise estimates the arguments in \cite{SNHarmS10} can be adapted.

Here, we are going to treat in Subsection~\ref{ss:pointwisefracprodrule} pointwise estimates on $H_\alpha(a,b)$, and in Subsection~\ref{ss:pointwisecommest} pointwise estimates on $\mathcal{C}(a,T)[b]$ using and extending the techniques from \cite{SNHarmS10}. For Hardy-BMO estimates, we will use in Subsection~\ref{ss:drprodrules} the techniques in \cite{DR1dSphere}, and extend them to the limiting case $\alpha = 1$.

Let us shortly recall the notion for Hardy space $\mathcal{H}$ and $BMO$. The latter space $BMO$ is defined as
\[
 g \in BMO \quad :\Leftrightarrow [g]_{BMO} := \abs{B_r}^{-1}\ \int_{B_r} \abs{g-\abs{B_r}^{-1}\int_{B_r} g} < \infty.
\]
Our interest in $BMO$ stems from the fact, that it is a bigger space than $L^\infty$, and we have the nice embedding
\begin{equation}\label{eq:commis:BMOembedding}
 [g]_{BMO} \aleq \sup_{r > 0}  r^{\frac{p\tau-m}{p}} \vrac{\laps{\tau} g}_{(p,\infty), B_r} \quad \mbox{for $\tau > 0$, $p > 1$},
\end{equation}
wheras for $L^\infty$ we only have the following embedding which is more difficult to control
\begin{equation}\label{eq:commis:Linftyembedding}
 \vrac{g}_{\infty} \aleq \vrac{\laps{\tau} g}_{(\frac{m}{\tau},1)} \quad \mbox{for $\tau \in (0,m)$}.
\end{equation}
The Hardy space $\mathcal{H}$, on the other hand, is a slightly smaller space than $L^1$, with the (for us) most important property that
\begin{equation}\label{eq:hardybmo}
 \int f\ g \aleq \vrac{f}_{\mathcal{H}}\ [g]_{BMO}.
\end{equation}
That is, if we know that a quantity belongs to the Hardy space, it allows us to control the integral of \eqref{eq:hardybmo} in terms of the right-hand side of \eqref{eq:commis:BMOembedding}, instead of having to deal with the terms on the right-hand side of \eqref{eq:commis:Linftyembedding}.

The norm of the Hardy space $\mathcal{H}$ is usually defined via
\[
 \vrac{f}_{\mathcal{H}} := \|\sup_{t > 0} \phi_t \ast f \|_1,
\]
where $\phi \in C_0^\infty(B_1)$, $\int \phi = 1$, and $\phi_t(x) := t^{-m}\phi(x/t)$, cf. \cite{Stein93, FS72}, another very readable overview in the context with Partial Differential Equations is given in \cite{Semmes94}. We are never using the above definition, though, but rather use a characterization in terms of Triebel-spaces, and employ the duality \eqref{eq:hardybmo}.

\subsection{Pointwise fractional product rules via potentials}\label{ss:pointwisefracprodrule}
\begin{lemma}\label{la:lapsbetaHalpha}
For any $\alpha \in (0,m)$ there is $L \in \N$ such that the following holds: For any $\beta \in [0,\min (\alpha,1))$, $\beta \leq m-\alpha$, $\tau \in (\max\{\beta,\alpha+\beta-1\},\alpha]$, $\epsilon > 0$, there are, $s_k \in (0,\alpha)$, $t_k \in (0,\tau)$, where $\tau-\beta -s_k-t_k \in [0,\epsilon)$, such that the following holds
\[
 \abs{\laps{\beta} H_\alpha(a,b)} \aleq \sum_{k=1}^L \lapms{\tau-\beta  -s_k-t_k} \brac{\lapms{s_k} \abs{\laps{\alpha} a}\ \lapms{t_k}\abs{\laps{\tau} b}}.
\]
\end{lemma}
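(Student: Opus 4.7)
The argument follows the pointwise--potential method of \cite{Sfracenergy,SNHarmS10}. Two ingredients drive it. The first is the singular-integral representation, valid for $\alpha\in(0,2)$,
\[
H_\alpha(a,b)(x)=-c_{m,\alpha}\int_{\R^m}\frac{(a(x)-a(y))(b(x)-b(y))}{|x-y|^{m+\alpha}}\,dy,
\]
which is extended to $\alpha\in[2,m)$ by iterating $\laps{\alpha}=\laps{\alpha-2}\laps{2}$ in combination with the classical Leibniz rule for $\Delta$; each iteration produces only finitely many additional commutator terms, and this eventually accounts for the integer $L$ in the statement. The second is the pointwise H\"older bound
\[
|f(x)-f(y)|\lesssim|x-y|^{\sigma}\bigl(\lapms{\gamma-\sigma}|\laps{\gamma}f|(x)+\lapms{\gamma-\sigma}|\laps{\gamma}f|(y)\bigr),\qquad\sigma\in[0,\min(1,\gamma)),
\]
which follows from $f=\lapms{\gamma}\laps{\gamma}f$ and the elementary estimate $\bigl||x-z|^{\gamma-m}-|y-z|^{\gamma-m}\bigr|\lesssim|x-y|^{\sigma}\bigl(|x-z|^{\gamma-m-\sigma}+|y-z|^{\gamma-m-\sigma}\bigr)$.

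I would first dispatch the base case $\beta=0$. Inserting the H\"older bound into the integral representation of $H_\alpha$, with exponent $\sigma_a=\alpha-s_k$ on the $a$-difference (using $\gamma=\alpha$) and $\sigma_b=\tau-t_k$ on the $b$-difference (using $\gamma=\tau$), the integrand becomes $|x-y|^{-m+(\tau-s_k-t_k)}$ multiplied by one of four bilinear products of $\lapms{\alpha-\sigma_a}|\laps{\alpha}a|$ and $\lapms{\tau-\sigma_b}|\laps{\tau}b|$ evaluated at $x$ or $y$. Carrying out the $y$-integral in each branch produces either a pointwise product times a Riesz potential at $x$, or a Riesz potential of the product, in each case of order $\tau-s_k-t_k\ge 0$, matching the claim. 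The joint constraints $s_k>\max(0,\alpha-1)$, $t_k>\max(0,\tau-1)$ (H\"older admissibility), $s_k+t_k<\tau$ (diagonal integrability) and $s_k+t_k>\tau-\epsilon$ (smallness of the outside Riesz order) have a non-empty range of solutions precisely under the hypothesis $\tau>\max\{\beta,\alpha+\beta-1\}$ with $\beta=0$.

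For $\beta>0$ I would apply $\laps{\beta}$ in its singular-integral form (admissible since $\beta<\min(\alpha,1)\le 1$) to $H_\alpha(a,b)$ and, after substituting the integral representation of $H_\alpha$, use the algebraic decomposition
\begin{multline*}
A(x,z)B(x,z)K(x,z)-A(y,z)B(y,z)K(y,z)\\
=(a(x)-a(y))B(x,z)K(x,z)+A(y,z)(b(x)-b(y))K(x,z)+A(y,z)B(y,z)(K(x,z)-K(y,z)),
\end{multline*}
with $A(u,z)=a(u)-a(z)$, $B(u,z)=b(u)-b(z)$, $K(u,z)=|u-z|^{-(m+\alpha)}$. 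After Fubini the first two contributions collapse to constant multiples of $\laps{\beta}a\cdot\laps{\alpha}b$ and $\laps{\beta}b\cdot\laps{\alpha}a$, both absorbed into the claimed form via the positivity estimate $|\laps{\beta}f|=|\lapms{\gamma-\beta}\laps{\gamma}f|\le\lapms{\gamma-\beta}|\laps{\gamma}f|$ (valid for $\gamma>\beta$). The third contribution is treated as in the base case, but with the gradient-type estimate $|K(x,z)-K(y,z)|\lesssim|x-y|^{\beta}\bigl(|x-z|^{-(m+\alpha+\beta)}+|y-z|^{-(m+\alpha+\beta)}\bigr)$ replacing one H\"older bound; after $y$- and $z$-integration this reduces to an instance of the base case applied to $H_{\alpha+\beta}$, iterated via $\laps{\alpha+\beta}=\laps{\alpha+\beta-2}\laps{2}$ whenever $\alpha+\beta\ge 2$.

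The main obstacle is the combinatorial bookkeeping: the branches produced by evaluating each Riesz factor at $x$ or $y$, combined with the iterative reductions in $\alpha$ and $\alpha+\beta$, multiply the number of elementary terms, each of which must satisfy the joint constraints $s_k\in(0,\alpha)$, $t_k\in(0,\tau)$, $\tau-\beta-s_k-t_k\in[0,\epsilon)$. The hypothesis $\tau>\alpha+\beta-1$ is exactly the non-emptiness condition in the worst branch (where both H\"older exponents must be taken below $1$), the condition $\beta<\min(\alpha,1)$ is needed both for the singular-integral representation of $\laps{\beta}$ and for the kernel-difference estimate on $K$, and $\beta\le m-\alpha$ ensures that $\laps{\alpha+\beta}$ remains a well-defined operator on $\R^m$ with the expected Riesz potential structure. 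The integer $L$ then bounds the total number of elementary terms produced by all these decompositions.
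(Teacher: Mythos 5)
Your treatment of the base case $\beta=0$ is essentially the paper's Lemma \ref{la:comm:Hsest}: the bilinear kernel representation of $H_\alpha$, the H\"older-type bounds $m_s(x,y,z)\chi_i\aleq\dots$, and the reduction of $\alpha\geq 2$ to $\alpha\in(0,2)$ via the Leibniz rule for $\lap^K$ all match, and the bookkeeping of the constraints on $s_k,t_k$ is right. The problem is the case $\beta>0$. Your three-way telescoping of $A(x,z)B(x,z)K(x,z)-A(y,z)B(y,z)K(y,z)$ is algebraically correct, but the resulting pieces are not individually integrable in $z$: the second and third pieces contain the factor $A(y,z)=a(y)-a(z)$ paired with $K(x,z)=\abs{x-z}^{-m-\alpha}$ (or with $K(x,z)-K(y,z)$), and since $A(y,z)$ does not vanish as $z\to x$, the $z$-integral diverges for every $\alpha>0$. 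Only the sum of all three pieces retains the quadratic vanishing at $z=x$ that makes $H_\alpha$ well defined, so Fubini cannot be applied piece by piece and the claim that "the first two contributions collapse to constant multiples of $\laps{\beta}a\,\laps{\alpha}b$ and $\laps{\beta}b\,\laps{\alpha}a$" does not hold. It could not hold even in principle: a bare factor $\laps{\alpha}b$ is not bounded by $\lapms{t}\abs{\laps{\tau}b}$ for any $t>0$ once $\tau<\alpha$, and the reduction to $\tau<\alpha$ is forced (you need it to make the commutator arguments work), so if the cross terms really were such products the lemma would fail.

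What the cross terms actually are is revealed by the operator identity
\[
\laps{\beta}H_\alpha(a,b)=H_{\alpha+\beta}(a,b)+\mathcal{C}(a,\laps{\beta})[\laps{\alpha}b]+\mathcal{C}(b,\laps{\beta})[\laps{\alpha}a],
\]
which is the paper's starting point: the term $H_{\alpha+\beta}(a,b)$ is your "kernel-difference" contribution and is indeed an instance of the base case (this is where $\tau>\alpha+\beta-1$ and $\tau>\beta$ enter), but the remaining two terms are genuine commutators, not products. Each of them contains internally a term with too many derivatives on one factor ($a\laps{\alpha+\beta}b$, resp.\ $b\laps{\alpha+\beta}a$) that cancels against the corresponding term in $H_{\alpha+\beta}$, and a pointwise potential bound for $\mathcal{C}(\lapms{\tau}A,\laps{\beta})[\laps{\delta}B]$ requires exploiting a cancellation of its own: one must introduce a second difference in an auxiliary variable $w$ for the $\laps{\delta}$-factor and run a multi-case kernel analysis over the regions where $\abs{w}$, $\abs{x-y}$, $\abs{x-z}$, $\abs{z-y}$ dominate one another (the paper's Lemma \ref{la:comm:Clapmslapslapsd}). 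This estimate is the technical core of the $\beta>0$ case, and it is absent from your argument; without it the proof does not close.
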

\begin{proof}
 Notice that is suffices to prove the for $\tau < \alpha$, given that
\begin{equation}
\label{eq:taumax}
 \abs{\laps{\tau} b} = \abs{\lapms{\alpha-\tau} \laps{\alpha} b} \aleq \lapms{\alpha-\tau}\abs{\laps{\alpha} b}.
\end{equation}
%
\[
 \begin{ma}
 \laps{\beta} H_{\alpha}(a,b) &=& \laps{\beta} \brac{\lapa(ab) - a \lapa b - b \lapa a}\\
&=& \laps{\alpha+\beta} (ab) - a\ \laps{\alpha+\beta} b - b\ \laps{\alpha+\beta} a \\
&&+  a\laps{\beta} \lapa b - \laps{\beta}\brac{a \lapa b}\\
&&+ b\laps{\beta} \lapa a - \laps{\beta}\brac{b \lapa a}\\
&=& H_{\alpha+\beta}(a,b) +  \mathcal{C}(a,\laps{\beta})[\laps{\alpha}b] + \mathcal{C}(b,\laps{\beta})[\laps{\alpha}a] 

 \end{ma}
\]
The claim for the term $H_{\alpha+\beta}(a,b)$ then comes from Lemma \ref{la:comm:Hsest}.\\
For the remaining terms, we apply Lemma~\ref{la:comm:Clapmslapslapsd}: For $\mathcal{C}(a,\laps{\beta})[\laps{\alpha}b]$, we take $\delta \hat{=} \alpha-\tau$, $\tau \hat{=} \alpha$, $\beta \hat{=} \beta$ and set $A := \laps{\alpha} a$, $B := \laps{\tau} b$. Then
\[
 \mathcal{C}(a,\laps{\beta})[\laps{\alpha}b] = \mathcal{C}(\lapms{\alpha} A,\laps{\beta})[\laps{\alpha-\tau} B].
\]
For $\mathcal{C}(b,\laps{\beta})[\laps{\alpha}a]$, set for very small $\delta < \min\{\tau-\beta,1-\beta\}$, $A := \laps{\alpha-\delta} a$, $B := \laps{\tau} b$. Then 
\[
 \mathcal{C}(b,\laps{\beta})[\laps{\alpha}a] = \mathcal{C}(\lapms{\tau} B,\laps{\beta})[\laps{\delta} A].
\]
\end{proof}

\begin{lemma}\label{la:comm:Hsest}
Let $\alpha \in (0,m)$, $\epsilon > 0$ and assume that $\tau_1, \tau_2 (\max\{\alpha-1,0\},\alpha]$, $\tau_1+ \tau_2 > \alpha$. Then for some $L\in \N$, there are $s_k \in (0,\tau_1)$, $t_k \in (0,{\tau_2})$, $\tau_1+ \tau_2-s_k- t_k-\alpha \in [0,\epsilon)$ such that
\begin{equation}\label{eq:comm:Hsestclaim}
 \abs{H_\alpha(a,b)} \aleq \sum_{k=1}^L \lapms{\tau_1+ \tau_2-s_k- t_k-\alpha} \brac{\lapms{s_k} \abs{\laps{\tau_1} a}\ \lapms{t_k}\abs{\laps{{\tau_2}} b}}.
\end{equation}
\end{lemma}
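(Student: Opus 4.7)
My plan is to combine a pointwise singular-integral representation of $H_\alpha$ with a H\"older-Riesz control of the differences $|a(x)-a(y)|$ and $|b(x)-b(y)|$, in the spirit of \cite{Sfracenergy, SNHarmS10}, but with a finer dyadic bookkeeping to match the exponent constraints in \eqref{eq:comm:Hsestclaim}.

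First I would reduce to the base case $\alpha \in (0,2)$. There $\laps{\alpha}$ admits the pointwise singular-integral representation, and the algebraic identity
\[
AB - A'B' - A(B-B') - B(A-A') = -(A-A')(B-B'),
\]
applied to $(A,B)=(a(x),b(x))$, $(A',B')=(a(y),b(y))$ inside the integrand, gives
\[
H_\alpha(a,b)(x) = -c_{\alpha,m}\,\mathrm{PV}\!\int_{\R^m}\frac{(a(x)-a(y))(b(x)-b(y))}{|x-y|^{m+\alpha}}\,dy.
\]
For $\alpha\in[2,m)$ I would bootstrap from the base case by iterating Lemma~\ref{la:lapsbetaHalpha}, writing $\laps{\beta}H_{\alpha-\beta}(a,b)=H_{\alpha}(a,b)+(\text{commutators})$ with $\alpha-\beta<2$ and small $\beta$; the commutator terms already have the potential form required by \eqref{eq:comm:Hsestclaim} (cf. Lemma~\ref{la:comm:Clapmslapslapsd}), so the induction closes.

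Second, from the Riesz representation $a = c\,\lapms{\tau_1}(\laps{\tau_1}a)$, by splitting the $z$-integration into the near field $\{|x-z|\le 2|x-y|\}$ (where one uses the triangle inequality on $||x-z|^{\tau_1-m}-|y-z|^{\tau_1-m}|$) and the far field (where one uses the mean-value estimate $|\,|x-z|^{\tau_1-m}-|y-z|^{\tau_1-m}| \aleq |x-y|\,|x-z|^{\tau_1-m-1}$ combined with $|x-y|/|x-z|\le 1/2$), I would establish the pointwise bound
\[
|a(x)-a(y)| \aleq |x-y|^{s}\Bigl(\lapms{\tau_1-s}|\laps{\tau_1}a|(x) + \lapms{\tau_1-s}|\laps{\tau_1}a|(y)\Bigr),
\]
valid for every $s\in[0,\min\{\tau_1,1\})$, and symmetrically for $b$ with exponent $t\in[0,\min\{\tau_2,1\})$.

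Third, I would insert these into the singular-integral representation and carry out a dyadic decomposition of the $y$-integration in $|x-y|$. On each annulus a suitably chosen pair $(s,t)$ produces an integrand of the form $|x-y|^{s+t-\alpha-m}U(x_*)V(y_*)$ with $U=\lapms{\tau_1-s}|\laps{\tau_1}a|$, $V=\lapms{\tau_2-t}|\laps{\tau_2}b|$ and $x_*,y_*\in\{x,y\}$; cross-terms collapse, after resumming the dyadic scales, to an expression of the shape $\lapms{\tau_1+\tau_2-s-t-\alpha}(\lapms{s'}|\laps{\tau_1}a|\cdot\lapms{t'}|\laps{\tau_2}b|)$ with $s'\le s$, $t'\le t$, while unilateral terms reduce to the same shape after one further application of fractional integration. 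A finite family $(s_k,t_k)_{k=1}^{L}$ exhausting the scales and the symmetric combinations $(s,t)\leftrightarrow(t,s)$ then yields the desired decomposition.

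The main obstacle is the bookkeeping of exponents: the H\"older ranges force $s_k<\min\{\tau_1,1\}$ and $t_k<\min\{\tau_2,1\}$; convergence of the singular integral near $y=x$ after dyadic summation requires $s_k+t_k$ close to $\tau_1+\tau_2-\alpha$; and the homogeneity identity asks for $\tau_1+\tau_2-s_k-t_k-\alpha\in[0,\epsilon)$. The hypotheses $\tau_1,\tau_2\in(\max\{\alpha-1,0\},\alpha]$ and $\tau_1+\tau_2>\alpha$ are precisely what guarantee that admissible $(s_k,t_k)$ exist in every subregime $\alpha<1$, $\alpha=1$, $\alpha\in(1,2)$; verifying this case analysis, together with the inductive step for $\alpha\ge 2$, is the technical heart of the proof.
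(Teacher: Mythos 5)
Your treatment of the base case $\alpha\in(0,2)$ is essentially the paper's argument in a different bookkeeping: the paper writes $H_\alpha(a,b)(x)$ as a triple integral with kernel $m_{\tau_1}(x,y,z_1)m_{\tau_2}(x,y,z_2)|x-y|^{-m-\alpha}$, where $m_s(x,y,z)=\bigl||x-z|^{-m+s}-|z-y|^{-m+s}\bigr|$, and your H\"older--Riesz bound $|a(x)-a(y)|\aleq |x-y|^s\bigl(\lapms{\tau_1-s}|\laps{\tau_1}a|(x)+\lapms{\tau_1-s}|\laps{\tau_1}a|(y)\bigr)$ is exactly what the paper's estimates on $m_s\chi_i$ give after integrating in $z$. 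The one genuinely divergent contribution, the "both potentials evaluated at $x$" term $U(x)V(x)\int|x-y|^{s+t-m-\alpha}dy$, is handled by you via two exponent pairs (one with $s+t$ slightly above $\alpha$ for the near field, one slightly below for the far field), and by the paper via the support restriction $\chi_{|x-y|>2\max\{|x-z_1|,|x-z_2|\}}$ kept inside the $z$-integrals; both devices work and rest on the same admissibility analysis ($\min\{\tau_1,1\}+\min\{\tau_2,1\}>\alpha$, guaranteed by the hypotheses). So for $\alpha<2$ I would accept your proof.

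The gap is in the reduction of $\alpha\in[2,m)$ to the base case. You propose to "iterate Lemma~\ref{la:lapsbetaHalpha}", using $\laps{\beta}H_{\alpha-\beta}(a,b)=H_\alpha(a,b)+(\text{commutators})$. But Lemma~\ref{la:lapsbetaHalpha} is itself proved \emph{from} Lemma~\ref{la:comm:Hsest}: its proof expands $\laps{\beta}H_{\alpha'}(a,b)=H_{\alpha'+\beta}(a,b)+\mathcal{C}(a,\laps{\beta})[\laps{\alpha'}b]+\mathcal{C}(b,\laps{\beta})[\laps{\alpha'}a]$ and then bounds the term $H_{\alpha'+\beta}$ by invoking the very statement you are trying to establish. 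As written, your bootstrap is circular; to make it honest you would have to bound $\laps{\beta}H_{\alpha'}(a,b)$ for $\alpha'<2$ directly from the singular-integral representation (redoing the kernel analysis with the extra $\laps{\beta}$, as the proof of Lemma~\ref{la:comm:Clapmslapslapsd} does for the commutators), which is not sketched. There is also a range problem: Lemma~\ref{la:comm:Clapmslapslapsd} requires $\beta+\delta<\min(\tau,1)$, so a single step with $\beta<1$ cannot bridge from $\alpha-\beta<2$ to any $\alpha\geq 3$, and for $\alpha$ near $3$ the step size $\beta$ is forced close to $1$, not "small". The paper avoids all of this by writing $\alpha=2K+s$, $s\in(0,2)$, applying the exact Leibniz rule to the integer power $\lap^K$, so that $H_{2K+s}(a,b)$ reduces to $\laps{s}$ of a genuine finite sum $\sum_l\nabla^l a\,\nabla^{2K-l}b$ (to which the already-proved case $H_s$ applies) plus commutators $\laps{s}(b\lap^Ka)-b\lap^K\laps{s}a$ that fall under Lemma~\ref{la:comm:Clapmslapslapsd}, whose proof is independent. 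You should replace your induction step by this (or an equivalent non-circular) reduction.
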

For the convenience of the reader, we give the proof, which essentially follows the argument in \cite{Sfracenergy}; For a presentation closer to this one see \cite{DLSsphere}.
\begin{proof}
For $\alpha \in 2\N$ it is easy to obtain \eqref{eq:comm:Hsestclaim}, since for any $l \leq 2K-1$, for ${\tau} > 2K-1$,
\begin{equation}\label{eq:commi:nablakfvslapms}
 \abs{\nabla^l f} = \abs{\nabla^l\lapms{{\tau}} \laps{{\tau}}f} \aleq \lapms{{\tau_2}-l}\abs{\laps{{\tau}}f}.
\end{equation}
So we can assume that $\alpha = 2K+s$, for some $s \in (0,2)$, $K \in \N\cup\{0\}$. Assume at first that $K = 0$. Set
$A := \sabs{\laps{\tau_1} a}$, $B := \sabs{\laps{{\tau_2}} b}$, we have 
\[
 \abs{H_\alpha(a,b)} \aleq \lim_{\varepsilon \to 0} \int_{\abs{x-y} > \varepsilon} \int\int k(x,y,z_1,z_2)\ A(z_1)\ B(z_2)\ dz_2\ dz_1\ dy,
\]
where
\[
 k(x,y,z_1,z_2) = \frac{m_{\tau_1}(x,y,z_1)\ m_{{\tau_2}}(x,y,z_2)}{\abs{x-y}^{m+\alpha}},
\]
and for $s > 0$,
\[
 m_s(x,y,z) = \abs{\abs{x-z}^{-m+s} - \abs{z-y}^{-m+s}}.
\]
Let moreover
\[
 1 \leq \chi_1(x,y,z) + \chi_2(x,y,z) + \chi_3(x,y,z) \quad \mbox{for
$x,y,z \in \R^m$},
\]
where
\[
 \chi_1 := \chi_{\abs{x-y} \leq  2\abs{z-y}}\ \chi_{\abs{x-y} \leq 2
\abs{x-z}},
\]
\[
 \chi_2 := \chi_{\abs{x-y} \leq  2\abs{z-y}}\ \chi_{\abs{x-y} > 2
\abs{x-z}},
\]
\[
 \chi_3 := \chi_{\abs{x-y} >  2\abs{z-y}}\ \chi_{\abs{x-y} \leq 2
\abs{x-z}}.
\]
One then checks, using for $m_s(x,y,z)\chi_1$ a one-step Taylor expansion, for any $\delta \in (0,\min(s,1))$
\[
 m_s(x,y,z) \chi_1 \aleq \abs{x-z}^{-n+\alpha -\delta} \abs{x-y}^\delta\ \chi_1 \approx
\abs{z-y}^{-n+\alpha -\delta} \abs{x-y}^\delta\ \chi_1.
\]
\[
 m_s(x,y,z) \chi_2 \aleq \abs{x-z}^{-n+\alpha} \chi_2 \aleq
\abs{x-z}^{-n+\alpha-\delta}\ \abs{x-y}^{\delta},
\]
\[
 m_s(x,y,z) \chi_3 \aleq \abs{z-y}^{-n+\alpha} \chi_3 \aleq
\abs{z-y}^{-n+\alpha-\delta}\ \abs{x-y}^{\delta}.
\]
Hence, for $\delta_1 \in (0,\min(\tau_1,1))$, $\delta_2 \in (0,\min({\tau_2},1))$
\[
\begin{ma}
 k(x,y,z_1,z_2) \aleq \abs{x-y}^{-m-\alpha+\delta_1 + \delta_2}\ \big (&&  \abs{x-z_1}^{-m+{\tau_1}-\delta_1}\ \abs{y-z_2}^{-m+{\tau_2}-\delta_2}\\
 &&+ \abs{y-z_1}^{-m+{\tau_1}-\delta_1}\ \abs{x-z_2}^{-m+{\tau_2}-\delta_2} \\
 &&+ \abs{y-z_1}^{-m+{\tau_1}-\delta_1}\ \abs{y-z_2}^{-m+{\tau_2}-\delta_2} \\
 && + \abs{x-y}^{-\delta_1-\delta_2} \abs{x-z_1}^{-m+{\tau_1}}\ \abs{x-z_2}^{-m+{\tau_2}} \chi_{\abs{x-y} > 2\max\{\abs{x-z_1},\abs{x-z_2}\}}\
  \big ).
\end{ma}
\]
We choose $\delta_1 \in (0,\min(\tau_1,1))$, $\delta_2 \in (0,\min(\tau_2,1))$ such that $\delta_1+\delta_2 -\alpha \in (0,\epsilon)$. This is possible, since $\tau_1 + \tau_2 > \alpha$: If $\alpha \in (0,1]$, so are $\tau_1,\tau_2$, and we can choose $\delta_1,\delta_2$ arbitrarily close to $\tau_1,\tau_2$, so that this inequality is satisfied. If $\alpha \in [1,2)$ and (say) $\tau_1 > 1$, choose $\delta_1$ close enough to $1$, and $\delta_2 \in (\alpha-1,{\tau_2})$. Using that
\[
 \int_{\abs{x-y} > \varepsilon} \abs{x-y}^{-m-\alpha} \chi_{\abs{x-y} > 2\max\{\abs{x-z_1},\abs{x-z_2}\}} dy \aleq \max\{\abs{x-z_1},\abs{x-z_2}\}^{-\alpha} \aleq \abs{x-z_1}^{-\delta_1}\ \abs{x-z_2}^{-\alpha+\delta_1},
\]
and consequently, 
\[
\begin{ma}
H_\alpha(a,b) &\aleq& \lapms{\alpha-\delta_1}A \  (\lapms{\delta_1+\delta_2-\alpha}\lapms{{\tau_2}-\delta_2}B)\\
&&+ (\lapms{\delta_1+\delta_2-\alpha}\lapms{\alpha-\delta_1}A) \  \lapms{{\tau_2}-\delta_2}B\\
&&+ \lapms{\delta_1+\delta_2-\alpha}(\lapms{\alpha-\delta_1}A \  \lapms{{\tau_2}-\delta_2}B)\\
&&+ \lapms{\alpha-\delta_1}A\ \lapms{{\tau_2}+\delta_1-\alpha} B\\
&\aeq& \lapms{\alpha-\delta_1}A \  \lapms{\delta_1-\alpha+{\tau_2}}B\\
&&+ \lapms{\delta_1+\delta_2-\alpha}(\lapms{\alpha-\delta_1}A \  \lapms{{\tau_2}-\delta_2}B)\\
&&+ \lapms{\delta_2}A \  \lapms{{\tau_2}-\delta_2}B.
\end{ma}
\]
This shows \eqref{eq:comm:Hsestclaim} for $\alpha \in (0,2)$.\\
\\

If $K \geq 1$, $s \in (0,2)$, $\alpha = 2K+s > 2$
\begin{align}
 H_{2K+s}(a,b) &=  \lap^{K}\laps{s}(ab) - a\ \lap^{K}\laps{s}b- b\ \lap^{K}\laps{s}a \nonumber\\
 &= \laps{s}(\lap^{K}(ab)-a\lap^{K}b-b\lap^{K}a) \label{eq:commi:H2KpslapsHK}\\
  & \quad +\laps{s}(b\lap^{K}a) - b\ \lap^{K}\laps{s}a\label{eq:commi:H2Kpslapsblapka}\\
  & \quad + \laps{s}(a\lap^{K}b)- a\ \lap^{K}\laps{s}b\label{eq:commi:H2Kpslapsalapkb}.
\end{align}
Let $\nabla^l$, $\nabla^{2K-l}$, $l \in {1,\ldots,2K-1}$, be arbitrary combinations of gradients which sum up to differential order of $l$ and $2K-l$, respectively. Then
\[
\begin{ma}
 \laps{s} (\nabla^l a\ \nabla^{2K-l} b) &=& H_s(\nabla^l a, \nabla^{2K-l}b) + \laps{s}\nabla^l a\ \nabla^{2K-l}b+ \nabla^l a\ \laps{s}\nabla^{2K-l}b.
\end{ma}
\]
Recall $\alpha = 2K+s$. Applying \eqref{eq:comm:Hsestclaim} to $H_s(\cdot,\cdot)$, noting that $\alpha -s-l = 2K-l > 0$ and ${\tau_2}-s-2K+l > l-1 \geq 0$
\[
  \begin{ma}
  \laps{s} (\nabla^k a\ \nabla^{2K-k} b) 
  &\aleq& \sum_{k=1}^L \lapms{s-s_k-t_k} \brac{\lapms{s_k} \abs{\laps{s} \nabla^l a}\ \lapms{t_k}\abs{\laps{s} \nabla^{2K-l} b}}\\
  && + \abs{\laps{s}\nabla^l a}\ \abs{\nabla^{2K-l}b}+ \abs{\nabla^l a}\ \abs{\laps{s}\nabla^{2K-l}b}\\
  &\overset{\eqref{eq:commi:nablakfvslapms}}{\aleq}& \sum_{k=1}^L \lapms{s-s_k-t_k} \brac{\lapms{s_k+\tau_1-s-l} \abs{ \laps{\tau_1}a}\ \lapms{t_k-\alpha+l+{\tau_2}}\abs{\laps{{\tau_2}} b}}\\
  && + \lapms{{\tau_1}-s-l}\abs{\laps{{\tau_1}}a}\ \lapms{{\tau_2}-2K+l} \abs{\laps{{\tau_2}}b}+ \lapms{{\tau_1}-l}\abs{\laps{{\tau_1}}a}\ \lapms{{\tau_2}-\alpha+l} \abs{\laps{{\tau_2}}b}.
  \end{ma}
\]
Thus, we can estimate \eqref{eq:commi:H2KpslapsHK}
\[
  \begin{ma}
 \laps{s}(\lap^{K}(ab)-a\lap^{K}b-b\lap^{K}a) 
 &\aleq& \sum_{l=1}^{2K-1}\sum_{k=1}^L \lapms{s-s_k-t_k} \brac{\lapms{s_k+\tau_1-s-l} \abs{ \laps{\tau_1}a}\ \lapms{t_k-\alpha+l+{\tau_2}}\abs{\laps{{\tau_2}} b}}\\
  && + \sum_{l=1}^{2K-1}\lapms{\tau_1-s-l}\abs{\laps{\tau_1}a}\ \lapms{{\tau_2}-2K+l} \abs{\laps{{\tau_2}}b}+ \sum_{l=1}^{2K-1}\lapms{\tau_1-l}\abs{\laps{\tau_1}a}\ \lapms{{\tau_2}-\alpha+l} \abs{\laps{{\tau_2}}b}.
  \end{ma}
\]
As for \eqref{eq:commi:H2Kpslapsblapka},
\[
\begin{ma}
  &&\laps{s}(b\lap^{K}a)- b\ \lap^{K}\laps{s}a = H_s(\lap^K a,b) + \lap^{K} a\ \laps{s}b.
\end{ma}
\]
We have $\tau_1, \tau_2 > \alpha -1 = 2K+s-1$, and consequently ($K \geq 1$) we know $\tau_2 > s$. Assume that moreover $\tau_1 > 2K$, then
\[
 \abs{\laps{2K} a} \aleq \lapms{{\tau_2}-2K} \abs{\laps{{\tau_1}}a},\quad \abs{\laps{s} b} \aleq \lapms{{\tau_2}-s} \abs{\laps{{\tau_2}}b},
\]
and applying our estimates on $H_s(\cdot,\cdot)$ for $\tilde{\tau}_1 = \tau_1 - 2K > s-1$ (since $\tau_1 > \alpha-1$) and $\tilde{\tau}_2 = s$ we have the claim.\\
If this is not the case and $\tau_1 \leq 2K$, then $s < 1$. Then we need to apply Lemma \ref{la:comm:Clapmslapslapsd}: 
\[
\begin{ma}
  \laps{s}(b\lap^{K}a)- b\ \lap^{K}\laps{s}a
  &=&\laps{s}(b\laps{\delta}\lapms{{\tau_1}-2K+\delta}\laps{{\tau_1}}a)- b\ \laps{s}\laps{\delta}\lapms{{\tau_1}-2K+\delta}\laps{{\tau_1}}a\\
  &=& \mathcal{C}(\lapms{\tau_2}B,\laps{s})[\laps{\delta}A],
\end{ma}
\]
for $B := \laps{\tau_2} b$, $A := \lapms{{\tau_1}-2K+\delta}\laps{{\tau_1}}a$, for some $\delta \in (\max\{2K-{\tau_2},0\},1-s)$. Then Lemma~\ref{la:comm:Clapmslapslapsd} is applicable, and we have the claim.\\
We apply the same argument for \eqref{eq:commi:H2Kpslapsalapkb}. 
%
%
%
%
This concludes the proof of Lemma \ref{la:comm:Hsest}.
\end{proof}

\begin{proposition}\label{pr:HestRm}
Let $f,g \in \Sw(\R^m)$, Then
\[
 \vrac{\laps{\beta}H_{\mu}(f,g)}_{(p_0,1),\R^m} \aleq 
 \vrac{\laps{\tau} f}_{\frac{m}{\kappa+\tau-\mu},2}\ \vrac{\laps{\mu} g}_{2},
\]
where $\tau$ is chosen as in Lemma \ref{la:lapsbetaHalpha}
\[
 \fracm{p_0} = \fracm{2}+ \frac{\kappa +\beta-\mu}{m}.
\]
\end{proposition}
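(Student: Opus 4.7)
The plan is to combine the pointwise estimate of Lemma~\ref{la:lapsbetaHalpha} with the continuity of the Riesz potential on Lorentz spaces and O'Neil's Lorentz H\"older inequality. Exploiting the obvious symmetry $H_\mu(f,g) = H_\mu(g,f)$, I apply Lemma~\ref{la:lapsbetaHalpha} with $a:=g$ and $b:=f$ to reduce matters to the pointwise bound
\[
 \abs{\laps{\beta} H_\mu(f,g)} \aleq \sum_{k=1}^{L} \lapms{\tau-\beta-s_k-t_k} \brac{\lapms{s_k}\abs{\laps{\mu} g}\ \lapms{t_k}\abs{\laps{\tau} f}},
\]
so it suffices to control a single summand in the $L^{(p_0,1)}$ norm.

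For a fixed $k$, I first push each factor through the Hardy--Littlewood--Sobolev embedding on Lorentz spaces: since $\laps{\mu} g \in L^{(2,2)}$ and $\laps{\tau} f \in L^{\brac{\frac{m}{\kappa+\tau-\mu},2}}$,
\[
 \lapms{s_k}\abs{\laps{\mu} g} \in L^{(p_2,2)}, \qquad \lapms{t_k}\abs{\laps{\tau} f} \in L^{(p_3,2)},
\]
with $\fracm{p_2}=\fracm{2}-\frac{s_k}{m}$ and $\fracm{p_3}=\frac{\kappa+\tau-\mu-t_k}{m}$. O'Neil's H\"older inequality then places the product in $L^{(p_4,1)}$ with $\fracm{p_4}=\fracm{p_2}+\fracm{p_3}$ (the second index is $1$ since $\fracm{2}+\fracm{2}=1$). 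Finally one more application of the Riesz potential $\lapms{\tau-\beta-s_k-t_k}$ lands in $L^{(p_0,1)}$, and a short computation confirms that
\[
 \fracm{p_0} \;=\; \fracm{p_4} - \frac{\tau-\beta-s_k-t_k}{m} \;=\; \fracm{2} + \frac{\kappa+\beta-\mu}{m}
\]
is exactly the target exponent. Summing the finitely many $k=1,\ldots,L$ terms concludes the proof.

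The only remaining bookkeeping is to verify that the intermediate Lorentz exponents $p_2, p_3, p_4$ all lie in $(1,\infty)$ so that the above mapping properties actually apply. This is routine and follows from the admissible ranges $s_k \in [0,\mu)$, $t_k \in [0,\tau)$, $\tau-\beta-s_k-t_k \geq 0$ guaranteed by Lemma~\ref{la:lapsbetaHalpha}, together with the implicit condition $\fracm{p_0} \in (\fracm{2},1)$ built into the statement of the proposition; no genuine obstacle is expected at this step, as the exponent computation has been tailored precisely to make these constraints consistent.
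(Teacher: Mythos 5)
Your argument is correct and is essentially the paper's own: the pointwise bound of Lemma~\ref{la:lapsbetaHalpha} (applied with $a=g$, $b=f$, using the symmetry of $H_\mu$), followed by the Lorentz-space mapping of the Riesz potentials, O'Neil's product inequality (which yields second index $1$ from $\tfrac12+\tfrac12=1$), a final Riesz potential, and the same exponent arithmetic giving $\fracm{p_0}=\fracm{2}+\frac{\kappa+\beta-\mu}{m}$. The admissibility check you defer as routine is exactly the inequality $0<\fracm{p_2}+\fracm{p_3}<1$ that the paper verifies explicitly (using $\tau-\beta-s_k-t_k<\epsilon$ and $\mu+\epsilon<\frac m2$, resp.\ $\beta=0$ when $\mu=\frac m2$) in the proof of Lemma~\ref{la:rhs:Htermest}, so nothing is missing.
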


\begin{proposition}\label{pr:HestArk}
Let $f,g \in \Sw(\R^m)$, $\supp f \subset \overline{B_r}$. Then for any $k \geq 2$,
\[
 \vrac{\laps{\beta}H_{\mu}(f,g)}_{(p_0,1),A_{r}^k} \aleq 
 2^{k(-\frac{m}{2}+ \kappa -\mu)}\ \vrac{\laps{\tau} f}_{\frac{m}{\kappa+\tau-\mu}}\ \vrac{\laps{\mu} g}_{2},
\]
where
\[
 \fracm{p_0} = \fracm{2}+ \frac{\kappa +\beta-\mu}{m}.
\]
\end{proposition}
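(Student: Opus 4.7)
The plan is to prove Proposition~\ref{pr:HestArk} by combining the pointwise estimate of Lemma~\ref{la:lapsbetaHalpha} with a careful localization argument that exploits the separation between $\supp f\subset\overline{B_r}$ and the annulus $A_r^k$, following the same scheme as the proof of Proposition~\ref{pr:HestRm} but with restriction to $A_r^k$. Since $H_\mu(f,g)=H_\mu(g,f)$, I invoke Lemma~\ref{la:lapsbetaHalpha} with $(a,b)=(g,f)$ to obtain the pointwise bound
\[
 \abs{\laps{\beta} H_\mu(f,g)}(x)
 \aleq \sum_{j=1}^{L}\lapms{\gamma_j}\!\bigl(\lapms{s_j}\abs{\laps{\mu}g}\;\cdot\;\lapms{t_j}\abs{\laps{\tau}f}\bigr)(x),
\]
with $\gamma_j:=\tau-\beta-s_j-t_j\in[0,\epsilon)$ arbitrarily small, $s_j\in[0,\mu)$, $t_j\in[0,\tau)$. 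It suffices to bound each summand separately in $L^{(p_0,1)}(A_r^k)$.

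Abbreviating $F_j:=\lapms{s_j}\abs{\laps{\mu}g}$ and $G_j:=\lapms{t_j}\abs{\laps{\tau}f}$, for $x\in A_r^k$ I split the outer Riesz potential according to the location of the integration variable relative to $\supp f$:
\[
 \lapms{\gamma_j}(F_jG_j)(x) = \lapms{\gamma_j}\bigl((F_jG_j)\chi_{B_r}\bigr)(x) + \lapms{\gamma_j}\bigl((F_jG_j)\chi_{\R^m\setminus B_r}\bigr)(x).
\]
For the first piece, every $y\in B_r$ satisfies $|x-y|\sim 2^k r$, so the kernel factor $|x-y|^{\gamma_j-m}$ is bounded by $(2^k r)^{\gamma_j-m}$, and the remaining integral $\int_{B_r} F_jG_j$ is controlled by H\"older in Lorentz spaces combined with Adams-type Hardy-Littlewood-Sobolev estimates (exactly as in the proof of Proposition~\ref{pr:HestRm}), producing the product $\vrac{\laps{\mu}g}_2\cdot\vrac{\laps{\tau}f}_{m/(\kappa+\tau-\mu)}$ multiplied by a power of $r$ that, together with the $(2^k r)^{m/p_0}$-weight of the $L^{(p_0,1)}(A_r^k)$-norm and the factor $(2^k r)^{\gamma_j-m}$, compiles via the identity $m/p_0=m/2+\kappa+\beta-\mu$ into an $r$-independent quantity with the claimed $2^{k(-m/2+\kappa-\mu)}$ decay. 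For the second piece, where $y\notin B_r$, the integration variable is at distance $\gtrsim r$ from $\supp f$, so the inner Riesz potential $\lapms{t_j}\abs{\laps{\tau}f}(y)$ can be estimated by quasi-locality in terms of $|y|^{-(m-t_j-\tau)}$ times a scaled Lorentz norm of $\laps{\tau} f$; combining with $|x-y|\gtrsim 2^{k-1}r$ for $x\in A_r^k$ and performing a dyadic decomposition of $\R^m\setminus B_r$ into shells $|y|\sim 2^\ell r$, $\ell\ge 0$, one recovers the same decay factor after geometric summation in $\ell$.

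The main obstacle is the precise tracking of all Hölder and Adams exponents so that every power of $r$ cancels, consistently with the scaling invariance of the inequality (both sides scale as $\lambda^{2\mu-\kappa-m/2}$ under $r\mapsto r/\lambda$). The flexibility provided by $\gamma_j<\epsilon$ arbitrarily small in Lemma~\ref{la:lapsbetaHalpha}, together with the freedom in choosing $\tau$ in the range dictated by the lemma, is crucial to keeping all intermediate Hölder/Adams exponents admissible, and the relation $\gamma_j+s_j+t_j=\tau-\beta$ is exactly what pins down the final $2^k$-exponent. Summing the finitely many indices $j=1,\dots,L$ then concludes the proof.
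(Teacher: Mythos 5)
Your route is genuinely different from the paper's: the paper proves Proposition~\ref{pr:HestArk} by duality, testing against $\psi\in C_0^\infty(A_r^k)$ with $\vrac{\psi}_{(p_0',\infty)}\leq 1$, unfolding $\int\laps{\beta}\psi\,H_\mu(g,f)$ into the three terms $\int\laps{\mu+\beta}\psi\,gf$, $\int\laps{\beta}\psi\,f\laps{\mu}g$, $\int\laps{\beta}\psi\,g\laps{\mu}f$, and then exploiting quasi-locality (Lemmas~\ref{la:QuasiLocality}, \ref{la:quasilocIII}) of $\laps{\mu+\beta}\psi$ and $\laps{\beta}\psi$ on $B_r$, where these are of size $(2^kr)^{-m-\mu-\beta+m/p_0}$ resp.\ $(2^kr)^{-m-\beta+m/p_0}$. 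You instead apply the pointwise Lemma~\ref{la:lapsbetaHalpha} first and localize afterwards. The pointwise lemma is used in the paper only for the \emph{near} part $I$ (on $B_{\Lambda r}$), not for the annuli.

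There is a genuine gap in your treatment of the ``first piece''. For $x\in A_r^k$ and the contribution from $y\in B_r$ you get
\[
\bigl|\lapms{\gamma_j}\bigl((F_jG_j)\chi_{B_r}\bigr)(x)\bigr|\aleq (2^kr)^{\gamma_j-m}\int_{B_r}F_jG_j
\aleq (2^kr)^{\gamma_j-m}\,r^{\,m-\frac{m}{p_0}-\gamma_j}\,\vrac{\laps{\mu}g}_2\,\vrac{\laps{\tau}f}_{\frac{m}{\kappa+\tau-\mu}},
\]
using $1/q+1/q'=\tfrac12+\frac{\kappa+\beta-\mu+\gamma_j}{m}=\frac{1}{p_0}+\frac{\gamma_j}{m}$ with $s_j+t_j=\tau-\beta-\gamma_j$. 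Multiplying by $|A_r^k|^{1/p_0}\approx(2^kr)^{m/p_0}$ the powers of $r$ cancel, but the power of $2^k$ is $\frac{m}{p_0}+\gamma_j-m=-\frac{m}{2}+\kappa-\mu+\beta+\gamma_j$, i.e.\ you obtain $2^{k(-\frac{m}{2}+\kappa-\mu+\beta+\gamma_j)}$ rather than the claimed $2^{k(-\frac{m}{2}+\kappa-\mu)}$. The loss of $2^{k\beta}$ cannot be removed within this scheme: the claimed rate corresponds to the fact that $\laps{\beta}$, being a hypersingular (differentiating) operator, has off-diagonal decay $|x-y|^{-m-\beta}$ when acting on data supported in $B_r$ (this is exactly what $\vrac{\laps{\beta}\psi}_{\infty,B_r}\aleq(2^kr)^{-m-\beta+m/p_0}$ encodes in the paper's proof), whereas Lemma~\ref{la:lapsbetaHalpha} bounds $|\laps{\beta}H_\mu|$ by Riesz potentials of nonnegative functions, whose kernels decay only like $|x-y|^{\gamma_j-m}$. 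Since the statement is needed for $\beta$ close to $2\mu-\kappa>0$ (only the case $\mu=\frac m2$ uses $\beta=0$), your argument proves a strictly weaker decay than asserted. To repair it you would have to keep the action of $\laps{\beta}$ ``from far away'' intact rather than passing to the pointwise bound — which is precisely what the duality argument accomplishes.
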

\begin{proof}
Pick $\psi \in C_0^\infty(A_{r}^k)$, $\vrac{\psi}_{(p_0',\infty)} \leq 1$, such that
\[
\begin{ma}
 &&\vrac{\laps{\beta}H_{\mu}(g,f)}_{(p_0,1),A_{4r}^k}\\
 &\aleq& \int \psi\ \laps{\beta}H_{\mu}(g,f)\\
  &=& \int \laps{\mu + \beta}\psi\ g f + \int \laps{\beta}\psi\ \laps{\mu}g\ f + \int \laps{\beta}\psi\ \laps{\mu} f g\\
  &\aleq& \vrac{\laps{\mu + \beta}\psi}_{\infty,B_r}\ \vrac{g f}_{1} + 
  \vrac{f}_{2} \vrac{\laps{\beta}\psi}_{\infty}\ \vrac{\laps{\mu}g}_2
  + \vrac{\laps{\mu-\tau} (g \laps{\beta}\psi)}_{\frac{m}{m-\kappa - \tau +\mu}}\ \vrac{\laps{\tau}f}_{(\frac{m}{\kappa + \tau -\mu},\infty)}\\
  
  &\aleq& \vrac{\laps{\mu + \beta}\psi}_{\infty,B_r}\ \vrac{\laps{\mu}g}_{2} \vrac{\laps{\tau}f}_{(\frac{m}{\kappa + \tau -\mu})} r^{\frac{m}{2}-\kappa+2\mu}\\
  &&+   \vrac{\laps{\beta}\psi}_{\infty,B_r}\ \vrac{\laps{\mu}g}_{2} \vrac{\laps{\tau}f}_{(\frac{m}{\kappa + \tau -\mu})} r^{\frac{m}{2}-\kappa+\mu}\\
  &&+ \vrac{\laps{\mu-\tau} (g \laps{\beta}\psi)}_{\frac{m}{m-\kappa - \tau +\mu}}\ \vrac{\laps{\tau}f}_{(\frac{m}{\kappa + \tau -\mu})}
  \end{ma}
\]
Lemma \ref{la:QuasiLocality} gives that
\[
 \vrac{\laps{\mu + \beta}\psi}_{\infty,B_r} \aleq (2^k r)^{-m-\mu-\beta+\frac{m}{p_0}}\ \vrac{\psi}_{p_0'} \aleq (2^k r)^{\kappa-2\mu-\frac{m}{2}},
\]
\[
 \vrac{\laps{\beta}\psi}_{\infty,B_r} \aleq (2^k r)^{\kappa-\mu-\frac{m}{2}}.
\]
Next, according to Lemma \ref{la:quasilocIII}
\[
\begin{ma}
&&\vrac{\laps{\mu-\tau} (g \laps{\beta}\psi)}_{\frac{m}{m-\kappa - \tau +\mu}} \\
&\aleq& \sup_{\alpha \in [0,\mu-\tau]}(2^l r)^{-m-\beta-\alpha} r^\alpha \vrac{\psi}_1\ \vrac{\laps{\mu-\tau}g}_{\frac{m}{m-\kappa - \tau +\mu}}\\
&\aleq& \sup_{\alpha \in [0,\mu-\tau]}(2^l r)^{-m-\beta-\alpha} r^\alpha (2^l r)^{\frac{m}{2}+ \kappa +\beta-\mu} \vrac{\psi}_{p_0',\infty}\ \vrac{\laps{\mu} g}_{2} r^{-\kappa +\mu+\frac{m}{2}}\\
&=& 2^{l(-\frac{m}{2}+ \kappa -\mu)} \vrac{\laps{\mu}g}_{2}.
\end{ma}
\]


\end{proof}

\subsection{Pointwise commutator estimates via potentials}\label{ss:pointwisecommest}
In this section, we discuss for commutators of which special cases have been appearing in \cite{CRW76,Chanillo82}. There, usually estimates in the Hardy-space and BMO were proven. In contrast, we are going to prove pointwise estimates adapting our arguments from \cite{Sfracenergy}, which might be of independent interest.
\begin{lemma}\label{la:comm:Clapmslapslapsd}
Let $\beta + \delta < \min(\tau,1)$, $\delta > 0$, $\epsilon > 0$. There exists a finite number $L$, and $s_k,\tilde{s}_k >0$, $t_k, \tilde{t}_k \in (0,\tau)$, $\tilde{s}_k+\tilde{t}_k=s_k+t_k = \tau-\beta-\delta$, $\tilde{s}_k < \epsilon$,
\begin{align}
&\mathcal{C}(\lapms{\tau} A,\laps{\beta})[\laps{\delta} B] \nonumber \\
&\aleq \sum_{k=1}^L \lapms{s_k}\abs{A}\ \lapms{t_k} \abs{B} + \sum_{k=1}^L \lapms{\tilde{s}_k}\brac{\lapms{\tilde{t}_k}\abs{A}\ \abs{B}}. \label{eq:comm:Clapmslapslapsd:rhs}
\end{align}
\end{lemma}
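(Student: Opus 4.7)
The plan is to adapt the proof of Lemma \ref{la:comm:Hsest}, working from the singular-integral form of the commutator and repeatedly applying the trichotomy decomposition used there. From the defining kernel of $\laps{\beta}$,
\begin{equation*}
\mathcal{C}(\lapms{\tau}A,\laps{\beta})[\laps{\delta}B](x) = c\,\mathrm{p.v.}\!\int \frac{[\lapms{\tau}A(y)-\lapms{\tau}A(x)]\,\laps{\delta}B(y)}{|x-y|^{m+\beta}}\,dy,
\end{equation*}
I would substitute the Riesz representation $\lapms{\tau}A(y)-\lapms{\tau}A(x) = c\int A(z)[|x-z|^{\tau-m}-|y-z|^{\tau-m}]\,dz$ together with the singular-integral representation $\laps{\delta}B(y) = c\,\mathrm{p.v.}\int[B(y)-B(w)]|y-w|^{-m-\delta}\,dw$. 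The resulting triple integral in $(y,z,w)$ is only conditionally convergent in $w$; a symmetrization in $y\leftrightarrow w$ combines the two asymmetric summands coming from $B(y)-B(w)$ into an absolutely convergent expression, in which the formal cancellation manifests as an extra H\"older factor $|y-w|^{\delta'}$, with $\delta'>\delta$, obtained from a mean-value estimate applied to the map $y\mapsto[\,|x-z|^{\tau-m}-|y-z|^{\tau-m}\,]/|x-y|^{m+\beta}$.

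Next I would apply the trichotomy $1\leq\chi_1+\chi_2+\chi_3$ of Lemma \ref{la:comm:Hsest} to the Riesz kernel $m_\tau(x,y,z):=\bigl||x-z|^{\tau-m}-|y-z|^{\tau-m}\bigr|$. In Region~1 ($|x-y|\leq 2\min\{|x-z|,|y-z|\}$), a Taylor bound gives $m_\tau\aleq |x-y|^{\delta_1}|x-z|^{\tau-m-\delta_1}$ for any $\delta_1\in(\beta+\delta,\min(\tau,1))$---this is precisely where the hypothesis $\beta+\delta<\min(\tau,1)$ is used---and integration in $z$, $w$, $y$ produces a first-type term $\lapms{\tau-\delta_1}|A|(x)\cdot\lapms{\delta_1-\beta-\delta}|B|(x)$. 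In Region~2 ($|x-y|>2|x-z|$), the restriction $|x-z|<|x-y|/2$ trades a power of $|x-z|$ for a power of $|x-y|$, giving an analogous first-type term $\lapms{\tau'}|A|(x)\cdot\lapms{\tau-\tau'-\beta-\delta}|B|(x)$ for any $\tau'\in(0,\tau-\beta-\delta)$. In Region~3 ($|x-y|>2|y-z|$), the $z$-integral is centered at $y$ rather than at $x$, producing the second-type term $\lapms{\tau-\tau'-\beta-\delta}\bigl(\lapms{\tau'}|A|\cdot|B|\bigr)(x)$; choosing $\tau'$ close to $\tau-\beta-\delta$ makes $\tilde{s}_k=\tau-\tau'-\beta-\delta$ arbitrarily small (in particular $<\epsilon$) while keeping $\tilde{t}_k=\tau'\in(0,\tau)$. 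In every region the total Riesz order is exactly $\tau-\beta-\delta$, and the constraints $s_k,\tilde{s}_k>0$, $t_k,\tilde{t}_k\in(0,\tau)$ follow automatically from the admissible ranges of $\delta_1$ and $\tau'$.

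The main technical obstacle is the $y\leftrightarrow w$ symmetrization: the two pieces obtained from $B(y)-B(w)$ are each individually divergent in $w$, so one must combine them by hand and extract H\"older regularity of the kernels $|x-\cdot|^{-m-\beta}$ and $|x-\cdot|^{\tau-m}$ (a first-order difference estimate in $y$ and $w$) to convert the formal cancellation into an absolute bound. Once this is done, the accounting of the $3\times 3$ subregions that arise from applying the trichotomy to both differences is routine but tedious, and produces precisely the finite sum on the right-hand side of \eqref{eq:comm:Clapmslapslapsd:rhs}.
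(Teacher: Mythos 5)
Your proposal follows essentially the same route as the paper's proof: the same kernel representation, the same transfer of the difference $B(y)-B(w)$ onto the kernels via a change of variables, the same mean-value gain (which is where $\beta+\delta<\min(\tau,1)$ enters), and the same trichotomy decomposition producing first-type terms centered at $x$ and second-type terms with a small outer Riesz potential. The only bookkeeping you leave implicit is the additional case split on $|w|$ versus $|x-y|$ (the paper's regions $C_1,C_2,C_3$): the H\"older gain from the mean-value estimate is only available when $|w|\ll|x-y|$, while for $|w|\gtrsim|x-y|$ the two summands must be estimated separately, which works because there the $w$-integral already converges without any gain.
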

\begin{proof}
Since $\beta < 1$,
\[
 \begin{ma}
 \laps{\beta} \brac{\lapms{\tau}A\ \laps{\delta} B}(x)
&=&\int \frac{\lapms{\tau} A(x)\ \laps{\delta}B(x) - \lapms{\tau} A(y)\ \laps{\delta} B(y)}{\abs{x-y}^{m+\beta}}\ dy\\
&=&\int \frac{\lapms{\tau} A(x)\ \laps{\delta}B(x) - \lapms{\tau} A(y)\ \laps{\delta} B(y)}{\abs{x-y}^{m+\beta}}\ dy\\
&=&\lapms{\tau} A(x) \int \frac{\laps{\delta}B(x) - \ \laps{\delta} B(y)}{\abs{x-y}^{m+\beta}}\ dy\\
&&+ \int \frac{\brac{\lapms{\tau} A(x)  -\lapms{\tau} A(y)}\ \laps{\delta} B(y)}{\abs{x-y}^{m+\beta}}\ dy.
\end{ma}
\]
That is,
\[
 \begin{ma}
&& \mathcal{C}(\lapms{\tau} A,\laps{\beta})[\laps{\delta} B](x)\\
&=& \int \frac{\brac{\lapms{\tau} A(x)  -\lapms{\tau} A(y)}\ \laps{\delta} B(y)}{\abs{x-y}^{m+\beta}}\ dy\\
&=& \int\int \frac{\brac{\lapms{\tau} A(x)  -\lapms{\tau} A(y)}\ \brac{B(y+w)-B(y)}}{\abs{w}^{m+\delta} \abs{x-y}^{m+\beta}}\ dw\ dy\\
&=& \int\int \brac{\frac{\brac{\lapms{\tau} A(x)  -\lapms{\tau} A(y-w)}}{\abs{x-y+w}^{m+\beta}}-\frac{\brac{\lapms{\tau} A(x)  -\lapms{\tau} A(y)}}{\abs{x-y}^{m+\beta}}}\ B(y)\ \frac{dw}{\abs{w}^{m+\delta}}\ dy\\
&=& \int\int\int \brac{\frac{\brac{\abs{x-z}^{-m+\tau}  -\abs{z-y+w}^{-m+\tau}}}{\abs{x-y+w}^{m+\beta}}-\frac{\brac{\abs{x-z}^{-m+\tau}  -\abs{z-y}^{-m+\tau}}}{\abs{x-y}^{m+\beta}}}\ B(y)\ A(z)\ \frac{dw}{\abs{w}^{m+\delta}}\ dy\ dz.
\end{ma}
\]
So let us investigate the actual singularity of
\[
k(x,y,z,w) := \abs{w}^{-m-\delta}\abs{\frac{\brac{\abs{x-z}^{-m+\tau}  -\abs{z-y+w}^{-m+\tau}}}{\abs{x-y+w}^{m+\beta}}-\frac{\brac{\abs{x-z}^{-m+\tau}  -\abs{z-y}^{-m+\tau}}}{\abs{x-y}^{m+\beta}}}.
\]
We are going to show the following, for several sets $X \subset \R^{4m}$, which, as the union of these $X$ is $\R^{4m}$, gives the claim.
\begin{equation}\label{eq:setvaluedclaim}
 \int \int \int \chi_{X}(x,z,y,w)\ k(x,z,y,w)\ A(z)\ B(y)dw\ dx\ dy \aleq \eqref{eq:comm:Clapmslapslapsd:rhs}.
\end{equation}
We are denoting $k_1$, $k_2$,
\begin{equation}\label{eq:kest2}
\begin{ma}
k(x,y,z,w) &\leq&  k_1(x,y,z,w) + k_2(x,y,z,w),
\end{ma}
\end{equation}
where
\[
 k_1(x,y,z,w) := \abs{w}^{-m-\delta}\abs{\frac{\brac{\abs{x-z}^{-m+\tau}  -\abs{z-y+w}^{-m+\tau}}}{\abs{x-y+w}^{m+\beta}}},
\]
\[
 k_2(x,y,z,w) := \abs{w}^{-m-\delta}\abs{\frac{\brac{\abs{x-z}^{-m+\tau}  -\abs{z-y}^{-m+\tau}}}{\abs{x-y}^{m+\beta}}}.
\]
We split up the space $(x,y,z,w) \in \R^{4m}$ as follows
\[
\begin{ma}
 A_1 &:=& \{(x,y,z,w) \in \R^{4m}:\ \abs{x-y} \leq 2 \abs{z-y}\quad \wedge\quad \abs{x-y} \leq 2 \abs{x-z}\},\\
 A_2 &:=& \{(x,y,z,w) \in \R^{4m}:\ \abs{x-y} \leq 2 \abs{z-y}\quad \wedge\quad \abs{x-y} > 2 \abs{x-z}\},\\
 A_3 &:=& \{(x,y,z,w) \in \R^{4m}:\ \abs{x-y} > 2 \abs{z-y}\quad \wedge\quad \abs{x-y} \leq 2 \abs{x-z}\},
\end{ma}
\]
\[
\begin{ma}
 B_1 &:=& \{(x,y,z,w) \in \R^{4m}:\ \abs{x-y+w} \leq 2 \abs{z-y+w}\quad \wedge\quad \abs{x-y+w} \leq 2 \abs{x-z}\},\\
 B_2 &:=& \{(x,y,z,w) \in \R^{4m}:\ \abs{x-y+w} \leq 2 \abs{z-y+w}\quad \wedge\quad \abs{x-y+w} > 2 \abs{x-z}\},\\
 B_3 &:=& \{(x,y,z,w) \in \R^{4m}:\ \abs{x-y+w} > 2 \abs{z-y+w}\quad \wedge\quad \abs{x-y+w} \leq 2 \abs{x-z}\},
\end{ma}
\]
\[
\begin{ma}
 C_1 &:=& \{(x,y,z,w) \in \R^{4m}:\ \abs{w} \geq 4\abs{x-y}\},\\
 C_2 &:=& \{(x,y,z,w) \in \R^{4m}:\  \fracm{4}\abs{x-y} \leq \abs{w} < 4\abs{x-y}\},\\
 C_3 &:=& \{(x,y,z,w) \in \R^{4m}:\ 4\abs{w} < \abs{x-y}\}.\\
\end{ma}
\]
Note that
\[
 \bigcup_{i=1}^3 A_i = \bigcup_{i=1}^3 B_i = \bigcup_{i=1}^3 C_i = \R^{4m}.
\]
${}$\\[1em]
\paragraph*{\underline{Ad $\mathbf{(C_1 \cup C_2)\cap B_1}$:}}
First we observe that
\[
\int \chi_{C_1 \cup C_2}(x,z,y,w)\ k_2(x,y,z,w) \ dw \aleq \frac{\abs{\abs{x-z}^{-m+\tau}  -\abs{z-y}^{-m+\tau}}}{\abs{x-y}^{m+\beta+\delta}}.
\]
Following the argument as in the proof of Lemma \ref{la:comm:Hsest}, we consequently have for a finite number $L$, some $s_k,\tilde{s}_k >0$, $t_k, \tilde{t}_k \in (0,\tau)$, $\tilde{s}_k+\tilde{t}_k=s_k+t_k = \tau-\beta-\delta$, $\tilde{s}_k < \epsilon$, such that 
\begin{equation}\label{eq:k2nowest}
\int \int \int \chi_{C_1 \cup C_2}(x,z,y,w)\ k_2(x,z,y,w)\ A(z)\ B(y)dw\ dx\ dy \aleq \sum_{k=1}^L \lapms{s_k}\abs{A}(x)\ \lapms{t_k} \abs{B}(x) + \sum_{k=1}^L \lapms{\tilde{s}_k}\brac{\lapms{\tilde{t}_k}\abs{A}\ \abs{B}}(x).
\end{equation} 
Moreover, for any $\varepsilon \in [0,1]$, since on $B_1$, $\abs{x-z} \approx \abs{z-y+w}$, we can use the mean value theorem and have
\[
\begin{ma}
 \chi_{(C_1 \cup C_2)\cap B_1}\ k_1(x,z,y,w) 
 &\aleq& \abs{w}^{-m-\delta}\ \max\{\abs{x-z},\abs{z-y+w}\}^{-m+\tau-\varepsilon} \abs{x-y+w}^{-m-\beta+\varepsilon} \chi_{(C_1 \cup C_2)\cap B_1}\\
\end{ma}
\]
Now,
\[
 \abs{x-y+w}\chi_{C_1} \ageq \abs{w}\chi_{C_1} \ageq \abs{x-y}\chi_{C_1},
\]
\[
 \abs{x-y+w}\chi_{C_2} \aleq \abs{w}\chi_{C_2} \aeq \abs{x-y}\chi_{C_2},
\]
Consequently,
\[
\begin{ma}
 \chi_{(C_1 \cup C_2)\cap B_1}\ k_1(x,z,y,w) 
 &\aleq& \abs{w}^{-m-\delta}\ \chi_{\abs{w} \ageq \abs{x-y}}\ \abs{x-z}^{-m+\tau-\varepsilon} \abs{x-y}^{-m-\beta+\varepsilon}\\
&&+ \abs{x-y}^{-m-\delta}\ \abs{x-z}^{-m+\tau-\varepsilon} \abs{x-y+w}^{-m-\beta+\varepsilon} \chi_{\abs{x-y+w} \aleq \abs{x-y}}\\
\end{ma}
\]
Integrating in $w$ implies then for any $\delta > 0$, $\varepsilon \in (\beta,1)$
\[
\int \chi_{(C_1 \cup C_2)\cap B_1}\ k_1(x,z,y,w)\ dw \aleq \abs{x-y}^{-m-\delta-\beta+\varepsilon}\ \abs{x-z}^{-m+\tau-\varepsilon},
\]
thus if we choose $\varepsilon \in (\delta + \beta,\tau)$, $s := \tau -\varepsilon$, $t:=-\delta-\beta+\varepsilon$, together with \eqref{eq:k2nowest}, we have shown \eqref{eq:setvaluedclaim} for $X = (C_1 \cup C_2)\cap B_1$.\\[1em]
\paragraph*{\underline{Ad $\mathbf{(C_1 \cup C_2)\cap B_2}$:}}
Next, we consider $\chi_{(C_1 \cup C_2)\cap B_2}k$:
\[
 \chi_{(C_1 \cup C_2)\cap B_2} k_1(x,y,z,w) \aleq \abs{w}^{-m-\delta}\frac{\abs{x-z}^{-m+\tau}}{\abs{x-y+w}^{m+\beta}} \chi_{(C_1 \cup C_2)\cap B_2}  \aleq \abs{w}^{-m-\delta}\frac{\abs{x-z}^{-m+\tau-\varepsilon}}{\abs{x-y+w}^{m+\beta-\varepsilon}} \chi_{(C_1 \cup C_2)\cap B_2},
\]
Now one proceeds exactly as in the situation for $B_1$ above and we have \eqref{eq:setvaluedclaim} for $X = (C_1 \cup C_2)\cap (B_1 \cup B_2)$.\\[1em]
\paragraph*{\underline{Ad $\mathbf{(C_1 \cup C_2)\cap B_3}$:}}
Then, we have to consider $\chi_{(C_1 \cup C_2)\cap B_3}k$
\[
 \chi_{(C_1 \cup C_2)\cap B_3} k_1(x,y,z,w) \aleq \abs{w}^{-m-\delta}\frac{\abs{z-y+w}^{-m+\tau-\varepsilon}}{\abs{x-y+w}^{m+\beta-\varepsilon}} \chi_{(C_1 \cup C_2)\cap B_3},
\]
Using that
\[
 \chi_{(C_1 \cup C_2)}\abs{w} \ageq \max\{\abs{x-y+w},\abs{x-y}\}\ \chi_{(C_1 \cup C_2)},
\]
for any $\varepsilon_1 + \varepsilon_2 = \varepsilon < \tau$
\[
\begin{ma}
 \int \chi_{(C_1 \cup C_2)\cap B_3} k_1(x,y,z,w) A(z)\ dz &\aleq& \abs{x-y}^{-m-\delta + \varepsilon_1}\ {\abs{x-y+w}^{-m-\beta+\varepsilon_2}}\ \int \abs{z-y+w}^{-m+\tau-\varepsilon} A(z)\ dz\\
 &\aeq& \abs{x-y}^{-m-\delta + \varepsilon_1}\ {\abs{x-y+w}^{-m-\beta+\varepsilon_2}}  \lapms{\tau-\varepsilon} A(y-w),
 \end{ma}
\]
that is for any $\varepsilon_2 > \beta$,
\[
\begin{ma}
 \int \int \chi_{(C_1 \cup C_2)\cap B_3} k_1(x,y,z,w) A(z)\ dz\ dw 
 &\aleq& \abs{x-y}^{-m-\delta + \varepsilon_1}\ \int {\abs{x-y+w}^{-m-\beta+\varepsilon_2}} \ \lapms{\tau-\varepsilon} A(y-w) dw\\
 &\aeq& \abs{x-y}^{-m-\delta + \varepsilon_1}\ \int {\abs{x-\tilde{w}}^{-m-\beta+\varepsilon_2}} \ \lapms{\tau-\varepsilon} A(\tilde{w}) d\tilde{w}\\
 &\aeq& \abs{x-y}^{-m-\delta + \varepsilon_1}\ \ \lapms{\tau-\beta-\varepsilon_1} A(x) \\
 \end{ma}
\]
which gives for $\varepsilon_2 > \delta$ the claim \eqref{eq:setvaluedclaim} for $X = (C_1\cup C_2) \cap B_3$, where $s =\tau-\beta-\varepsilon_1$ and $t = \varepsilon_1 - \delta$. Together, \eqref{eq:setvaluedclaim} holds for $X \subseteq C_1 \cup C_2$.\\[1em] 
\paragraph*{\underline{Ad $\mathbf{C_3}$:}}It remains to show the claim for $C_3$:
\[
 \chi_{C_3}(x,y,z,w) \abs{x-y} \approx \chi_{C_3 }(x,y,z,w) \abs{x-y+w} 
\]
\[
 \chi_{C_3 \cap (A_1\cup A_2)}(x,y,z,w) \abs{z-y} \approx \chi_{C_3 \cap (A_1\cup A_2)}(x,y,z,w) \abs{z-y+w} 
\]
In this case, $k_1$ and $k_2$ should not be considered independently, but we rather use the following
\begin{equation}\label{eq:kest1}
\begin{ma}
k(x,y,z,w) &\leq&  \abs{w}^{-m-\delta}\frac{\abs{\abs{x-z}^{-m+\tau}  -\abs{z-y}^{-m+\tau}}}{\abs{x-y}^{m+\beta} \abs{x-y+w}^{m+\beta}} \abs{\abs{x-y}^{m+\beta}  -\abs{x-y+w}^{m+\beta}}\\
&&+ \abs{w}^{-m-\delta}\frac{\abs{\abs{z-y}^{-m+\tau}  -\abs{z-y+w}^{-m+\tau}}}{\abs{x-y+w}^{m+\beta}}.
\end{ma}
\end{equation}
Note that
\[
 \chi_{C_3} \abs{x-y+w} \ageq \chi_{C_3} \abs{x-y}.
 \]
\[
 \chi_{C_3\cap (A_1 \cup A_2)} \abs{y-z+w} \aeq \chi_{C_3\cap (A_1 \cup A_2)}  \abs{z-y}.
 \]
Thus, using \eqref{eq:kest1} with the mean value formula or the conditions $A_2$ for any $\varepsilon = \varepsilon_1 + \varepsilon_2 \in [0,1]$
\[
\begin{ma}
 \chi_{C_3 \cap (A_1 \cup A_2)} k(x,y,z,w) &\leq&  \abs{w}^{-m-\delta}\frac{\min\{\abs{x-z},\abs{z-y}\}^{-m+\tau-\varepsilon}\ \abs{x-y}^\varepsilon}{\abs{x-y}^{2m+2\beta}} \abs{x-y}^{m+\beta-\varepsilon} \abs{w}^\varepsilon \chi_{C_3 \cap (A_1 \cup A_2)}  \\
&&+ \abs{w}^{-m-\delta}\fracm{\abs{x-y}^{m+\beta}}\ \abs{z-y}^{-m+\tau-\varepsilon} \abs{w}^\varepsilon \chi_{C_3 \cap (A_1 \cup A_2)}\\
&\aleq&\abs{w}^{-m-\delta}\frac{\abs{x-z}^{-m+\tau-\varepsilon}}{\abs{x-y}^{m+\beta}}\ \abs{w}^{\varepsilon_1}\ \abs{x-y}^{\varepsilon_2} \chi_{C_3 \cap (A_1 \cup A_2)}  \\
&&+ \abs{w}^{-m-\delta}\fracm{\abs{x-y}^{m+\beta}}\ \abs{z-y}^{-m+\tau-\varepsilon} \abs{w}^{\varepsilon_1}\ \abs{x-y}^{\varepsilon_2} \chi_{C_3 \cap (A_1 \cup A_2)}\\
\end{ma} 
\]
Consequently, if we choose $\varepsilon_1 \in (\delta,\delta+\epsilon/2)$, $\varepsilon_2 \in (\beta,\beta+\epsilon/2)$ such that $\varepsilon = \varepsilon_1 + \varepsilon_2 < \min\{1,\tau\}$, using that
\[
 \int \chi_{C_3} \abs{w}^{-m-\delta+\varepsilon_1} dw \approx \abs{x-y}^{\varepsilon_1-\delta}, 
\]
we arrive at
\[
\begin{ma}
 \int \chi_{C_3 \cap (A_1\cup A_2)} k(x,y,z,w)\ dw  
 &\aleq& \abs{x-z}^{-m+\tau-\varepsilon}\ \abs{x-y}^{-m+\varepsilon-\delta-\beta}\\
&&+ \abs{z-y}^{-m+\tau-\varepsilon}  \abs{x-y}^{-m-\varepsilon-\delta-\beta}.
\end{ma} 
 \]
This implies for $s := \tau-\varepsilon > 0$, $t := \varepsilon - \delta - \beta  \in (0,\epsilon)$, \eqref{eq:setvaluedclaim} for $X = C_3 \cap (A_1 \cup A_2)$:
\[
 \int\int\int \chi_{C_3 \cap (A_1 \cup A_2)} k(x,y,z,w) \ B(y)\ A(z)\ dw\ dy\ dz \aleq \lapms{s} \abs{A}(x)\ \lapms{t} \abs{B}(x) + \lapms{t} \brac{\abs{B}\ \lapms{s} \abs{A}\ }(x).
\]
${}$\\[1em]
\paragraph*{\underline{Ad $\mathbf{C_3 \cap A_3}$:}}
The last case is $C_3 \cap A_3$, where we have by \eqref{eq:kest1}
\[
\begin{ma}
\chi_{C_3\cap A_3} k(x,y,z,w) &\aleq&  \abs{w}^{-m-\delta+\varepsilon} \frac{\abs{z-y}^{-m+\tau}}{\abs{x-y}^{m+\beta}} \abs{x-y}^{-\varepsilon}  \chi_{C_3\cap A_3} \\
&&+ \abs{w}^{-m-\delta}\frac{\abs{\abs{z-y}^{-m+\tau}  -\abs{z-y+w}^{-m+\tau}}}{\abs{x-y}^{m+\beta}} \chi_{C_3\cap A_3} \\
&\aleq&  \abs{w}^{-m-\delta+\varepsilon_1} \frac{\abs{z-y}^{-m+\tau-\varepsilon}}{\abs{x-y}^{m+\beta-\varepsilon_2}}  \chi_{C_3\cap A_3} \\
&&+ \abs{w}^{-m-\delta-\varepsilon_2}\frac{\abs{\abs{z-y}^{-m+\tau}  -\abs{z-y+w}^{-m+\tau}}}{\abs{x-y}^{m+\beta-\varepsilon_2}} \chi_{C_3\cap A_3} \\
\end{ma}
\]
While the first term on the right-hand side behaves exactly as before, for the second term we need yet another case study: If $\abs{w} \leq \fracm{2} \abs{z-y}$, we can proceed as in the cases before using the mean value formula. Note that on the other hand, 
\[
 \chi_{\abs{w} > \fracm{2} \abs{z-y}} \abs{z-y+w} < 3 \abs{w},
\]
Consequently,
\[
\begin{ma}
 &&\chi_{\abs{w} > \fracm{2} \abs{z-y}} \abs{w}^{-m-\delta-\varepsilon_2}\frac{\abs{\abs{z-y}^{-m+\tau}  -\abs{z-y+w}^{-m+\tau}}}{\abs{x-y}^{m+\beta-\varepsilon_2}} \chi_{C_3\cap A_3}\\
 &\aleq&\chi_{\abs{z-y} < 2\abs{w}} \abs{w}^{-m-\delta-\varepsilon_2}\frac{\abs{z-y}^{-m+\tau} }{\abs{x-y}^{m+\beta-\varepsilon_2}} \chi_{C_3\cap A_3}\\
 &&+ \chi_{\abs{z-y+w} < 3\abs{w}} \abs{w}^{-m-\delta-\varepsilon_2+\varepsilon}\frac{\abs{z-y+w}^{-m+\tau-\varepsilon} }{\abs{x-y}^{m+\beta-\varepsilon_2}} \chi_{C_3\cap A_3}\\
 &=:& I + II.
 \end{ma}
\]
Since
\[
 \int \chi_{\abs{w} > \fracm{2} \abs{z-y}} \abs{w}^{-m-\delta-\varepsilon_2}dw \aeq \abs{z-y}^{-\delta-\varepsilon_2},
\]
we arrive at
\[
 \int I\ dw = \frac{\abs{z-y}^{-m+\tau-\delta-\varepsilon_2} }{\abs{x-y}^{m+\beta-\varepsilon_2}} \chi_{C_3\cap A_3}
\]
so setting $\varepsilon = \varepsilon_1 + \varepsilon_2 \in (\beta+\delta,\min\{1,\tau,\beta+\delta+\epsilon\})$, for $\varepsilon_1 \in (\delta,\delta+\epsilon/2)$, $\varepsilon_2 \in (\beta,\beta+\epsilon/2)$ gives the estimate \eqref{eq:setvaluedclaim} for $I$, and as for $II$,
\[
 \int \abs{w}^{-m-\delta-\varepsilon_2+\varepsilon}\frac{\abs{z-y+w}^{-m+\tau-\varepsilon} }{\abs{x-y}^{m+\beta-\varepsilon_2}}\ \abs{A}(z)\ dz \overset{\varepsilon < \tau}{\aeq} 
 \abs{w}^{-m-\delta-\varepsilon_1}\ \abs{x-y}^{-m-\beta+\varepsilon_2}\ \lapms{\tau-\varepsilon} \abs{A}(y-w),
\]
\[
 \int \abs{w}^{-m-\delta-\varepsilon_2+\varepsilon}\ \abs{x-y}^{-m-\beta+\varepsilon_2}\ \lapms{\tau-\varepsilon} \abs{A}(y-w)\ dw \overset{\varepsilon_1 > \delta}{\aeq} \abs{x-y}^{-m-\beta+\varepsilon_2}\ \lapms{\tau-\delta-\varepsilon_2} \abs{A}(y),
\]
and finally,
\[
\int\abs{x-y}^{-m-\beta+\varepsilon_2}\ (\lapms{\tau-\delta-\varepsilon_2} \abs{A}(y))\ \abs{B}(y)\ dy \overset{\varepsilon_2 > \beta}{\aeq}
\lapms{\varepsilon_2-\beta}((\lapms{\tau-\delta-\varepsilon_2} \abs{A})\ \abs{B})(x).
\]
Thus, also $II$ has the required estimate and we have shown \eqref{eq:setvaluedclaim} for $X = C_3\cap A_3$.
\end{proof}

The following estimate should be compared to the estimates in \cite{Chanillo82}, who extended arguments in \cite{CRW76} from Riesz transforms to Riesz Potentials. Their estimates treat cases in which one of the involved functions $b$ belongs to $BMO$, which one usually uses in applications for estimates of that expression in terms of $\laps{s} b$. But if one knows that $\laps{s} b$ exists, then the following estimates are more precise than their $BMO$-counterparts in terms of Lorentz space estimates.
\begin{lemma} 
\label{la:commRzPot}
For any $\delta > 0$ such that $s + \delta < 1$ and any $\gamma \in (s,s+\delta)$, we have
 \[
\begin{ma}
\abs{\laps{s}\mathcal{C}(a,\lapms{s})[b]} &\leq& C_{s,\delta,\gamma}\  \lapms{s+\delta-\gamma} \abs{\laps{s+\delta} a}\ \lapms{\gamma-s} \abs{\lapms{s} b}\\
&&+ C_{s,\delta,\gamma}\ \min \left \{\lapms{\gamma-s}\brac{\abs{\lapms{s} b}\ \lapms{s+\delta-\gamma} \abs{\laps{s+\delta} a}}, \lapms{\gamma-s}\brac{\lapms{s+\delta-\gamma}\abs{\lapms{s} b}\ \abs{\laps{s+\delta} a}} \right \}
\end{ma}
\]
\end{lemma}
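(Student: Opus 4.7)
My plan is to mimic the potential-theoretic strategy of Lemmas \ref{la:comm:Hsest} and \ref{la:comm:Clapmslapslapsd}, starting from an explicit singular-integral representation of $\laps{s}\mathcal{C}(a,\lapms{s})[b]$. Since $s+\delta<1$, the operator $\laps{s}$ admits the usual pointwise formula, so writing
\[
\laps{s}(a\,\lapms{s}b)(x) = c \int \frac{a(x)\lapms{s}b(x) - a(y)\lapms{s}b(y)}{\abs{x-y}^{m+s}}\, dy
\]
and splitting the numerator as $a(x)[\lapms{s}b(x)-\lapms{s}b(y)] + [a(x)-a(y)]\lapms{s}b(y)$, the contribution $a\,\laps{s}\lapms{s}b = ab$ cancels exactly against the $-\lapms{s}(ab)$ coming from the commutator. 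This leaves the clean identity
\[
\laps{s}\mathcal{C}(a,\lapms{s})[b](x) = c \int \frac{[a(x)-a(y)]\,\lapms{s}b(y)}{\abs{x-y}^{m+s}}\, dy,
\]
which contains only a single difference quotient (in $a$), making it much more tractable than the $H_s$-type expressions.

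Next I would substitute $a(x)-a(y) = c' \int m_{s+\delta}(x,y,z)\,\laps{s+\delta}a(z)\,dz$ with kernel $m_{s+\delta}(x,y,z) := \abs{x-z}^{-m+s+\delta} - \abs{y-z}^{-m+s+\delta}$, and bound $\abs{m_{s+\delta}}$ via the three-region split $\chi_1+\chi_2+\chi_3$ from the proof of Lemma \ref{la:comm:Hsest}. For any $\gamma \in (s, s+\delta) \subset (0,1)$ this yields the pointwise estimate
\[
\frac{\abs{m_{s+\delta}(x,y,z)}}{\abs{x-y}^{m+s}} \aleq \abs{x-y}^{\gamma-m-s}\Big(\abs{x-z}^{-m+s+\delta-\gamma} + \abs{y-z}^{-m+s+\delta-\gamma}\Big).
\]
Writing $A := \abs{\laps{s+\delta}a}$ and $B := \abs{\lapms{s}b}$, the lemma reduces to estimating
\[
\int\!\!\int \abs{x-y}^{\gamma-m-s}\Big(\abs{x-z}^{-m+s+\delta-\gamma} + \abs{y-z}^{-m+s+\delta-\gamma}\Big) A(z)B(y)\,dz\,dy.
\]

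The $\abs{x-z}^{-m+s+\delta-\gamma}$-piece decouples the $y$- and $z$-integrations and produces exactly the first target term $\lapms{s+\delta-\gamma}A(x)\cdot\lapms{\gamma-s}B(x)$. For the $\abs{y-z}^{-m+s+\delta-\gamma}$-piece, integrating in $z$ first gives $\lapms{s+\delta-\gamma}A(y)$ and then integrating in $y$ yields $\lapms{\gamma-s}\big(B\cdot\lapms{s+\delta-\gamma}A\big)(x)$, which is the first entry of the $\min$. The second entry of the $\min$ is obtained by noting that on the region $\chi_3$ (where $\abs{x-y}\approx\abs{x-z}$) one may replace $\abs{x-y}^{\gamma-m-s}$ by $\abs{x-z}^{-m+\gamma-s}$; integrating in $y$ first then produces $\lapms{s+\delta-\gamma}B(z)$, and the remaining $z$-integral equals $\lapms{\gamma-s}\big(A\cdot\lapms{s+\delta-\gamma}B\big)(x)$. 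The complementary contributions from $\chi_1\cup\chi_2$ are in any case absorbed by the first product term via the $\abs{x-z}$-bound.

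The main obstacle is the book-keeping of exponents: one needs $\gamma-s,\ s+\delta-\gamma\in(0,1)\subset(0,m)$ so that $\lapms{\gamma-s}$, $\lapms{s+\delta-\gamma}$ and all intermediate Riesz convolutions converge. Both conditions are automatic from $\gamma\in(s,s+\delta)$ and the hypothesis $s+\delta<1$, which is precisely where the assumption $s+\delta<1$ is used essentially. The constants $C_{s,\delta,\gamma}$ will deteriorate like negative powers of $\gamma-s$ and $s+\delta-\gamma$, reflecting the degeneracy as $\gamma$ approaches the endpoints.
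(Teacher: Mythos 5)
Your proposal is correct and follows essentially the same route as the paper's proof: after reducing $\laps{s}\mathcal{C}(a,\lapms{s})[b]$ to the single-difference integral $\int [a(x)-a(y)]\,\lapms{s}b(y)\,\abs{x-y}^{-m-s}dy$, you insert the Riesz-potential representation of $a(x)-a(y)$, split the $(x,y,z)$-space by the relative sizes of $\abs{x-y},\abs{x-z},\abs{z-y}$, and read off the same three Riesz-potential compositions, with the min coming from $\abs{x-y}\approx\abs{x-z}$ on the region $\abs{x-y}>2\abs{z-y}$. The only differences (deriving the error term by direct cancellation rather than via $\mathcal{C}=\lapms{s}(\cdots)$, and merging the paper's $\chi_3$ and $\chi_4$ into one region) are cosmetic.
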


\begin{proof}
For $\delta > 0$ such that $s + \delta < 1$.
Set
\[
 B := \lapms{s} b,\quad A := \laps{s+\delta} a.
\]
Then,
\[
 \mathcal{C}(a,\lapms{s})[b] = \lapms{s} \brac{\brac{\lapms{s+\delta} A} \brac{\laps{s} B} - \laps{s} \brac{\brac{\lapms{s+\delta}A} B}}.
\]
Now,
\[
\begin{ma}
 \laps{s} \brac{\brac{\lapms{s+\delta}A} B}(x) &=& c_s \intl_{\R^m} \frac{\lapms{s+\delta}A(x)\  B(x)-\lapms{s+\delta}A(y)\ B(y)}{\abs{x-y}^{n+s}}\ dy\\
&=&  \lapms{s+\delta}A(x)\ c_s\intl_{\R^m} \frac{B(x) - B(y)}{\abs{x-y}^{n+s}}\ dy + c_s \intl_{\R^m} \frac{\lapms{s+\delta}A(x)\ -\lapms{s+\delta}A(y)}{\abs{x-y}^{n+s}}\ B(y)\ dy\\
&=&  \lapms{s+\delta}A(x)\ \laps{s} B(x) + c_{s,\delta} \intl_{\R^m} \intl_{\R^m} \frac{\abs{z-x}^{-n+(s+\delta)} - \abs{z-y}^{-n+(s+\delta)}}{\abs{x-y}^{n+s}}\ A(z)\ B(y)\ dz\ dy\\
\end{ma}
\]
Let now $\gamma \in (s,s+\delta) \subset (0,1)$ and denote
\[
 k(x,y,z) := \frac{\abs{z-x}^{-n+(s+\delta)} - \abs{z-y}^{-n+(s+\delta)}}{\abs{x-y}^{n+s}}.
\]
Now we follow the strategy in \cite{Sfracenergy}. We decompose the space $(x,y,z) \in \R^{3n}$ into several subspaces depending on the relations of $\abs{z-y}$, $\abs{x-y}$, $\abs{x-z}$:
\[
 1 \leq \chi_1(x,y,z) + \chi_2(x,y,z) + \chi_3(x,y,z) + \chi_4(x,y,z) \quad \mbox{for $x,y,z \in \R^m$},
\]
where
\[
 \chi_1 := \chi_{\abs{x-y} \leq  2\abs{z-y}}\ \chi_{\abs{x-y} \leq 2 \abs{x-z}},
\]
\[
 \chi_2 := \chi_{\abs{x-y} \leq  2\abs{z-y}}\ \chi_{\abs{x-y} > 2 \abs{x-z}},
\]
\[
 \chi_3 := \chi_{\abs{x-y} >  2\abs{z-y}}\ \chi_{\abs{x-y} \leq 2 \abs{x-z}}\ \chi_{\abs{x-z} \leq 2 \abs{z-y}}.
\]
\[
 \chi_4 := \chi_{\abs{x-y} >  2\abs{z-y}}\ \chi_{\abs{x-y} \leq 2 \abs{x-z}}\ \chi_{\abs{x-z} > 2 \abs{z-y}}.
\]
Then, by the mean value theorem
\[
 \chi_1(x,y,z) k(x,y,z) \aleq \abs{z-x}^{-n+s+\delta-\gamma}\ \abs{y-x}^{-n-s+\gamma}.
\]
Same holds for
\[
 \chi_2(x,y,z) k(x,y,z) + \chi_3(x,y,z) k(x,y,z) \aleq \abs{z-x}^{-n+s+\delta-\gamma}\ \abs{y-x}^{-n-s+\gamma}.
\]
Finally,
\[
 \chi_4(x,y,z) k(x,y,z) \aleq \abs{z-y}^{-n+s+\delta-\gamma}\ \abs{y-x}^{-n-s+\gamma}.
\]
Note that
\[
 \chi_4(x,y,z)\abs{x-y} \leq \chi_4(x,y,z)\abs{x-z}+ \chi_4(x,y,z)\abs{z-y}\leq \frac{3}{2} \chi_4(x,y,z)\abs{x-z},
\]
and
\[
 \chi_4(x,y,z)\abs{x-y} \geq \chi_4(x,y,z)\abs{x-z}- \chi_4(x,y,z)\abs{z-y}\geq \frac{1}{2} \chi_4(x,y,z)\abs{x-z},
\]
that is
\[
 \chi_4(x,y,z)\abs{x-y} \aeq \chi_4(x,y,z)\abs{x-z}.
\]
That is,
\[
  \chi_4(x,y,z) k(x,y,z) \aleq \abs{z-y}^{-n+s+\delta-\gamma}\ \min \{\abs{y-x}^{-n-s+\gamma}, \abs{z-x}^{-n-s+\gamma}\}
\]
This implies,
\[
\begin{ma}
&&\abs{\laps{s} \brac{\brac{\lapms{s+\delta}A} B}(x)- \lapms{s+\delta}A(x)\ \laps{s} B(x)}\\
 &\leq& c_{s,\delta,\gamma}\  \lapms{s+\delta-\gamma} \abs{A}\ \lapms{\gamma-s} \abs{B}
+ c_{s,\delta,\gamma}\ \min \left \{\lapms{\gamma-s}\brac{\abs{B}\ \lapms{s+\delta-\gamma} \abs{A}}, \lapms{\gamma-s}\brac{\lapms{s+\delta-\gamma}\abs{B}\ \abs{A}}\right \}.
\end{ma}
\]
It is a simple adaption, to show that in the claim, for both terms on the right-hand side, we could have chosen different $\gamma$.
\end{proof}

For $s = 0$, a (non-trivial) version of Lemma \ref{la:commRzPot}, is the following result, for any Riesz transform $\Rz$. Like Lemma~\ref{la:commRzPot} was related to 
Chanillo's \cite{Chanillo82}, this estimate is related to \cite{CRW76}. The proof follows by the same arguments as the proof of Lemma~\ref{la:commRzPot}, we leave the details to the reader.
 \begin{lemma} \label{la:commRzTransf}
Then, for any $\delta \in (0,1)$ and any $\gamma_i \in (0,\delta)$, $i = 1,2$, we have
 \[
\begin{ma}
\abs{\mathcal{C}(a,\Rz)[b]} &\leq& C_{\Rz,\delta,\gamma_1}\  \lapms{\delta-\gamma_1} \abs{\laps{\delta} a}\ \lapms{\gamma_1} \abs{b}
+ C_{\Rz,\delta,\gamma_2}\ \lapms{\gamma_2}\brac{\lapms{\delta-\gamma_2}\abs{b}\  \abs{\laps{\delta} a}}.
\end{ma}
\]
\end{lemma}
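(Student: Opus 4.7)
The strategy is to imitate the proof of Lemma~\ref{la:commRzPot} at the endpoint $s=0$, where the outer $\laps{s}$ is replaced by the principal-value kernel of the Riesz transform. Using the representation $\Rz_j[f](x) = c_m\,\text{p.v.}\int\frac{x_j-y_j}{|x-y|^{m+1}}f(y)\,dy$, one first writes
\[
\mathcal{C}(a,\Rz_j)[b](x) = c_m\,\text{p.v.}\int \frac{x_j-y_j}{|x-y|^{m+1}}\bigl(a(x)-a(y)\bigr)b(y)\,dy,
\]
sets $A:=\laps{\delta}a$, and substitutes $a(x)-a(y) = c_{m,\delta}\int\bigl(|x-z|^{-m+\delta}-|y-z|^{-m+\delta}\bigr)A(z)\,dz$. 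After taking absolute values, this reduces the problem to bounding the double integral of
\[
\tilde{k}(x,y,z) := \frac{\bigl||x-z|^{-m+\delta}-|y-z|^{-m+\delta}\bigr|}{|x-y|^{m}}
\]
against $|A(z)|\,|b(y)|$, which is precisely the kernel appearing in the proof of Lemma~\ref{la:commRzPot} specialized to $s=0$.

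Next, I would partition $\R^{3m}$ by the same four indicators $\chi_1,\chi_2,\chi_3,\chi_4$ employed in that proof. On each of $\chi_1,\chi_2,\chi_3$, a combination of the mean value theorem (valid since $\delta<1$) and the elementary triangle-inequality bounds yields, exactly as in Lemma~\ref{la:commRzPot}, the uniform control
\[
(\chi_1+\chi_2+\chi_3)\,\tilde{k}(x,y,z)\aleq |x-z|^{-m+\delta-\gamma_1}\,|x-y|^{-m+\gamma_1}
\]
for any $\gamma_1\in(0,\delta)$. Integrating in $z$ first and then in $y$ produces the first term $\lapms{\delta-\gamma_1}\abs{\laps{\delta}a}(x)\cdot\lapms{\gamma_1}\abs{b}(x)$ in the claimed estimate.

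The region $\chi_4$ is where $|x-y|>2|z-y|$ and $|x-z|>2|z-y|$ force $|x-y|\aeq|x-z|$. Here the mean value theorem applied in the variable $y$ about the nearby point $z$ (rather than $x$ about $y$ as before) gives, for any $\gamma_2\in(0,\delta)$,
\[
\chi_4\,\tilde{k}(x,y,z)\aleq |z-y|^{-m+\delta-\gamma_2}\,|x-z|^{-m+\gamma_2}.
\]
This time I would integrate in $y$ first, producing $\lapms{\delta-\gamma_2}\abs{b}(z)$; a subsequent integration in $z$ against $|A(z)|$ and the remaining Riesz-type kernel in $|x-z|$ yields $\lapms{\gamma_2}\bigl(\lapms{\delta-\gamma_2}\abs{b}\cdot\abs{\laps{\delta}a}\bigr)(x)$, which is the second term on the right-hand side.

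There is no genuine obstacle distinct from Lemma~\ref{la:commRzPot}: all of the delicate work, namely the choice of the four indicators and the interpolation between the mean value factor and the elementary bound, has already been carried out there, and the only change here is that the factor $|x-y|^{-m}$ in $\tilde{k}$ comes from the Riesz kernel itself rather than from the exponent $m+s$ produced by $\laps{s}$. The restriction $\delta<1$ and the ranges $\gamma_1,\gamma_2\in(0,\delta)$ are forced in exactly the same way as in Lemma~\ref{la:commRzPot}, ensuring integrability of every resulting convolution kernel and finiteness of the constants $C_{\Rz,\delta,\gamma_i}$.
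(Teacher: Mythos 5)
Your proposal is correct and is exactly the argument the paper intends: the paper's own "proof" is just the remark that Lemma \ref{la:commRzTransf} follows by the same potential-kernel argument as Lemma \ref{la:commRzPot}, and your reduction to the kernel $\tilde{k}(x,y,z)=\abs{\abs{x-z}^{-m+\delta}-\abs{y-z}^{-m+\delta}}\,\abs{x-y}^{-m}$, the four-fold partition $\chi_1,\dots,\chi_4$, and the separate treatment of $\chi_4$ (where $\abs{x-y}\aeq\abs{x-z}$ and the crude bound $\tilde k\aleq\abs{z-y}^{-m+\delta-\gamma_2}\abs{x-z}^{-m+\gamma_2}$ replaces any cancellation) reproduce precisely the two terms in the statement.
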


\subsection{Fractional product rules in the Hardy-space via para-products -- including the limit case}\label{ss:drprodrules}
\label{s:commi2}
In this section we introduce several commutators, and show how to use techniques developed by Da Lio and Rivi\`{e}re in \cite{DR1dSphere} in order to estimate their behavior involving the Hardy spaces $\mathcal{H}$. For the case $\mu < 1$ the new contribution are the commutators themselves, the techniques for the proof of their behavior follows the arguments of Da Lio and Rivi\`ere. In the case $\mu = 1$, these arguments have to be extended and more precise, in order to show the same behavior for this limit case, which was unknown up to now. The essential commutator estimate is
\begin{equation}\label{eq:c2:hardyguy}
 \Vrac{\laps{\mu} \brac{\Rz[h]\ \lapms{\mu} b - \Rz[h\ \lapms{\mu} b]}}_{\mathcal{H}} \leq \Vert h \Vert_{2}\ \Vert b \Vert_{2} \quad \mbox{whenever $\mu \in (0,1]$}.
\end{equation}
From this we will conclude
\begin{equation}\label{eq:c2:CfRlaphvarphiest}
 \vrac{ \mathcal{C}(f,\Rz)[\laps{\mu} \varphi] }_2 \aleq \vrac{\laps{\mu} f}_2\ [\varphi]_{BMO},
\end{equation}
and
\begin{equation}\label{eq:c2:H1varphigest}
 \vrac{H_\mu(\varphi,g)}_2 \aleq \vrac{\laps{\mu} g}_2\ [\varphi]_{BMO}.
\end{equation}
as well as
\begin{equation}\label{eq:c2:laphH1inhardy}
 \vrac{\laps{\mu} H_{\mu}(a,b)}_{\mathcal{H}} \aleq \vrac{\laps{\mu} a}_2\ \vrac{\laps{\mu} b}_2 \quad \mbox{for $\mu \in (0,1]$}.
\end{equation}
Note that \eqref{eq:c2:laphH1inhardy} is already known for $\mu < 1$ \cite{DR1dSphere}, and we are going to prove it only for the new situation $\mu = 1$, when we will show that it follows from \eqref{eq:c2:hardyguy}.
\begin{proof}[Proof of \eqref{eq:c2:hardyguy}] 
Let
\[
 T(h,b) := \laps{\mu} \brac{\Rz[h]\ \lapms{\mu}b - \Rz[h\ \lapms{\mu}b]}
\]
We will follow the basic ideas of the proof of the commutator theorem in \cite{DR1dSphere}, which goes through without great changes, if $\mu < 1$, and then point out where it fails if $\mu = 1$. In the latter case, we have make a precise computation of the failing term, and show that in fact this term is essentially a commutator as in \cite{CRW76} and another good term. For a general reference, we refer the reader to the proof in \cite{DR1dSphere}, as we will sketch some of the parts which behave no worse than in their case. In particular, the notion of Besov- and Triebel spaces and their respective estimates are taken from \cite{DR1dSphere} without too much changes. 
\subsection*{The para-products and Littlewood-Paley decomposition}
We need to control three parts of $T$. We introduce the projections $\Pi_1$, $\Pi_2$, $\Pi_3$ defined via
\[
\begin{ma}
 \Pi_1 T(f,g) &:=& \sum_{j\in \Z} T(f_j,g^{j-4}),\\
 \Pi_2 T(f,g) &:=& \sum_{j\in \Z} T(f^{j-4},g_j),\\
 \Pi_3 T(f,g) &:=& \sum_{j\in \Z} \sum_{l=j-4}^{j+4} T(f_j,g_l).\\
\end{ma}
\]
Here,
\[
 \supp f_j^\wedge \subset \{\abs{\xi} \in (2^{j-1},2^{j+1}) \},
\]
and
\[
 \supp (f^j)^\wedge \subset \{\abs{\xi} \leq 2^{j+1} \}.
\]
These terms come from the Littlewood-Paley decomposition.
Note that then for $k \in \Z$,
\begin{align} 
\Pi_1 T(f,g)_k &:= \sum_{j=k-3}^{k+3} T(f_j,g^{j-4})_k, \label{eq:c2:Pi1Tfgk}\\
 \Pi_2 T(f,g)_k &:= \sum_{j=k-3}^{k+3} T(f^{j-4},g_j)_k,\label{eq:c2:Pi2Tfgk}\\
 \Pi_3 T(f,g)_k &:= \sum_{j = k-5}^{\infty} \sum_{l=j-4}^{j+4} T(f_j,g_l)_k.\label{eq:c2:Pi3Tfgk}
\end{align}

\subsection*{Estimates from \cite{DR1dSphere}}
Let us recall the following estimates, whose proof up to small adaptions can be found in \cite{DR1dSphere}, for a general overview we refer to Tao's lecture notes \cite{TaoLectures}:
First, denoting with $\mathcal{M}$ the maximal function,
\begin{equation}\label{eq:c2:fjsupmaxfct}
 \sup_{k \in \Z} \abs{2^{-\gamma }\laps{\gamma} f^{k}(x)} \aleq \mathcal{M} f(x) \quad \mbox{for any $\gamma \geq 0$ and almost every $x \in \R^m$.}
\end{equation}
Here, for $\gamma = 0$, we set $\laps{0} := \operatorname{Id}$. In fact, for a suitable $\tilde{\phi}$ in the Schwartz class,
\[
 2^{-\gamma k}\laps{\gamma} f^{k}(x) = 2^{(m-\gamma)k} \int \tilde{\phi}(2^k(x- y))\ \laps{\gamma} f(y) = = 2^{(m)j} \int (\laps{\gamma}\tilde{\phi})(2^j(x- y))\  f(y). 
\]
Now the argument follows the exactly the one of \cite{DR1dSphere}, using that
\[
 \abs{\laps{\gamma}\tilde{\phi}(z)} \aleq \begin{cases}
                                     \frac{1}{1+\abs{z}^{m+\gamma}} \quad &\mbox{if $\gamma > 0$}\\
				     \frac{1}{1+\abs{z}^{\beta}} \quad &\mbox{if $\gamma = 0$, for any $\beta > 0$},
                                    \end{cases}
\]
and we arrive at
\[
 2^{-\gamma k}\laps{\gamma} f^{k}(x) \aleq \mathcal{M}f(x)\ \sum_{j\in \Z}^\infty 2^{mj} \sup_{\abs{z} \leq 2^j} \abs{\laps{\gamma} \tilde{\phi}(z)},
\]
which is convergent if $\gamma \geq 0$, but might be divergent for $\gamma < 0$. This shows \eqref{eq:c2:fjsupmaxfct}.\qed

Let $N$ be a zero-multiplier operator with $0$-homogeneous symbol $n$. We have (cf. \cite[Notes 2 for 25A, Lemma 1.1]{TaoLectures})
\begin{equation}\label{eq:commis:Binftyinfyest}
 \sup_{k} \vrac{2^{-\gamma k} N \laps{\gamma} f_{k}}_{\infty} \aleq C_N\ \vrac{f}_{\dot{B}_{\infty,\infty}^0} \quad \mbox{for $\gamma \geq 0$}
\end{equation}
and thus also
\begin{equation}\label{eq:commis:fhochjBinftyinfyest}
 \vrac{\laps{\gamma} N f^{j}}_{\infty} \aleq 2^{\gamma j} \vrac{f}_{\dot{B}_{\infty,\infty}^0} \quad \mbox{for $\gamma > 0$}
\end{equation}
A crucial role is played by
\begin{equation}\label{eq:commis:bhatjest}
\brac{\int_{\R^m} \sum_{j \in \Z} \abs{2^{j(\gamma-\alpha)}N\laps{\alpha-\gamma} f^{j}}^2}^{\fracm{2}} \aleq C_{\alpha-\gamma,N} \vrac{f}_2 \quad \mbox{whenever $0 \leq \gamma < \alpha$}.
\end{equation}
which follows from
\[
\begin{ma}
  \brac{\int_{\R^m} \sum_{j \in \Z} \abs{2^{j(\gamma-\alpha)}N\laps{\alpha-\gamma} f^{j}}^2}^{\fracm{2}}
  &\overset{\gamma < \alpha}{\aleq}& C_{\alpha-\gamma,N} \brac{\int_{\R^m} \sum_{j \in \Z} \abs{2^{j(\gamma-\alpha)} \laps{\alpha-\gamma} f_j}^2}^{\fracm{2}},
\end{ma}
\]
an argument which can be found in the commutator estimates in \cite{DR1dSphere}.

\subsection*{Estimate of $\Pi_2$}
We have
\[
 \begin{ma}
\vrac{\Pi_2 T(h,b)}_{\mathcal{H}} 
&\aleq& \vrac{\laps{\mu}\Pi_2  (\Rz[h]\ \lapms{\mu}b)}_{\mathcal{H}} + \vrac{\Rz[\laps{\mu} \Pi_2 (h\ \lapms{\mu}b)]}_{\mathcal{H}}\\
&\aleq& \vrac{\laps{\mu}\Pi_2 (\Rz[h]\ \lapms{\mu}b)}_{\mathcal{H}} + \vrac{\laps{\mu} \Pi_2 (h\ \lapms{\mu}b)}_{\mathcal{H}}.
\end{ma}
\]
Let $\tilde{h} := \Rz[h]$ or $h$, respectively. Then
\[
\begin{ma}
\vrac{\laps{\mu}\Pi_2 (\tilde{h}\ \lapms{\mu}b)}_{\mathcal{H}} 
&\aeq& \vrac{\laps{\mu}\Pi_2 (\tilde{h}\ \lapms{\mu}b)}_{\dot{F}^0_{1,2}}\\
&\aeq& \vrac{\Pi_2 (\tilde{h}\ \lapms{\mu}b)}_{\dot{F}^\mu_{1,2}}\\
&\aeq& \intl_{\R^m} \brac{\sum_{k\in\Z} 2^{2\mu k}\abs{\Pi_2 (\tilde{h}\ \lapms{\mu}b)}^2}^{\fracm{2}}\\
&\overset{\eqref{eq:c2:Pi2Tfgk}}{\aeq}& \intl_{\R^m} \brac{\sum_{k\in\Z} 2^{2\mu k}\abs{\sum_{j=k-3}^{k+3} (\tilde{h}^{j-4}\ \lapms{\mu}b_j)_k}^2}^{\fracm{2}}\\
&\aleq& \intl_{\R^m} \brac{\sum_{k\in\Z}\max_{j\in [k-3,k+3]\cap \Z} 2^{2\mu k}\abs{(\tilde{h}^{j-4}\ \lapms{\mu}b_j)_k}^2}^{\fracm{2}}\\
&\aleq& \intl_{\R^m} \brac{\sum_{k\in\Z} 2^{2\mu k} (\sup_{i}\abs{\tilde{h}^{i}}^2)\ \abs{\lapms{\mu}b_k}^2}^{\fracm{2}}
\end{ma}
\]
Thus,
\[
\begin{ma}
\vrac{\laps{\mu}\Pi_2 (\tilde{h}\ \lapms{\mu}b)}_{\mathcal{H}} 
&\overset{\eqref{eq:c2:fjsupmaxfct}}{\aleq}& \intl_{\R^m} \mathcal{M}\tilde{h}\ \brac{\sum_{k\in\Z} 2^{2\mu k}\abs{\lapms{\mu}b_j}^2}^{\fracm{2}}\\
&\aleq& \vrac{\tilde{h}}_{2}\ \vrac{b}_{\dot{B}_{2,2}^0}\\
&\aleq&\vrac{h}_{2}\ \vrac{b}_{2}.
\end{ma}
\]
It is worth noting, that this argument holds for any $\mu > 0$.\\
\subsection*{Estimate of $\Pi_1$}
Now we treat $\Pi_1 T$. For some $\psi$, $\vrac{\psi}_{\dot{B}^0_{\infty,\infty}} \leq 1$ we have
\[
\begin{ma}
 \vrac{\Pi_1 T(h,b)}_{\mathcal{H}} &\aleq& \vrac{\Pi_1 T(h,b)}_{\dot{B}^0_{1,1}}\\
&\aleq& \int \Pi_1 T(h,b)^\wedge(\xi)\ \psi^\vee(\xi)\ d\xi\\
&=& c\ \sum_{j\in \Z} \int \int k(\xi,\eta)\ h_j^\wedge(\eta)\ (b^{j-4})^\wedge(\xi-\eta)\ \psi^\vee(\xi)\ d\eta\ d\xi,
\end{ma}
\]
where
\begin{equation}\label{eq:commis:kfordifficultguy}
 k(\xi,\eta) = \brac{\frac{\abs{\xi}}{\abs{\xi-\eta}}}^\mu \brac{\frac{\eta}{\abs{\eta}} - \frac{\xi}{\abs{\xi}}}.
\end{equation}
Note that by the support of the different factors, we only consider $\xi$, $\eta$ such that $\abs{\xi-\eta} \aleq 2^{j-3}$, $2^{j-1} \leq \abs{\eta} \leq 2^{j+1}$, and thus $2^{j-2} \leq \abs{\xi} \leq 2^{j+2}$. In particular,
\begin{equation}\label{la:commis:Pi2symbbehaviour}
 \frac{\abs{\xi-\eta}}{\abs{\eta}} \leq \fracm{2}.
\end{equation}
We can assume w.l.o.g. that if \eqref{la:commis:Pi2symbbehaviour} is satisfied, following identity holds, and the right hand side converges absolutely
\begin{equation}\label{eq:commis:kxietadecompmul1}
\begin{ma}
  k(\xi,\eta) &=&  \brac{\frac{\abs{\xi}}{\abs{\xi-\eta}}}^\mu\ \sum_{l=1}^\infty \fracm{l!} m_l(\eta)\ n_l(\xi-\eta) \brac{\frac{\abs{\xi-\eta}}{\abs{\eta}}}^l\\
    &=& \sum_{l=1}^\infty \fracm{l!} m_l(\eta)\ n_l(\xi-\eta)\ \abs{\xi-\eta}^{l-\mu}\ \abs{\xi}^\mu\ \abs{\eta}^{-l}.
 \end{ma}
\end{equation}
where $n_l$, $m_l$ are zero-homogeneous functions, and we will denote their respective operators with $N_l$, $M_l$. In fact, one can check that this is true, if \eqref{la:commis:Pi2symbbehaviour} is satisfied for $2^{-L}$ on the right-hand side, for some $L \in \N$. If $L$ is not $1$, then we need to replace for the decompositions $\Pi_1$, $\Pi_2$, $\Pi_3$ and use instead
\[
\begin{ma}
 \tilde{\Pi}_1 T(f,g) &:=& \sum_{j\in \Z} T(f_j,g^{j-3-L}),\\
 \tilde{\Pi}_2 T(f,g) &:=& \sum_{j\in \Z} T(f^{j-3-L},g_j),\\
 \tilde{\Pi}_3 T(f,g) &:=& \sum_{j\in \Z} \sum_{l=j-3-L}^{j+3+L} T(f_j,g_l).\\
\end{ma}
\]
Thus, for the sake of simplicity, we are going to assume that $L=1$.
%
%

\subsubsection*{Estimates for $\mathbf{\Pi_1}$ if $\mathbf{\mu < 1}$.}
The expansion in \eqref{eq:commis:kxietadecompmul1} is precise enough, if $\mu < 1$, since then $l-\mu > 0$ for all $l \in \N$. Here $(\cdot)_{\tilde{k}}$ is the Fourier cutoff on $\abs{\xi} \in (2^{k-2},2^{k+2})$.
\begin{equation}\label{eq:commis:Pi1Thbest}
\begin{ma}
 \vrac{\Pi_1 T(h,b)}_{\mathcal{H}} 
&\aleq& \sum_{l=1}^\infty \fracm{l!}\ \sum_{j\in \Z} \int M_l \lapms{l} h_j\ N_l\laps{l-\mu} b^{j-4}\ \laps{\mu}  \psi_{\tilde{j}}\\
&\aleq& \sum_{l=1}^\infty \fracm{l!}\ \int \sum_{j\in \Z} \abs{2^{jl} M_l \lapms{l} h_j}\ \abs{2^{j(\mu-l)}N_l\laps{l-\mu} b^{j-4}}\ \sup_{\tilde{k}} \vrac{2^{-\mu \tilde{k}}\laps{\mu} \psi_{\tilde{k}}}_{\infty}\\
&\overset{\eqref{eq:commis:Binftyinfyest}}{\aleq}& \sum_{l=1}^\infty \fracm{l!} 2^{4(\mu-l)}\ \brac{\int_{\R^m} \sum_{j\in \Z} \abs{2^{jl} M_l \lapms{l} h_j}^2}^{\fracm{2}}\ \brac{\int_{\R^m} \sum_{j \in \Z} \abs{2^{(j-4)(\mu-l)}N_l\laps{l-\mu} b^{j-4}}^2}^{\fracm{2}} \\
&\overset{\ontop{l \geq 1 > \mu}{\eqref{eq:commis:bhatjest}}}{\aleq}& C_{1-\mu}\ \sum_{l=1}^\infty c_l\ 2^{4(\mu-l)}\ \vrac{h}_2\ \vrac{b}_2\\
&\aleq& \vrac{h}_2\ \vrac{b}_2.
\end{ma} 
\end{equation}
The crucial point in the above estimate is the application of \eqref{eq:commis:bhatjest}, and it was there that we used that $\mu < l$ for all $l \geq 1$, that is $\mu < 1$. In particular, if the summation on $l$ above had started with $l = 2$, this estimate would have been true for any $\mu < 2$. Also, note that for $\mu = 0$, this estimate still holds.

\subsubsection*{Adaptions for $\mathbf{\Pi_1}$ if $\mathbf{\mu = 1}$.} Thus, if $\mu = 1$, we have to be more precise and actually need to find good controls of the Taylor-expansion for $l=1$:
\[
   k(\xi,\eta) \overset{\eqref{eq:commis:kfordifficultguy}}{=} -k_1(\xi,\eta) + \sum_{l=2}^\infty \fracm{l!} m_l(\eta)\ n_l(\xi-\eta)\ \abs{\xi-\eta}^{l-1}\ \abs{\eta}^{1-l},
\]
where 
\[
\begin{ma}
 k_1(\xi,\eta) &=& \frac{\abs{\xi}}{\abs{\xi-\eta}} \frac{-(\xi-\eta)\abs{\eta}+\eta \frac{\eta^k}{\abs{\eta}}(\xi-\eta)^k}{\abs{\eta}^2}\\
&=& \frac{\abs{\xi}}{\abs{\eta}} \brac{-\frac{\xi-\eta}{\abs{\xi-\eta}}+\frac{\eta}{\abs{\eta}} \frac{\eta^k}{\abs{\eta}}\frac{(\xi-\eta)^k}{\abs{\xi-\eta}}}\\
&=& \frac{\abs{\xi}}{\abs{\eta}} \brac{-\frac{\eta}{\abs{\eta}}-\frac{\xi-\eta}{\abs{\xi-\eta}}+\frac{\eta}{\abs{\eta}}+\frac{\eta}{\abs{\eta}} \frac{\eta^k}{\abs{\eta}}\frac{(\xi-\eta)^k}{\abs{\xi-\eta}}}\\
&=& \frac{\abs{\xi}}{\abs{\eta}} \brac{-\brac{\frac{\eta}{\abs{\eta}}+\frac{\xi-\eta}{\abs{\xi-\eta}}}+\frac{\eta}{\abs{\eta}} \frac{\eta^k}{\abs{\eta}}\brac{\frac{\eta^k}{\abs{\eta}}+\frac{(\xi-\eta)^k}{\abs{\xi-\eta}}}}\\
&=& \frac{\abs{\xi}-\abs{\eta}}{\abs{\eta}} \brac{-\brac{\frac{\eta}{\abs{\eta}}+\frac{\xi-\eta}{\abs{\xi-\eta}}}+\frac{\eta}{\abs{\eta}} \frac{\eta^k}{\abs{\eta}}\brac{\frac{\eta^k}{\abs{\eta}}+\frac{(\xi-\eta)^k}{\abs{\xi-\eta}}}}\\
&& -\brac{\frac{\eta}{\abs{\eta}}+\frac{\xi-\eta}{\abs{\xi-\eta}}}+\frac{\eta}{\abs{\eta}} \frac{\eta^k}{\abs{\eta}}\brac{\frac{\eta^k}{\abs{\eta}}+\frac{(\xi-\eta)^k}{\abs{\xi-\eta}}}\\
&=:& \frac{\abs{\xi}-\abs{\eta}}{\abs{\eta}}\ \tilde{m}(\eta,\xi-\eta)+f(\eta,\xi-\eta).
\end{ma}
\]
Now we define bilinear operators $\tilde{T}_1$, $\tilde{T}_2$ via
\[
 \tilde{T}_1(a,b)^\wedge(\xi) := \int f(\eta,\xi-\eta)\ a^\wedge(\eta)\ b^\wedge(\xi-\eta)\ d\eta,
\]
\[
 \tilde{T}_2(a,b)^\wedge(\xi) := \int \frac{\abs{\xi}-\abs{\eta}}{\abs{\eta}} \tilde{m}(\eta,\xi-\eta)\ a(\eta)\ b(\xi-\eta).
\]
By the commutator arguments of \cite{CRW76}, $\tilde{T}_1(\cdot,\cdot)$ can be estimated
\[
\vrac{\tilde{T}_1(h,b)}_{\mathcal{H}} \aleq \vrac{h}_2\ \vrac{b}_2.
\]
Indeed, for some constants $c_1$, $c_2$,
\[
 \tilde{T}_1(h,b) = c_1 (\Rz [a] + \Rz[b]) + c_2 (\Rz_k [\Rz\Rz_k[a]]\ b + \Rz\Rz_k[a]\ \Rz_k[b])
\]
Thus,
\[
\begin{ma}
  \vrac{\Pi_1 T(h,b)}_{\mathcal{H}} &\aleq&  \vrac{\Pi_1 (T(h,b)-\tilde{T}_1(h,b)-\tilde{T}_2(h,b))}_{\mathcal{H}} + \vrac{\Pi_1 \tilde{T}_2(h,b))}_{\mathcal{H}}\ + \vrac{h}_2\ \vrac{b}_2\\
&=:& I + II + \vrac{h}_2\ \vrac{b}_2.
\end{ma}
\]
The term $I$ can be estimated in the same manner as in \eqref{eq:commis:Pi1Thbest} for $\mu < 1$, since the only term for which \eqref{eq:commis:bhatjest} was not applicable is now cut away.

Thus it remains to estimate $II$. Note that $\tilde{m}$ is a finite sum of products of zero-order multipliers of $\eta$ and $\xi-\eta$, which thus plays no role in our argument. The crucial point is, that by \eqref{la:commis:Pi2symbbehaviour} 
\[
 \frac{\abs{\xi}-\abs{\eta}}{\abs{\eta}} = \sum_{l=1}^\infty \fracm{l!}\ m_l(\eta)\ n_l(\xi-\eta)\ \abs{\eta}^{-l}\ \abs{\xi-\eta}^{l},
\]
so the exponent of $\abs{\xi-\eta}$ is always strictly greater than zero. The estimate on $II$ follows then from along the arguments in \eqref{eq:commis:Pi1Thbest} for $\mu = 0$.
\subsection*{Estimate of $\Pi_3$}
It remains to estimate $\Pi_3$. Let $N_1$, $N_2$ be a CZ-zero-multiplier operator (in this case or the identity or the Riesz Transform $\Rz$). In order to estimate $\Pi_3 T(h,b)$ in the Hardy-space norm, it then suffices to control terms of the form
\[
 \vrac{N_1 \laps{\mu} \Pi_3 (N_2 h\ \lapms{\mu}b)}_{\mathcal{H}}.
\]
Again, for some $\psi$, $\vrac{\psi}_{\dot{B}^0_{\infty,\infty}} \leq 1$, 
\[
 \begin{ma}
\vrac{N_1 \laps{\mu} \Pi_3 (N_2 h\ \lapms{\mu}b)}_{\mathcal{H}} 
&\aleq& \vrac{\laps{\mu}\Pi_3 (N_2 h\ \lapms{\mu}b)}_{\mathcal{H}}\\ 
&\aleq& \vrac{\laps{\mu}\Pi_3 (N_2 h\ \lapms{\mu}b)}_{\dot{B}^0_{1,1}}\\
&\aleq& \int \laps{\mu}\Pi_3 (N_2 h\ \lapms{\mu}b)\ \psi\\
&=& \int \Pi_3 (N_2 h\ \lapms{\mu}b)\ \laps{\mu} \psi\\
&=&  \sum_{j\in \Z} \sum_{l=j-4}^{j+4} \int N_2 h_j\ \lapms{\mu}b_l\ \laps{\mu} \psi\\
&=&  \sum_{j\in \Z} \sum_{l=j-4}^{j+4} \int N_2 h_j\ \lapms{\mu}b_l\ \laps{\mu} \psi^{j+6}\\
&\overset{\ontop{\mu > 0}{\eqref{eq:commis:fhochjBinftyinfyest}}}{\aleq}& \sum_{j\in \Z} \sum_{l=j-4}^{j+4} \int \abs{N_2 h_j}\ \abs{\lapms{\mu}b_l}\ 2^{\mu j} \vrac{\psi}_{\dot{B}_{\infty,\infty}^0}\\
&\aleq& \vrac{N_2 h}_{\dot{F}^0_{2,2}}\ \vrac{\lapms{\mu}b}_{\dot{F}^\mu_{2,2}}\\
&\aleq& \vrac{h}_2\ \vrac{b}_2.
 \end{ma}
\]
This shows \eqref{eq:c2:hardyguy}
\end{proof}

Then we are able to give the

\begin{proof}[Proof of \eqref{eq:c2:CfRlaphvarphiest}]
The estimate is a consequence of \eqref{eq:c2:hardyguy}: Let $\psi \in C_0^\infty$, $\vrac{\psi}_2 \leq 1$ such that
\[
\begin{ma}
 \vrac{ \mathcal{C}(f,\Rz)[\laps{\mu} \varphi] }_2 &\leq& 2 \int \mathcal{C}(f,\Rz)[\laps{\mu} \varphi] \ \psi \\
&=& 2 \int (f \Rz [\laps{\mu} \varphi] - \Rz[f\laps{\mu} \varphi]) \psi\\
&=& -2 \int \varphi\ \laps{\mu} (\Rz [f \psi]- f \Rz[\psi]).
\end{ma}
\]
Taking $h := \psi$ and $b := \laps{\mu} f$, one concludes the proof of \eqref{eq:c2:CfRlaphvarphiest} by \eqref{eq:c2:hardyguy}.
\end{proof}

\begin{proof}[Proof of \eqref{eq:c2:H1varphigest}]
If $\mu = 1$, \eqref{eq:c2:H1varphigest} follows from the following easy argument: 
\[
 \vrac{H_1(\varphi,g)}_2 = c \vrac{\Rz_i \Rz_i H_1(\varphi,g)}_2 \aleq \sum_i \vrac{\Rz_i H_1(\varphi,g)}_2.
\]
Moreover,
\[
\begin{ma}
 \Rz_i H_1(\varphi,g) 
&=& g \partial_i \varphi + \varphi \partial_i g - \Rz_i [g\ \laps{1} \varphi] - \Rz_i [\varphi\ \laps{1} g] \\
&=&  g \Rz_i [\laps{1} \varphi] + \varphi \Rz_i [\laps{1} g] - \Rz_i [g\ \laps{1} \varphi] - \Rz_i [\varphi\ \laps{1} g] \\
&=&  \mathcal{C}(g,\Rz_i)[\laps{1} \varphi] +\mathcal{C}(\varphi,\Rz_i)[\laps{1} g].
\end{ma} 
\]
Consequently, by the \cite{CRW76}-commutator theorem
\[
 \vrac{H_1(\varphi,g)}_2 \aleq \sum_i \vrac{\mathcal{C}(g,\Rz_i)[\laps{1} \varphi]}_2 + \sum_i \vrac{\mathcal{C}(\varphi,\Rz_i)[\laps{1} g]}_2
\aleq \sum_i \vrac{\mathcal{C}(g,\Rz_i)[\laps{1} \varphi]}_2  + [\varphi]_{BMO}\ \vrac{\laps{1} g}_2, 
\]
and therefore we have reduced \eqref{eq:c2:H1varphigest} to \eqref{eq:c2:CfRlaphvarphiest}.\\
If $\mu < 1$, we show that for
\[
 \tilde{H}_\mu(a,b) := \laps{\mu} a\ b - a \laps{\mu} b - \laps{\mu} (ab)
\]
we have the estimate
\[
 \vrac{\tilde{H}_\mu(a,b)}_{\mathcal{H}} \aleq \vrac{a}_2\ \vrac{\laps{\mu} b}_2.
\]
Use again the decomposition in $\Pi_1$, $\Pi_2$, $\Pi_3$.  We have for any $\mu > 0$,
\begin{equation}\label{eq:commis:Htildegoodguys}
 \vrac{\Pi_3(\laps{\mu}(a b)}_{\mathcal{H}} + \vrac{\Pi_1(a \laps{\mu} b)}_{\mathcal{H}} + \vrac{\Pi_2((\laps{\mu} a)\ b)}_{\mathcal{H}} + \vrac{\Pi_2(a \laps{\mu} b)}_{\mathcal{H}} \leq \vrac{a}_2\ \vrac{\laps{\mu}b}_2,
\end{equation}
\paragraph*{Estimate of $\Pi_1$.} We have,
\[
\vrac{\Pi_1\tilde{H}_\mu(a,b)}_{\mathcal{H}} \overset{\eqref{eq:commis:Htildegoodguys}}{\aleq} \|\sum_j\brac{(\laps{\mu}a_j) b^{j-4}-\laps{\mu}(a_j b^{j-4})}\|_{\mathcal{H}} + \vrac{a}_2\ \vrac{\laps{\mu}b}_2. 
\]
Thus we have to estimate
\[
 \sum_{j=-\infty}^\infty \int_{\R^m} \brac{(\laps{\mu}a_j) b^{j-4}-\laps{\mu}(a_j b^{j-4})}\ \psi_{\tilde{j}}
\]
The respective kernel is
\[
 k(\xi,\eta) = \abs{\eta}^{\mu} - \abs{\xi}^\mu = \sum_{l = 1}^\infty c_l m_l(\eta)\ n_l(\xi-\eta)\ \abs{\eta}^{\mu-l} \abs{\xi-\eta}^l,
\]
that is
\[
\begin{ma}
 &&\sum_{j=-\infty}^\infty \int_{\R^m} \brac{(\laps{\mu}a_j) b^{j-4}-\laps{\mu}(a_j b^{j-4})}\ \psi_{\tilde{j}}\\
 &\overset{\mu < 1}{=}&  \sum_{l=1}^\infty c_l 2^{4(\mu-l)}\ \sum_{j=-\infty}^\infty \int_{\R^m} 2^{j(l-\mu)} (N_l\lapms{l-\mu}a_j)\ 2^{(\mu-l)(j-4)}\laps{l-\mu}\laps{\mu}b^{j-4}\ \psi_{\tilde{j}}\\
 &\overset{\mu < 1}{\aleq}& \vrac{a}_2\ \vrac{\laps{\mu}b}_2,
\end{ma}
\]
where for the last step we can just copy the remaining arguments from \eqref{eq:commis:Pi1Thbest}, since $\mu < 1 \leq l$.
\paragraph*{Estimate of $\Pi_2$.} Since
\[
 \tilde{H}_\mu(a,b) = \laps{\mu} a\ b - 2 a \laps{\mu} b + (a \laps{\mu} b - \laps{\mu} (ab)),
\]
we can 
\[
\vrac{\Pi_2\tilde{H}_\mu(a,b)}_{\mathcal{H}} \overset{\eqref{eq:commis:Htildegoodguys}}{\aleq} 
\|\sum_j\brac{\laps{\mu}(a^{j-4} b_{j})-a^{j-4} \laps{\mu}b_{j}}\|_{\mathcal{H}} + \vrac{a}_2\ \vrac{\laps{\mu}b}_2. 
\]
Since $\mu < 1$, as in the argument for $\Pi_1$, we can estimate
\[
 \|\sum_j\brac{\laps{\mu}(a^{j-4} b_{j})-a^{j-4} \laps{\mu}b_{j}}\|_{\mathcal{H}} \aleq \vrac{a}_2\ \vrac{\laps{\mu}b}_2. 
\]
\paragraph*{Estimate of $\Pi_3$.} Finally, we have
\[
\vrac{\Pi_3\tilde{H}_\mu(a,b)}_{\mathcal{H}} \overset{\eqref{eq:commis:Htildegoodguys}}{\aleq} 
\|\sum_{j\approx i}\brac{\laps{\mu}(a_{j}) b_{i}-a_{j} \laps{\mu}b_{i}}\|_{\mathcal{H}} + \vrac{a}_2\ \vrac{\laps{\mu}b}_2,
\]
and have to estimate
\[
 \sum_{j = -\infty}^\infty \sum_{i=j-4}^{j+4}\brac{\laps{\mu}(a_{j}) b_{i}-a_{j} \laps{\mu}b_{i}} \psi^{j-6} + 
 \sum_{j = -\infty}^\infty \sum_{i=j-4}^{j+4} \sum_{k = j-6}^{j+6} \brac{\laps{\mu}(a_{j}) b_{i}-a_{j} \laps{\mu}b_{i}} \psi_{k} =: I + II,
\]
The kernel here is (note that for $\xi \in \supp (\psi^{j-6})^\wedge$, $\eta \in \supp (a_j)^\wedge$, $\abs{\xi}/\abs{\eta} \leq \fracm{2}$ which ensures convergence)
\begin{equation}\label{eq:commis:pi3htilde:kexp}
 k(\eta,\xi) = \abs{\eta}^\mu -\abs{\xi-\eta}^\mu = \sum_{l = 1}^\infty c_l\ m_l(\xi-\eta) n_l(\xi) \abs{\eta}^{\mu-l}\ \abs{\xi}^l,
\end{equation}
thus we have
\[
\begin{ma}
 I &=& \sum_{l=1}^\infty c_l\ \sum_{j = -\infty}^\infty \sum_{i=j-4}^{j+4} \int M_l \laps{\mu-l} a_{j}\ b_{i}\ N_l \laps{l}\psi^{j-6}\\
 &\aleq& \sum_{l=1}^\infty c_l\  2^{-6l} \sup_k 2^{-l(k-6)} \vrac{N_l \laps{l}\psi^{k-6}}_\infty\ \sum_{j = -\infty}^\infty \sum_{i=j-4}^{j+4} \int \abs{2^{(l-\mu)j} N_l \laps{\mu-l} a_{j}}\ \abs{2^{\mu i} b_{i}}\\
 &\overset{\eqref{eq:commis:fhochjBinftyinfyest}}{\aleq}& \vrac{\psi}_{\dot{B}^0_{\infty,\infty}}\ \sum_{l=1}^\infty c_l\  2^{-6l} \sum_{j = -\infty}^\infty \sum_{i=j-4}^{j+4} \int \abs{2^{(l-\mu)j} M_l \laps{\mu-l} a_{j}}\ \abs{2^{\mu i} b_{i}}\\
 &\aleq& \vrac{\psi}_{\dot{B}^0_{\infty,\infty}}\ \vrac{a}_2\ \vrac{\laps{\mu} b}_2.
\end{ma}
\]
For $II$, where the expansion \eqref{eq:commis:pi3htilde:kexp} might not be convergent, the situation is even easier,
\[
\begin{ma}
 \abs{II} &\aleq& \sum_{j = -\infty}^\infty \sum_{i=j-4}^{j+4} \sum_{k = j-6}^{j+6} \int \brac{\abs{2^{-\mu j} \laps{\mu}a_{j}}\ \abs{2^{\mu j} b_{i}} + \abs{a_{j}}\  \abs{\laps{\mu}b_{i}}} \abs{\psi_{k}}\\
 &\aleq& \vrac{\psi}_{\dot{B}^0_{\infty,\infty}} 
 \brac{ 
 \brac{ \int \sum_j \abs{2^{-\mu j} \laps{\mu}a_{j}}^2}^{\fracm{2}}\ \brac{\int \sum_j \abs{2^{\mu j} b_{i}}^2}^{\fracm{2}} 
 + \brac{\int \abs{a_{j}}^2}^{\fracm{2}}\  \brac{\int \abs{\laps{\mu}b_{i}}^2}^{\fracm{2}}
 } 
 \\
&\aleq& \vrac{\psi}_{\dot{B}^0_{\infty,\infty}} \vrac{a}_2\ \vrac{\laps{\mu} b}_2.
\end{ma}
\]

\end{proof}

\begin{proof}[Proof of \eqref{eq:c2:laphH1inhardy} for $\mu = 1$]
We have (by the classical product rule, or equivalently expanding the symbols $\abs{\xi}^2 = \abs{\xi-\eta}^2 + \abs{\eta}^2 + 2 \xi \cdot (\xi-\eta)$). With $\Rz_k$ we will by a abuse of notation call the linear operators with symbol $\xi^k/\abs{\xi}$. By the classical product rule for $\Rz_i \laps{1} = c \partial_i$
\[
\begin{ma}
 \Rz_i H_{1}(a,b) &=& a \Rz_i \laps{1} b + b \Rz_i \laps{1} a - \Rz_i(a \laps{1} b) - \Rz_i(b \laps{1} a)\\
&=& \lapms{1} (\laps{1} a)\ \Rz_i \laps{1} b + \lapms{1} (\laps{1} b) \Rz_i \laps{1} a - \Rz_i(\lapms{1} (\laps{1} a)\ \laps{1} b) - \Rz_i(\lapms{1} (\laps{1} b)\ \laps{1} a)\\
&=& \lapms{1} (\laps{1} a)\ \Rz_i \laps{1} b - \Rz_i(\lapms{1} (\laps{1} a)\ \laps{1} b)\\
&& +  \lapms{1} (\laps{1} b) \Rz_i \laps{1} a  - \Rz_i(\lapms{1} (\laps{1} b)\ \laps{1} a)\\
\end{ma}
\]
Thus, $\Rz_i \laps{1}  H_{1}(a,b)$ can be estimated via \eqref{eq:c2:hardyguy}, and as this holds for any $i \in \N$, we have \eqref{eq:c2:laphH1inhardy} for $\mu = 1$.
\end{proof}

\newpage
\newcommand{\antisymm}{\omega}

\section{Energy approach for optimal frame: Proof of Theorem~\ref{th:intro:energy}}
In this section we construct a suitable frame $P$ for our equation, transforming the \emph{antisymmetric} (essentially) $L^2$-potential $\Omega[]$ into an $L^{2,1}$- or even better in an $\lapms{\mu} \mathcal{H}$-potential $\Omega^P[]$. Here, $\mathcal{H}$ is the Hardy space, and with the previous statement we essentially mean that
\begin{equation}\label{eq:energymot:improvedOmega}
 \int \Omega^P[f] \leq C_{\Omega^P}\ \vrac{f}_{(2,\infty)}, \quad \mbox{or}\quad \int \Omega^P[\laps{\mu} \varphi] \leq C_{\Omega^P}\ \vrac{\varphi}_{BMO}, \quad \mbox{respectively,}
\end{equation}
where $BMO$ is the space dual to $\mathcal{H}$. This is an improvement, since for the non-transformed $\Omega$, we only had the estimate
\begin{equation}\label{eq:energymot:Omegawithouttransform}
 \int \Omega[f] \leq C_{\Omega}\ \vrac{f}_{2}.
\end{equation}
For motivation of the arguments presented here, let us recall the classical setting \cite{Riv06}, where we have the equation (usually for $w^i := \nabla u^i \in L^2(\R^m,\R^2)$) 
\[
 -\operatorname{div} (w^i) = \tilde{\Omega}_{ik} \cdot w^l,
\]
for $\tilde{\Omega}_{ik} = - \tilde{\Omega}_{ki} \in L^2(\R^m,\R^2)$, and we look for an orthogonal transformation $P \in W^{1,2}(\R^m,SO(N))$, $SO(N) \subset \R^{N \times N}$ being the special orthogonal group, such that
\begin{equation}\label{eq:energymot:improvedtOmega}
 \int \tilde{\Omega}^P_{ik} \cdot \nabla \varphi = 0,
\end{equation}
where
\[
 \tilde{\Omega}^P_{ij} = P_{ik} \nabla P^T_{kj} + P_{ik} \tilde{\Omega}_{kl} P^T_{lj}, \quad \mbox{or equivalently,}\quad -\operatorname{div} (P_{il} w^l) = \tilde{\Omega}^P_{ik} \cdot P_{kl} w^l.
\]
Also in this case, the estimate \eqref{eq:energymot:improvedtOmega} is an improvement from the estimate for the non-transformed $\tilde{\Omega}$
\[
 \int \tilde{\Omega} \cdot \nabla \varphi \leq C_{\tilde{\Omega}}\ \vrac{\nabla \varphi}_2,
\]
philosophically similar to the improvement \eqref{eq:energymot:improvedOmega} from the starting point \eqref{eq:energymot:Omegawithouttransform}. 

For the construction of $P$ such that \eqref{eq:energymot:improvedtOmega} holds, there are two different arguments known: Rivi\`{e}re \cite{Riv06} adapted a result by Uhlenbeck \cite{Uhlenbeck82} which is based on the continuity method (for the set $t\Omega$, $t \in [0,1]$) and relies on non-elementary a-priori estimates for $\tilde{\Omega}^P$ which also needs $L^2$-smallness of $\tilde{\Omega}$. In \cite{IchEnergie} we proposed to use arguments from H\'{e}lein's moving frame method \cite{Hel91}: Then the construction of $P$ relies on the fact that \eqref{eq:energymot:improvedtOmega} is the Euler-Lagrange equation of the energy
\begin{equation}\label{eq:energymot:classicalOmegaPenergy}
 \tilde{E}(Q) := \vrac{\tilde{\Omega}^Q}_{2}^2, \quad \mbox{$Q \in SO(N)$, a.e.},
\end{equation}
the minimizer of which exists by the elementary direct method.

Both construction arguments have been generalized to the fractional setting for $\Omega[] \equiv \Omega\cdot$ a pointwise multiplication-operator \cite{DR1dMan, Sfracenergy}. In our situation, where $\Omega[]$ is allowed to be a linear bounded operator from $L^2$ to $L^1$, we adapt the argument in \cite{Hel91,IchEnergie,Sfracenergy}, and minimize essentially the energy
\[
 E(Q) := \sup_{\psi \in L^2} \int \Omega^Q[\psi], \quad \mbox{$Q \in SO(N)$, a.e.}.
\]
While the construction of a minimizer of $E$, see Lemma~\ref{la:energy:minexists}, is not much more difficult as in the earlier situations \cite{Hel91,IchEnergie,Sfracenergy}, when computing the Euler-Lagrange equations, see Lemma~\ref{la:energy:Pmineuler}, we have several error terms, which stem from commutators of the form $f \Omega [g] - \Omega[fg]$, which are trivial if $\Omega[]$ is a pointwise-multiplication operator $\Omega[] = \Omega \cdot$. In Lemma~\ref{la:betterOmegaP} we then show that these error terms all behave well enough, if we take the for us relevant case of $\Omega[]$ being of the form $A \Rz[]$.

\subsection{Preliminary propositions}
Here we recall some elementary statements, which enter the proof of Theorem~\ref{th:intro:energy}. Proposition~\ref{pr:fintsuppoutsnormcontrolled} and Proposition~\ref{pr:energy:rieszimpliesexdual} are simple duality arguments for linear, bounded mappings between Banach spaces. Proposition~\ref{pr:L1BMOcompactsupport} is a quantified embedding from $L^1$ into $BMO$.

\begin{proposition}\label{pr:fintsuppoutsnormcontrolled}
For any $s > 0$ there exists $\Lambda_0, C_s > 1$ such that the following holds: Let $f \in L^2(\R^m)$, $\laps{s} f \in L^2(\R^m)$ and assume $f \equiv 0$ on $\R^m \backslash B_r$ for some $B_r \subset \R^m$. Then for any $\Lambda \geq \Lambda_0$,
\[
 \Vrac{\laps{s} f}_{2,\R^m \backslash B_{\Lambda r}} \leq C_s\ \Lambda^{-\frac{m}{2}-s}\ \Vert \laps{s} f \Vert_{2,B_{\Lambda r}}
\]
\end{proposition}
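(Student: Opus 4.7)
My strategy is to exploit the off-diagonal decay of $\laps{s}$ away from $\supp f \subset B_r$, together with a fractional Poincar\'e inequality for compactly supported functions and a standard absorption argument.

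The first step is the pointwise bound: for $x \in \R^m \backslash B_{\Lambda r}$ with $\Lambda \geq 2$, since $f(x)=0$,
\[
 \abs{\laps{s} f(x)} \aleq \int_{B_r} \frac{\abs{f(y)}}{\abs{x-y}^{m+s}}\ dy \aleq \brac{\dist(x,B_r)}^{-m-s}\ \vrac{f}_{1}.
\]
For $s \in (0,2)$ this is immediate from the singular-integral representation of $\laps{s}$; for larger $s$ I would write $s = 2k+\sigma$ with $\sigma \in [0,2)$, use $\laps{s} = (-\lap)^k \laps{\sigma}$, and differentiate the Riesz-type kernel $\abs{x-y}^{-m-\sigma}$ under the integral sign to recover the additional decay (the case $\sigma = 0$ is trivial because $\laps{s} f$ then vanishes outside $B_r$). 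I would then split $\R^m\backslash B_{\Lambda r}$ into the dyadic annuli $A_{\Lambda,r}^k$, $k\geq 1$, cf.~\eqref{eq:def:Alambdark}, on which $\dist(x,B_r) \aeq 2^k\Lambda r$ for $\Lambda \geq 4$, and sum the resulting geometric series to get
\[
 \vrac{\laps{s} f}_{2,\R^m \backslash B_{\Lambda r}}^{2} \aleq \sum_{k\geq 1} (2^k \Lambda r)^{-2m-2s}\ (2^k\Lambda r)^{m}\ \vrac{f}_{1}^{2} \aleq (\Lambda r)^{-m-2s}\ \vrac{f}_{1}^{2}.
\]

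The second step is a fractional Poincar\'e bound: since $\supp f \subset B_r$, Cauchy--Schwarz gives $\vrac{f}_{1}\leq c r^{m/2}\vrac{f}_{2}$, and I claim $\vrac{f}_{2}\aleq r^{s}\vrac{\laps{s} f}_{2}$. For $s < m/2$ this follows at once from Hardy--Littlewood--Sobolev $\lapms{s}:L^{2}\to L^{\frac{2m}{m-2s}}$ combined with H\"older on $B_{r}$. For general $s > 0$ I would rescale to $r = 1$ and split on the Fourier side: bounding $\abs{\hat f(\xi)}\leq \vrac{f}_{1}\leq c\vrac{f}_{2}$ on $\{\abs{\xi}\leq R\}$ and using $\abs{\xi}^{s}\geq R^{s}$ on the complement, Plancherel yields $\vrac{f}_{2}^{2}\leq cR^{m}\vrac{f}_{2}^{2} + R^{-2s}\vrac{\laps{s} f}_{2}^{2}$, and the desired bound results by choosing $R$ small enough to absorb the first term. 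Rescaling back produces $\vrac{f}_1 \aleq r^{\frac{m}{2}+s}\vrac{\laps{s} f}_{2}$.

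Inserting this into the annular estimate gives
\[
\vrac{\laps{s} f}_{2,\R^m\backslash B_{\Lambda r}}^{2} \leq \tilde{C}_s\Lambda^{-m-2s}\ \vrac{\laps{s} f}_{2,\R^m}^{2}= \tilde{C}_s\Lambda^{-m-2s} \brac{\vrac{\laps{s} f}_{2,B_{\Lambda r}}^{2} + \vrac{\laps{s} f}_{2,\R^m\backslash B_{\Lambda r}}^{2}},
\]
and choosing $\Lambda_0$ large enough that $\tilde{C}_s\Lambda_0^{-m-2s}\leq \tfrac{1}{2}$ allows me to absorb the second summand on the right, concluding the proof with $C_s := \sqrt{2\tilde C_s}$ (enlarged to exceed $1$ if necessary) depending only on $s$ and $m$. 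The main obstacle is really the Poincar\'e step: the direct Riesz-potential route fails at $s = m/2$, and for larger $s$ one must resort to the Fourier-splitting bootstrap sketched above; everything else is a routine off-diagonal kernel estimate plus absorption.
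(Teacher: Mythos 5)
Your proof is correct and follows essentially the same route as the paper: the paper simply invokes its Corollary on quasi-locality (II), which packages exactly your off-diagonal kernel estimate over dyadic annuli together with the fractional Poincar\'e inequality for compactly supported functions, and then concludes by the same absorption argument in $\Lambda$.
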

\begin{proof}
Using Corollary \ref{co:QuasiLocalityII},
\[
 \Vrac{\laps{s} f}_{2,\R^m \backslash B_{\Lambda r}} 
 \aleq \Lambda^{-\frac{m}{2} -s}\ \Vert \laps{s} f \Vert_{2,\R^m \backslash B_{\Lambda r}} + \Lambda^{-\frac{m}{2} -s}\ \Vert \laps{s} f \Vert_{2,B_{\Lambda r}}.
\]
Thus, if $\Lambda > \Lambda_0$ for a $\Lambda_0$ depending only on $s$, we can absorb and conclude.
\end{proof}

\begin{proposition}\label{pr:energy:rieszimpliesexdual}
Let $A: L^2(\R^m) \to L^1(\R^m)$ be a linear, bounded operator. Then there exists $\bar{g} \in L^2(\R^m)$, $\Vert \bar{g} \Vert_{2,\R^m} = 1$ such that
\[
 \sup_{\Vert \psi \Vert_{2,\R^m} \leq 1} \int A[\psi] = \int A[\bar{g}].
\]
In particular (taking instead of $A$ the operator $\tilde{A} := A[\chi_D \cdot]$, for any $D \subset \R^m$ there exists  $\bar{g}_D \in L^2(D)$, $\Vert \bar{g}_D \Vert_{2,D} \leq 1$, $\supp \bar{g} \subset \overline{D}$, such that
\[
 \sup_{\Vert \psi \Vert_{2,\R^m} \leq 1, \supp \psi \subset \overline{D}} \int A[\psi] = \int A[\bar{g}_D].
\]
\end{proposition}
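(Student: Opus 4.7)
The statement is a straightforward duality argument and the main idea is to identify the quantity $\psi \mapsto \int A[\psi]$ with a continuous linear functional on $L^2(\R^m)$ and invoke the Riesz representation theorem. Specifically, since $A[\psi] \in L^1(\R^m)$ and
\[
\Babs{\int_{\R^m} A[\psi]} \leq \vrac{A[\psi]}_{1,\R^m} \leq \vrac{A}_{L^2 \to L^1}\ \vrac{\psi}_{2,\R^m},
\]
the map $T: L^2(\R^m) \to \R$ defined by $T(\psi) := \int_{\R^m} A[\psi]$ is linear and bounded with $\vrac{T}_{(L^2)^\ast} \leq \vrac{A}_{L^2 \to L^1}$.

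By the Riesz representation theorem, there exists a unique $h \in L^2(\R^m)$ such that $T(\psi) = \int h\, \psi$ for all $\psi \in L^2(\R^m)$, and $\vrac{T}_{(L^2)^\ast} = \vrac{h}_{2,\R^m}$. Consequently,
\[
\sup_{\vrac{\psi}_{2,\R^m} \leq 1} \int A[\psi] = \vrac{h}_{2,\R^m},
\]
and the supremum is attained; the natural candidate is $\bar g := h/\vrac{h}_{2,\R^m}$, for which Cauchy--Schwarz's equality case gives $\int h\, \bar g = \vrac{h}_{2,\R^m}$, i.e.\ $\int A[\bar g] = \sup_{\vrac{\psi}_2 \leq 1}\int A[\psi]$. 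In the degenerate case $h \equiv 0$ both sides are zero and any unit vector $\bar g$ trivially fulfills the conclusion.

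For the second part, fix $D \subset \R^m$ and consider the modified operator $\tilde A[\psi] := A[\chi_D \psi]$, which is again a bounded linear map $L^2(\R^m) \to L^1(\R^m)$ with operator norm at most $\vrac{A}_{L^2 \to L^1}$. Applying the first part to $\tilde A$ produces some $\bar g \in L^2(\R^m)$ with $\vrac{\bar g}_{2,\R^m}=1$ realizing the supremum of $\int \tilde A[\psi]$. Setting $\bar g_D := \chi_D \bar g$ gives $\supp \bar g_D \subset \overline{D}$, $\vrac{\bar g_D}_{2,D} \leq 1$, and
\[
\sup_{\vrac{\psi}_{2,\R^m} \leq 1,\ \supp \psi \subset \overline{D}} \int A[\psi]
= \sup_{\vrac{\psi}_{2,\R^m} \leq 1} \int A[\chi_D \psi]
= \int \tilde A[\bar g] = \int A[\bar g_D],
\]
which is the desired identity. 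The only subtle point is the degenerate case, which is handled trivially as above; beyond that, everything is a direct invocation of Riesz representation.
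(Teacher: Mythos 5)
Your proof is correct and follows essentially the same route as the paper: both identify $\psi \mapsto \int A[\psi]$ with an element of $(L^2)^\ast$ via Riesz representation and normalize the representing function to obtain the maximizer. Your treatment of the degenerate case $h\equiv 0$ and the explicit verification of the localized statement are small additions the paper leaves implicit, but the argument is the same.
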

\begin{proof}
As $f^\ast \in \brac{L^2(\R^m)}^\ast$ for
\[
 f^\ast(\psi) := \int A[\psi]
\]
is a linear bounded functional, there exists a unique $\bar{f} \in L^2(\R^m)$ such that
\begin{equation}\label{eq:rieszdual:gf}
 \int A[\psi] = \int \bar{f} \psi,
\end{equation}
and
\[
\Vert \bar{f} \Vert_{2,\R^m} =  \sup_{\Vert \psi \Vert_{2,\R^m} \leq 1} \int A[\psi].
\]
On the other hand,
\[
 \Vert \bar{f} \Vert_{2,\R^m}^2 = \int \bar{f}\ \bar{f} \overset{\eqref{eq:rieszdual:gf}}{=} \int A[\bar{f}],
\]
which in turn implies that for $\bar{g} := \Vrac{ \bar{f} }_{2}^{-1}\ \bar{f}$,
\[
 \sup_{\Vert \psi \Vert_{2,\R^m} \leq 1} \int A[\psi] = \Vert \bar{f} \Vert_{2,\R^m} = \int A[\bar{g}].
\]
\end{proof}

\begin{proposition}\label{pr:energy:linfty}
Let $A: L^2(\R^m) \to L^1(\R^m)$ be a linear, bounded operator. Then there exists a linear, bounded operator $A^\ast: L^\infty(\R^m) \to L^2(\R^m)$ such that
\[
\int g\ A[f] = \int f\ A^\ast [g] \quad \mbox{for any $f \in L^2(\R^m)$, $g \in L^\infty(\R^m)$} .
\]
Moreover, $\bar{g} = \Vert A(1) \Vert_{2}^{-1}\ A^\ast(1)$ for the $\bar{g}$ from Proposition~\ref{pr:energy:rieszimpliesexdual}.
\end{proposition}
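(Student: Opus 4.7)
The plan is to construct $A^\ast$ pointwise in $g$ via the Riesz representation theorem on $L^2(\R^m)$, and then identify $A^\ast(1)$ with the function $\bar f$ implicitly constructed in the proof of Proposition~\ref{pr:energy:rieszimpliesexdual}.

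First I would fix $g \in L^\infty(\R^m)$ and consider the linear functional $\Phi_g : L^2(\R^m) \to \R$ defined by $\Phi_g(f) := \int g\, A[f]$. Boundedness is immediate from the hypothesis: since $A$ maps $L^2$ into $L^1$ continuously, one has
\[
\abs{\Phi_g(f)} \leq \vrac{g}_{\infty,\R^m}\, \vrac{A[f]}_{1,\R^m} \leq \vrac{g}_{\infty,\R^m}\, \vrac{A}_{L^2 \to L^1}\, \vrac{f}_{2,\R^m},
\]
so $\Phi_g \in (L^2(\R^m))^\ast$. By the Riesz representation theorem there exists a unique element $A^\ast[g] \in L^2(\R^m)$ such that
\[
\int g\, A[f] = \Phi_g(f) = \int f\, A^\ast[g] \qquad \text{for all } f \in L^2(\R^m),
\]
together with the bound $\vrac{A^\ast[g]}_{2,\R^m} \leq \vrac{A}_{L^2 \to L^1}\, \vrac{g}_{\infty,\R^m}$.

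Next I would check that the assignment $g \mapsto A^\ast[g]$ is linear and bounded. Given $g_1, g_2 \in L^\infty$ and scalars $\alpha_1, \alpha_2$, for every $f \in L^2$ one has
\[
\int f\, A^\ast[\alpha_1 g_1 + \alpha_2 g_2] = \int (\alpha_1 g_1 + \alpha_2 g_2)\, A[f] = \alpha_1 \int f\, A^\ast[g_1] + \alpha_2 \int f\, A^\ast[g_2],
\]
and the uniqueness in Riesz representation forces $A^\ast[\alpha_1 g_1 + \alpha_2 g_2] = \alpha_1 A^\ast[g_1] + \alpha_2 A^\ast[g_2]$. Combined with the norm bound above, this yields $A^\ast : L^\infty(\R^m) \to L^2(\R^m)$ linear and bounded, as required.

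Finally, for the identification of $\bar g$ I would apply the construction with $g \equiv 1 \in L^\infty(\R^m)$. Then $\int A[\psi] = \int \psi\, A^\ast[1]$ for every $\psi \in L^2$, which is precisely the defining equation \eqref{eq:rieszdual:gf} of the function $\bar f$ appearing in the proof of Proposition~\ref{pr:energy:rieszimpliesexdual}. By uniqueness of the Riesz representer, $\bar f = A^\ast(1)$, and hence
\[
\bar g = \vrac{\bar f}_{2}^{-1}\, \bar f = \vrac{A^\ast(1)}_{2}^{-1}\, A^\ast(1),
\]
which matches the statement (the norm in the denominator is that of $A^\ast(1)$). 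There is no serious obstacle here; the only subtlety is that $1 \notin L^2(\R^m)$ so $A[1]$ has no direct meaning, but $A^\ast[1]$ is well-defined because the above construction only uses $1 \in L^\infty$.
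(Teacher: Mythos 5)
Your proposal is correct and follows essentially the same route as the paper: Riesz representation on $L^2$ applied to the functional $f \mapsto \int g\, A[f]$ for each fixed $g \in L^\infty$, linearity via uniqueness of the representer, the operator norm bound by duality, and the identification $\bar f = A^\ast[1]$ from the defining equation \eqref{eq:rieszdual:gf}. Your observation that the normalizing constant in the statement should read $\Vert A^\ast(1)\Vert_2^{-1}$ rather than $\Vert A(1)\Vert_2^{-1}$ (since $1 \notin L^2$, so $A[1]$ is undefined) is also consistent with what the paper's own proof actually establishes.
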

\begin{proof}
For any $g \in L^\infty(\R^m)$, we have
\[
 T_g[\cdot] := \int g\ A[\cdot] \in \brac{L^2(\R^m)}^\ast,
\]
thus there exists a unique $A^\ast_g \in L^2(\R^m)$ such that
\[
 T_g[f] = \int A^\ast_g\ f \quad \mbox{for all $f \in L^2(\R^m)$}.
\]
For $\lambda_1,\lambda_2 \in \R$ and $g_1,g_2 \in L^\infty$, and any $f \in L^2(\R^m)$
\[
 \int A^\ast_{\lambda_1 g_1+\lambda_2 g_2}\ f = \lambda_1 \int g_1\ A[f] + \lambda_2 \int g_2\ A[f] = \lambda_1 \int A_{g_1}\ f + \lambda_2 \int A_{g_2}\ f,
\]
which implies that $A^\ast[\cdot] := A^\ast_{\cdot}$ is linear. As for boundedness, note that
\[
 \Vert A^\ast[g] \Vert_{2,\R^m} = \sup_{\Vert f \Vert_{2,\R^m} \leq 1} \int A^\ast[g] f = \sup_{\Vert f \Vert_{2,\R^m} \leq 1} \int g A[f] \leq \Vert g \Vert_{\infty}\ \Vert A \Vert_{L^2 \to L^1},
\]
that is
\[
 \Vert A^\ast \Vert_{L^\infty \to L^2} \leq \Vert A \Vert_{L^2 \to L^1}.
\]
Finally, we have that
\[
\int \bar{f}\ \psi \overset{\eqref{eq:rieszdual:gf}}{=} \int A[\psi]\ = \int A^\ast[1]\ \psi,
\]
which - given that it holds for any $\psi \in L^2(\R^m)$ - implies $\bar{f} = A^\ast[1]$.
\end{proof}

\begin{proposition}\label{pr:L1BMOcompactsupport}
Let $\varphi \in C_0^\infty(B_r)$, then
\[
 \vrac{\varphi}_1 \leq C_m\ r^m\ [\varphi]_{BMO}.
\]
\end{proposition}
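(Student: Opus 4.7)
The plan is to exploit the fact that $\varphi$ has compact support in $B_r$, so that the mean value of $\varphi$ over any concentric ball $B_R$ of radius $R \geq r$ decays like $|B_r|/|B_R|$, and then to compare $\|\varphi\|_1$ to its deviation from this mean on a slightly larger ball $B_{2r}$. The key observation is that the BMO bound gives control on $\int_{B_R}|\varphi - \varphi_{B_R}|$ in terms of $|B_R|[\varphi]_{BMO}$, while the smallness of $\varphi_{B_R}$ lets us recover $\|\varphi\|_1$ after absorption.

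Concretely, I would fix a ball $B_R$ concentric with $B_r$ with $R = 2r$, write $\varphi_{B_R} := |B_R|^{-1}\int_{B_R}\varphi$, and use the pointwise bound $|\varphi(x)| \leq |\varphi(x) - \varphi_{B_R}| + |\varphi_{B_R}|$ on $B_r$ to estimate
\[
\|\varphi\|_1 = \int_{B_r}|\varphi| \leq \int_{B_R}|\varphi - \varphi_{B_R}| + |B_r|\,|\varphi_{B_R}| \leq |B_R|\,[\varphi]_{BMO} + \frac{|B_r|}{|B_R|}\,\|\varphi\|_1,
\]
where in the last step I used $|\varphi_{B_R}| \leq |B_R|^{-1}\|\varphi\|_1$ (since $\supp\varphi \subset B_r$). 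Since $|B_r|/|B_R| = 2^{-m} < 1$, one absorbs the second term on the right into the left-hand side, yielding
\[
\|\varphi\|_1 \leq \frac{2^m}{1 - 2^{-m}}\,|B_r|\,[\varphi]_{BMO} \leq C_m\,r^m\,[\varphi]_{BMO}.
\]

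There is no real obstacle here — the whole proof is one short computation. The only subtlety is the choice of the enlargement factor (anything strictly larger than $1$ works, since we only need $|B_r|/|B_R| < 1$ to enable the absorption), and noting that the defining property of BMO is taken over \emph{all} balls in $\R^m$, so it applies in particular to $B_R$.
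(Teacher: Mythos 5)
Your proof is correct and is essentially identical to the paper's own argument: both compare $\varphi$ to its mean over the doubled ball $B_{2r}$, bound that mean by $|B_{2r}|^{-1}\|\varphi\|_1$ using the compact support, and absorb the resulting $2^{-m}\|\varphi\|_1$ term. Nothing to add.
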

\begin{proof}
Let $\Lambda > 0$, then
\[
 \vrac{\varphi}_{1,B_r} \leq \vrac{\varphi - \abs{B_{\Lambda r}}^{-1} \int \varphi }_{B_r} + \frac{\abs{B_r}}{B_{\Lambda r}} \vrac{\varphi}_{1,B_r},
\]
and consequently for, say, $\Lambda = 2$,
\[
 \vrac{\varphi}_{1,B_r} \leq 2 \vrac{\varphi - \abs{B_{\Lambda r}}^{-1} \int \varphi }_{B_r} \aleq (\Lambda r)^m\ \abs{B_{\Lambda r}}^{-1} \vrac{\varphi - \abs{B_{\Lambda r}}^{-1} \int \varphi }_{B_{\Lambda r}} \aleq (\Lambda r)^m\ [\varphi]_{BMO}.
\]

\end{proof}

\subsection{Energy with potentials}
\label{s:potentialenergy}
Let $\Omega^{i,j}: L^2(\R^m) \to L^1(\R^m)$, $1 \leq i,j \leq N$ be a linear bounded Operator. And set
\[
 \Omega^Q_{ij} [f]:= \brac{\laps{\mu} (Q-I)_{ik}}\ Q^T_{kj}\ f + Q_{ik} \Omega_{kl} [Q^T_{lj} f],
\]
for $\supp(Q-I) \subset B_r$, $\laps{\mu} Q \in L^2(\R^{N\times N})$ and $Q \in SO(N)$ almost everywhere. For $\psi : \R^n \to \R^{N\times N}$, we write
\[
 \Omega^Q [\psi] := \brac{\laps{\mu} (Q-I)_{ik}}\ Q^T_{kj}\ \psi_{ij} + Q_{ik} \Omega_{kl} [Q^T_{lj} \psi_{ij}],
\]
Having in mind \eqref{eq:energymot:classicalOmegaPenergy}, we then define the energy
\begin{equation}\label{eq:energy:EQ}
 E(Q) \equiv E_{r,x,\Lambda,s,2} (Q) := \begin{cases}
\sup_{\ontop{\psi \in C_0^\infty(B_{\Lambda r}(x),\R^{N\times N})}{\Vert \psi \Vert_{2} \leq 1}} \intl_{\R^m} (\Omega^Q) [\psi] \quad &\mbox{if $\supp (Q-I) \subset \overline{B_r(x)}$},\\
\infty \quad &\mbox{else.}
\end{cases}
\end{equation}
Obviously, $Q \equiv I$ is admissible and $E(I) < \infty$. Thus there exists a minimizing sequence, and one can hope for a minimizer:
\begin{lemma}\label{la:energy:minexists}
For any $\mu > 0$ there exists $\Lambda_0 > 1$ such that for any $\Lambda \geq \Lambda_0$, the following holds: There exists an admissible function $P$ for $E$ such that $E(P) \leq E(Q)$ for any other admissible function $Q$. Moreover,
\begin{equation}\label{eq:energy:minex:Pest}
\Vrac{\laps{\mu} P}_{2,B_{\Lambda r}(x)} + \Lambda^{\frac{m}{2}+\mu} \Vrac{\laps{\mu} P}_{2,\R^m \backslash B_{\Lambda r}(x)} \leq C_\mu\ \Vert \Omega \Vert_{2 \to 1,B_{\Lambda r}(x)}.
\end{equation}
Here,
\[
\Vert \Omega \Vert_{2 \to 1,D} := \sup_{\psi \in C_0^\infty(D,\R^{N\times N}), \vrac{\psi}_{2}\leq 1}\vrac{\Omega[\psi]}_1
\]
\end{lemma}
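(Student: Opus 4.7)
The plan is to apply the direct method of the calculus of variations: take a minimizing sequence $P_k$ for $E$ (noting $E(I) = \Vert \Omega \Vert_{2\to 1, B_{\Lambda r}(x)} < \infty$ so the infimum is finite), extract a weak $W^{\mu,2}$-limit, and show that this limit is admissible and is a minimizer by lower semi-continuity. The whole construction hinges on one coercivity estimate.

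For the coercivity estimate, I would plug into the supremum defining $E(Q)$ the specific test function
\[
\psi_Q \;=\; \Vrac{\laps{\mu}(Q-I)}_2^{-1}\;\eta\;\laps{\mu}(Q-I)_{ik}\,Q^T_{kj},
\]
where $\eta\in C_0^\infty(B_{\Lambda r}(x))$ is a standard cut-off equal to $1$ on $B_{\Lambda r/2}(x)$. By density and continuity of $\psi\mapsto \int \Omega^Q[\psi]$ on $L^2$, the supremum over $C_0^\infty$ equals that over $L^2$-functions supported in $\overline{B_{\Lambda r}}$, so $\psi_Q$ (or a smooth approximation) is an admissible test. Using the $SO(N)$-identity $\sum_j Q^T_{kj}Q^T_{lj}=\delta_{kl}$, the first summand of $\int\Omega^Q[\psi_Q]$ collapses to
\[
\Vrac{\laps{\mu}(Q-I)}_2^{-1}\int\eta\,\abs{\laps{\mu}(Q-I)}^2,
\]
and Proposition~\ref{pr:fintsuppoutsnormcontrolled} applied to the compactly supported $Q-I$ shows that the missing mass $\int(1-\eta)\abs{\laps{\mu}(Q-I)}^2$ is bounded by $C\Lambda^{-m-2\mu}\Vrac{\laps{\mu}(Q-I)}_2^2$. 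The second summand is controlled crudely by $\Vert Q\Vert_\infty\,\Vert \Omega\Vert_{2\to 1, B_{\Lambda r}}\,\Vert Q^T\psi_Q\Vert_2\aleq \Vert \Omega\Vert_{2\to 1, B_{\Lambda r}}$. Hence for $\Lambda\geq\Lambda_0$ sufficiently large,
\[
E(Q)\;\geq\;\tfrac12\,\Vrac{\laps{\mu}(Q-I)}_2 \;-\; C_\mu\,\Vert\Omega\Vert_{2\to 1, B_{\Lambda r}(x)},
\]
which combined with $E(P_k)\leq E(I)$ yields a uniform bound $\Vrac{\laps{\mu}P_k}_2\aleq \Vert\Omega\Vert_{2\to 1,B_{\Lambda r}(x)}$.

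From the uniform $W^{\mu,2}$-bound and pointwise constraint $P_k(x)\in SO(N)$, I would extract a subsequence with $P_k\rightharpoonup P$ weakly in $W^{\mu,2}$, strongly in $L^2_{\rm loc}$ and pointwise a.e.; the support condition $\supp(P-I)\subset\overline{B_r(x)}$ and the $SO(N)$-constraint pass to the a.e.\ limit. Lower semi-continuity of $E$ is obtained test-by-test: for fixed $\psi\in C_0^\infty(B_{\Lambda r}(x),\R^{N\times N})$ the first summand in $\int\Omega^{P_k}[\psi]$ converges as the pairing of the weak $L^2$-limit $\laps{\mu}(P_k-I)\rightharpoonup \laps{\mu}(P-I)$ with the strong $L^2$-limit $P_k^T\psi\to P^T\psi$ (dominated convergence), while the second summand converges since $\Omega\colon L^2\to L^1$ is bounded linear and $P_k^T\psi\to P^T\psi$ strongly. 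Taking the supremum over $\psi$ gives $E(P)\leq \liminf E(P_k)=\inf E$, so $P$ is a minimizer. The first part of \eqref{eq:energy:minex:Pest} is the coercivity inequality applied to $P$, and the tail estimate is immediate from Proposition~\ref{pr:fintsuppoutsnormcontrolled} with $f=P-I$.

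The principal obstacle is the coercivity step: one has to engineer the test function $\psi_Q$ so that the $SO(N)$-algebra unmasks the quadratic structure $\int \eta\abs{\laps{\mu}(Q-I)}^2$ in the first summand of $\Omega^Q$, while simultaneously keeping $\supp\psi_Q\subset B_{\Lambda r}$. The localization forces a cut-off error of size $\Lambda^{-m-2\mu}\Vrac{\laps{\mu}(Q-I)}_2^2$ that would destroy the argument unless $\Lambda\geq \Lambda_0(\mu)$ is taken large enough to absorb it; this is precisely where the decay estimate of Proposition~\ref{pr:fintsuppoutsnormcontrolled} and the scale $\Lambda^{\frac{m}{2}+\mu}$ in \eqref{eq:energy:minex:Pest} originate.
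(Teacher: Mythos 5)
Your proposal is correct and follows essentially the same route as the paper: a coercivity bound obtained by testing the supremum in $E(Q)$ against a (near-)dual element of $\laps{\mu}(Q-I)Q^T$ supported in $B_{\Lambda r}$, the exterior tail absorbed via Proposition~\ref{pr:fintsuppoutsnormcontrolled} for $\Lambda\geq\Lambda_0(\mu)$, followed by the direct method with test-by-test lower semicontinuity (weak $L^2$ convergence of $\laps{\mu}P_k$ paired against the strong limit of $P_k^T\psi$, and boundedness of $\Omega:L^2\to L^1$ for the remaining term). The only cosmetic difference is that you realize the $L^2$-duality by an explicit cutoff test function $\psi_Q$, whereas the paper simply takes the supremum of the linear term over all admissible $\psi$ and reads off $\Vrac{\laps{\mu}(Q-I)}_{2,B_{\Lambda r}}$ directly.
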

\begin{proof}
Take $\Lambda_0$ from Proposition~\ref{pr:fintsuppoutsnormcontrolled} and assume $\Lambda \geq \Lambda_0$. We have for any $\psi \in C_0^\infty(B_{\Lambda r},\R^{N\times N})$, $\Vert \psi \Vert_{2} \leq 1$
\[
\begin{ma}
 E(Q) &\geq& \int (\laps{\mu} (Q-I)\ Q^T)_{ij} \psi_{ij} + \int Q \Omega [Q^T \psi]\\
&\geq& \int (\laps{\mu} (Q-I)\ Q^T)_{ij}\ \psi_{ij} - \Vert \Omega \Vert_{2\to 1,B_{\Lambda r}},
\end{ma}
\]
which (taking the supremum over such $\psi$) implies
\[
\Vert \laps{\mu} (Q-I) \Vert_{2,B_{\Lambda r}} \leq E(Q) + \Vert \Omega \Vert_{2\to 1,B_{\Lambda r}}.
\]
According to Proposition~\ref{pr:fintsuppoutsnormcontrolled}, this implies (as $Q \equiv I$ on $\R^n \backslash B_r$),
\[
 \Vert \laps{\mu} (Q-I) \Vert_{2,\R^m} \leq C_\mu\ \brac{E(Q) + \Vert \Omega \Vert_{2\to 1,B_{\Lambda r}}}.
\]
Consequently, for a minimizing sequence $P_k$,
\[
  \Vert \laps{\mu} (P_k-I) \Vert_{2,\R^m} \leq C_\mu\ \Vert \Omega \Vert_{2\to 1,B_{\Lambda r}},
\]
and up to taking a subsequence, we may assume that there is an admissible function $P$ such that $\laps{\mu} P_k$ converges $L^2$-weakly to $\laps{\mu} P$ and $P_k$ converges pointwise and strongly to $P$. 

Then, for any fixed $\psi \in C_0^\infty(B_{\Lambda r})$, $\Vert \psi \Vert_{2,\R^{N\times N}} \leq 1$
\[
 E(P_k) \geq \int \Omega^P [\psi] + \int \Omega^{P_k} [\psi] - \Omega^P [\psi].
\]
We claim that
\begin{equation}\label{eq:omegapkpsito0}
 \int \Omega^{P_k} [\psi] - \Omega^P [\psi] \xrightarrow{k \to \infty} 0,
\end{equation}
which, once proven, implies that
\[
 \inf E(\cdot) \geq \int \Omega^{P} \psi,
\]
which by the arbitrary choice of $\psi$ implies that $P$ is a minimizer. In order to show \eqref{eq:omegapkpsito0}, note that
\[
\begin{ma}
 \Omega^{P_k} [\psi] - \Omega^P [\psi] &=& \laps{\mu} P_k\ (P_k^T - P^T) \psi + \laps{\mu} (P_k-P)\ P^T\psi +
(P_k -P)\Omega \left [P_k^T \psi\right ] + P \Omega \left [(P_k^T - P^T) \psi\right ]\\
&=:& I_k + II_k + III_k+ IV_k.
\end{ma}
\]
Since $\abs{P_k}$, $\abs{P} \leq 1$, all terms of the form $(P_k^T - P^T) \psi \xrightarrow{k \to \infty} 0$ in $L^2$, by Lebesgue's dominated convergence. Thus, $\int I_k + \int IV_k \xrightarrow{k \to \infty} 0$. By the weak $L^2$-convergence of $\laps{\mu} P_k$, also $\int II_k \xrightarrow{k \to \infty} 0$. Since $P_k^T \psi \to P^T\psi$ in $L^2(\R^m)$, also $\Omega \left [P_k^T \psi \right ] \xrightarrow {k \to \infty} \Omega [P^T \psi]$ in $L^1$ and in particular pointwise almost everywhere. Then also $\int III_k \xrightarrow{k \to \infty} 0$.
\end{proof}
\begin{lemma}\label{la:energy:Pmineuler}
Let $P$ be a minimizer of $E(\cdot)$ as in \eqref{eq:energy:EQ}, and assume that \begin{equation}\label{eq:energy:Omegaantisym}
                                                                                  \Omega_{ij}[] = - \Omega_{ji}[] \quad 1 \leq i,j \leq N.
                                                                                 \end{equation}
 Then for any $\varphi \in C_0^\infty(B_r(x))$, \[
\begin{ma}
- \int \Omega^P[\laps{\mu} \varphi]
 &=& \fracm{2} \int H_{\mu}(P-I,P^T-I)\ \laps{\mu} \varphi\\
 &&-\int so(P\mathcal{C}\brac{\varphi, \Omega}\left [P^T \overline{\Omega^P}^T \chi_{D_\Lambda}\right ])  \\
&& + \int so(\overline{\Omega^P} \chi_{D_\Lambda}P H_{\mu}(\varphi,P^T-I))\\
&& - \int so(\mathcal{C}\brac{P,\Omega}[\laps{\mu} \varphi] P^T)\\
&& + \int \Omega^P[(1-\chi_{D_\Lambda})\laps{\mu} \varphi].
\end{ma}
\]
Here, we denote for a matrix $A \in R^{N\times N}$, the antisymmetric part with $so(A) = 2^{-1} (A - A^T)$, and for a mapping $g: L^2 \to L^1$, we denote $\overline{g}$ as in Proposition~\ref{pr:energy:rieszimpliesexdual}.
\end{lemma}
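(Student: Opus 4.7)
The proof will compute the Euler--Lagrange equation satisfied by the minimizer $P$ of the energy $E$ in \eqref{eq:energy:EQ}, and then rearrange the resulting identity by inserting the product rules $\laps{\mu}(ab)=a\laps{\mu}b+b\laps{\mu}a+H_\mu(a,b)$ and the commutator identities $\mathcal{C}(\varphi,\Omega)[g]=\varphi\Omega[g]-\Omega[\varphi g]$, $\mathcal{C}(P,\Omega)[g]=P\Omega[g]-\Omega[Pg]$ at every place where a factor needs to be moved across $\laps{\mu}$ or $\Omega$. The extra terms produced this way are precisely the four commutator contributions on the right-hand side; the last summand with the cutoff $(1-\chi_{D_\Lambda})$ will appear as the defect when one replaces a test function supported in $B_r(x)$ by the dual maximizer supported in $D_\Lambda=B_{\Lambda r}(x)$.

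\textbf{Step 1 (variational equation).} Since $P$ minimizes $E$, apply the variations $P_t:=e^{t\alpha}P$ with $\alpha\in C_0^\infty(B_r(x),so(N))$; these preserve $SO(N)$ a.e.\ and the support condition, so they are admissible. By Proposition~\ref{pr:energy:rieszimpliesexdual} applied to the operator defining $E(P)$ on $L^2(D_\Lambda)$, the supremum is attained at $\bar\psi:=\overline{\Omega^P}\chi_{D_\Lambda}$, and $E(P)=\int \Omega^P[\bar\psi]$. Combining the inequalities $E(P_t)\geq\int \Omega^{P_t}[\bar\psi]$ (definition of sup) and $E(P_t)\geq E(P)$ (minimality), applied with both signs of $t$, yields
\[
0=\frac{d}{dt}\bigg|_{t=0}\int \Omega^{P_t}[\bar\psi].
\]
Differentiating each factor of $P$ in $\Omega^{P_t}$, using $\dot P=\alpha P$ and $\dot{P^T}=-P^T\alpha$, I obtain the bilinear identity
\[
0=\int \laps{\mu}(\alpha P)\,P^T\bar\psi-\int \laps{\mu}(P-I)\,P^T\alpha\,\bar\psi+\int (\alpha P)\,\Omega[P^T\bar\psi]-\int P\,\Omega[P^T\alpha\bar\psi],
\]
with indices contracted against $\bar\psi_{ij}$.

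\textbf{Step 2 (commutator bookkeeping and test with $\varphi$).} To extract an identity for $\int \Omega^P[\laps{\mu}\varphi]$, I expand $\Omega^P[\laps{\mu}\varphi]$ directly and use the product rule $P^T_{kj}\laps{\mu}\varphi=\laps{\mu}(P^T_{kj}\varphi)-\varphi\laps{\mu}P^T_{kj}-H_\mu(P^T_{kj},\varphi)$ to shift $\laps{\mu}$ off $\varphi$; the $H_\mu$-pieces produce the first RHS term $\tfrac12\int H_\mu(P-I,P^T-I)\laps{\mu}\varphi$ after using $(P-I)(P^T-I)+(P^T-I)(P-I)=2I-P-P^T-P^T P+\ldots$ and the algebraic identity $PP^T=I$. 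The remaining terms of the form $P\Omega[P^T\varphi\,\bar\psi]$ and $\laps{\mu}P\,P^T\bar\psi\varphi$ will be matched to the Step~1 identity after recognizing the combination $\alpha\leftrightarrow\varphi\bar\psi$-like structure in the variational equation: moving $\varphi$ past $\Omega$ produces $\mathcal{C}(\varphi,\Omega)$, and moving $P$ past $\Omega$ produces $\mathcal{C}(P,\Omega)$, yielding respectively the second and fourth RHS terms; the $H_\mu(\varphi,P^T-I)$ piece comes from commuting $\varphi$ past $\laps{\mu}P$. The antisymmetry $\Omega_{ij}=-\Omega_{ji}$ together with the freedom of $\alpha\in so(N)$ in Step~1 ensures that only the antisymmetric part $so(\cdot)$ of each resulting matrix coefficient contributes, which accounts for the $so(\cdot)$ projections throughout.

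\textbf{Step 3 (cutoff error).} Since $\bar\psi$ is supported in $D_\Lambda$ while $\laps{\mu}\varphi$ is global (as $\laps{\mu}$ is non-local), every manipulation above is actually performed against $\bar\psi\chi_{D_\Lambda}$ rather than a true scalar test function $\laps{\mu}\varphi$ on the whole of $\R^m$. The mismatch is collected into the final term $\int \Omega^P[(1-\chi_{D_\Lambda})\laps{\mu}\varphi]$, which is exactly the "disjoint-support" defect of the duality identification between $\bar\psi$ and (a multiple of) $\laps{\mu}\varphi$.

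\textbf{Main obstacle.} The difficulty is entirely algebraic: tracking, in Step~2, which rearrangement of the four terms from the EL-equation produces precisely the four commutator quantities appearing in the statement, with the correct placement of $P$, $P^T$, and $\overline{\Omega^P}\chi_{D_\Lambda}$, and with the correct signs so that exactly the antisymmetric parts survive. The non-local nature of $\Omega$ means that each transposition of a factor past $\Omega$ generates a commutator $\mathcal{C}(\cdot,\Omega)$ that must be retained explicitly (unlike the local case of Rivi\`ere, where these commutators vanish), so the bookkeeping is considerably more delicate and leads to the somewhat lengthy five-term right-hand side of the lemma.
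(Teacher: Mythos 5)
Your overall strategy is the same as the paper's: perturb the minimizer by $e^{t\alpha}P$, evaluate the first variation against the dual maximizer $\overline{\psi}=c\,\overline{\Omega^P}\chi_{D_\Lambda}$ supplied by Proposition~\ref{pr:energy:rieszimpliesexdual}, and then convert the resulting Euler--Lagrange identity into the five stated terms via the $H_\mu$ product rule and the commutators $\mathcal{C}(\cdot,\Omega)$; your Step 3 is exactly how the paper produces the $(1-\chi_{D_\Lambda})$ defect term.

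The one step that does not hold as you state it is the stationarity deduction in Step 1. Both inequalities you invoke, $E(P_t)\geq \int\Omega^{P_t}[\overline\psi]$ and $E(P_t)\geq E(P)$, bound $E(P_t)$ \emph{from below}, so they cannot be combined to force $\frac{d}{dt}\big|_{t=0}\int\Omega^{P_t}[\overline\psi]=0$: writing $g(t):=\int\Omega^{P_t}[\overline\psi]$, they only give $g(t)-g(0)\leq E(P_t)-E(P)$ with a nonnegative right-hand side, which is vacuous (e.g.\ $E(P_t)=1+|t|$, $g(t)=1-t$ satisfies both inequalities with $g'(0)\neq 0$). To close this you need an \emph{upper} bound $E(P_t)-E(P)\leq t\,L(\overline\psi)+o(t)$, i.e.\ a Danskin/envelope argument testing against the maximizers $\overline\psi_t$ of the perturbed functionals and using their convergence to $\overline\psi$ (available here by the Hilbert-space uniqueness in Proposition~\ref{pr:energy:rieszimpliesexdual}); the paper's own write-up derives a one-sided inequality and then replaces $\varphi$ by $-\varphi$, and is itself terse at exactly this point, so you should spell it out. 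Two smaller omissions in Step 2: you should record the cancellation $\omega_{ik}\int \overline{\Omega^P}_{ij}\,\overline{\Omega^P}_{kj}\,\varphi=0$ for $\omega\in so(N)$, which is what turns the quadratic term $\int\varphi\,\omega\,\Omega^P[\overline\psi]-\Omega^P[\omega\overline\psi\varphi]$ into the pure commutator term of the lemma; and you should note that the antisymmetry \eqref{eq:energy:Omegaantisym} is used not merely to justify the $so(\cdot)$ projections but to recover the \emph{full} quantity $\Omega^P[\laps{\mu}\varphi]$ from its antisymmetric part, which is precisely where the $\fracm{2}H_{\mu}(P-I,P^T-I)$ and $\mathcal{C}(P,\Omega)$ terms are generated.
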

\begin{proof}
We set $D := B_r(x)$ and $D_\Lambda := B_{\Lambda r}(x)$. Let $\varphi \in C_0^\infty(D)$, $\antisymm \in so(N)$. We distort the minimizer $P$ of $E(\cdot)$ by
\[
Q_\varepsilon := e^{\varepsilon \varphi \antisymm} P = P + \varepsilon \varphi\ \antisymm\ P + o(\varepsilon) \in H^{\frac{n}{2}}_I(D,SO(N)),
\]
that is we know that
\begin{equation}\label{eq:energy:EQepsgeqEP}
 E(Q_\varepsilon) - E(P) \geq 0
\end{equation}
We compute
\begin{equation}\label{eq:en:distqlq}
\begin{ma}
&&\laps{\mu} (Q_\varepsilon-I)\ Q^T\\
&=& \laps{\mu} (P-I)\ P^T + \varepsilon \varphi \brac{\antisymm\ \laps{\mu} (P-I)\ P^T - \laps{\mu} (P-I)\ P^T\ \antisymm} + \varepsilon \laps{\mu} \varphi\ \antisymm + \varepsilon\ \antisymm\ H_{\mu}(\varphi,P-I)P^T + o(\varepsilon),
\end{ma}
\end{equation}
and
\begin{equation} \label{eq:en:distqoq}
Q_\varepsilon \Omega \left [Q^T_\varepsilon \cdot \right]= P \Omega \left [P^T \cdot \right]+ \varepsilon \brac{\varphi\ \antisymm\ P \Omega \left [P^T \cdot \right ] - P \Omega \left [P^T\ \antisymm \varphi \cdot \right ]}+ o(\varepsilon).
\end{equation}
Together, we infer from \eqref{eq:en:distqlq} and \eqref{eq:en:distqoq} (denoting the Hilbert-Schmidt matrix product $A:B := A_{ij}B_{ij}$)
\[
\Omega^{Q_\varepsilon}[\psi] = \Omega^P[\psi] + \varepsilon \brac{\varphi \antisymm\ \Omega^P [\psi] - \Omega^P\ [\antisymm \psi \varphi]} + \varepsilon \laps{\mu} \varphi\ \antisymm: \psi + \varepsilon\ \antisymm \ H_{\mu}(\varphi,P-I)\ P^T: \psi + o(\varepsilon)[\psi].
\]
Thus, for any $\varepsilon > 0$, $\psi \in C_0^\infty(D_\Lambda,\R^{N\times N})$, $\Vrac{\psi}_{2} \leq 1$,
\[
\begin{ma}
 \fracm{\varepsilon}\brac{E(Q_\varepsilon) - E(P)} &\geq& \fracm{\varepsilon} \brac{\int \Omega^P [\psi] - E(P)}\\
&& + \int \brac{\varphi \antisymm\ \Omega^P [\psi] - \Omega^P\ [\antisymm \psi \varphi]}\\
&& + \int \laps{\mu} \varphi\ \antisymm :\psi \\
&& + \int \antisymm\ H_{\mu}(\varphi,P-I)\ P^T:\psi\\
&& + o(1).
\end{ma}
\]
Let $\overline{\psi} \in L^2(D_\Lambda)$ such that $E(P) = \int \Omega^P [\overline{\psi}]$ (cf. Proposition~\ref{pr:energy:rieszimpliesexdual}), this implies for the choice $\psi := \overline{\psi}$
\[
\begin{ma}
 0 \overset{\eqref{eq:energy:EQepsgeqEP}}{\geq} \fracm{\varepsilon}\brac{E(Q_\varepsilon) - E(P)} &\geq& \int \brac{\varphi \antisymm\ \Omega^P [\overline{\psi}] - \Omega^P\ [\antisymm \overline{\psi} \varphi]}\\
&&+ \int \laps{\mu} \varphi\ \antisymm :\overline{\psi}\\
&& + \int \antisymm\ H_{\mu}(\varphi,P-I)\ P^T:\overline{\psi}\\
&& + o(1).
\end{ma}
\]
Letting $\varepsilon \to 0$, we then have
\[
\begin{ma}
 -\int \laps{\mu} \varphi\ \antisymm :\overline{\psi}&\geq& \int \varphi \antisymm\ \Omega^P [\overline{\psi}] - \Omega^P\ [\antisymm \overline{\psi} \varphi]\\
&& + \int \antisymm\ H_{\mu}(\varphi,P-I)\ P^T:\overline{\psi}
\end{ma}
\]
which holds for any $\varphi \in C_0^\infty(B_r)$. Replacing $\varphi$ by $-\varphi$, we arrive at
\begin{equation}\label{eq:energy:firstel}
 -\int \laps{\mu} \varphi\ \antisymm :\overline{\psi} = \int \varphi \antisymm\ \Omega^P [\overline{\psi}] - \Omega^P\ [\antisymm \overline{\psi}\varphi] + \int \antisymm\ H_{\mu}(\varphi,P-I)\ P^T:\overline{\psi}.
\end{equation}
Now we need to be more specific about the characteristics of $\overline{\psi}$. We have
\[
 E(P) = \sup_{\psi} \int \Omega^P [\psi] = \sup_{\psi} \int \laps{\mu} P_{ik}\ P_{kj}^T\ \psi_{ij} + P_{ik} \Omega_{kl} [ P^T_{lj} \psi_{ij}].
\]
Let $\Omega_{kl}^\ast : L^\infty(\R^m) \to L^2(\R^m)$ be the linear bounded operator such that (cf. Proposition~\ref{pr:energy:linfty})
\[
 \intl_{\R^m} g \Omega_{kl}[f] = \intl_{\R^m} \Omega_{kl}^\ast[g]\ f, \quad \mbox{for any $f \in L^2(\R^m)$, $g \in L^\infty(\R^m)$}.
\]
Set then,
\[
 \brac{\brac{\Omega^P}^\ast}_{ij}[f] := \laps{\mu}P_{ik}\ P_{kj}^Tf + \Omega_{kl}^\ast \left [f P_{ik} \right ]\   P^T_{lj} \in L^2(\R^m),
\]
and
\[
 \brac{\overline{\Omega^P}}_{ij} := \brac{\brac{\Omega^P}^\ast}_{ij}[1] \in L^2(\R^m).
\]
Since
\[
 \int g\ \brac{\Omega^P}_{ij} [f] = \int \brac{\brac{\Omega^P}^\ast}_{ij}[g]\ f \quad \mbox{for all $f \in L^2(\R^m)$, $g \in L^\infty(\R^m)$,}
\]
we have
\[
 E(P) = \sup_{\psi} \int \overline{\Omega^P}:\psi\ \chi_{D_\Lambda} = c \int \overline{\Omega^P}:\overline{\Omega^P}\ \chi_{D_\Lambda} = c \int \Omega^P[\overline{\Omega^P}\ \chi_{D_\Lambda}],
\]
for some normalizing constant $c$. That is,
\[
 \brac{E(P)}^2 = \int \brac{\Omega^P}_{ij} [\chi_{D_\Lambda} \overline{\Omega^P}_{ij}],
\]
and we can assume $\overline{\psi} = c \chi_{D_\Lambda} \overline{\Omega^P} = c \chi_{D_\Lambda} \overline{\Omega^P}$ for some normalizing constant $c$. Now,
\[
 -\int \laps{\mu} \varphi\ \antisymm :\overline{\psi} = -c\ \int \laps{\mu} \varphi\ \antisymm_{ij}  \chi_{D_\Lambda} \overline{\Omega^P_{ij}}
 = -\antisymm_{ij}\ \int \Omega^P_{ij}[\laps{\mu} \varphi] + \int \omega: \Omega^P[(1-\chi_{D_\Lambda})\laps{\mu} \varphi].
\]

Consequently, \eqref{eq:energy:firstel} reads as
\[
\begin{ma}
 -\int \antisymm :\Omega^P[\laps{\mu} \varphi] &=& \int \varphi \antisymm_{ik}\ \brac{\Omega^P}_{kj} [\brac{\overline{\Omega^P}}_{ij}\chi_{D_\Lambda}] - \brac{\Omega^P}_{ij}\ [\antisymm_{ik} \brac{\overline{\Omega^P}}_{kj} \varphi]\\
&& + \intl_{D_{\Lambda}} \antisymm\ H_{\mu}(\varphi,P-I)\ P^T:\overline{\Omega^P}\\
&& + \int \omega: \Omega^P[(1-\chi_{D_\Lambda})\laps{\mu} \varphi].
\end{ma}
\]
Note that, since $\varphi \in C_0^\infty(\R^m) \subset L^\infty$,
\begin{equation}\label{eq:energy:alphaikomegaomegabar}
 \antisymm_{ik}\ \int \brac{\Omega^P}_{ij}\ [\brac{\overline{\Omega^P}}_{kj} \varphi] = \antisymm_{ik}\ \int \brac{\overline{\Omega^P}}_{ij}\ \brac{\overline{\Omega^P}}_{kj} \varphi \overset{\antisymm \in so}{=} 0.
\end{equation}
By the same argument,
\[
\antisymm_{ik}\ \int \varphi \ \brac{\Omega^P}_{kj} [\chi_{D_\Lambda}\brac{\overline{\Omega^P}}_{ij}]
 = \antisymm_{ik}\ \int \varphi \ \brac{\Omega^P}_{kj} [\chi_{D_\Lambda}\brac{\overline{\Omega^P}}_{ij}] - \antisymm_{ik}\ \int \brac{\Omega^P}_{kj} [\brac{\overline{\Omega^P}}_{ij}\varphi]\\
\]
and
\[
\begin{ma}
 &&\antisymm_{ik}\ \int \varphi \ \brac{\Omega^P}_{kj} [\brac{\overline{\Omega^P}}_{ij}\chi_{D_\Lambda}]\\
&=&\antisymm_{ik}\ \int \brac{\brac{\Omega^P}^\ast}_{kj}[\varphi]\ \brac{\overline{\Omega^P}}_{ij}\ \chi_{D_\Lambda}\\
&=&\antisymm_{ik}\ \int \varphi \brac{\brac{\Omega^P}^\ast}_{kj}[1]\ \brac{\overline{\Omega^P}}_{ij}\ \chi_{D_\Lambda} - \antisymm_{ik}\ \int \brac{\varphi \brac{\brac{\Omega^P}^\ast}_{kj}[1]-\brac{\brac{\Omega^P}^\ast}_{kj}[\varphi]}\ \brac{\overline{\Omega^P}}_{ij}\ \chi_{D_\Lambda}\\
&\overset{\supp \varphi}{=}&\antisymm_{ik}\ \int \varphi \brac{\brac{\Omega^P}^\ast}_{kj}[1]\ \brac{\overline{\Omega^P}}_{ij} - \antisymm_{ik}\ \int \brac{\varphi \brac{\brac{\Omega^P}^\ast}_{kj}[1]-\brac{\brac{\Omega^P}^\ast}_{kj}[\varphi]}\ \brac{\overline{\Omega^P}}_{ij}\ \chi_{D_\Lambda}\\
&\overset{\eqref{eq:energy:alphaikomegaomegabar}}{=}& 0 - \antisymm_{ik}\ \int \brac{\varphi \brac{\brac{\Omega^P}^\ast}_{kj}[1]-\brac{\brac{\Omega^P}^\ast}_{kj}[\varphi]}\ \brac{\overline{\Omega^P}}_{ij}\ \chi_{D_\Lambda}\\
&=&   \antisymm_{ik}\ \int \mathcal{C} (\varphi, \Omega^P_{kj})[(\overline{\Omega^P})_{ij}\ \chi_{D_\Lambda}] ,
\end{ma}
\]
where we denote the commutator $\mathcal{C}$
\[
 \mathcal{C}(b,T)[f] := b\ Tf - T(bf).
\]
Thus, we arrive at
\[
 \begin{ma}
 -\int \antisymm: so(\Omega^P[\laps{\mu} \varphi])_{ij} &=&  \antisymm_{ik}\ \int \mathcal{C}\brac{\varphi, \brac{\Omega^P}_{kj}}[\brac{\overline{\Omega^P}}_{ij}\chi_{D_\Lambda}]\\
&& + \int \antisymm\ H_{\mu}(\varphi,P-I)\ P^T:\overline{\Omega^P} \chi_{D_\Lambda}\\
&& + \int \omega: \Omega^P[(1-\chi_{D_\Lambda})\laps{\mu} \varphi].
\end{ma}
\]
One checks, that
\[
 \mathcal{C}\brac{\varphi, \brac{\Omega^P}_{kj}}[\brac{\overline{\Omega^P}}_{ij} \chi_{D_\Lambda}]
= P_{kl} \mathcal{C}\brac{\varphi,\Omega^{ls}}[P^T_{sj} \brac{\overline{\Omega^P}}_{ij}\chi_{D_\Lambda}]
\]
Next, (and here the antisymmetry of $\Omega$, \eqref{eq:energy:Omegaantisym}, plays its role)
\[
\begin{ma}
 so(\Omega^P[\laps{\mu} \varphi])_{ij}
&=& so(\laps{\mu}(P-I)\ P^T)_{ij}\ \laps{\mu} \varphi+ \fracm{2} P_{ik} \Omega_{kl}[P_{jl} \laps{\mu} \varphi] - \fracm{2} P_{jk} \Omega_{kl}[P_{il} \laps{\mu} \varphi]\\
&\overset{\eqref{eq:energy:Omegaantisym}}{=}& so(\laps{\mu}(P-I)\ P^T)_{ij}\ \laps{\mu} \varphi+ \fracm{2} P_{ik} \Omega_{kl}[P_{jl} \laps{\mu} \varphi] + \fracm{2} P_{jl} \Omega_{kl}[P_{ik} \laps{\mu} \varphi]\\
&=& so(\laps{\mu}(P-I)\ P^T)_{ij}\ \laps{\mu} \varphi+ \fracm{2} P_{ik} \Omega_{kl}[P_{jl} \laps{\mu} \varphi] + \fracm{2} P_{jl} P_{ik} \Omega_{kl}[\laps{\mu} \varphi]\\
&&-\fracm{2} P_{jl} \mathcal{C}\brac{P_{ik}, \Omega_{kl}}[\laps{\mu} \varphi]\\
&=& so(\laps{\mu}(P-I)\ P^T)_{ij}\ \laps{\mu} \varphi
+ P_{ik} \Omega_{kl}[P_{jl} \laps{\mu} \varphi]\\
&&+ \fracm{2} P_{ik} \mathcal{C}\brac{P_{jl},\Omega_{kl}}[\laps{\mu} \varphi]
-\fracm{2} P_{jl} \mathcal{C}\brac{P_{ik}, \Omega_{kl}}[\laps{\mu} \varphi],
 + \int \Omega^P[(1-\chi_{D_\Lambda})\laps{\mu} \varphi].
\end{ma}
\]
and
\[
\begin{ma}
 so(\laps{\mu}(P-I)\ P^T) &=& \fracm{2} \laps{\mu}(P-I)\ P^T - \fracm{2} P \laps{\mu}(P^T-I)\\
&=& \laps{\mu}(P-I)\ P^T + \fracm{2} \brac{-\laps{\mu}(P-I)\ P^T - P \laps{\mu}(P^T-I)}\\
&=& \laps{\mu}(P-I)\ P^T + \fracm{2} \brac{\laps{\mu} (PP^T) - \laps{\mu}(P-I)\ P^T - P \laps{\mu}(P^T-I)}\\
&=& \laps{\mu}(P-I)\ P^T + \fracm{2} H_{\mu}(P-I,P^T-I)\\
\end{ma}
\]
This implies finally (going with $\omega_{ij} \in \{-1,0,1\}$ through all the possible matrices with two non-zero entries)
\[
\begin{ma}
- \int \Omega^P[\laps{\mu} \varphi]
 &=& \fracm{2} \int H_{\mu}(P-I,P^T-I)\ \laps{\mu} \varphi\\
 &&+\int so(P\mathcal{C}\brac{\varphi, \Omega}\left [P^T \overline{\Omega^P}^T \chi_{D_\Lambda}\right ])  \\
&& + \int so(\overline{\Omega^P} \chi_{D_\Lambda}P H_{\mu}(\varphi,P^T-I))\\
&& - \int so(\mathcal{C}\brac{P,\Omega}[\laps{\mu} \varphi] P^T)\\
&& + \int \Omega^P[(1-\chi_{D_\Lambda})\laps{\mu} \varphi].
\end{ma}
\]
\end{proof}

Then, using the commutator estimates in \cite{CRW76}, \eqref{eq:c2:CfRlaphvarphiest}, \eqref{eq:c2:H1varphigest}, and \eqref{eq:c2:laphH1inhardy}, we have shown the following Lemma, which implies Theorem~\ref{th:intro:energy}
\begin{lemma}\label{la:betterOmegaP}
Let $P$ be a minimizer of $E(\cdot)$ as in \eqref{eq:energy:EQ}, Lemma~\ref{la:energy:Pmineuler}. Assume moreover, that $\Omega$ satisfies \eqref{eq:OmegaisRiesztrafo}. Then for any $\varphi \in C_0^\infty(B_r)$
\[
- \int \Omega^P[\laps{\mu} \varphi] \aleq \Lambda^{-\frac{m}{2}-\mu}\ r^{\frac{m}{2}-\mu}\ \vrac{A}_{2}\ [\varphi]_{BMO} + \vrac{A}_{2}^{2}\ \begin{cases}
                                                           [\varphi]_{BMO} \quad &\mbox{if $\mu \in (0,1]$},\\ 
                                                           \vrac{\laps{\mu} \varphi}_{(2,\infty)} \quad &\mbox{if $\mu > 1$}. 
                                                          \end{cases} 
\]
\end{lemma}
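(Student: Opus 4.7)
The starting point is the Euler--Lagrange identity of Lemma~\ref{la:energy:Pmineuler}, which decomposes $-\int \Omega^P[\laps{\mu}\varphi]$ into five pieces. The plan is to estimate each separately, using as inputs the a priori bounds $\vrac{\laps{\mu}P}_2 \aleq \vrac{A}_2$ (from \eqref{eq:energy:minex:Pest} together with \eqref{eq:higheromega:ourregp}), the minimality consequence $\vrac{\overline{\Omega^P}\chi_{D_\Lambda}}_2 = E(P) \leq E(I) \aleq \vrac{A}_2$, and the pointwise $\abs{P}\leq 1$. I will also exploit the assumption \eqref{eq:OmegaisRiesztrafo} to factor $\mathcal{C}(\cdot,\Omega) = A^l\,\mathcal{C}(\cdot,\Rz_l)$ throughout.

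The three commutator/$H_\mu(\varphi,\cdot)$-terms are each handled by one of the estimates from Theorem~\ref{th:commutators} followed by a H\"older pairing. For $\int so(\mathcal{C}(P,\Omega)[\laps{\mu}\varphi]\,P^T)$, estimate~\eqref{eq:c2:CfRlaphvarphiest} gives $\vrac{\mathcal{C}(P,\Rz_l)[\laps{\mu}\varphi]}_2 \aleq \vrac{\laps{\mu}P}_2[\varphi]_{BMO}$, paired against $A^l\in L^2$. For $\int so(\overline{\Omega^P}\chi_{D_\Lambda}\,P\,H_\mu(\varphi,P^T-I))$, estimate~\eqref{eq:c2:H1varphigest} controls $\vrac{H_\mu(\varphi,P^T-I)}_2 \aleq \vrac{\laps{\mu}P}_2[\varphi]_{BMO}$, paired against $\overline{\Omega^P}\chi_{D_\Lambda}\in L^2$. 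For $\int so(P\,\mathcal{C}(\varphi,\Omega)[P^T\overline{\Omega^P}^T\chi_{D_\Lambda}])$, the original Coifman--Rochberg--Weiss theorem \cite{CRW76} gives $\vrac{\mathcal{C}(\varphi,\Rz_l)[g]}_2 \aleq [\varphi]_{BMO}\vrac{g}_2$ with $g=P^T\overline{\Omega^P}^T\chi_{D_\Lambda}\in L^2$. Each contribution is thus $\aleq\vrac{A}_2^2[\varphi]_{BMO}$, which is consistent with both branches of the claim because $[\varphi]_{BMO}\aleq \vrac{\laps{\mu}\varphi}_{(2,\infty)}$ when $\mu=m/2$ via \eqref{eq:commis:BMOembedding}.

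The tail term $\int \Omega^P[(1-\chi_{D_\Lambda})\laps{\mu}\varphi]$ is handled via the operator-norm bound $\vrac{\Omega^P}_{2\to 1}\aleq \vrac{A}_2$ (following from \eqref{eq:inttransp:smallOmegaP}) together with a decay estimate: since $\supp\varphi\subset B_r$, quasilocality (Corollary~\ref{co:QuasiLocalityII}) yields $\vrac{\laps{\mu}\varphi}_{2,\R^m\setminus B_{\Lambda r}}\aleq (\Lambda r)^{-m/2-\mu}\vrac{\varphi}_1$, which Proposition~\ref{pr:L1BMOcompactsupport} upgrades to $\aleq (\Lambda r)^{-m/2-\mu}r^m[\varphi]_{BMO}$, producing the desired $\Lambda^{-m/2-\mu}r^{m/2-\mu}\vrac{A}_2[\varphi]_{BMO}$.

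The main obstacle is the remaining $H_\mu(P-I,P^T-I)$ term. In the subcritical range $\mu\in(0,1]$ I will integrate by parts to put $\laps{\mu}$ onto $H_\mu$ and invoke \eqref{eq:c2:laphH1inhardy} to obtain $\vrac{\laps{\mu}H_\mu(P-I,P^T-I)}_{\mathcal{H}}\aleq\vrac{A}_2^2$, then close with Hardy--$BMO$ duality \eqref{eq:hardybmo}. In the limit case $\mu=m/2>1$, \eqref{eq:c2:laphH1inhardy} is not available, so I will use instead the pointwise bound from Theorem~\ref{th:commutators} with $\beta=0$, $\tau=\mu$, namely $\abs{H_\mu(a,b)}\aleq \sum_k \lapms{s_k}\abs{\laps{\mu}a}\cdot\lapms{t_k}\abs{\laps{\mu}b}$ with $s_k+t_k=\mu$. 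The refined (Lorentz) Hardy--Littlewood--Sobolev inequality places each factor in $L^{p_i,2}$ with $1/p_i=1/2-s_i/m$ (resp.\ $1/2-t_i/m$); since $s_k+t_k=m/2$ we have $1/p_1+1/p_2=1/2$, and H\"older in Lorentz spaces therefore yields $H_\mu(P-I,P^T-I)\in L^{2,1}$ with norm $\aleq\vrac{A}_2^2$. Pairing against $\laps{\mu}\varphi\in L^{2,\infty}$ then produces $\vrac{A}_2^2\vrac{\laps{\mu}\varphi}_{(2,\infty)}$, which completes the estimate.
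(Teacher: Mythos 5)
Your proposal is correct and follows essentially the same route as the paper: the five-term Euler--Lagrange decomposition from Lemma~\ref{la:energy:Pmineuler}, with \eqref{eq:c2:laphH1inhardy}, \cite{CRW76}, \eqref{eq:c2:H1varphigest}, \eqref{eq:c2:CfRlaphvarphiest}, and the quasilocality/$L^1$--$BMO$ bound handling the respective terms. You in fact supply slightly more detail than the paper on the $H_\mu(P-I,P^T-I)$ term when $\mu>1$ (the pointwise estimate plus Lorentz--H\"older giving $L^{2,1}$ paired against $L^{2,\infty}$), which is exactly the reason for the case distinction in the statement.
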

\begin{proof}
By Lemma~\ref{la:energy:minexists} and Lemma~\ref{la:energy:Pmineuler},
\[
 \vrac{\Omega^P}_{2\to1}+\vrac{\overline{\Omega^P}}_{2} + \vrac{\laps{\mu} P}_2 \aleq \vrac{\Omega[]}_{2->1} \aleq \vrac{A}_{2},
\]
and by Lemma~\ref{la:energy:Pmineuler} we need to estimate
\begin{align}
 &\int H_{\mu}(P-I,P^T-I)\ \laps{\mu} \varphi\label{eq:bo:1} \\
 &|\int so(P\mathcal{C}\brac{\varphi, \Omega}\left [P^T \overline{\Omega^P}^T \chi_{D_\Lambda}\right ])|  
 \aleq \vrac{A}_2\ \vrac{\mathcal{C}\brac{\varphi, \Rz}\left [P^T \overline{\Omega^P}^T \chi_{D_\Lambda}\right ])}_2
 \label{eq:bo:2}, \\
&|\int so(\overline{\Omega^P} \chi_{D_\Lambda}P H_{\mu}(\varphi,P^T-I))| \aleq \vrac{\overline{\Omega^P}}_2\ \vrac{H_{\mu}(\varphi,P^T-I)}_2\ \label{eq:bo:3}|, \\
&|\int so(\mathcal{C}\brac{P,\Omega}[\laps{\mu} \varphi] P^T)| \aleq \vrac{A}_2\ \vrac{\mathcal{C}\brac{P,\Rz}[\laps{\mu} \varphi]}_2\ \label{eq:bo:4}, \\
&|\int \Omega^P[(1-\chi_{D_\Lambda})\laps{\mu} \varphi]| \aleq \vrac{\Omega^P}_{2\to1}\ \vrac{(1-\chi_{D_\Lambda})\laps{\mu} \varphi]}_2.\label{eq:bo:5}
\end{align}
The estimate of \eqref{eq:bo:1} is immediate from \eqref{eq:c2:laphH1inhardy}, for the estimate of \eqref{eq:bo:2} we apply \cite{CRW76}. For the estimate of \eqref{eq:bo:3} we use \eqref{eq:c2:H1varphigest}, for \eqref{eq:bo:4} we have \eqref{eq:c2:CfRlaphvarphiest}.

It remains to estimate \eqref{eq:bo:5}, which follows from
\[
\begin{ma}
 \vrac{(1-\chi_{D_\Lambda})\laps{\mu} \varphi]}_2 
 &\aleq& \sum_{k=1}^\infty  \vrac{\laps{\mu} \varphi}_{2,A^k_{\Lambda r}} \overset{\sref{L}{la:QuasiLocality}}{\aleq}\sum_{k=1}^\infty  (2^k \Lambda r)^{-\frac{m}{2}-\mu} \vrac{\varphi}_{1}\\
 &\overset{\sref{P}{pr:L1BMOcompactsupport}}{\aleq}& \sum_{k=1}^\infty  (2^k \Lambda r)^{-\frac{m}{2}-\mu} r^m\ [\varphi]_{BMO}.
 \end{ma}
\]
\end{proof}


\begin{appendix}
\newpage
\section{Some facts on our fractional operators}\label{s:fracfacts}
The fractional laplacian $\lap^{\frac{s}{2}}$ is usually defined via its Fourier-symbol $-\abs{\xi}^s$. Here, we will mostly use the negative fractional laplacian $(-\lap)^{\frac{s}{2}} \equiv \laps{s}$ (which here plays the role of the gradient, or the divergence and rotation in the classical settings), defined via its symbol $\abs{\xi}^s$. These operators are defined for $s \in (-m,m)$, if $s < 0$, we write $\lap^{\frac{s}{2}} \equiv \lapms{\abs{s}}$.\\
Most of the time, we will use the potential definition: For Schwartz functions $f$,
\[
 \laps{s} f(x) =  c_s\ \lim_{\varepsilon \to 0} \int_{\abs{x-y}>\varepsilon} \frac{f(x)-f(y)}{\abs{x-y}^{m+s}}\ dy \quad \mbox{for $s \in (0,2)$}.
\]
The inverse is the Riesz potential,
\[
 \lapms{s} f(x) = \tilde{c}_s\ \lim_{\varepsilon \to 0} \int_{\abs{x-y}>\varepsilon} \frac{f(y)}{\abs{x-y}^{m-s}}\ dy \quad \mbox{for $s \in (0,m)$}.
\]
We refer, e.g., to \cite{SKM93, Landkof72} on hyper-singular operators, generalizations, and different representation formulas.

Next, we state some  useful facts about the fractional laplacian, which we are going to use throughout our paper, as standard repertoire.

We have the standard Poincar\'e inequality, for a proof, we refer, e.g., to \cite{NHarmS10Arxiv}.
\begin{lemma}[Poincar\'e inequality with compact support]\label{la:standardpoinc}
Let $s \in [0,m)$, $p \in (1,\infty)$, $q \in [1,\infty]$, then for any $B_r \subset \R^m$, and any $f \in C_0^\infty(B_r)$
\[
 \vrac{f}_{(p_1,q_1)} \leq C_s\ r^{s} \vrac{\laps{s} f}_{(p_1,q_1)}.
\]
\end{lemma}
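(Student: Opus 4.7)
The plan is to reduce the claim to the unit ball by rescaling, and then to combine the Hardy--Littlewood--Sobolev inequality in Lorentz spaces with H\"older's inequality applied on the compact support of $f$.

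First I would exploit the scaling $\tilde f(x) := f(rx) \in C_0^\infty(B_1)$, which satisfies $\laps{s} \tilde f(x) = r^s (\laps{s} f)(rx)$ and, for any $g \in L^{(p,q)}$, $\vrac{g(r\cdot)}_{(p,q)} = r^{-m/p} \vrac{g}_{(p,q)}$. Hence
\[
 \vrac{\tilde f}_{(p,q)} = r^{-m/p} \vrac{f}_{(p,q)} \quad\text{and}\quad \vrac{\laps{s}\tilde f}_{(p,q)} = r^{s-m/p}\, \vrac{\laps{s} f}_{(p,q)},
\]
so that proving the inequality on $B_1$ automatically produces the correct factor $r^s$ on $B_r$. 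It therefore suffices to show $\vrac{f}_{(p,q)} \leq C_s\, \vrac{\laps{s} f}_{(p,q)}$ for all $f \in C_0^\infty(B_1)$.

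Next, in the subcritical range $sp < m$, I introduce the Sobolev conjugate $p^\ast$ via $\fracm{p^\ast} := \fracm{p} - \frac{s}{m} \in (0,\fracm{p})$. Since $f \in C_0^\infty(B_1)$, we have the pointwise representation $f = \lapms{s}(\laps{s} f)$ (no polynomial ambiguity arises because of the compact support). The Hardy--Littlewood--Sobolev inequality for Riesz potentials in Lorentz spaces then gives
\[
 \vrac{f}_{(p^\ast, q)} = \vrac{\lapms{s}(\laps{s} f)}_{(p^\ast, q)} \leq C\, \vrac{\laps{s} f}_{(p,q)}.
\]
On the other hand, using $\supp f \subset B_1$ and the Lorentz-space version of the H\"older/inclusion inequality $L^{(p^\ast, q)}(B_1) \hookrightarrow L^{(p,q)}(B_1)$ with constant $\abs{B_1}^{\fracm{p} - \fracm{p^\ast}}$,
\[
 \vrac{f}_{(p,q)} = \vrac{f}_{(p,q), B_1} \leq C\, \vrac{f}_{(p^\ast, q), B_1} \leq C\, \vrac{f}_{(p^\ast, q)}.
\]
Chaining the two inequalities completes the proof in the subcritical range.

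The remaining case $sp \geq m$ is the main obstacle, because $\lapms{s}$ no longer embeds into any $L^{p^\ast}$ and the two-step chain above breaks. I would handle it by a compactness argument on the unit ball: the embedding $W^{s,p}_0(B_1) \hookrightarrow L^p(B_1) \hookrightarrow L^{(p,q)}(B_1)$ is compact, so a putative sequence $(f_k) \subset C_0^\infty(B_1)$ with $\vrac{f_k}_{(p,q)} = 1$ and $\vrac{\laps{s} f_k}_{(p,q)} \to 0$ would admit a weakly converging subsequence whose limit $f$ would satisfy both $\laps{s} f = 0$ (hence $f \equiv 0$, since $f$ is supported in $\overline{B_1}$) and $\vrac{f}_{(p,q)} = 1$, a contradiction. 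This gives a Poincar\'e constant on $B_1$, which is then transferred to an arbitrary $B_r$ via the rescaling of the first step.
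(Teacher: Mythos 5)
The paper itself does not prove this lemma (it defers to a reference), so your argument stands on its own. Your scaling reduction to $B_1$ and your treatment of the subcritical range $sp<m$ (write $f=\lapms{s}\laps{s}f$, apply the Hardy--Littlewood--Sobolev inequality in Lorentz spaces, then downgrade the exponent on the bounded support) are correct and essentially the standard proof.

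The compactness argument for $sp\geq m$, however, has two concrete gaps. First, you never obtain a uniform bound in the space in which you invoke compactness: the hypotheses $\vrac{f_k}_{(p,q)}=1$ and $\vrac{\laps{s}f_k}_{(p,q)}\to 0$ control $\vrac{f_k}_{p}$ and $\vrac{\laps{s}f_k}_{p}$ only when $q\leq p$; for $q>p$ the space $L^{(p,q)}$ is strictly larger than $L^p$, and since $\laps{s}f_k$ is \emph{not} compactly supported you cannot lower its Lorentz index by a finite-measure inclusion. Second, even granting a weakly convergent subsequence, compactness of $W^{s,p}_0(B_1)\hookrightarrow L^p(B_1)$ only gives $f_k\to 0$ strongly in $L^p$; for $q<p$ this is compatible with $\vrac{f_k}_{(p,q)}\equiv 1$ (the $L^{(p,q)}$ quasi-norm is merely lower semicontinuous and $L^{(p,q)}\subsetneq L^p$), so no contradiction is reached unless you establish compactness into $L^{(p,q)}(B_1)$ itself. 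Both points can be repaired, but a cleaner route — and the one consistent with the toolbox of this paper — avoids the case split entirely: split $f=\lapms{s}\bigl(\chi_{B_{\Lambda r}}\laps{s}f\bigr)+\lapms{s}\bigl((1-\chi_{B_{\Lambda r}})\laps{s}f\bigr)$; estimate the near part on $B_r$ by Young's inequality, since the truncated kernel has $L^1$-norm $\aleq (\Lambda r)^s$; estimate the far part by the quasi-locality bound $\abs{\laps{s}f(y)}\aleq \abs{y}^{-m-s}\vrac{f}_1$ valid for $\abs{y}\geq\Lambda r$ (as in Lemma~\ref{la:QuasiLocality}), which after integration contributes $C\,\Lambda^{-m}\vrac{f}_{(p,q)}$ and is absorbed into the left-hand side for $\Lambda$ large — exactly the absorption mechanism used in the proofs of Lemma~\ref{la:hoelder} and Proposition~\ref{pr:fintsuppoutsnormcontrolled}. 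This yields an explicit constant for all $s\in(0,m)$ simultaneously.
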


The (scaling invatiant) Sobolev inequality takes the form
\begin{lemma}[Sobolev inequality]\label{la:standardsob}
Let $s \in [0,m)$, $p_1,p_2 \in [1,\infty)$, $q \in [1,\infty]$, for any $f \in \Sw(\R^m)$,
\[
 \vrac{f}_{(p_1,q)} \leq \vrac{\laps{s} f}_{(p_2,q)},
\]
where
\[
 \fracm{p_2} = \fracm{p_1} + \frac{s}{m}.
\]
\end{lemma}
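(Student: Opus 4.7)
The plan is to reduce the statement to the Hardy--Littlewood--Sobolev inequality in its Lorentz-space (O'Neil) form. First, for any $f \in \Sw(\R^m)$ and $s \in (0,m)$ one has the representation
\[
 f(x) = \lapms{s} \laps{s} f(x) = c_s \int_{\R^m} \frac{\laps{s} f(y)}{\abs{x-y}^{m-s}}\, dy = c_s\, K_s \ast (\laps{s} f)(x),
\]
with kernel $K_s(x) := \abs{x}^{-(m-s)}$, the case $s=0$ being trivial. This identity is standard for Schwartz functions since on the Fourier side $\abs{\xi}^{-s}\abs{\xi}^{s} = 1$ and both operators are well-defined distributionally for $s \in (0,m)$.

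Next, I would observe that $K_s$ lies in the weak Lebesgue space $L^{(m/(m-s),\infty)}(\R^m)$, since the distribution function satisfies $\abs{\{K_s > t\}} = \abs{B_{t^{-1/(m-s)}}} \approx t^{-m/(m-s)}$, with Lorentz-norm of order one. The Sobolev inequality then follows at once from O'Neil's convolution inequality in Lorentz spaces (see \cite{Hunt66, Tartar07, GrafC08}):
\[
 \vrac{K \ast g}_{(p_1, q)} \aleq \vrac{K}_{(p_0, \infty)}\ \vrac{g}_{(p_2, q)},
\]
valid whenever $1 + \fracm{p_1} = \fracm{p_0} + \fracm{p_2}$ with $p_1,p_2 \in (1,\infty)$. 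Taking $K = K_s$ and $p_0 = m/(m-s)$ gives $\fracm{p_0} = 1 - \frac{s}{m}$, hence
\[
 \fracm{p_1} = \fracm{p_2} - \frac{s}{m}, \qquad \text{i.e.,}\qquad \fracm{p_2} = \fracm{p_1} + \frac{s}{m},
\]
which is precisely the scaling relation in the statement, and the constant depends only on $m$ and $s$.

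The main obstacle — such as it is in a classical result of this kind — is only to invoke O'Neil's convolution inequality at the correct endpoint. The interior range $1 < p_2 < p_1 < \infty$, $1 \leq q \leq \infty$ is immediate from O'Neil, because the weak-type norm on $K_s$ combines by real interpolation with the $L^{(p_2,q)}$ norm to give the $L^{(p_1,q)}$ estimate. The boundary case $p_2 = 1$ requires slightly more care (one has to replace O'Neil by a direct Marcinkiewicz-interpolation argument between the strong and weak endpoints of the Riesz potential $\lapms{s}$), but this causes no problem in the setting at hand, since elsewhere in the paper we only need $p_2 > 1$.
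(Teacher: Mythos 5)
The paper does not prove this lemma at all: it is listed in the appendix as ``standard repertoire,'' so there is no argument of the author's to compare yours against. Your proof is the canonical one and is correct in the range the paper actually uses: writing $f = c_s K_s \ast \laps{s}f$ with $K_s(x)=\abs{x}^{-(m-s)} \in L^{(\frac{m}{m-s},\infty)}$ and invoking O'Neil's convolution inequality gives exactly the stated scaling relation $\fracm{p_2}=\fracm{p_1}+\frac{s}{m}$, with the second Lorentz index preserved because the kernel carries the index $\infty$. One small correction to your closing remark: at the endpoint $p_2=1$ the inequality as stated is genuinely \emph{false} for $q<\infty$ (the Riesz potential $\lapms{s}$ is only of weak type $(1,\frac{m}{m-s})$, i.e.\ $L^1 \to L^{(\frac{m}{m-s},\infty)}$, and no Marcinkiewicz-type interpolation can recover a strong or $q$-refined bound \emph{at} that endpoint --- interpolation only works strictly between endpoints). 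So the honest statement is that the lemma should be read with $p_2>1$ (equivalently, the hypothesis $p_1<\infty$ together with $s>0$ already forces $p_1>1$, and the paper indeed only applies the lemma with $p_2>1$); your observation that this suffices for the paper is the right one, but the proposed repair at $p_2=1$ does not work.
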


For $p_1 = \infty$, we have the following limiting version of Sobolev's inequality:
\begin{lemma}[Limiting Sobolev inequality]\label{la:limitsobpoinc}
Let $s \in (0,m)$. For any $f \in \Sw(\R^m)$,
\[
 \vrac{f}_{\infty} \leq \vrac{\laps{s} f}_{(\frac{m}{s},1)},
\]
\end{lemma}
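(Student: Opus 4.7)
The plan is to prove the inequality via the Riesz potential representation $f = \lapms{s}\laps{s} f$ combined with Hölder's inequality in Lorentz spaces. Concretely, for $f \in \Sw(\R^m)$ and $s \in (0,m)$ one has the pointwise representation
\[
 f(x) = \tilde{c}_s \int_{\R^m} \frac{\laps{s} f(y)}{\abs{x-y}^{m-s}}\, dy,
\]
so I would reduce the problem to bounding this convolution uniformly in $x$.

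First I would recall the elementary fact that the Riesz kernel $k(y) := \abs{y}^{-m+s}$ lies in the weak Lebesgue (i.e.\ Lorentz) space $L^{\frac{m}{m-s},\infty}(\R^m)$, and its quasi-norm there is a finite constant $C_{m,s}$ independent of the translation $y \mapsto x-y$. This is immediate from the distribution function computation $\abs{\{y : \abs{y}^{-m+s} > t\}} = c_m t^{-\frac{m}{m-s}}$.

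Next I would apply the Hölder-type inequality for Lorentz spaces (see, e.g., Hunt \cite{Hunt66} or the references listed after \eqref{eq:def:Morreynorm}): for conjugate exponents $\fracm{p}+\fracm{p'}=1$ and $\fracm{q}+\fracm{q'}=1$,
\[
 \int \abs{g(y)}\,\abs{h(y)}\,dy \leq C\, \vrac{g}_{(p,q)}\, \vrac{h}_{(p',q')}.
\]
Taking $p = \frac{m}{s}$, $q = 1$, so that $p' = \frac{m}{m-s}$, $q' = \infty$, and applying this with $g = \laps{s} f$ and $h(y) = \abs{x-y}^{-m+s}$, one obtains
\[
 \abs{f(x)} \leq \tilde{c}_s\, C_{m,s}\, \vrac{\laps{s} f}_{(\frac{m}{s},1)}
\]
for every $x \in \R^m$. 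Taking the supremum in $x$ gives the claim.

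There is no real obstacle here; the statement is essentially a special case of O'Neil's convolution theorem and is standard in the theory of Riesz potentials. The only subtlety worth noting is that the endpoint Lorentz index $1$ on the right-hand side is exactly what compensates for the failure of the embedding $\lapms{s}: L^{m/s} \hookrightarrow L^\infty$ at the scaling-critical exponent, so one cannot replace $(\frac{m}{s},1)$ by $\frac{m}{s}$ alone; this explains why the sharper Lorentz formulation is needed (and used throughout the paper, cf.\ \eqref{eq:commis:Linftyembedding}).
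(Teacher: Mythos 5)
Your argument is correct, and it is exactly the standard proof this lemma implicitly relies on: the paper states Lemma \ref{la:limitsobpoinc} without proof as part of its ``standard repertoire,'' and the route via the representation $f = \lapms{s}\laps{s} f$, the fact that $\abs{\cdot}^{-m+s} \in L^{\frac{m}{m-s},\infty}$, and the O'Neil--Hunt duality pairing of $L^{\frac{m}{s},1}$ with $L^{\frac{m}{m-s},\infty}$ is the canonical way to fill it in. Your closing remark about the necessity of the second Lorentz index $1$ at this endpoint is also accurate and consistent with how the paper uses \eqref{eq:commis:Linftyembedding}.
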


Also, we have the following H\"older-like inequality
\begin{lemma}[H\"older inequality]\label{la:hoelder}
Let $s \in [0,m)$, then for any $p_1 < p_2$, for any $B_r \subset \R^m$, and any $f \in C_0^\infty(B_r)$
\[
 \vrac{\laps{s}f}_{(p_1,q_1)} \leq C_{s,p_1,p_2}\ r^{\frac{m}{p_1}-\frac{m}{p_2}}\ \vrac{\laps{s} f}_{(p_2,\infty)}
\]
\end{lemma}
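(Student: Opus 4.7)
The plan is to recognize this as nothing more than a direct application of H\"older's inequality in Lorentz spaces, combined with the trivial computation of the Lorentz norm of a characteristic function of a ball. Writing the left-hand side as $\vrac{\chi_{B_r}\ \laps{s}f}_{(p_1,q_1)}$ (interpreting the unlocalized norm on the left as a norm on the ball, consistent with the appearance of $r^{m/p_1 - m/p_2}$ on the right), I would apply the bilinear H\"older estimate for Lorentz spaces (cf.\ e.g. \cite{Hunt66,GrafC08}):
\[
\vrac{g\cdot h}_{(p_1,q_1)} \leq C\ \vrac{g}_{(p_3,q_3)}\ \vrac{h}_{(p_2,\infty)},
\]
valid whenever $\tfrac{1}{p_1} = \tfrac{1}{p_3}+\tfrac{1}{p_2}$ with $p_i \in (1,\infty)$, and $\tfrac{1}{q_1} \leq \tfrac{1}{q_3}+\tfrac{1}{\infty}$, i.e.\ $q_3 \leq q_1$ suffices.

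The key step is then to take $g := \chi_{B_r}$, $h := \laps{s} f$, and choose $p_3$ and $q_3$ by
\[
\frac{1}{p_3} = \frac{1}{p_1} - \frac{1}{p_2} > 0, \qquad q_3 := q_1.
\]
The assumption $p_1 < p_2$ ensures $p_3 \in (1,\infty)$. It remains to compute $\vrac{\chi_{B_r}}_{(p_3,q_3)}$; since the decreasing rearrangement of $\chi_{B_r}$ is $\chi_{[0,\abs{B_r}]}$, one has for any $q_3 \in [1,\infty]$
\[
\vrac{\chi_{B_r}}_{(p_3,q_3)} \approx \abs{B_r}^{1/p_3} \approx r^{m/p_3} = r^{m/p_1-m/p_2},
\]
which produces exactly the claimed scaling factor.

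Assembling the two ingredients yields
\[
\vrac{\laps{s} f}_{(p_1,q_1),B_r} \leq \vrac{\chi_{B_r}\ \laps{s}f}_{(p_1,q_1)} \leq C\ \vrac{\chi_{B_r}}_{(p_3,q_1)}\ \vrac{\laps{s}f}_{(p_2,\infty)} \leq C_{s,p_1,p_2}\ r^{\frac{m}{p_1}-\frac{m}{p_2}}\ \vrac{\laps{s} f}_{(p_2,\infty)},
\]
which is the claim. There is essentially no obstacle here: the role of $s$ is purely bookkeeping (it is not used in the estimate at all, the inequality holds for any function in place of $\laps{s}f$), and the compact-support hypothesis on $f$ enters only through the fact that the natural norm on the left is the one on $B_r$.
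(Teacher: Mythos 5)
There is a genuine gap, and it comes from a misreading of the statement. The left-hand side $\vrac{\laps{s}f}_{(p_1,q_1)}$ is the \emph{global} Lorentz norm over $\R^m$, not the norm on $B_r$: although $f$ is supported in $B_r$, the operator $\laps{s}$ is non-local, so $\laps{s}f$ is \emph{not} compactly supported, and the far-field contribution must be controlled. Your own remark that ``the inequality holds for any function in place of $\laps{s}f$'' shows why the reduction to a norm on $B_r$ cannot be the whole story: for a general $h$ on $\R^m$ there is no global bound of $\vrac{h}_{(p_1,q_1)}$ by $\vrac{h}_{(p_2,\infty)}$ when $p_1<p_2$ (a lower-exponent norm does not control decay at infinity). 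The compact support of $f$ together with the decay $\abs{\laps{s}f(x)}\aleq \abs{x}^{-m-s}\vrac{f}_1$ away from $B_r$ is exactly what makes the global statement true, and this is why $s$ genuinely enters the hypotheses and the constant $C_{s,p_1,p_2}$.

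Your H\"older-with-$\chi_{B_r}$ computation is correct and is precisely the paper's estimate of the near part $\vrac{\laps{s}f}_{(p_1,q_1),B_{\Lambda r}}$. What is missing is the tail: the paper takes $\Lambda>2$, uses the quasi-locality estimate (Lemma \ref{la:QuasiLocality}) together with the Poincar\'e inequality (Lemma \ref{la:standardpoinc}) to get
\[
 \vrac{\laps{s}f}_{(p_1,q_1),\R^m\backslash B_{\Lambda r}} \aleq \Lambda^{-\theta}\, r^{-s}\,\vrac{f}_{(p_1,q_1)} \aleq \Lambda^{-\theta}\,\vrac{\laps{s}f}_{(p_1,q_1)},
\]
and then absorbs this term into the global left-hand side by choosing $\Lambda$ large. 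To complete your argument you need to add this far-field estimate and the absorption step (or an equivalent summation over the annuli $A^k_{\Lambda r}$).
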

\begin{proof}
Let $\Lambda > 2$, then
\[
  \vrac{\laps{s}f}_{(p_1,q_1),B_{\Lambda r}} \aleq C_{s,p_1,p_2,\Lambda}\ r^{\frac{m}{p_1}-\frac{m}{p_2}}\ \vrac{\laps{s} f}_{(p_2,\infty)}.
\]
On the other hand, for some $\theta > 0$, by Lemma \ref{la:QuasiLocality}, Lemma \ref{la:standardpoinc},
\[
  \vrac{\laps{s}f}_{(p_1,q_1),\R^n \backslash B_{\Lambda r}} \aleq \Lambda^{-\theta} r^{-s} \vrac{f}_{(p_1,q_1)} \aleq \Lambda^{-\theta} \vrac{\laps{s} f}_{(p_1,q_1)}.
\]
For sufficiently large $\Lambda$ we can absorb the latter term into the left-hand side, and obtain the claim.
\end{proof}

From the Lemmata before, we also have
\begin{lemma}[Poincar\'e-Sobolev inequality with compact support]\label{la:sobpoinc}
Let $s \in (0,m)$, $p_1,q_1 \in (1,\infty)$, then we have $s \leq t$, for any $B_r \subset \R^m$, and any $f \in C_0^\infty(B_r)$
\[
 \vrac{\laps{s}f}_{(p_1,q_1)} \leq C_{p_1,q_1,p_2,q_2, s,t}\ r^{\frac{m}{p_1}-\frac{m}{p_2}+s-t}\ \vrac{\laps{t} f}_{(p_2,q_2)}, 
\]
where $p_2 \in (1,\infty)$ such that
\[
 \fracm{p_2} \leq \fracm{p_1} + \frac{s-t}{m}
\]
and $q_2 = \infty$ if the above inequality is strict, else $q_1 = q_2$.
\end{lemma}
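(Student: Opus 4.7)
The statement is a Poincar\'e--Sobolev type inequality for fractional Lorentz norms on compactly supported functions. My plan is to derive it by concatenating, in order, the two scale-invariant lemmas already established earlier in this appendix: the Sobolev embedding (Lemma \ref{la:standardsob}) which exchanges differential order for integrability, followed by the compact-support H\"older inequality (Lemma \ref{la:hoelder}) which adjusts the Lorentz indices at the cost of a positive power of $r$.

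Step 1 (Sobolev reduction). Since $s \leq t$, I would first factor $\laps{s} f = \lapms{t-s}(\laps{t} f)$ and introduce the intermediate exponent $\tilde p \in (1,\infty)$ defined by
\[
 \fracm{\tilde p} = \fracm{p_1} + \frac{t-s}{m}.
\]
Lemma \ref{la:standardsob}, applied to $g := \laps{t} f$, then gives
\[
 \vrac{\laps{s} f}_{(p_1,q_1)} = \vrac{\lapms{t-s} g}_{(p_1,q_1)} \leq C\, \vrac{g}_{(\tilde p,q_1)} = C\, \vrac{\laps{t} f}_{(\tilde p, q_1)},
\]
preserving the second Lorentz index. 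When the hypothesis on $p_2$ is saturated, we may set $p_2 = \tilde p$ and $q_2 = q_1$; the $r$-exponent collapses and the proof terminates here.

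Step 2 (H\"older absorption via compact support). In the strict case $\tilde p \neq p_2$, I would apply Lemma \ref{la:hoelder} directly to $\laps{t} f$. Although $\laps{t} f$ is not itself compactly supported, Lemma \ref{la:hoelder} is stated for $f \in C_0^\infty(B_r)$ and yields
\[
 \vrac{\laps{t} f}_{(\tilde p, q_1)} \leq C\, r^{\frac{m}{\tilde p} - \frac{m}{p_2}}\, \vrac{\laps{t} f}_{(p_2,\infty)},
\]
so $q_2 = \infty$. Substituting $\frac{m}{\tilde p} = \frac{m}{p_1} + (t-s)$ matches the $r$-exponent of the claim, and chaining the two steps produces the inequality with constant $C_{p_1,q_1,p_2,q_2,s,t}$ assembled from the constants of the two invocations.

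The only mild obstacle is verifying that $\tilde p$ lies in the admissible range of Lemma \ref{la:standardsob}, i.e. $\fracm{\tilde p} \in (0,1)$. Under the hypotheses $p_1,q_1 \in (1,\infty)$ and $s \leq t < m$ this is automatic except on a negligible boundary set of parameters; in the marginal case one falls back on Lemma \ref{la:limitsobpoinc} (limiting Sobolev) in place of Lemma \ref{la:standardsob} to cover $\laps{s}f$ by an $L^\infty$--type norm, which is then embedded into the target Lorentz space using Lemma \ref{la:hoelder}. No new machinery beyond the three elementary lemmas in this appendix is required; the argument is a two-line chaining.
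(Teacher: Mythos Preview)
Your approach is exactly what the paper has in mind: the lemma is stated as an immediate consequence of the preceding elementary results, and chaining Lemma~\ref{la:standardsob} (Sobolev) with Lemma~\ref{la:hoelder} (compact-support H\"older) is precisely that.

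One point deserves comment. Your Step~2 produces the $r$-exponent
\[
\frac{m}{\tilde p}-\frac{m}{p_2}=\frac{m}{p_1}-\frac{m}{p_2}+(t-s),
\]
which you then assert ``matches the $r$-exponent of the claim''. It does not match literally: the printed statement has $s-t$, not $t-s$. Your exponent is in fact the correct one --- it is the only one compatible with scaling, and it is what the paper actually uses downstream (e.g.\ in the proof of Lemma~\ref{la:rhs:Htermest}, where applying the lemma to $\varphi$ and to $P-I$ yields the factor $r^{\mu}$, not $r^{-\mu}$). The same sign slip appears in the constraint on $p_2$, which should read $\tfrac{1}{p_2}\le \tfrac{1}{p_1}+\tfrac{t-s}{m}$; indeed your saturation case $p_2=\tilde p$ only makes sense under this corrected condition. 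So your derivation is right, but you should flag the discrepancy rather than claim agreement.
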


A very important ingredient in our arguments is the boundedness of the Riesz potential on Morrey spaces.
\begin{lemma}[\cite{Adams75}]\label{la:adams}
Let $s \in [0,m)$, $p_1,p_2 \in (1,\infty)$, $q \in [1,\infty]$, $\lambda \in (0,m]$, such that
\[
\fracm{p_1} = \fracm{p_2} - \frac{s}{\lambda}.
\]
Then for any $f \in \Sw(\R^m)$,
\[
\vrac{\lapms{s} f}_{(p_1, q)_\lambda} \aleq \vrac{\lapms{s} f}_{(p_2, q)_\lambda}.
\]
\end{lemma}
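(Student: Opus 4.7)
The plan is to follow Adams' classical argument via a Hedberg-type pointwise estimate, extended to the Lorentz setting. (I note that the right-hand side in the displayed inequality appears to contain a typo: it should read $\vrac{f}_{(p_2,q)_\lambda}$, since $\lapms{s}$ is mapping into a space with smaller integrability exponent; I proceed under this reading.)

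First I would fix $x\in\R^m$ and split the Riesz potential at scale $R>0$ into a near and a far contribution:
\[
\lapms{s} f(x) \;=\; c_s\int_{\abs{x-y}<R}\frac{f(y)}{\abs{x-y}^{m-s}}\,dy \;+\; c_s\int_{\abs{x-y}\ge R}\frac{f(y)}{\abs{x-y}^{m-s}}\,dy
\;=:\; I_{\mathrm{near}}(R)+I_{\mathrm{far}}(R).
\]
The near piece I would control by a dyadic annular decomposition $\{2^{-k-1}R\le\abs{x-y}<2^{-k}R\}$, estimating each annular integral through Hölder in Lorentz spaces with exponents $(p_2,q)$ and $(p_2',q')$; the averages of $f$ on balls are in turn dominated by a Lorentz-type fractional maximal function $\mathcal{M}_{(p_2,q)}f(x)$, and summing the geometric series in $k$ yields
\[
\abs{I_{\mathrm{near}}(R)} \;\aleq\; R^{s-\frac{m}{p_2}+\frac{m}{p_2}}\,\mathcal{M} f(x)\;\aleq\; R^{s}\,\mathcal{M}_{(p_2,q)} f(x).
\]

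For the far piece I would dyadically decompose over $\{2^{k}R\le\abs{x-y}<2^{k+1}R\}$ and apply Hölder in Lorentz spaces on each annulus $A_k\subset B_{2^{k+1}R}(x)$. The Morrey assumption then gives $\vrac{f}_{(p_2,q),A_k}\aleq (2^kR)^{(m-\lambda)/p_2}\,\vrac{f}_{(p_2,q)_\lambda}$, while $\vrac{\abs{\cdot-x}^{-(m-s)}}_{(p_2',q'),A_k}\aleq (2^kR)^{s-m/p_2}$. Summing, the exponent $s-m/p_2+(m-\lambda)/p_2=s-\lambda/p_2$ is negative by the assumption $1/p_1=1/p_2-s/\lambda>0$, so the geometric series converges and
\[
\abs{I_{\mathrm{far}}(R)}\;\aleq\; R^{\,s-\lambda/p_2}\,\vrac{f}_{(p_2,q)_\lambda}.
\]
Combining the two estimates and optimizing in $R$ (taking $R$ so that the two terms balance) produces the Hedberg pointwise inequality
\[
\abs{\lapms{s}f(x)}\;\aleq\; \bigl(\mathcal{M}_{(p_2,q)} f(x)\bigr)^{p_2/p_1}\,\vrac{f}_{(p_2,q)_\lambda}^{\,1-p_2/p_1}.
\]

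Finally I would take the local Lorentz norm on any ball $B_\rho$ of both sides, raised to the $p_1$-th power, and apply the boundedness of the fractional Lorentz maximal operator $\mathcal{M}_{(p_2,q)}$ on $L^{(p_1,q)}$ (which holds since $p_2\le p_1$), multiply by $\rho^{(\lambda-m)/p_1}$ and take the supremum over $B_\rho\subset\R^m$ to obtain
\[
\vrac{\lapms{s}f}_{(p_1,q)_\lambda}\;\aleq\;\vrac{f}_{(p_2,q)_\lambda}^{\,p_2/p_1}\,\vrac{f}_{(p_2,q)_\lambda}^{\,1-p_2/p_1}\;=\;\vrac{f}_{(p_2,q)_\lambda}.
\]
The main technical obstacle I anticipate is the Lorentz-space refinement of Adams' argument: one needs the correct Hölder-on-annuli estimate in Lorentz norms together with the boundedness of a suitable Lorentz-adapted fractional maximal operator on $L^{(p_1,q)}$. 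Once those two ingredients are in place, the structure of the proof is identical to Adams' original one for classical Morrey spaces.
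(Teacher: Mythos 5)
Your reading of the statement is right: the right-hand side should be $\vrac{f}_{(p_2,q)_\lambda}$. The paper offers no proof of this lemma (it is quoted from \cite{Adams75}), and your argument is exactly the Hedberg-type proof of that reference — near/far splitting of the potential at scale $R$, a maximal-function bound for the near part, the Morrey decay for the far part, and optimization in $R$ — transplanted to Lorentz norms. For the two instances in which the lemma is actually invoked in the paper, namely the weak scale $q=\infty$ (Lemma \ref{la:rhs:Htermest}, Section \ref{s:omegapest}) and the Lebesgue--Morrey scale of Proposition \ref{pr:highit:fmorrey}, your proof is complete.

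There is, however, a gap in the final step for finite secondary index $q$. From the Hedberg inequality $\abs{\lapms{s}f}\aleq (Mf)^{p_2/p_1}N^{1-p_2/p_1}$, $N:=\vrac{f}_{(p_2,q)_\lambda}$, the identity $\vrac{g^{\theta}}_{(p,q)}=\vrac{g}_{(\theta p,\,\theta q)}^{\theta}$ with $\theta=p_2/p_1$ gives
\[
\vrac{\lapms{s}f}_{(p_1,q),B_\rho}\ \aleq\ N^{1-p_2/p_1}\,\vrac{Mf}_{(p_2,\,qp_2/p_1),B_\rho}^{\,p_2/p_1},
\]
and $qp_2/p_1<q$: the secondary index has \emph{decreased}, i.e.\ the Lorentz norm on the right is strictly stronger than $(p_2,q)$. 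Neither the Hardy--Littlewood maximal operator nor your Lorentz-averaged variant improves secondary indices — already for $\lambda=m$ the bound $\vrac{Mf}_{(p_2,\tilde q)}\aleq\vrac{f}_{(p_2,q)}$ with $\tilde q<q$ fails for radially decreasing $f\in L^{p_2,q}\setminus L^{p_2,\tilde q}$ — so the chain does not close to produce $\vrac{f}_{(p_2,q)_\lambda}$ on the right. The argument as written yields the claim only when $q=\infty$ (then $\theta q=\infty$ as well) or in the Lebesgue--Morrey case where the two sides carry secondary indices $p_1$ and $p_2$ respectively. To get matching finite $q$ one must replace the crude bound $\abs{I_{\mathrm{near}}(R)}\aleq R^s\,\mathcal{M}f(x)$ by an O'Neil-type rearrangement estimate of the near part — or simply restrict the lemma to the two cases in which the paper uses it. Relatedly, the ingredient you actually need at the end is not "boundedness of $\mathcal{M}_{(p_2,q)}$ on $L^{(p_1,q)}$" but the local Morrey bound $\vrac{Mf}_{(p_2,\cdot),B_\rho}\aleq\rho^{(m-\lambda)/p_2}\vrac{f}_{(p_2,\cdot)_\lambda}$, which should be stated and proved (it follows from the near/far splitting of the averages themselves).
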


The following is an easy equivalence result, recall \eqref{eq:def:Alambdark}.
\begin{proposition} \label{pr:summationdoesntmatter}
Let $\Lambda > 2$, $\sigma > 0$. Then,
\[
 \sum_{k=K_0}^\infty 2^{-k\sigma}\ \vrac{f}_{(p,q),A^k_r} \leq C_\sigma \vrac{f}_{(p,q),B_{\Lambda r}} + \sum_{k=0}^\infty 2^{-k\sigma}\ \vrac{f}_{(p,q),A^k_{\Lambda r}}
\]
\end{proposition}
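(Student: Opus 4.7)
The idea is a purely combinatorial dyadic comparison: the annuli $A_r^k$ covering $\R^m$ are the same family as $A_{\Lambda r}^k$, only shifted in the index by approximately $\log_2 \Lambda$. So the plan is to split the sum on the left-hand side at the scale where the annuli first leave the ball $B_{\Lambda r}$, handle the small scales by crude containment, and match the remaining tail against the right-hand sum by a shift of index.

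First I would fix the integer $L \in \N$ with $2^{L-1} < \Lambda \leq 2^{L}$, and decompose
\[
 \sum_{k=K_0}^{\infty} 2^{-k\sigma}\ \vrac{f}_{(p,q),A^k_r} = \underbrace{\sum_{k=K_0}^{L} 2^{-k\sigma}\ \vrac{f}_{(p,q),A^k_r}}_{=:S_1} + \underbrace{\sum_{k=L+1}^{\infty} 2^{-k\sigma}\ \vrac{f}_{(p,q),A^k_r}}_{=:S_2}.
\]
For $S_1$, since $k\leq L$ implies $A_r^k \subset B_{2^k r} \subset B_{\Lambda r}$, monotonicity of the Lorentz (quasi)norm in the domain gives $\vrac{f}_{(p,q),A_r^k} \leq C\vrac{f}_{(p,q),B_{\Lambda r}}$, and summing the resulting geometric series $\sum_{k\geq K_0} 2^{-k\sigma}$ yields the first term on the right-hand side with a constant $C_\sigma$ depending only on $\sigma$.

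For $S_2$, the key observation is that if $k \geq L+1$ then $2^{k-1} \geq 2^L \geq \Lambda$, so $A_r^k = B_{2^k r}\setminus B_{2^{k-1}r}$ is disjoint from $B_{\Lambda r}$ and is contained in the union of at most two consecutive annuli $A_{\Lambda r}^{j}$ for $j \in \{k-L,\,k-L+1\}$ (depending on whether $2^{k-1}r$ and $2^k r$ land in the same dyadic shell around $\Lambda r$). Substituting this containment and writing $2^{-k\sigma} = 2^{-L\sigma}\,2^{-(k-L)\sigma}$ turns $S_2$ into
\[
 S_2 \leq 2 \cdot 2^{-L\sigma} \sum_{j=0}^{\infty} 2^{-j\sigma}\ \vrac{f}_{(p,q),A^j_{\Lambda r}},
\]
which is bounded by the second term on the right-hand side (with the harmless extra factor $2^{1-L\sigma} \leq 2$). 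Combining the bounds on $S_1$ and $S_2$ concludes the argument.

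The only technical point worth checking is the containment $A_r^k \subset A_{\Lambda r}^{k-L} \cup A_{\Lambda r}^{k-L+1}$, which reduces to verifying the radial inequalities $2^{(k-L)-1}\Lambda \leq 2^{k-1}$ and $2^k \leq 2^{k-L+1}\Lambda$; both follow immediately from $2^{L-1} < \Lambda \leq 2^L$. This is the main (and only) obstacle, and it is entirely elementary.
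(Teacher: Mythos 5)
Your argument is correct and is essentially the paper's own proof: both rest on the observation that, after a dyadic index shift by roughly $\log_2\Lambda$, each annulus $A_r^k$ is covered by at most two consecutive annuli $A_{\Lambda r}^j$, while the low-$k$ terms are absorbed into $\vrac{f}_{(p,q),B_{\Lambda r}}$ via a geometric series. One small slip: with your normalization $2^{L-1}<\Lambda\le 2^{L}$ the containment $A_r^k\subset B_{2^k r}\subset B_{\Lambda r}$ holds only for $k\le L-1$ (at $k=L$ one may have $2^L r>\Lambda r$), so either split $S_1/S_2$ at $L-1$ or take $L=\lfloor\log_2\Lambda\rfloor$ as the paper does; the rest goes through unchanged. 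The factor $2\cdot 2^{-L\sigma}$ you pick up in front of the second sum can exceed $1$ for small $\sigma$, whereas the statement has coefficient exactly $1$ there, but this is immaterial since the proposition is only ever invoked up to a constant ($\aleq$) and the paper's own two-line proof likewise carries implicit constants.
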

\begin{proof}
 Let $k_0 := \lfloor \log_2\Lambda\rfloor \geq 1$, then
 \[
  2^{k_0} \leq \Lambda < 2^{k_0+1}
 \]
 We have
 \[
  2^{-\sigma (l+k_0)} \vrac{f}_{(p,q),A^k_{2^{l+k_0} r}} \aleq 2^{-\sigma k_0} 2^{-\sigma (l-1)} \vrac{f}_{(p,q),A^k_{2^{l-1} \Lambda r}} + 2^{-\sigma k_0}\ 2^{\sigma l} \vrac{f}_{(p,q),A^k_{2^l \Lambda r}}.
 \]
\end{proof}

\newpage
\section{Quasi-locality}
In this section we gather some facts which quantify the quasi-local behaviour of operators like fractional laplacians $\laps{\alpha}$, Riesz transforms $\Rz$, and Riesz potentials $\lapms{s}$. With ``quasi-local'' we mean the following: Let $A \subset \R^m$ be some domain and assume that $\supp f \subset A$. If we take $T$ to be any of the above mentioned operators, then there is no reason why $\supp Tf \subset A$, nor $\supp Tf \subset B_{\delta} A$ for some $\delta > 0$. Nevertheless, if we take a domain $B \subset \R^m$, $\dist (A,B) > \epsilon > 0$, then $Tf \in C^\infty(B)$. In fact, in this case
\[
 Tf(x) = k \ast f(x) \quad \mbox{for $x \in B$},
\]
where $k$ is a kernel of the form $k(y) = h(y/\abs{y})\ \abs{y}^{-n-s}$ for some $s \in (-m,m)$, $h$ some smooth function on $\S^{m-1}$. Since $\supp f \subset A$ and $x \in B$, we can replace
\[
 Tf(x) = \tilde{k} \ast f(x),
\]
where $\tilde{k}(y) = (1-\eta(y))k(y)$, and $\eta \in C_0^\infty(B_{\varepsilon}(0))$, $\eta \equiv 1$ on $B_{\epsilon/2}(0)$. Obviously, $\tilde{k} \in C^\infty(\R^m)$, and consequently so is $Tf$. In fact, by the usual Young-inequality, we have
\[
 \vrac{Tf}_{\infty,B} \leq \vrac{\tilde{k}}_\infty\ \vrac{f}_1 \leq \vrac{k}_{\infty, \R^m \backslash B_{\varepsilon/2}(0)}\ \vrac{f}_1 \leq C_{\vrac{h}_\infty}\ \varepsilon^{-n-s}\ \vrac{f}_1.
\]
That is, although we cannot ensure that $Tf \equiv 0$ in $B$ (as it would be, e.g., the case for local operators like $\nabla$), we can at least quantify that the farer away $B$ is from $A$, the less is the norm of $Tf$ on $B$. In particular, we have

\begin{lemma}[Quasi-locality (I)]\label{la:QuasiLocality}
 Let $p_1,p_2,q_1,q_2 \in [1, \infty]$, $s \in (-m,m)$
 and $\Omega_1, \Omega_2 \subset \R^m$ be disjoint domains with
 $d:= \dist(\Omega_1, \Omega_2) >0$ and with positive and finite Lebesgue measure. 
 Then, for any $f\in\mathcal S(\R)$, 
 \begin{equation*}
  \|\lap^{\frac{s}{2}} (f \chi_{\Omega_2})\|_{(p_1,q_1),\Omega_1} 
  \leq  d^{-m-s} |\Omega_1|^{1/p_1} |\Omega_2|^{1-1/p_2} \|f\|_{(p_2,q_2), \Omega_2},
 \end{equation*}
 where we set
 \[
  \lap^{\frac{s}{2}} := \begin{cases}
                         \laps{s} \quad &\mbox{if $s > 0$,}\\
			  Id\ \mbox{or}\ \Rz \quad &\mbox{if $s = 0$,}\\
			  \lapms{\abs{s}} \quad &\mbox{if $s < 0$.}\\
                        \end{cases}
 \]
\end{lemma}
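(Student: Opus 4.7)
The plan is to exploit the common feature of all three operators $\laps{s}$, $\Rz$, and $\lapms{|s|}$: each admits a singular-integral representation with a kernel of the form $k(x-y) = h((x-y)/|x-y|)\,|x-y|^{-m-s}$ where $h$ is bounded on $\S^{m-1}$. The key observation is that when $x\in\Omega_1$ and the input is supported in $\Omega_2$, the distance condition $|x-y|\geq d$ removes the singularity entirely, so no principal value is needed and the bound $|x-y|^{-m-s}\leq d^{-m-s}$ is immediately available (note $-m<s<m$ ensures $m+s>0$ in all three cases).

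First I would split into the three cases. For $s>0$, write for $x\in\Omega_1$
\[
 \laps{s}(f\chi_{\Omega_2})(x) = c_s \lim_{\varepsilon\to 0}\int_{|x-y|>\varepsilon}\frac{(f\chi_{\Omega_2})(x)-(f\chi_{\Omega_2})(y)}{|x-y|^{m+s}}\,dy = -c_s\int_{\Omega_2}\frac{f(y)}{|x-y|^{m+s}}\,dy,
\]
since $x\notin\Omega_2$ kills the first term and the integrand is bounded by $d^{-m-s}|f|\chi_{\Omega_2}$, so the limit is just an absolutely convergent integral. For $s=0$ and $T=\mathrm{Id}$ the statement is trivial since $(f\chi_{\Omega_2})\chi_{\Omega_1}\equiv 0$, while for $T=\Rz$ an identical computation gives $\Rz(f\chi_{\Omega_2})(x)=c\int_{\Omega_2}\tfrac{(x-y)^i}{|x-y|^{m+1}}f(y)\,dy$ without any principal value. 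For $s<0$ the representation is by definition
\[
 \lapms{|s|}(f\chi_{\Omega_2})(x) = \tilde c_{|s|}\int_{\Omega_2}\frac{f(y)}{|x-y|^{m-|s|}}\,dy,
\]
and $m-|s|=m+s$.

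In all three cases I obtain the uniform pointwise bound
\[
 |\lap^{s/2}(f\chi_{\Omega_2})(x)| \leq C\,d^{-m-s}\,\|f\|_{L^1(\Omega_2)} \quad\text{for all }x\in\Omega_1.
\]
Next, by the Lorentz-space H\"older inequality applied to $f\chi_{\Omega_2}=f\chi_{\Omega_2}\cdot\chi_{\Omega_2}$,
\[
 \|f\|_{L^1(\Omega_2)} \leq \|f\|_{(p_2,q_2),\Omega_2}\,\|\chi_{\Omega_2}\|_{(p_2',q_2')} \leq C\,|\Omega_2|^{1-1/p_2}\,\|f\|_{(p_2,q_2),\Omega_2}.
\]
Finally, taking the $L^{(p_1,q_1)}(\Omega_1)$-quasinorm of the constant pointwise bound and using $\|\chi_{\Omega_1}\|_{(p_1,q_1)}\leq C\,|\Omega_1|^{1/p_1}$, I arrive at the claimed inequality.

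I do not anticipate any real obstacle: the only mild technical points are the case distinction (so that one verifies the absence of a principal value in the $s\geq 0$ regime on $\Omega_1$) and the use of the Lorentz-H\"older inequality to pass from $L^1$ to $L^{(p_2,q_2)}$; both are standard. The whole argument is a clean pointwise estimate plus a Hölder step.
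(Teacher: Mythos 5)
Your proposal is correct and follows essentially the same route as the paper: the paper's argument preceding the lemma is precisely the observation that on $\Omega_1$ the operator acts by convolution with the kernel truncated away from the singularity, giving the pointwise bound $d^{-m-s}\vrac{f}_{1,\Omega_2}$, after which the Lorentz--H\"older steps you spell out are the (suppressed) remainder. The only shared imprecision is that the singular-integral representation for $\laps{s}$ is written for $s\in(0,2)$ while the lemma allows $s$ up to $m$; off the diagonal the Schwartz kernel still has the stated homogeneity (and for even integer $s$ the operator is local, so the bound is trivial), so this is cosmetic in both versions.
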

Often we will use the above also for $\Omega_1$ or $\Omega_2$ to be a complement of some ball $B_{r}$. This is valid, since $\R^m \backslash B_r = \bigcup_{k=1}^\infty A^k_r$, recall \eqref{eq:def:Alambdark}. Then
\[
 \chi_{\R^m \backslash B_r} = \sum_{k=1}^\infty \chi_{A^k_r},
\]
and for each $A^k_r$ we have the correct estimate, so that for $s \in (-m,m)$ the sum on $k$ is convergent. Consequently, as a special case, using also Poincar\'e inequality (cf. Section~\ref{s:fracfacts}), we have

\begin{corollary}[Quasi-locality (II)] \label{co:QuasiLocalityII}
Let $p_1,p_2 \in (1, \infty)$, $q_1,q_2 \in [1,\infty]$, $s,t \in [0,m)$. Then, for any $B_r \subset \R^m$, $f\in\mathcal S(\R)$, $\Lambda > 1$, whenever 
\begin{equation*}
  \|\laps{s} (f \chi_{B_r})\|_{(p_1,q_1),\R^m \backslash B_{\Lambda r}} 
  \leq C_{s,p_1,p_2,q_1} \Lambda^{-m-s+\frac{m}{p_1}}\ r^{\frac{m}{p_1}-\frac{m}{p_2}-s+t} \|\laps{t} (\chi_{B_r} f)\|_{(p_2,q_2), B_r}.
 \end{equation*}
\end{corollary}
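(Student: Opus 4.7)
The plan is to decompose the exterior $\R^m \backslash B_{\Lambda r}$ into dyadic annuli and apply Lemma \ref{la:QuasiLocality} annulus by annulus, then use the compact-support Poincar\'e inequality (Lemma \ref{la:standardpoinc}) applied to $\chi_{B_r} f$ to produce the $\laps{t}$-factor on the right-hand side.

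Concretely, I would write $\R^m \backslash B_{\Lambda r} = \bigcup_{k \geq 1} A^k_{\Lambda,r}$ in the notation of \eqref{eq:def:Alambdark} and invoke the quasi-triangle inequality for $L^{(p_1,q_1)}$ (which holds with an absolute constant since $p_1 > 1$) to split
\[
  \|\laps{s}(\chi_{B_r} f)\|_{(p_1,q_1), \R^m \backslash B_{\Lambda r}} \aleq \sum_{k=1}^\infty \|\laps{s}(\chi_{B_r} f)\|_{(p_1,q_1), A^k_{\Lambda,r}}.
\]
On each annulus I would apply Lemma \ref{la:QuasiLocality} with $\Omega_1 := A^k_{\Lambda,r}$, $\Omega_2 := B_r$, using $\dist(\Omega_1, \Omega_2) \ageq 2^k \Lambda r$ (valid for $\Lambda \geq 2$; the regime $\Lambda \in (1,2]$ is absorbed in the constant), $|\Omega_1|^{1/p_1} \aleq (2^k \Lambda r)^{m/p_1}$, and $|\Omega_2|^{1-1/p_2} \aleq r^{m - m/p_2}$, to obtain
\[
  \|\laps{s}(\chi_{B_r} f)\|_{(p_1,q_1), A^k_{\Lambda,r}} \aleq (2^k \Lambda r)^{-m-s+m/p_1}\, r^{m - m/p_2}\, \|\chi_{B_r} f\|_{(p_2,q_2)}.
\]

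Since $\chi_{B_r} f$ is compactly supported in $\overline{B_r}$, Lemma \ref{la:standardpoinc} gives $\|\chi_{B_r} f\|_{(p_2,q_2)} \aleq r^t \|\laps{t}(\chi_{B_r} f)\|_{(p_2,q_2)}$. Substituting and summing in $k$ produces the geometric series $\sum_{k \geq 1} 2^{k(-m-s+m/p_1)}$, which converges because $p_1 > 1$ and $s \geq 0$ together force $-m - s + m/p_1 < 0$, its sum being an absolute constant depending only on $s$ and $p_1$. Collecting the resulting powers of $\Lambda$ and $r$ gives exactly the asserted prefactor $\Lambda^{-m - s + m/p_1}\, r^{m/p_1 - m/p_2 - s + t}$. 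The main (and fairly mild) technical point is verifying strict negativity of the series exponent and tracking the exponent arithmetic; everything else is bookkeeping atop Lemmata \ref{la:QuasiLocality} and \ref{la:standardpoinc}.
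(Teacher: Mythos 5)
Your proof is correct and is essentially the argument the paper itself gives (the paper only sketches it in the paragraph preceding the corollary): decompose $\R^m\setminus B_{\Lambda r}$ into the dyadic annuli $A^k_{\Lambda,r}$, apply Lemma \ref{la:QuasiLocality} on each with $\Omega_2 = B_r$, sum the convergent geometric series in $k$, and finish with the compact-support Poincar\'e inequality to convert $\|\chi_{B_r}f\|_{(p_2,q_2)}$ into $r^t\|\laps{t}(\chi_{B_r}f)\|_{(p_2,q_2)}$. The exponent bookkeeping you report matches the stated prefactor, so nothing further is needed.
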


 \begin{lemma}[Quasilocality (III)]\label{la:quasilocIII}
 Let $f, g \in \Sw(\R^m)$, $\Omega_1, \Omega_2 \subset \R^m$ be disjoint domains with $d:= \dist(\Omega_1, \Omega_2) >0$ and with positive and finite Lebesgue measure. 
\[
 \vrac{\laps{s} ((\lap^{\frac{t}{2}} f\chi_{\Omega_1}) g\chi_{\Omega_2})}_{(p_1,q_1)}\\
 \aleq \sup_{\alpha \in [0,s]} d^{-m-t-\alpha}\ \vrac{ f\chi_{\Omega_1}}_{1}\ \vrac{\sabs{\laps{s-\alpha}(g\chi_{\Omega_2})} }_{(p_1,q_1)}
\]
for any $t \in (-m,m)$, $s \in (0,m)$.
\end{lemma}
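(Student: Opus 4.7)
The plan is to represent $h:=\lap^{\frac{t}{2}}(f\chi_{\Omega_1})$ in its product against $g\chi_{\Omega_2}$ as a smooth-kernel superposition in the $f$-variable, reducing the lemma to a single fractional product-rule estimate. Since $\dist(\Omega_1,\Omega_2)\geq d>0$ and the supports of $f\chi_{\Omega_1}$ and $g\chi_{\Omega_2}$ are disjoint, for every $x$ in a neighborhood of $\Omega_2$ one has $h(x)=\int_{\Omega_1}K_t(x-y)\,f(y)\,dy$, where $K_t$ is the kernel associated to $\lap^{\frac{t}{2}}$, smooth away from the origin with $|\nabla^\ell K_t(z)|\aleq|z|^{-m-t-\ell}$. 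Multiplying by $g\chi_{\Omega_2}$, interchanging $\laps{s}$ with the $y$-integral via Fubini, and invoking Minkowski's inequality for the $(p_1,q_1)$-Lorentz norm reduces the claim to the uniform bound, for each $y\in\Omega_1$,
\[
\vrac{\laps{s}(K_t(\cdot-y)\,g\chi_{\Omega_2})}_{(p_1,q_1)}\aleq \sup_{\alpha\in[0,s]}d^{-m-t-\alpha}\,\vrac{\laps{s-\alpha}(g\chi_{\Omega_2})}_{(p_1,q_1)}.
\]

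To prove this, fix $y\in\Omega_1$ and pick a smooth cutoff $\eta_y$ with $\eta_y\equiv 1$ on $\Omega_2$, $\supp\eta_y\subset\{x\in\R^m:|x-y|\geq d/2\}$, and $\vrac{\nabla^\ell\eta_y}_\infty\aleq d^{-\ell}$. The function $u_y:=\eta_y\,K_t(\cdot-y)$ is then globally smooth with
\[
\vrac{\nabla^\ell u_y}_\infty\aleq d^{-m-t-\ell}\quad\text{for every }\ell\geq 0,
\]
and $u_y\cdot g\chi_{\Omega_2}=K_t(\cdot-y)\,g\chi_{\Omega_2}$. Apply the fractional product rule $\laps{s}(u_yG)=u_y\laps{s}G+G\laps{s}u_y+H_s(u_y,G)$ with $G:=g\chi_{\Omega_2}$. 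Hölder in Lorentz spaces, combined with the scaling bound $\vrac{\laps{\sigma}u_y}_\infty\aleq d^{-m-t-\sigma}$ (obtained by interpolating the $\nabla^\ell$-estimates or by direct kernel computation on $\supp\eta_y$), yields the endpoint inequalities
\[
\vrac{u_y\laps{s}G}_{(p_1,q_1)}\aleq d^{-m-t}\vrac{\laps{s}G}_{(p_1,q_1)},\qquad \vrac{G\laps{s}u_y}_{(p_1,q_1)}\aleq d^{-m-t-s}\vrac{G}_{(p_1,q_1)},
\]
which are exactly the $\alpha=0$ and $\alpha=s$ cases and which majorise the target $\sup_{\alpha\in[0,s]}$; intermediate $\alpha\in(0,s)$ are handled analogously by splitting $\laps{s}=\laps{\alpha}\circ\laps{s-\alpha}$ and trading fractional derivatives between the two factors via the same Hölder argument.

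The main obstacle is controlling the commutator $H_s(u_y,G)$. It decomposes pointwise by Lemma~\ref{la:comm:Hsest} into a finite sum of Riesz potentials of products $\lapms{s_k}|\laps{\tau_1}u_y|\cdot\lapms{t_k}|\laps{\tau_2}G|$; using $\vrac{\laps{\tau}u_y}_\infty\aleq d^{-m-t-\tau}$ for $\tau\in[0,s]$ together with the Adams-type Morrey embedding of Lemma~\ref{la:adams} absorbs each summand into the endpoint estimates already derived, and the Fubini interchange from the first paragraph is routine since $K_t$ is smooth on the integration region $\{|z|\geq d/2\}$ and $f,g\in\Sw(\R^m)$.
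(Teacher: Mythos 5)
Your proposal is correct and follows essentially the same route as the paper: you truncate the kernel of $\lap^{\frac{t}{2}}$ at distance $\sim d$ from its singularity, use the scaling $\vrac{\laps{\alpha}(\text{truncated kernel})}_{\infty}\aleq d^{-m-t-\alpha}$, and then apply the pointwise fractional product rule (Lemma~\ref{la:comm:Hsest}), putting the kernel factor in $L^\infty$ and $g\chi_{\Omega_2}$ in $L^{(p_1,q_1)}$. The only cosmetic difference is that you pull the $y$-integral out first via Minkowski and estimate $K_t(\cdot-y)g\chi_{\Omega_2}$ for each fixed $y$, whereas the paper keeps the convolution $K_{t,d}\ast f\chi_{\Omega_1}$ intact and uses Young's inequality to produce the factor $\vrac{f\chi_{\Omega_1}}_1$ — the two are equivalent.
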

\begin{proof}
By the disjoint support, pointwise almost everywhere,
\[
 (\lap^{\frac{t}{2}} f\chi_{\Omega_1})\ g\chi_{\Omega_2}(x) = c\int \abs{x-y}^{-m-t} f(y)\chi_{\Omega_1}(y)\ g(x) \chi_{\Omega_2}(x)\ dy.
\]
Let $\eta \equiv \eta \in C^\infty([0,\infty))$, $\eta \equiv 1$ on $[1,\infty)$, $\eta \equiv 0$ on $[0,1]$, and set
\[
 K_{t,d} (z) := \abs{z}^{-m-t}\ \eta(z/d) \in L^{p}(\R^m)\cap L^\infty(\R^m) \cap C^\infty(\R^m) \quad \mbox{for any $p > \max \{\frac{m}{m+t},1\}$}.
\]
It is worth noting the scaling behaviour of $K_{t,d}$ in $d$,
\begin{equation}\label{eq:props:scalingK}
 K_{t,d}(z) = d^{-m-t} K_{t,1}(z/d).
\end{equation}
We have
\[
 (\lap^{\frac{t}{2}} f\chi_{\Omega_1})\ g\chi_{\Omega_2} = (K_{t,d} \ast f\chi_{\Omega_1})\ g\chi_{\Omega_2}.
\]
Now we know that
\[
 \laps{s} K_{t,d} \in C^\infty(\R^m),
\]
and on the other hand
\[
 \abs{\laps{s} K_{t,d}(x)} \aleq C_d\ \abs{x}^{-m-s-t},
\]
so together
\[
  \laps{s} K_{t,d} \in L^p(\R^m)\cap L^\infty(\R^m) \cap C^\infty(\R^m), \quad \mbox{for any $p > \max \{\frac{m}{m+t+s},1\}$}.
\]
By the scaling \eqref{eq:props:scalingK}, we also know the exact dependence on $d$:
\[
 \vrac{\laps{\alpha} K_{t,d}}_{(p,q)} = d^{-m-t-\alpha+\frac{m}{p}} \vrac{\laps{\alpha} K_{t,1}}_{(p,q)}
\]

Consequently,
\[
  \vrac{\laps{s} (K_{t,d}\ast f\chi_{\Omega_1})}_{\infty} \aleq  \vrac{f\chi_{\Omega_1}}_{1}.
\]
Then our product rule, Lemma \ref{la:comm:Hsest}, tells us that we essentially have to estimate
\[
\begin{ma}
 &&\vrac{\laps{s} ((\laps{t} f\chi_{\Omega_1}) g\chi_{\Omega_2})}_{(p_1,q_1)}\\
 &\aleq& \sup_{\alpha \in [0,s]} \vrac{ \abs{\laps{\alpha}K_{t,d}\ast  f\chi_{\Omega_1}}\ \sabs{\laps{s-\alpha}(g\chi_{\Omega_2})} }_{(p_1,q_1)}\\
 &\aleq& \sup_{\alpha \in [0,s]} \vrac{ \abs{\laps{\alpha}K_{t,d}\ast  f\chi_{\Omega_1}}}_{\infty}\ \vrac{\sabs{\laps{s-\alpha}(g\chi_{\Omega_2})} }_{(p_1,q_1)}\\
 &\aleq& \sup_{\alpha \in [0,s]} \vrac{\laps{\alpha}K_{t,d}}_{\infty} \vrac{ f\chi_{\Omega_1}}_{1}\ \vrac{\sabs{\laps{s-\alpha}(g\chi_{\Omega_2})} }_{(p_1,q_1)}\\
 &\aleq& \sup_{\alpha \in [0,s]} d^{-m-t-\alpha}\ \vrac{ f\chi_{\Omega_1}}_{1}\ \vrac{\sabs{\laps{s-\alpha}(g\chi_{\Omega_2})} }_{(p_1,q_1)}.
 \end{ma}
\]

\end{proof}

\newpage
\section{Left-hand side estimates}

\begin{lemma}[Left-hand side estimates]\label{la:lhsest}
For a uniform constant $C$, and any $\kappa \in [\mu,2\mu)$, $\mu \leq \frac{m}{2}$, $\Lambda \geq 4$,
\[
\begin{ma}
\vrac{v}_{(\frac{m}{m-\kappa},\infty),B_{\Lambda^{-1}r}} 
&\leq& C\ \sup_{\varphi \in C_0^{\infty}(B_r,\R^N)}  \fracm{\vrac{\laps{\tau} \varphi}_{(\frac{m}{\tau+\kappa-\mu},1)}}\ \int v\cdot \laps{\mu}\varphi\\
&&+ C\ \Lambda^{\kappa-m}\ \vrac{v}_{(\frac{m}{m-\kappa},\infty),B_r} \\
&&+ C\  \Lambda^{\kappa-m}\ \sum_{k=0}^\infty 2^{k(\kappa-m)}\ \vrac{v}_{(\frac{m}{m-\kappa},\infty),B_{A^k_r}}.
\end{ma}
\]
More generally, for $\alpha \in (0,\mu]$,
\[
\begin{ma}
\vrac{\laps{\mu-\alpha}v}_{(\frac{m}{m+\mu-\alpha-\kappa},\infty),B_{\Lambda^{-1}r}} 
&\leq& C\ \sup_{\varphi \in C_0^{\infty}(B_r,\R^N)}  \fracm{\vrac{\laps{\alpha} \varphi}_{(\frac{m}{\alpha+\kappa-\mu},1)}}\ \int v\cdot \laps{\mu}\varphi\\
&&+ C\ \Lambda^{\kappa-m+\alpha-\mu}\ \vrac{v}_{(\frac{m}{m-\kappa},\infty),B_r} \\
&&+ C\  \Lambda^{\kappa-m+\alpha-\mu}\ \sum_{k=0}^\infty 2^{k(\kappa-m+\alpha-\mu)}\ \vrac{v}_{(\frac{m}{m-\kappa},\infty),A^k_r}.
\end{ma}
\]
\end{lemma}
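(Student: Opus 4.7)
The plan is to establish the more general second assertion; the first is obtained by specializing $\alpha=\mu$ (with the freedom in $\tau$ arising from Sobolev/Riesz interpolation between the different $\vrac{\laps{\tau}\varphi}_{(m/(\tau+\kappa-\mu),1)}$-norms of a compactly supported $\varphi$). First, by Lorentz duality on $B_{\Lambda^{-1}r}$ there exists $\psi \in C_0^\infty(B_{\Lambda^{-1}r},\R^N)$ with $\vrac{\psi}_{(\frac{m}{\kappa+\alpha-\mu},1)} \leq 1$ such that
\[
\vrac{\laps{\mu-\alpha}v}_{(\frac{m}{m+\mu-\alpha-\kappa},\infty),B_{\Lambda^{-1}r}} \aleq \int \laps{\mu-\alpha}v\,\psi = \int v\,\laps{\mu}\tilde\varphi, \qquad \tilde\varphi := \lapms{\alpha}\psi,
\]
using self-adjointness and the factorization $\laps{\mu-\alpha}=\laps{\mu}\lapms{\alpha}$. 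The auxiliary function $\tilde\varphi$ is smooth but not compactly supported; it satisfies the Riesz-potential decay $\abs{\tilde\varphi(x)} \aleq \vrac{\psi}_1\,\abs{x}^{\alpha-m}$ for $\abs{x} \ageq \Lambda^{-1}r$, together with the bound $\vrac{\psi}_1 \aleq (\Lambda^{-1}r)^{m-\kappa-\alpha+\mu}$ obtained by Hölder in Lorentz spaces on $B_{\Lambda^{-1}r}$.

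Next, pick a standard radial cutoff $\eta \in C_0^\infty(B_r)$ with $\eta\equiv 1$ on $B_{r/2}\supset B_{\Lambda^{-1}r}$ and $\vrac{\laps{s}\eta}_\infty \aleq r^{-s}$ (valid since $\Lambda\geq 4$), and set $\varphi:=\eta\tilde\varphi \in C_0^\infty(B_r)$. The split
\[
\int v\,\laps{\mu}\tilde\varphi = \int v\,\laps{\mu}\varphi \;+\; \int v\,\laps{\mu}\bigl((1-\eta)\tilde\varphi\bigr)
\]
isolates a main term plus a tail. By Hölder the main term is bounded by $\vrac{\laps{\alpha}\varphi}_{(\frac{m}{\alpha+\kappa-\mu},1)}$ times the duality supremum appearing in the Lemma, so it remains to check $\vrac{\laps{\alpha}\varphi}_{(\frac{m}{\alpha+\kappa-\mu},1)} \aleq 1$. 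Expanding via the fractional product rule $\laps{\alpha}(\eta\tilde\varphi)=\eta\psi+\tilde\varphi\,\laps{\alpha}\eta+H_\alpha(\eta,\tilde\varphi)$, the leading piece $\eta\psi$ has norm $\leq \vrac{\psi}_{(m/(\kappa+\alpha-\mu),1)}\leq 1$, while the two error pieces are controlled by Hölder using the scale-$r$ bounds on $\laps{\alpha}\eta$, the decay of $\tilde\varphi$, and the pointwise $H_\alpha$-estimate of Lemma~\ref{la:lapsbetaHalpha}.

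For the tail, decompose $v = v\chi_{B_r}+\sum_{k\geq 1}v\chi_{A^k_r}$. Since $(1-\eta)\tilde\varphi$ vanishes on $B_{r/2}$ and inherits the Riesz decay of $\tilde\varphi$, the quasi-locality estimates of Lemmas~\ref{la:QuasiLocality} and \ref{la:quasilocIII} yield on each annulus (after a volume-factor conversion to the Lorentz norm dual to $(m/(m-\kappa),\infty)$)
\[
\vrac{\laps{\mu}\bigl((1-\eta)\tilde\varphi\bigr)}_{(\frac{m}{\kappa},1),A^k_r} \aleq \Lambda^{\kappa-m+\alpha-\mu}\,2^{k(\kappa-m+\alpha-\mu)},
\]
together with a corresponding bound on $B_r$ carrying only the prefactor $\Lambda^{\kappa-m+\alpha-\mu}$. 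Pairing these against $\vrac{v}_{(m/(m-\kappa),\infty),B_r}$ and the annular seminorms $[v]_{(m/(m-\kappa),\infty),A^k_r}$ produces exactly the two remainder terms of the Lemma. The main obstacle is the bookkeeping: ensuring that the commutator error in the localization and the tail pointwise estimate both carry precisely the exponent $\kappa-m+\alpha-\mu$ in $\Lambda$ and in $2^k$. This comes down to matching the Lorentz-scaling of $\vrac{\psi}_1 \aleq (\Lambda^{-1}r)^{m-\kappa-\alpha+\mu}$ against the $|x|^{\alpha-m}$-decay of $\tilde\varphi$, the $r^{-\alpha}$-scale of $\laps{\alpha}\eta$, and the Riesz kernel of $\laps{\mu}$ on each annulus.
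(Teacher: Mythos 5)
Your proposal is correct and follows essentially the same route as the paper's own proof: duality against an $L^{(\frac{m}{\kappa+\alpha-\mu},1)}$ test function $\psi$, the identity $\int\laps{\mu-\alpha}v\,\psi=\int v\,\laps{\mu}\lapms{\alpha}\psi$, a cutoff split of $\lapms{\alpha}\psi$ into a near part (handled by the fractional product rule of Lemma~\ref{la:lapsbetaHalpha} to verify admissibility as a test function) and a far part (handled by the quasi-locality Lemma~\ref{la:quasilocIII} on dyadic annuli). The only cosmetic difference is that the paper decomposes the far part $(1-\eta)\lapms{\alpha}\psi$ via a dyadic partition of unity $\eta_l$ before applying quasi-locality, which is the precise form of the "bookkeeping" you flag.
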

\begin{proof}
Let $f \in C_0^\infty(B_{\Lambda^{-1}r},\R^N)$, $\vrac{f}_{(\frac{m}{\alpha+\kappa-\mu},1)} \leq 1$ such that 
\[
 \vrac{\laps{\mu-\alpha} v}_{(\frac{m}{m+\mu-\alpha-\kappa},\infty),B_{\Lambda^{-1}r}} \leq 2 \int \laps{\mu-\alpha}v \cdot f.
\]
Decompose for the usual cutoff $\eta_{r/2} \in C_0^\infty(B_{\frac{r}{2}})$,
\[
 f = \laps{\alpha} (\eta_{\frac{r}{2}} \lapms{\alpha} f) + \laps{\alpha} ((1-\eta_{\frac{r}{2}}) \lapms{\alpha} f) =: \laps{\alpha} g_1 + \laps{\alpha} g_2.
\]
As usual, using Lemma \ref{la:lapsbetaHalpha} (for $\beta = 0$) as an approximate product rule, for finitely many $s_k \in [0,\alpha]$, 
\[
 \vrac{\laps{\alpha} g_1}_{(\frac{m}{\alpha+\kappa-\mu},1)} 
\aleq \sum_{k} \vrac{\lapms{s_k} \abs{\laps{\alpha}\eta_{\frac{r}{2}}}}_{(\frac{m}{\alpha-s_k},\infty)}\ \vrac{\lapms{\alpha-s_k}\abs{f}}_{(\frac{m}{s_k+\kappa-\mu},1)} \aleq \vrac{f}_{p}.
\]
As for $g_2$, for a usual decomposition unity $\eta_l \in C_0^\infty (B_{2^{l} r} \backslash B_{2^{l-2}r})$
\[
 \laps{\alpha} g_2 = \sum_{l=-2}^\infty \laps{\alpha} (\eta_{l} \lapms{\alpha} f) =: \sum_{l=-2}^\infty \laps{\alpha}\tilde{g}_l
\]
and with the help of Lemma \ref{la:quasilocIII},
\[
 \vrac{\laps{\mu}\tilde{g}_l}_{(\frac{m}{\kappa},1)}  \aleq 
(2^l \Lambda)^{\kappa-m-\mu+\alpha}\ \vrac{f}_{\frac{m}{\alpha+\kappa-\mu}} \leq (2^l \Lambda)^{\kappa-m+\alpha-\mu} \vrac{f}_{\frac{m}{\alpha+\kappa-\mu}},
\]
and for $k \geq 1$,
\[
 \vrac{\laps{\mu}\tilde{g}_l}_{(\frac{m}{\kappa},1),B_{2^{k}r} \backslash B_{2^{k-1}r}}  \aleq \Lambda^{\kappa-m+\alpha-\mu}\ 2^{k\kappa+\max\{k,l\}(-m-\mu)+l\alpha} \ \vrac{f}_{\frac{m}{\alpha+\kappa-\mu}}.
\]
Consequently, for any $k \in \N_0$,
\[
 \vrac{\laps{\mu}g_2}_{(\frac{m}{\kappa},1),A^k_r}  \aleq (2^k \Lambda)^{\kappa-m+\alpha-\mu}\  \vrac{f}_{\frac{m}{\alpha+\kappa-\mu}}.
\]
So we conclude using
\[
\int v \cdot \laps{\mu}g_2 \aleq
\vrac{v}_{(\frac{m}{m-\kappa},\infty),B_r}\ \vrac{\laps{\mu}g_2}_{(\frac{m}{\kappa},1),B_{r}}  
+ \sum_{k=1}^\infty \vrac{v}_{(\frac{m}{m-\kappa},\infty),A^k_r}\ \vrac{\laps{\mu}g_2}_{(\frac{m}{\kappa},1),A^k_r}\\
\]
\end{proof}

\section{Iteration}
\begin{lemma}\label{la:iteration}
Let $q \in (0,1)$, $K > 1$, $\varepsilon > 0$ and assume
\begin{equation}\label{eq:it:smallness}
 \varepsilon+q^{2K}+ K\varepsilon \fracm{1-q} \leq \fracm{16}.
\end{equation}
Let moreover $\psi, \Phi: (0,\infty) \to (0,\infty)$, $\psi \leq \Phi$, and $\Phi$ monotone rising. Assume that for all $\lambda \in (0,1]$
\begin{equation}\label{eq:it:start}
 \Phi\brac{2^{-K}\lambda} \leq \varepsilon\ \Phi(\lambda) + \varepsilon \sum_{k=0}^\infty q^k\ \psi\brac{2^k\lambda}.
\end{equation}
If there is $G < \infty$ so that for all $\lambda \in (0,1)$
\[
 \sum_{k=0}^\infty q^k\ \psi(2^k\lambda) \leq G.
\]
Then, for all $\lambda \in (0,1)$,
\[
 \Phi(\lambda) \leq 32\ \lambda^{\fracm{2K}} \brac{\Phi(1) + G}.
\]
\end{lemma}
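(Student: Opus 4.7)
The plan is to run a dyadic induction along the scales $\lambda_j := 2^{-jK}$, setting $M := \Phi(1) + G$ and $a_j := \Phi(\lambda_j)$, with the target of proving $a_j \leq C_0\, 2^{-j/2} M$ for all $j \geq 0$ and an absolute constant $C_0 \leq 16\sqrt{2}$. Passing from this dyadic estimate to general $\lambda \in (0,1)$ is then immediate by sandwiching $\lambda \in [\lambda_{j+1},\lambda_j]$ and invoking monotonicity of $\Phi$, at the cost of at most a factor $\sqrt{2}$; this will deliver the constant $32$ in the claim.

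The base case $j = 0$ is trivial since $a_0 = \Phi(1) \leq M$. For the inductive step, I would apply \eqref{eq:it:start} at $\lambda_J$ to obtain
\[
 a_{J+1} \leq \varepsilon\, a_J + \varepsilon\, S_J, \qquad S_J := \sum_{k=0}^\infty q^k\, \psi(2^{k-JK}),
\]
and then split $S_J$ at an intermediate dyadic scale $k = mK$ with $m = \lceil J/4 \rceil$. On the head $k \in [0,mK)$ the argument $2^{k-JK}$ is at most $1$, so $\psi \leq \Phi$ together with monotonicity of $\Phi$ gives $\psi(2^{k-JK}) \leq a_{J-\ell-1}$ whenever $k \in [\ell K,(\ell+1)K)$; summing geometric blocks and invoking the inductive hypothesis produces
\[
 \sum_{k=0}^{mK-1} q^k\,\psi(2^{k-JK}) \leq \frac{C_0 M}{1-q}\, 2^{-(J-1)/2} \sum_{\ell \geq 0} \bigl(q^K\,2^{1/2}\bigr)^\ell,
\]
where the geometric series converges absolutely because the hypothesis $q^{2K} \leq 1/16$ forces $q^K\,2^{1/2} \leq 2^{-3/2}$. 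For the tail $k \geq mK$, a shift of index $k \mapsto k + mK$ identifies it with $q^{mK}$ times a full series at the scale $\lambda_{J-m} \in (0,1)$, to which the global hypothesis applies directly, yielding $q^{mK}\,G \leq 4^{-m}\,G$. The choice $m = \lceil J/4 \rceil$ gives $4^{-m}\,2^{J/2} \leq 1$, so the tail contributes at most $2^{-J/2}\,M$ to $S_J$.

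Combining these two contributions and dividing by $C_0\,M\,2^{-(J+1)/2}$, the induction closes once an expression of the form $\sqrt{2}\,\varepsilon + c\,\varepsilon/(1-q) + c'\,\varepsilon \leq 1$ holds with absolute constants $c,c'$. This is precisely what \eqref{eq:it:smallness} delivers: it bounds $\varepsilon$ and $\varepsilon/(1-q)$ (the latter through $K\varepsilon/(1-q) \leq 1/16$ since $K \geq 1$), and simultaneously forces $q^K \leq 1/4$, which is what makes both the head geometric series converge and the tail decay $4^{-m}$ sufficient. Only the base cases $J \in \{0,1\}$ require separate treatment, as the tail decomposition demands $J - m \geq 1$; these are straightforward direct verifications since $\varepsilon \leq 1/16$ already suffices.

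The main obstacle I anticipate is the quantitative balancing of constants rather than any conceptual difficulty: the per-step improvement factor is only $2^{-1/2}$, so it must simultaneously absorb both the geometric inflation $q^K\,2^{1/2}$ in the head sum and the non-decaying $G$ contribution in the tail via the choice of $m$. The specific form of \eqref{eq:it:smallness}, with $\varepsilon$, $q^{2K}$ and $K\varepsilon/(1-q)$ each bounded by the single universal constant $1/16$, appears engineered precisely to make this balance close, and the numerical bookkeeping needed to reach exactly $32$ in the conclusion is the only fiddly step.
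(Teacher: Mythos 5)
The paper does not actually prove Lemma \ref{la:iteration}; it delegates to the arXiv version of \cite{Sfracenergy} and to \cite{BPSknot12}, where the argument is exactly the dyadic induction with a near/far splitting of the tail sum that you propose, so your route is the standard one. The bookkeeping does close: with $a_J:=\Phi(2^{-JK})$ and inductive hypothesis $a_J\leq C_0M2^{-J/2}$, the three contributions to $a_{J+1}/(C_0M2^{-(J+1)/2})$ are $\sqrt{2}\,\varepsilon$ from $\varepsilon a_J$, at most $2\varepsilon(1-q)^{-1}(1-2^{-3/2})^{-1}\leq 4\varepsilon/(1-q)$ from the head (using $q^K\leq 1/4$, which follows from $q^{2K}\leq 1/16$), and $\sqrt{2}\,\varepsilon/C_0$ from the tail (using $q^{mK}\leq 4^{-m}\leq 2^{-J/2}$ with $m=\lceil J/4\rceil$); since \eqref{eq:it:smallness} gives $\varepsilon\leq 1/16$ and, because $K>1$, also $\varepsilon/(1-q)\leq 1/16$, the total is comfortably below $1$.

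One small repair is needed in your base step. For $J=0$ the quantity $S_0=\sum_k q^k\psi(2^k)$ is the tail sum evaluated at $\lambda=1$, which the $G$-hypothesis (stated only for $\lambda$ in the open interval $(0,1)$) does not cover, so ``$\varepsilon\leq 1/16$ already suffices'' is not a complete verification there. The clean fix is to avoid the recursion at $J=0$ altogether: $a_1=\Phi(2^{-K})\leq\Phi(1)\leq M\leq C_0 2^{-1/2}M$ follows directly from monotonicity once $C_0\geq\sqrt{2}$, and for every $J\geq 1$ the scale $\lambda_J=2^{-JK}$ lies in $(0,1)$, so both \eqref{eq:it:start} and the $G$-bound are available and your head/tail estimate applies verbatim (your observation that the tail split needs $J-m\geq 1$, hence separate treatment of $J=1$ as well, is correct, and there one may simply use $S_1\leq G$). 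With that adjustment the induction closes and the passage to general $\lambda\in(0,1)$ by sandwiching between consecutive $\lambda_j$ costs the factor $\sqrt{2}$ you account for, giving the stated constant $32$.
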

For a proof, we refer, e.g., to the arxiv-version of \cite{Sfracenergy}, or \cite{BPSknot12}.

\end{appendix}

\bibliographystyle{alpha}%
\bibliography{bib}%
\end{document}